\documentclass[reqno,11pt]{amsart}
\usepackage{amsfonts}
\usepackage{amsfonts,amsmath, amssymb}
\usepackage{hyperref}
\usepackage{marginnote}
\usepackage{color}
\usepackage{dsfont}
\usepackage{xcolor}
\usepackage{mathtools}
\usepackage{enumitem}
\usepackage[numbers,sort&compress]{natbib}

\numberwithin{equation}{section}

\newcommand{\R}{\mathbb{R}}

\newtheorem{theorem}{Theorem}[section]
\newtheorem{corollary}[theorem]{Corollary}
\newtheorem{lemma}[theorem]{Lemma}

\newtheorem{remark}[theorem]{Remark}
\theoremstyle{definition}
\newtheorem{definition}[theorem]{Definition}

\newtheorem{case}{Case}

\def\v{\varepsilon}

\def\d{{\rm d}}

\def\r{\rho}

\def\dd{{\, \rm d}}
\def\M{{\mathcal{M}}}

\def\bp{\boldsymbol{\psi}}
\def\supp{{\rm supp}}
\newcommand{\longrightharpoonup}{\rm - \!\!\! \rightharpoonup}
\usepackage{tikz}

\begin{document}
\title[Nonlinear Stability and Instability]{Nonlinear Stability and Instability of\\
Finite-Energy Solutions of the Compressible Euler-Riesz Equations with General Pressure Laws}

\author[J.A. Carrillo]{Jos$\acute{\mathrm{E}}$ A. Carrillo}
\address{J.A. Carrillo:\,Mathematical institute, University of Oxford, Oxford OX2 6GG, UK}
\email{carrillo@maths.ox.ac.uk}

\author[S.R. Charles]{Samuel R. Charles}
\address{S.R. Charles:\, Department of Mathematics, National University of Singapore, Singapore 119076, Singapore}
\email{samuel.charles@nus.edu.sg}

\author[G.-Q. Chen]{Gui-Qiang G. Chen}
\address{Gui-Qiang G. Chen:\, Mathematical Institute, University of Oxford, Oxford OX2 6GG, UK;
School of Mathematical Sciences, Fudan University, Shanghai 200433, China;
AMSS, Chinese Academy of Sciences, Beijing 100190, China}
\email{gui-qiang.chen@maths.ox.ac.uk}

\author[D.~F. Yuan]{Difan Yuan}
\address{Difan Yuan: \,School of Mathematical Sciences,  Beijing Normal University and Laboratory of Mathematics and Complex Systems,
Ministry of Education, Beijing 100875, China}
\email{yuandf@amss.ac.cn}

\begin{abstract}
The compressible Euler-Riesz equations arise in the modeling of a wide range of physical phenomena, including stellar dynamics, plasma physics, and mathematical biology. In this paper, we investigate the nonlinear stability and 
instability of steady states for the multidimensional compressible Euler-Riesz equations 
under general pressure laws.
In the polytropic case, we establish the nonlinear instability of steady states 
in the mass-supercritical regime for attractive potentials; this is achieved by analyzing 
the concavity of the free energy along mass-preserving dilations. 
At the mass-critical exponent, we show that, for any steady state, there exist solutions that start arbitrarily 
close to it, but develop growing support. 
For general pressure laws, we employ a concentration-compactness approach to 
prove the existence of energy minimizers and establish the nonlinear stability of steady states. 
Moreover, we quantify the finite-time stability by deriving a relative entropy 
bound for finite-energy solutions, without requiring uniform pointwise upper and lower bounds on the density.
We further exploit the convexity of the second moment to obtain quantitative growth estimates for solutions with positive energy, thereby proving the local nature of the stability result. Finally, we prove the global existence of finite-energy weak solutions to the compressible Euler–Riesz equations with spherical symmetry for general pressure laws via the compensated compactness method, thereby yielding unconditional stability around steady states within the class of weak solutions. The approach developed in this paper should be useful for solving other nonlinear partial differential 
equations involving similar difficulties.
\end{abstract}

\subjclass[2000]{35Q31, 35Q35, 35B35, 35L65, 35L67, 35D30, 35R09, 35B44, 76N10, 35B25, 35A15, 35R35, 76N17}
\keywords{Compressible Euler-Riesz equations, multi-dimension, nonlinear stability, nonlinear instability, relative entropy, weak solutions, existence, compensated compactness, concentration compactness}
\date{\today}

\maketitle

\thispagestyle{empty}

\section{Introduction}
We are concerned with the nonlinear stability and instability of 
the multidimensional (M-D) compressible Euler-Riesz equations (CEREs) in $\R^n:$
\begin{align}\label{1.1}
\begin{cases}
\displaystyle
\partial_t \rho+\mbox{div} \mathcal{M}=0,\\
\displaystyle
\partial_t \mathcal{M}+\mbox{div}\Big(\frac{\mathcal{M}\otimes \mathcal{M}}{\rho}\Big)+\nabla p+\kappa\rho\nabla\mathcal{W}_\alpha=\boldsymbol{0},\\
\end{cases}
\end{align}
for $(t, \boldsymbol{x})\in\R_+\times\mathbb{\R}^n$ with $n \geq 2,$
where $\r:\R_+\times\mathbb{\R}^n \to \mathbb{R}_+$ is the density, the pressure is prescribed by a barotropic constitutive law $p=p(\rho)$, $\mathcal{M}: \R_+\times\mathbb{\R}^n \to \mathbb{\R}^n$ represents the momentum,
and $\mathcal{W}_\alpha : \mathbb{R}^n \to \mathbb{R}$ is the Riesz potential for $\alpha > 0$ 
and the logarithmic potential for $\alpha = 0$.
In the analysis of the stability and instability of steady states of \eqref{1.1},
 we focus on the region $\alpha \in (0,n)$. 
 However, for the existence of solutions, we consider the larger region $\alpha \in (-1,n)$, so that
 $\mathcal{W}_\alpha$ is given by $\mathcal{W}_\alpha=\Phi_\alpha *\rho$ and 
 \[
 \nabla \mathcal{W}_\alpha(\boldsymbol{x}) =
 \begin{cases}
    \displaystyle
     (\nabla \Phi_\alpha * \rho) (\boldsymbol{x}) & \text{for } \alpha \in (-1,n-1), \\[1mm]
     \displaystyle
     \int_{\mathbb{R}^n} \nabla \Phi_\alpha(\boldsymbol{x} - \boldsymbol{y}) (\rho(\boldsymbol{y}) - \rho(\boldsymbol{x})) \dd \boldsymbol{y} \quad & \text{for } \alpha \in [n-1,n),
 \end{cases}
 \]
 where $\Phi_\alpha: \mathbb{R}^n \to \mathbb{R}$ is a Riesz kernel for $\alpha \in (0,n)$ 
 and a logarithmic kernel for $\alpha = 0$; that is, the kernel given by
 \[
 \Phi_\alpha(\boldsymbol{x}) =
 \begin{cases}
     -\frac{|\boldsymbol{x}|^{-\alpha}}{\alpha} &\quad \text{for } \alpha \neq 0, \\
     \log|\boldsymbol{x}| &\quad \text{for } \alpha = 0.
 \end{cases}
 \]
 Here, the logarithmic kernel can be obtained as a limit of the Riesz kernels, as $\alpha \to 0$, 
 in the sense that
 \begin{equation*}
    \lim_{\alpha \to 0} \big(\Phi_\alpha (\boldsymbol{x})+ \frac1{\alpha}\big) 
    = \lim_{\alpha \to 0} \frac{1-|\boldsymbol{x}|^{-\alpha}}{\alpha}
    = \log |\boldsymbol{x}|.
\end{equation*}
Whenever $\alpha$ appears, we use the convention that $\alpha = 0$ represents the logarithmic case. Here $\kappa > 0$ represents the attractive potential, and $\kappa < 0$ represents a repulsive potential. 
By scaling (without loss of generality), we always assume that $\kappa$ 
lies in the following set $\kappa \in \{-1,1\}$. 
The Riesz interactions arise in many physical settings, 
for example, plasma physics, stellar and galactic dynamics, condensed matter, random matrices, 
swarming, among others;
see \cite{Campa_2014,Lewin_2017,Cotar_2019,Lewin_2022,Carrillo_2021,Carrillo_2017}.
The Newtonian case corresponds to $\alpha=n-2$ (for $n\ge 3$) 
and, for $\alpha>n-2$, the kernel is more singular and can be viewed (up to constants) 
as the inverse of a fractional Laplacian $(-\Delta)^{\frac{n-\alpha}{2}}$.

\smallskip
For $\alpha = n-2$, the interaction reduces to the Newtonian/Coulomb potential. 
Coulomb gases serve as classical toy models, neglecting quantum effects; 
for example, Gamow’s liquid drop model ({\it cf}. \cite{Serfaty:2024ojp}) for atomic nuclei, electrons, 
and atoms reduces in certain regimes to a Coulomb-interacting particle system (see \cite{Alberti_2009}). 
Since the 1970s, Coulomb gases have been central to statistical mechanics ({\it cf}. \cite{Lieb_1969}) 
and density functional theory ({\it cf}. \cite{Lieb_2010,Lewin_2017,Cotar_2013,Cotar_2019}), 
particularly through the study of indirect Coulomb energy via the Lieb–Oxford inequality 
and optimal transport with Coulomb cost. 
For $n\ge3$, the critical case $\alpha=n-1$ corresponds to the Manev potential and exhibits similar interesting phenomena as in Vlasov--Riesz equations; see \cite{Bobylev,Cercignani,Illner}.

\smallskip
In two dimensions ($n=2$), the Newtonian/Coulomb interaction coincides with the logarithmic potential, leading to the log gas (also known as the 2-D one-component plasma, jellium, or Dyson gas), a standard model in 2-D plasmas, the fractional quantum Hall effect, and random matrix theory.

\smallskip
We consider a general pressure law $p: [0,\infty) \to [0,\infty)$, $p \in C^2([0,\infty))$ under the conditions that
\begin{equation}\label{Press}
    p'(\rho) > 0, \quad 2p'(\rho) + \rho p''(\rho) > 0,
\end{equation}
for $\rho > 0$, where the latter is the genuine nonlinear condition. 
For the stability of steady states, if the pressure is given by a power law
\[
p(\rho) = a_0 \rho^\gamma,
\]
we require that $\gamma > \frac{n + \alpha}{n}$. 
The value $\gamma = \frac{n+\alpha}{n}$ is known as the mass-critical exponent, 
arising from the balance between the internal and potential energies.
Another important scaling is the energy-critical scaling  $\gamma = \frac{2n}{2n - \alpha}$,
which arises from the scaling of the potential energy.
Classically, for $n=3$ and $\alpha = n-2 = 1$, the range $\gamma\in(\frac{6}{5},\frac{4}{3})$ 
is called the mass-supercritical case and the range $\gamma > \frac{4}{3}$ is called the mass-subcritical case. 
In the context of CEREs, these ranges become $\gamma\in(\frac{2n}{2n - \alpha}, \frac{n + \alpha}{n})$ 
and $\gamma > \frac{n + \alpha}{n}$, respectively. 
However, there are physical phenomena that do not have a corresponding pressure given by a power law. 
In the case of a white dwarf star where $n=3$ and $\alpha = n-2 = 1$, the pressure is of the form
\begin{equation}\label{white}
    p(\rho) = A \int^{B\rho^\frac{1}{3}}_0 \frac{\sigma^4}{\sqrt{D + \sigma^2}} \dd \sigma
\end{equation}
for $\rho > 0$ and $A,B,D > 0$, which has the following behavior for small and large densities
\begin{equation}\label{asymp}
    \begin{cases}
        p(\rho) \sim \frac{A B^5 }{5\sqrt{D}}\rho^\frac{5}{3} \quad& \text{ as } \rho \to 0, \\[2mm]
        p(\rho) \sim \frac{A B^4 }{4}\rho^\frac{4}{3} \quad& \text{ as } \rho \to \infty.
    \end{cases}
\end{equation}

\smallskip

We consider solutions of the Cauchy problem for \eqref{1.1} with initial data:
\begin{equation}\label{1.1-2}
(\rho, \M)|_{t=0}=(\rho_0, \M_0)( \boldsymbol{x})\longrightarrow (0, \boldsymbol{0})
\qquad\,\,\,\,\mbox{as $| \boldsymbol{x}|\to \infty$}.
\end{equation}
In \eqref{1.1-2}, the initial far-field velocity is assumed to be zero
without loss of generality due to the Galilean invariance of system \eqref{1.1}. 
Since global solutions of CEREs may contain vacuum states 
$\{(t,\boldsymbol{x}) \in [0,\infty) \times \mathbb{R}^n\, : \, \rho(t,\boldsymbol{x}) = 0\}$, 
we consider the momentum $\mathcal{M}$, instead of the velocity $u = \frac{\mathcal{M}}{\rho}$ 
which may not be well-defined in these states. 
Under these assumptions, the energy of CEREs is given by
\begin{equation*}
     E(\rho,\mathcal{M})(t): 
     =\int_{\mathbb{R}^n}\Big( \frac{1}{2} \Big|\frac{\mathcal{M}}{\sqrt{\rho}}\Big|^2(t, \boldsymbol{x}) + \rho(t, \boldsymbol{x}) e(\rho(t, \boldsymbol{x}))  
     + \frac{\kappa}{2}\rho(t,\boldsymbol{x})(\Phi_\alpha \ast \rho)(t, \boldsymbol{x})\Big)\,\dd\boldsymbol{x},
\end{equation*}
where
\[
    e(\rho) = \int^\rho_0 \frac{p(s)}{s^2}\,\dd s.
\]

\smallskip

CEREs may be thought of as a generalization of the compressible Euler-Poisson equations (CEPEs), 
where the potential is given by a solution of the Poisson equation $\Delta \mathcal{W} = \kappa \rho$, 
that is, the Newtonian/Coulomb case ($\alpha = n-2$). 
It was proven in Chen-He-Wang-Yuan \cite{Chen2021} that, under the assumption of radially symmetric initial data, 
there exist weak global-in-time finite-energy solutions to the polytropic CEPEs. 
This was further extended to more general pressure laws 
in Chen {\it et al.}  
\cite{Chen_2024}. An interesting direction concerns global-in-time dynamics in the
physical-vacuum free-boundary setting. Hadzi\'c and Jang constructed
global-in-time solutions of the 3-D compressible Euler equations
without symmetry assumptions near expanding compactly supported affine motions
\cite{HadzicJang2018Euler}. They
subsequently obtained a class of global-in-time compactly supported solutions
to the 3-D CEPEs, again without symmetry
assumptions, in both the gravitational and plasma cases
\cite{HadzicJang2019EP}. For these solutions, the density tends to zero and
the support expands linearly in time.
\smallskip

In the general Riesz setting, it was seen in a series of papers \cite{Chi_2025,Choi_2022_2,Choi_2025_2,Choi_2025,Danchin_2022} that, 
if the initial data is small enough or close enough to some background state, 
then there exist global-in-time classical solutions of CEREs for $\alpha \in (n-2,n)$. 
However, since CEREs are strongly hyperbolic and nonlinear, it cannot be expected, in general, 
to admit global smooth solutions for all time. 
In fact, for large initial data, smooth solutions may break down in finite time; see Choi–Jeong \cite{Choi_2022}. 
Such a breakdown can occur through mechanisms including the formation of shock waves in the solution 
and the formation of cavitation or concentration of the density. 
Consequently, for general initial data, it is more natural to study weak solutions of CEREs. 
This was carried out in Carrillo–Charles–Chen–Yuan \cite{Carrillo2025} for the polytropic CEREs 
where, under the assumption of radially symmetric initial data and $\alpha \in (0,n-1)$, 
the existence of global-in-time finite-energy weak solutions was proved.

\smallskip
It is of physical interest to consider the nonlinear stability and instability of 
steady states of \eqref{1.1}, that is, solutions of the equation:
\[
    \nabla p(\rho) + \kappa \rho \nabla \mathcal{W}_\alpha = \boldsymbol{0}.
\]
The notion of being a steady state of \eqref{1.1} is equivalent to the notion of being a steady state 
of the aggregation-diffusion equation:
\[
\partial_t \rho=\nabla\cdot(\nabla p(\rho) + \kappa \rho \nabla \mathcal{W}_\alpha).
\]
This connection motivates the steady-state problem within the variational theory
of aggregation--diffusion equations. In particular, radial symmetry, compact
support, and long-time asymptotics of equilibria were studied in
\cite{Carrillo_2019}, while the uniqueness and nonuniqueness mechanisms for
fixed-mass steady states under attractive interactions were investigated in
\cite{DelgadinoYanYao2022}; see also \cite{Carrillo_2018} for related
ground-state results. For a broader overview of aggregation--diffusion
dynamics, asymptotics, and singular limits, we refer to \cite{CCY19}.
Here we consider steady states nonlinearly stable if any solution that starts ``close'' 
enough at $t=0$ in some sense (usually in the sense of a normed vector space) 
to the steady state remains ``close'' to it for all time $t>0$. 
Such steady states are usually given as minimizers of the free energy functional:
\[
\mathcal{G}(\varrho)=\int_{\mathbb{R}^n} \Big(\varrho(\boldsymbol{x}) e(\varrho(\boldsymbol{x}))+\frac{1}{2}\varrho(\boldsymbol{x})(\Phi_\alpha\ast \varrho)(\boldsymbol{x})\Big)\,\dd\boldsymbol{x}.
\]
In the mass-critical polytropic case $\gamma=\frac{n+\alpha}{n}$, for any fixed $\alpha$, there exists a unique steady solution $\rho_s$ modulo mass normalization and translations, which is referred to as
the Lane-Emden-Fowler type solution. The steady state
$\rho_s(|\boldsymbol{x}|)$ has compact support and is determined by the equation:
\begin{equation*}
\nabla p_s + \kappa \rho_s\,\nabla \Phi_{\alpha} * \rho_s=\boldsymbol{0},
\end{equation*}
where $p_s(|\boldsymbol{x}|)=a_0(\rho_s(|\boldsymbol{x}|))^{\frac{n + \alpha}{n}}$.
In particular, for $\alpha = n-2$, the steady solution is the Lane-Emden solution. 

Recent studies of the attractive CEPEs have also addressed genuinely
time-dependent stellar configurations, including expanding and collapsing
solutions. In the radially symmetric mass-critical case, Hadzi\'c-Jang \cite{HadzicJang2018Stars}
proved the nonlinear stability of compactly supported expanding star
solutions. In the mass-supercritical regime
$\gamma\in(1,\frac{4}{3})$, Guo-Hadzi\'c-Jang \cite{GuoHadzicJang2021} constructed an
infinite-dimensional family of continued-collapse solutions, while Guo-Hadzi\'c-Jang-Schrecker \cite{GuoHadzicJangSchrecker2022}
constructed smooth radially symmetric self-similar imploding solutions
throughout the same range.

We now turn to the stability and instability theory for steady states. In the context of the attractive CEPEs {\rm (}$\kappa = 1${\rm )}, Rein \cite{Rein_2003} proved that, 
under the assumption of the existence of solutions to the 3-D attractive CEPEs and the uniqueness of the minimizer, 
the minimizer is nonlinearly stable for a given class of pressure laws, which corresponds to 
$\gamma > \frac{4}{3}$ in the polytropic case. Subsequently, Luo-Smoller 
\cite{LuoSmoller2009} established the existence of rotating-star minimizers with prescribed 
total mass and angular momentum and, assuming the uniqueness up to vertical translations, proved nonlinear dynamical stability under perturbations having the same total mass and symmetry 
as the steady state; see also  \cite{Luo_2008,LinWangZhu2025}. For the attractive polytropic CEREs, Carrillo-Charles-Chen-Yuan \cite{Carrillo2025} proved 
the existence and unconditional stability of minimizers. However, Deng-Liu-Yang-Yao \cite{Deng_2002} proved for the polytropic CEPEs for $\gamma > \frac{4}{3}$, the linear 
growth of the support of solutions with positive energy over time, thereby proving 
the locality of the stability result.  We also refer to Jang \cite{Jang2008,Jang2014} for the nonlinear instability 
of the polytropic stars $(n=3,\alpha=n-2)$ for $\gamma\in[\frac{6}{5},\frac{4}{3})$. In Cheng-Cheng-Lin \cite{Cheng_2025}, it was proven that steady states of the attractive 
polytropic CEPEs for $\gamma\in(\frac{6}{5},\frac{4}{3}]$ are nonlinearly unstable, 
which was based on a virial identity. The use of such methods date back to Sideris \cite{Sideris_1985}, who used
averaged moment quantities (a virial-type
argument) to prove finite-time breakdown of $C^1$ solutions of the
3-D compressible Euler equations under suitable
assumptions on the initial data.

\smallskip
Even in the classical setting of CEPEs, the dynamical stability theory faces several fundamental difficulties. 
The system is hyperbolic and strongly nonlinear, so smooth solutions typically break down in finite time 
due to behaviors such as shock formation, cavitation, and density concentration.
This motivates the study of weak solutions. 
In addition, the physically relevant steady states are compactly supported and therefore 
involve vacuum regions, which introduce additional degeneracy at the vacuum interface. 
Moreover, the stability and instability of steady states are closely tied to the delicate 
variational structure of the system and 
the competition between the internal and potential energies, 
with the mass-critical exponent $\gamma=\frac{4}{3}$ for $n=3$ playing a central role. 
For example, under suitable assumptions, the nonlinear stability of minimizers 
was established by Rein \cite{Rein_2003} for $\gamma>\frac{4}{3}$ in the polytropic case, 
while the nonlinear instability in the astrophysically relevant range was studied in \cite{Jang2008,Jang2014,Cheng_2025}. 
A further essential difficulty lies in quantifying the dynamical behavior beyond the mere orbital stability: 
for CEPEs in $n=3$, one can prove the linear growth in time of the support for positive energy solutions \cite{Deng_2002}, which shows the inherently local nature of variational stability.
 
\smallskip
Passing from CEPEs to CEREs introduces genuinely new analytic and mathematical challenges. One of the main challenges is the loss of local elliptic structure and the presence of a broad range of singularity types 
at the origin of the potential.
For general $\alpha$, the potential is nonlocal, and $\alpha>n-2$ corresponds to a fractional inverse. 
More delicate non-local estimates near the origin are needed in order 
to obtain local higher integrability estimates of the density required for the compensated 
compactness method, fundamental in proving the existence of solutions. 

\smallskip
Another challenge arises from the different scaling competition compared with CEPEs.
The balance between the internal energy and the Riesz potential energy 
yields the mass-critical exponent:
$$
\gamma=\frac{n+\alpha}{n},
$$
which depends on $\alpha$ and determines whether the minimizing sequences are tight (stability) 
or can lower the energy by dilation (instability).
Moreover, unlike the treatment of CEPEs in \cite{Cheng_2025}, 
the convexity or concavity of the internal energy and the Riesz potential 
energy under the mass-preserving dilations depends {\it separately} on $(\alpha,\gamma)$. 
This leads to new {\it mixed regimes}, in which the two energy components compete under scaling. 
In addition, for the analysis of the stability and instability of steady states, 
the weak convergence of minimizing sequences is not sufficient to 
prove the convergence to a minimizer of the nonlocal energy. 
In the variational approach, the minimizing sequences are typically bounded only 
in $L^1_+\cap L^\gamma (\mathbb{R}^n)$, so the weak convergence alone does not  
imply the convergence of the interaction term:
$$
\int_{\mathbb{R}^n} \varrho_m\Phi_\alpha*\varrho_m \dd \boldsymbol{x}.
$$ 
Therefore, additional mechanisms are required to pass to the limit 
in the nonlocal term. 
To address this issue, we quantify the stability estimates via a relative entropy method. 
Such estimates require a careful analysis of the bound given by the difference 
of the Riesz potentials. 
Previous works relied on strong assumptions, such as $L^\infty$ bounds 
on the density and the momentum, 
and the restriction $\gamma \leq 2$ (see \cite{Luo_2008}), in order to control these terms. 
More recently, the result of Alves-Carrillo-Choi \cite{Alves_2024} assumes that 
the steady state is bounded from above and below by a positive constant, 
which remains a rather restrictive assumption about the steady state. 
In contrast, establishing the stability estimates for weak solutions under weaker assumptions
is considerably more delicate and requires new analytical tools.

\smallskip
Finally, another new challenge in the paper is to establish the linear growth of 
positive-energy solutions 
for $\alpha > 2$. In \cite{Deng_2002}, 
such growth of solutions was proved for CEPEs in the 3-D case, 
where $\alpha = 1$.
The approach involves second moments, which are key to proving the growth of the support, 
and uses a virial identity to obtain a positive lower bound on the second derivative 
of the second moment of the density. 
This property is crucial for proving the growth of the second moment and, consequently, 
due to the conservation of total mass of the fluid, the expansion of the support. 
However, in the present setting, the competition between the potential energy 
and the kinetic energy leads to a fundamentally different situation. 
Using the approach of \cite{Deng_2002}, 
we obtain a lower bound for the second derivative of the second moment that involves the term
\[
    (2-\alpha)\int_{\Omega(t)}\Big| \frac{\mathcal{M}}{\sqrt{\rho}} \Big|^2\dd \boldsymbol{x},
\] 
in the proof of Lemma \ref{E>0,general}. 
When $\alpha>2$, this term becomes negative, and hence it is no longer possible to deduce that 
the second derivative of the second moment is bounded below by a positive constant. 
As a result, the classical argument breaks down.
This difficulty does not arise in the existing literature, which focuses on the Newtonian potential 
in three dimensions, corresponding to $\alpha=1$. In contrast, for $\alpha>2$, including 
the Newtonian/Coulomb interaction in higher dimensions ($n\ge 5$), a direct extension 
of the growth argument in \cite{Deng_2002} to CEREs fails.

\smallskip
In this paper, we establish the nonlinear instability in the mass-supercritical regime.
For the attractive polytropic CEREs, we prove the nonlinear instability of steady states for
$$
\frac{2n}{2n-\alpha}<\gamma<\frac{n+\alpha}{n},
$$
by exploiting the concavity of the free energy along the mass-preserving dilations; 
see Theorem \ref{smallgamma} and Section \ref{instability}. 
Moreover, through a careful analysis of the asymptotic behavior of key critical scalings,
we extend the instability result to the regimes not covered in \cite{Cheng_2025}. 
Furthermore, we establish a new compactness result for the potential term 
in the supercritical range in the radial case, which is crucial for proving 
the existence of steady states. This argument relies on the local compact 
embeddings of fractional Sobolev spaces. 
While insights have often flowed from the theory of aggregation–diffusion equations 
to the study of CEREs, the instability of steady states in the corresponding parameter range remains poorly
understood for the aggregation-diffusion equations.
The current literature primarily establishes 
finite-time blow-up of solutions with large initial data for the Newtonian potential, 
as in Chen-Wang \cite{Chen_2014}. Related finite-time blow-up dynamics for aggregation equations with degenerate
diffusion and power-law attractive potentials were analyzed by Yao-Bertozzi
\cite{YaoBertozzi2013}. Our results suggest that analogous instability phenomena, driven by different mechanisms, 
should also hold for aggregation–diffusion equations.

\smallskip
We establish the nonlinear stability of minimizers in the subcritical regime for general pressure 
laws. 
Under suitable conditions on $p$ including \eqref{Press} and appropriate asymptotic behavior, 
we prove the existence of minimizers via concentration-compactness \cite{Lions_1984} 
and establish the nonlinear orbital dynamical stability of these steady states; 
see Theorem \ref{ns} and Section \ref{stability}. 
Such conditions on the pressure are more general than those in previous results, 
allowing for differing behaviors near the vacuum and at infinity. 

We also prove a quantitative stability result by developing a new
relative-entropy method for finite-energy solutions.  This part of the analysis
goes beyond the variational orbital stability argument and addresses the
dynamical control of perturbations around compactly supported steady states.
The main difficulty comes from the nonlocal interaction terms generated in the
relative entropy inequality, in particular the terms:
\[ \int_{\mathbb R^n} \rho
 \left| \nabla\Phi_\alpha*(\rho-\bar\rho) \right|^2\,\dd\boldsymbol{x}.
\]
Such terms are delicate to estimate because the steady density $\bar\rho$ is compactly
supported and may vanish at the free boundary.  Hence, one cannot rely on a
uniform positive lower bound for $\bar\rho$, which is often used in
relative-entropy arguments to control the density perturbation.

A key point of our approach is to exploit the variational structure of the
steady state in the construction of the relative entropy.  More precisely, the
Euler--Lagrange condition for the minimizer yields
\[ G_\mu=-h'(\bar\rho) \quad\text{on } \{\bar\rho>0\}, \qquad\,\,\, G_\mu\ge0
\quad\text{on } \{\bar\rho=0\},
\]
where $G_\mu=\mu+\bar{\mathcal W}_\alpha.$
This allows us to work with a modified relative internal energy of the form:
\[ h_\mu(\rho\mid\bar\rho) = h(\rho)-h(\bar\rho)+G_\mu(\rho-\bar\rho),
\]
which coincides with the standard relative internal energy on the support of
the steady state, while  it gives
\[ h_\mu(\rho\mid0) =\frac{a_0}{\gamma-1}\rho^\gamma+G_\mu\rho
\]
on the vacuum region.
The additional nonnegative term $G_\mu\rho$ provides useful information on
the density perturbations outside the support of $\bar\rho$.  This is a key point that replaces the usual lower-bound assumption on the steady state
density.

Using this modified relative entropy, we establish new estimates for the density
difference $\rho-\bar\rho$. The key coercivity estimate is given by Lemma \ref{coer}, which relies on the variational relation of the
steady state \eqref{property}. 
The resulting density estimates are obtained by separating the support of $\bar\rho$, the
bounded vacuum region, and the far-field region.  On the support of
$\bar\rho$, the convexity of the internal energy yields the local coercivity. In the
vacuum region, the variational inequality for $G_\mu$ gives additional control,
while in the exterior region the decay of the steady nonlocal potential term provides an
$L^1$-type estimate.  Combining these bounds with the higher integrability
assumptions on $\rho$ and $\mathcal M$, we succeed in controlling the relative non-local term
through the Hardy-Littlewood-Sobolev and H\"older inequalities; see Lemma \ref{relativenonlocal} and, in particular,
\eqref{relativeestimate}. This yields a relative entropy stability estimate without assuming that the
steady density is bounded from below by a positive constant; see
Lemma \ref{effrelativeei} and Theorem \ref{thm:stability}.  Compared with
\cite{Alves_2024}, the argument therefore applies to compactly supported
steady states with vacuum free boundaries.  Different from
\cite{Luo_2016_1}, our method avoids the strong assumptions
$\rho,\mathcal M\in L^\infty,$ 
and instead uses the finite-energy bounds together with natural higher
integrability conditions. In \cite{Luo_2016_1}, the steady solutions (density and momentum) are required to belong to $W^{1,\infty}_{{\rm loc}}.$  In this paper, we define the steady solutions in \eqref{definesteady}, which have lower regularity. The 
relative-entropy estimate is obtained in the range $\gamma\le2$,
where the modified relative internal energy controls the small vacuum
perturbations at least quadratically.  This identifies the precise role of the
vacuum degeneracy in the relative entropy method and is one of the main
technical novelties of the paper.
\smallskip

We establish linear growth of the support for positive-energy solutions, 
including the regime $\alpha>2$.
More precisely, we derive lower bounds on the asymptotic growth rate of the radius of 
the support for global classical solutions with positive energy. In the case 
$\alpha > 2$, we overcome the presence of a term with an unfavorable sign by showing that, 
under a suitable mass threshold, the internal energy dominates the potential energy. 
This yields a uniform positive lower bound on the second derivative of the second moment; 
see Theorem \ref{growth}.

\smallskip
Finally, the global existence of spherically symmetric finite-energy solutions is proved for 
CEREs with general pressure laws.
In particular, our results apply to a \textit{broad} range of 
the Riesz potential $\alpha\in(-1,n-1)$, 
leading to \emph{unconditional} stability results; 
see Theorem \ref{existence} and Section \ref{existy}.

\smallskip
The structure of the paper is as follows: 
In Section \ref{section2}, we introduce the variational approach, 
the steady states, and the main theorems.
In Section \ref{section3}, we establish the growth estimates for positive-energy solutions.
Section \ref{instability} is devoted to the instability results in the supercritical regime.
In Section \ref{stability}, we prove the existence and stability of minimizers
and develop the quantitative finite-time stability estimates via the relative entropy.
Section \ref{existy} addresses the existence of global finite-energy solutions to
CEREs with general pressure laws.
The appendices collect several auxiliary results, including convolution inequalities, 
decay and regularity estimates for radial nonlocal potentials, 
and background material on fractional Sobolev spaces.

\section{Preliminaries and Main Theorems}\label{section2}
In this section, we present the preliminaries and main theorems 
on the stability and instability of steady states, as well as 
the existence of global finite-energy weak solutions to CEREs with general pressure laws.

\subsection{Steady States of CEREs}
One of the important results of this paper is the nonlinear stability and instability
of the minimizers to the energy functional relating to CEREs.

\subsubsection{Steady states for general and polytropic power laws with $\gamma > \frac{n + \alpha}{n}$}
For $\alpha \in (0,n)$, $\kappa =1$ and $0<M<\infty$, define the set of admissible
functions{\rm:}
\begin{equation*}
X_M := \Big\{ \varrho \in L^1_+(\mathbb{R}^n) \cap L^\frac{n + \alpha}{n}(\mathbb{R}^n)
\, : \, \int_{\mathbb{R}^n} \varrho(\boldsymbol{x}) \, \dd \boldsymbol{x} = M \Big\}.
\end{equation*}
For $\varrho\in X_M$, define the energy functional $\mathcal{G}$:
\begin{equation*}
\mathcal{G}(\varrho)=\int_{\mathbb{R}^n} \Big(\varrho(\boldsymbol{x}) e(\varrho(\boldsymbol{x}))+\frac{1}{2}\varrho(\boldsymbol{x})(\Phi_\alpha\ast \varrho)(\boldsymbol{x})\Big)\,\dd\boldsymbol{x}.
\end{equation*}
For $\varrho, \bar{\rho} \in X_M,$  define the  relative free-energy functional:
\begin{equation*}
\begin{split}
    d(\varrho,\bar{\rho}) & := \int_{\mathbb{R}^n}\big((\varrho e(\varrho)-\bar{\rho}e(\bar{\rho}))+(\varrho-\bar{\rho})(\Phi_\alpha\ast\bar{\rho})\big)\,\dd \boldsymbol{x}\\
    & = \int_{\mathbb{R}^n}\big((\varrho e(\varrho)-\bar{\rho}e(\bar{\rho}))+(\varrho-\bar{\rho})(\mu + \Phi_\alpha\ast\bar{\rho})\big)\,\dd \boldsymbol{x}
\end{split}
\end{equation*}
for any $\mu > 0$. If $\bar{\rho}$ is a minimizer, then $\mu$ can be taken to be the Lagrange multiplier given by Theorem \ref{lmul}. Given a solution $(\rho(t),\mathcal{M}(t))$ of the attractive CEREs {\rm (}$\kappa = 1${\rm )} with mass $M$ and minimizer of $\bar{\rho} \in X_M$ of $\mathcal{G}$ over $X_M$, we define the relative energy
\begin{equation}\label{Energydiff}
     E(\rho,\mathcal{M}\, | \, \bar{\rho}, \boldsymbol{0})(t):= E(\rho,\mathcal{M})(t) - E(\bar{\rho},\boldsymbol{0}) = \mathcal{G}(\rho(t)) - \mathcal{G}(\bar{\rho})
    + \frac{1}{2}\int_{\mathbb{R}^n} \Big|\frac{\mathcal{M}}{\sqrt{\rho}}\Big|^2(t,\boldsymbol{x})\,\dd\boldsymbol{x}.
\end{equation}
In order for a minimizer of $\mathcal{G}$ over $X_M$ to exist, we require
\begin{equation}\label{lower}
    M > \bigg ( \frac{2n \limsup_{\rho \to 0} p(\rho) \rho^{-\frac{n +\alpha}{n}}}{ C_*} \bigg )^\frac{n}{n-\alpha},
\end{equation} 
where $C_* > 0$ is the optimal constant given by the Hardy-Littlewood-Sobolev (HLS) inequality as in Lemma \ref{HLSineq}.
Such a condition ensures that the potential energy dominates the internal energy at low densities. Consequently, by applying Lions' concentration–compactness argument, one can rule out both vanishing and dichotomy for any minimizing sequence
of any minimizing sequences of $\mathcal{G}$ over $X_M$.

We also require that
\begin{equation}\label{higherpress}
    M < \bigg ( \frac{2n \liminf_{\rho \to \infty} p(\rho) \rho^{-\frac{n+\alpha}{n}}}{C_*} \bigg )^\frac{n}{n-\alpha},
\end{equation}
which ensures that the internal energy dominates the potential energy at large densities, a necessary condition for the existence of minimizers of $\mathcal{G}$ over $X_M$. 

\begin{remark}
For a polytropic fluid with the pressure law
$p(\rho) = a_0 \rho^\gamma$, if $\gamma > \frac{n + \alpha}{n}$, conditions \eqref{lower} and \eqref{higherpress} reduce to the condition that $0<M<\infty$.
\end{remark}

\begin{remark}
    When $\gamma = \frac{n+\alpha}{n}$, 
    the lower and upper bounds for the mass in \eqref{lower} and \eqref{higherpress} 
    coincide, both reducing to
    the same value $\big( \frac{2na_0}{C_*} \big )^\frac{n}{n-\alpha}$.
    In this case, there exists a unique steady state {\rm (}up to translation and mass-preserving dilation{\rm )} 
    at this critical mass, and no steady states exist for other values of the mass{\rm ;} 
    see {\rm \cite[Theorem 3]{Calvez_2021}}.
\end{remark}

Condition \eqref{higherpress} is analogous to that of the Chandrasekhar limit mass. 
In the polytropic case with $\alpha = n-2$ and $\gamma = \frac{2(n-1)}{n}$, 
this limit mass is given by 
$$
M_{\rm ch}:= \Big( \frac{2 n a_0}{C_*} \Big )^\frac{n}{2}
$$ 
which characterizes steady gaseous stars; see \cite{Chandrabook}. 
This can be understood by considering a white dwarf star, for which the pressure 
is given by \eqref{white}. 
For $n=3$, conditions \eqref{lower} and \eqref{higherpress} reduce 
to $$
0<M<\Big (\frac{3AB^4}{2 C_*} \Big )^\frac{3}{2}.
$$ 
Moreover, it follows from \eqref{asymp} that $p(\rho) \sim a_0 \rho^\frac{4}{3}$, 
where $a_0 = \frac{AB^4}{4}$. 
Substituting this expression shows that the upper bound coincides exactly 
with the Chandrasekhar limit mass $M_{\rm ch}$. 
From a physical perspective, this agreement is expected, as it reflects the classical
phenomenon that white dwarf stars collapse once their mass exceeds the Chandrasekhar limit mass; see \cite{Chandrabook}. As in \cite[Theorem 5]{Carrillo_2018}, we have

\begin{theorem}\label{lmul}
Suppose that there exists an energy minimizer $\bar{\rho}$ of the energy 
functional $\mathcal{G}$ in $X_M,$ and define
\[
 \Gamma:= \{ \boldsymbol{x} \in \mathbb{R}^n \, : \, \bar{\rho}(\boldsymbol{x}) > 0 \}.
\]
Then there exists a constant $\mu \geq 0$ such that
\begin{align}\label{station}
\begin{cases}
\frac{\dd}{\dd \rho}(\rho e(\rho))|_{\rho=\bar{\rho}}+\Phi_\alpha\ast\bar{\rho}= - \mu 
\quad & \mbox{for $\boldsymbol{x}\in \Gamma$},\\[1mm]
\Phi_\alpha\ast\bar{\rho}( \boldsymbol{x})\geq - \mu 
& \mbox{for $\boldsymbol{x}\in \R^n\setminus\Gamma$}.
\end{cases}
\end{align}
\end{theorem}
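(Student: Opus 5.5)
The plan is the standard first-variation argument for constrained minimizers, as in \cite[Theorem 5]{Carrillo_2018}. Write $f(\rho):=\rho e(\rho)$, so that $f'(\rho)=e(\rho)+p(\rho)/\rho$ is continuous on $(0,\infty)$ and $f'(\rho)\to 0$ as $\rho\to0$, since $p(0)=0$ and $p\in C^2$. Set $\bar{\mathcal W}:=\Phi_\alpha*\bar\rho$, which is well defined and locally bounded from below for $\bar\rho\in L^1\cap L^{\frac{n+\alpha}{n}}$ by Lemma \ref{HLSineq} and the usual splitting of $\Phi_\alpha$ into near and far parts. The whole argument rests on the Euler--Lagrange quantity
\[
\Lambda(\boldsymbol{x}):=f'(\bar\rho(\boldsymbol{x}))+\bar{\mathcal W}(\boldsymbol{x}),
\]
with the convention $f'(0)=0$ on $\mathbb R^n\setminus\Gamma$.

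First I compute the directional derivative of $\mathcal G$ along mass-preserving perturbations $\bar\rho+\varepsilon\psi$. Here $\psi\in L^\infty$ has bounded support, $\int\psi=0$, and $\psi\ge0$ wherever $\bar\rho=0$; on $\Gamma$ the allowed perturbations are two-sided provided $\bar\rho\ge\delta$ on $\supp\psi$. The interaction term is quadratic and symmetric, so its derivative is $\int\psi\bar{\mathcal W}$. The internal term is handled by the mean value theorem and dominated convergence on the set $\{\bar\rho\ge\delta\}$, where $f'$ is uniformly continuous on bounded sets. Minimality then gives
\[
\int_{\mathbb R^n}\Lambda\,\psi\,\dd\boldsymbol{x}\ \ge\ 0 .
\]
Next, choose $\psi=\chi_A-\frac{|A|}{|B|}\chi_B$ with $A,B\subset\{\bar\rho\ge\delta\}$ of positive measure and bounded. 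Testing with $\pm\psi$ shows that the averages of $\Lambda$ over any two such sets coincide. Hence $\Lambda$ equals a constant $-\mu$ a.e. on $\{\bar\rho\ge\delta\}$, and letting $\delta\to0$ gives the same on $\Gamma$.

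For the vacuum region, take $A\subset\mathbb R^n\setminus\Gamma$ bounded and $B\subset\{\bar\rho\ge\delta\}$, and perturb by $\varepsilon(\chi_A-\frac{|A|}{|B|}\chi_B)$ with $\varepsilon>0$ small. This is admissible, but the internal energy on $A$ contributes $|A|f(\varepsilon)$, whose derivative at $0$ is $f'(0)=0$, which I verify from $e(\varepsilon)\to0$. The one-sided inequality yields $\frac1{|A|}\int_A\bar{\mathcal W}\ge -\mu$ for all such $A$, and hence $\bar{\mathcal W}\ge-\mu$ a.e. off $\Gamma$. If pointwise statements are wanted, they follow from lower semicontinuity of $\bar{\mathcal W}$: $\Phi_\alpha$ is lower semicontinuous and bounded below locally, so Fatou applies.

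It remains to show $\mu\ge0$. I would use that $\bar{\mathcal W}(\boldsymbol{x})\to0$ as $|\boldsymbol{x}|\to\infty$ for $\alpha>0$, by the decay of $|\boldsymbol{x}|^{-\alpha}$ and $\bar\rho\in L^1\cap L^{\frac{n+\alpha}{n}}$. If $\Gamma$ has a complement of infinite measure, the vacuum inequality at large $|\boldsymbol{x}|$ gives $-\mu\le0$. Otherwise $\Gamma$ has infinite measure, and then $\bar\rho\in L^1$ forces $\bar\rho\to0$ along large portions of $\Gamma$. Along those points $f'(\bar\rho)\to0$ and $\bar{\mathcal W}\to0$, so $\mu=0$.

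I expect the main obstacle to be justifying the differentiation of the internal energy near low densities, where $f'$ may not be Lipschitz. The restriction to $\{\bar\rho\ge\delta\}$ together with $f'(0)=0$ avoids this. The decay of $\bar{\mathcal W}$ at infinity is the other delicate point, and it requires an HLS-type tail estimate.
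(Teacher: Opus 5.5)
Your overall scheme (mass-preserving first variations, two-set test functions showing that $f'(\bar\rho)+\bar{\mathcal W}$ is constant on the support, a one-sided test that moves mass into the vacuum, then a separate argument for the sign) is the standard proof of \cite[Theorem 5]{Carrillo_2018}. The paper cites that result rather than proving it. It uses the same kind of first variation, with perturbations supported in $\Gamma_\varepsilon=\{\varepsilon<\bar\rho<1/\varepsilon\}$, when it derives the Euler--Lagrange conditions for $S_\mu$ in Section 4.

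As written, however, your differentiation step fails. You only require $\bar\rho\ge\delta$ on the perturbation sets. But $\bar\rho$ is not known to be bounded there, and the hypotheses place no growth bound on $p$ at infinity, so $f(\bar\rho+\varepsilon\psi)$ need not be integrable. For instance, take $f(\rho)=e^{\rho^2}$ for large $\rho$, which is compatible with \eqref{Press} and \eqref{higherpress}, and $|\{\boldsymbol{x}\in A:\bar\rho(\boldsymbol{x})>t\}|=t^{-3}e^{-t^2}$ for large $t$. Then $\int_A f(\bar\rho)\,\dd\boldsymbol{x}<\infty$, but $\int_A f(\bar\rho+\varepsilon)\,\dd\boldsymbol{x}=\infty$ for every $\varepsilon>0$. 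The $+\psi$ direction then gives no information, and the two-sided averaging argument collapses; nothing known a priori about the minimizer rules this out. The repair is the truncation the paper uses. Take $A,B\subset\{\delta\le\bar\rho\le\delta^{-1}\}$, where the mean value theorem and uniform continuity of $f'$ on $[\delta/2,2\delta^{-1}]$ give the derivative directly, and then exhaust $\Gamma$ by letting $\delta\to0$.

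Several of your claims about $\bar{\mathcal W}=\Phi_\alpha*\bar\rho$ are also false, though the proof does not need them. For $\bar\rho\in L^1\cap L^{\frac{n+\alpha}{n}}$, $\bar{\mathcal W}$ is neither locally bounded below nor pointwise decaying. The function $|\boldsymbol{x}|^{-\beta}\mathds{1}_{B_1}$ with $n-\alpha\le\beta<\frac{n^2}{n+\alpha}$ lies in this class and has potential $-\infty$ at the origin. Summable small multiples of its translates to points $\boldsymbol{x}_k\to\infty$ destroy pointwise decay. What HLS does give is $\bar{\mathcal W}\in L^r(\mathbb R^n)$ for $\frac n\alpha<r\le\frac{n(n+\alpha)}{\alpha^2}$, and this suffices: it yields local integrability for the first variation, and $|\{\bar{\mathcal W}<-\eta\}|<\infty$ for every $\eta>0$.

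For the sign of $\mu$, decay is unnecessary when $|\Gamma^{\rm c}|>0$, since $-\mu\le\bar{\mathcal W}\le0$ a.e.\ on $\Gamma^{\rm c}$. When $|\Gamma^{\rm c}|=0$, the set $\{\bar\rho\le\eta\}\cap\{\bar{\mathcal W}\ge-\eta\}$ has infinite measure. Evaluating $f'(\bar\rho)+\bar{\mathcal W}=-\mu$ there gives $|\mu|\le\eta+f'(\eta)$, hence $\mu=0$.

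Two smaller corrections. First, $\bar{\mathcal W}$ is upper, not lower, semicontinuous: $\Phi_\alpha$ is unbounded below at the origin, so you need reverse Fatou with $\Phi_\alpha\le0$. Upper semicontinuity is the direction that would upgrade the a.e.\ inequality on $\Gamma^{\rm c}$ to points, although the a.e.\ statement is the meaningful one for $\bar\rho\in L^1$. Second, $f'(0^+)=0$ does not follow from $p(0)=0$ and $p\in C^2$ alone, since it requires $p'(0)=0$. It follows instead from the finiteness of $e$, e.g.\ from the finite $\limsup$ in \eqref{lower}.
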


\subsubsection{Steady states for the polytropic CEREs with
$\frac{2n}{2n - \alpha} < \gamma < \frac{n + \alpha}{n}$}
Note that \eqref{station} is derived under the assumptions 
that \eqref{lower} and \eqref{higherpress} hold, which are  
equivalent to $\gamma > \frac{n+\alpha}{n}$ in the polytropic case.
For $\gamma > \frac{2n}{2n-\alpha}$, it was shown in \cite{Chan_2020} and \cite{Flucher_1998} 
that, for $a > 0$, $\mu > 0$, $1 \leq p < \frac{2n - \alpha}{\alpha}$, 
and $\alpha \in (0,n) \cap [n-2,n)$, there exists a unique spherically 
symmetric $v \in L^\infty(\mathbb{R}^n)$ satisfying
    \[
   (-\Delta)^\frac{n-\alpha}{2} v = a (v - \mu)_+^p.
    \]
Moreover, for $\rho \in L^1 \cap L^\gamma(\mathbb{R}^n)$, 
the fractional Laplacian $(-\Delta)^\frac{n-\alpha}{2}$ is invertible, 
with the inverse given (up to a multiplicative constant) by the Riesz potential $\Phi_\alpha$. 
In particular, if
    \[ (-\Delta)^\frac{n-\alpha}{2} v = c_{n,\alpha} \rho
    \]
in the sense of distributions, where
    \[c_{n,\alpha} = \frac{2^{n-\alpha} \pi^\frac{n}{2} \Gamma\left (\frac{n-\alpha}{2} \right )}{\alpha \Gamma\left (\frac{\alpha}{2} \right )},
    \]
then $v = -\Phi_\alpha * \rho$. 
Consequently, choosing $a = c_{n,\alpha} \Big ( \frac{\gamma - 1}{a_0 \gamma} \Big )^\frac{1}{\gamma - 1}$
and $p = \frac{1}{\gamma - 1}$, 
we deduce the existence of a unique spherically symmetric 
$\bar{\rho}_\mu \in L^1 \cap L^\gamma(\mathbb{R}^n)$ with $\bar{\rho}_\mu \geq 0$ {\it a.e.} and $\bar{\rho}_\mu \not \equiv 0$ such that $v = - \Phi_\alpha * \bar{\rho}_\mu$ and
    \[  \bar{\rho}_\mu = \Big ( \frac{\gamma - 1}{a_0 \gamma} \Big )^\frac{1}{\gamma - 1} (- \Phi_\alpha * \bar{\rho}_\mu - \mu)_+^\frac{1}{\gamma - 1}.
    \]
Hence, there is a unique steady state (up to translation and mass-preserving dilation) 
for $\mu >0$, as described 
in \eqref{station}. 
Moreover, Chan-Gonz\'alez-Huang-Mainini-Volzone showed that $\bar{\rho}_\mu$ has compact support 
and is radially decreasing. 
We will see later that both properties can also be derived from our analysis.
For $\alpha \in (0,n)$, let $\bar{\rho}_1$ be a spherically symmetric solution 
of \eqref{station} corresponding to $\mu = 1$.
Consider the scaling 
$\rho_\mu(\boldsymbol{x}) := 
\mu^\frac{1}{\gamma - 1}\bar{\rho}_1(\mu^\frac{2-\gamma}{(\gamma - 1)(n-\alpha)} \boldsymbol{x})$. 
Then 
    \[  (\Phi_\alpha * \rho_\mu)(\boldsymbol{x}) 
= \mu (\Phi_\alpha * \bar{\rho}_1)(\mu^\frac{2-\gamma}{(\gamma - 1)(n-\alpha)}\boldsymbol{x}).
    \]
Consequently, we obtain 
\[
\begin{split}
\rho_\mu(\boldsymbol{x}) & 
= \Big ( \frac{\gamma - 1}{a_0 \gamma} \Big )^\frac{1}{\gamma - 1} (- \mu \Phi_\alpha * \bar{\rho}_1 - \mu)_+^\frac{1}{\gamma - 1}(\mu^\frac{2-\gamma}{(\gamma - 1)(n-\alpha)}\boldsymbol{x}) \\
 & = \Big ( \frac{\gamma - 1}{a_0 \gamma} \Big )^\frac{1}{\gamma - 1} (- \Phi_\alpha * \rho_\mu - \mu)_+^\frac{1}{\gamma - 1}(\boldsymbol{x}),        \end{split}
\]
so that $\rho_\mu$ is a solution of \eqref{station} for $\mu > 0$. 
For $\alpha \in [n-2,n)$, $\rho_\mu$ remains spherically symmetric and satisfies \eqref{station}.
By uniqueness of steady states, we conclude that 
$\bar{\rho}_\mu(\boldsymbol{x}) 
= \mu^\frac{1}{\gamma - 1}
\bar{\rho}_1 (\mu^\frac{2-\gamma}{(\gamma - 1)(n-\alpha)} \boldsymbol{x})$. 

Similarly, the existence of steady states can be established via a variational approach by
considering the energy functional:
\[
S_\mu(\varrho) := \frac{a_0}{\gamma - 1} \int_{\mathbb{R}^n} \varrho^\gamma \dd \boldsymbol{x} + \frac{1}{2} \int_{\mathbb{R}^n} \varrho \Phi_\alpha * \varrho \dd \boldsymbol{x} + \mu \int_{\mathbb{R}^n} \varrho \dd \boldsymbol{x}.
\]
This functional is considered over the admissible set:
\[
\mathcal{K} := \big\{ \varrho \in L^1 \cap L^\gamma(\mathbb{R}^n) \,\, : \, Q(\varrho) = 0, \, \varrho \geq 0 \text{ {\it a.e.,} } \varrho \not\equiv 0\big\},
\]
where 
\[
Q(\varrho) := n a_0 \int_{\mathbb{R}^n} \varrho^\gamma \dd \boldsymbol{x} + \frac{\alpha}{2} \int_{\mathbb{R}^n} \varrho \Phi_\alpha * \varrho \dd \boldsymbol{x}.
\]
The functional $Q(\varrho)$ corresponds to the derivative of $S_\mu(\varrho)$ 
in the scaling parameter defining mass-preserving dilations. 
Moreover, as in Theorem \ref{lmul}, we show in Section \ref{instability} that
 
\begin{theorem}\label{minform}
Suppose that there exists an energy minimizer $\bar{\rho}$ of the energy functional $S_\mu$ in $\mathcal{K},$ and define
\[ \Gamma := \{ \boldsymbol{x} \in \mathbb{R}^n \,\,: \, \bar{\rho}(\boldsymbol{x}) > 0 \}.
\]
Then $\bar{\rho}$ is a steady state and satisfies \eqref{station}. 
Moreover, for $\alpha \in (0,n) \cap [n-2,n)$, $\bar{\rho}$ coincides, up to translation 
and mass-preserving dilation,
with $\bar{\rho}_\mu$.
\end{theorem}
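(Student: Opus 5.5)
The plan is a Lagrange multiplier argument on the constraint set $\mathcal{K}$, followed by the uniqueness result of Chan et al./Flucher quoted above. We first show that at a minimizer the constraint $Q=0$ carries a zero multiplier. The key observation is that $Q$ is the derivative of $S_\mu$ along mass-preserving dilations. Set $\varrho_\lambda(\boldsymbol{x})=\lambda^n\varrho(\lambda\boldsymbol{x})$. Then
\[
\frac{a_0}{\gamma-1}\int\varrho_\lambda^\gamma=\lambda^{n(\gamma-1)}\frac{a_0}{\gamma-1}\int\varrho^\gamma,\qquad
\frac12\int\varrho_\lambda\Phi_\alpha*\varrho_\lambda=\lambda^{\alpha}\frac12\int\varrho\Phi_\alpha*\varrho ,
\]
and the mass term is unchanged. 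Hence $\frac{\dd}{\dd\lambda}S_\mu(\varrho_\lambda)|_{\lambda=1}=Q(\varrho)$.

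We then perturb $\bar\rho$ in an admissible direction $\varphi$. Admissible means $\varphi\in L^\infty$ with compact support, and $\varphi\ge0$ on $\{\bar\rho=0\}$ (or $\varphi$ supported in $\{\bar\rho>\delta\}$, where it may have either sign). We follow each perturbation by a rescaling $\lambda(\varepsilon)$ that restores $Q=0$. Along the curve $\varepsilon\mapsto(\bar\rho+\varepsilon\varphi)_{\lambda(\varepsilon)}\in\mathcal K$, the first variation equals $\langle S_\mu'(\bar\rho),\varphi\rangle+\lambda'(0)\,Q(\bar\rho)$. The second term vanishes because $Q(\bar\rho)=0$. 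For the implicit function theorem we need $\partial_\lambda Q(\bar\rho_\lambda)|_{\lambda=1}\neq0$. Writing $A=a_0\int\bar\rho^\gamma$ and $B=\frac12\int\bar\rho\Phi_\alpha*\bar\rho<0$, this derivative is $n^2(\gamma-1)A+\alpha^2B$. Using $nA=-\alpha B$, it equals $\alpha B\,(\alpha-n(\gamma-1))$. This is nonzero precisely because $\gamma<\frac{n+\alpha}{n}$.

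The conclusion is that, for all such $\varphi$,
\[
\int\big(\tfrac{a_0\gamma}{\gamma-1}\bar\rho^{\gamma-1}+\Phi_\alpha*\bar\rho+\mu\big)\varphi\ \ge0 ,
\]
with equality for two-signed $\varphi$ supported in $\{\bar\rho>\delta\}$. Letting $\delta\to0$ gives \eqref{station} with Lagrange multiplier $\mu$, using $\frac{\dd}{\dd\rho}(\rho e(\rho))=\frac{a_0\gamma}{\gamma-1}\rho^{\gamma-1}$. This is the same variational-inequality argument as in Theorem \ref{lmul} and \cite[Theorem 5]{Carrillo_2018}.

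Next we show $\bar\rho$ is a steady state. On $\Gamma$, $\frac{a_0\gamma}{\gamma-1}\bar\rho^{\gamma-1}=-\mu-\Phi_\alpha*\bar\rho$, which combined with the inequality off $\Gamma$ gives
\[
\bar\rho=\Big(\frac{\gamma-1}{a_0\gamma}\Big)^{\frac1{\gamma-1}}(-\Phi_\alpha*\bar\rho-\mu)_+^{\frac1{\gamma-1}} .
\]
Taking gradients gives $\nabla p(\bar\rho)+\bar\rho\nabla\Phi_\alpha*\bar\rho=0$ wherever the positive part is differentiable. To justify this we need regularity of $\Phi_\alpha*\bar\rho$ for $\bar\rho\in L^1\cap L^\gamma$ with $\gamma>\frac{2n}{2n-\alpha}$, obtained by bootstrapping via HLS so that $\Phi_\alpha*\bar\rho$ is continuous and locally Lipschitz. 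Since $(\cdot)_+^{\gamma/(\gamma-1)}$ is $C^1$, the identity then holds in the distributional sense.

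For uniqueness when $\alpha\in(0,n)\cap[n-2,n)$, set $v=-\Phi_\alpha*\bar\rho$. Then $(-\Delta)^{\frac{n-\alpha}{2}}v=c_{n,\alpha}\bar\rho=a(v-\mu)_+^{p}$ with $a$ and $p=\frac1{\gamma-1}$ as above. The condition $\gamma>\frac{2n}{2n-\alpha}$ gives $p<\frac{2n-\alpha}{\alpha}$, and we need $v\in L^\infty$, which follows again from the bootstrap. The cited uniqueness is among radial solutions, so we must first show $\bar\rho$ is radial up to translation. For this we use Riesz rearrangement: $S_\mu(\bar\rho^*)\le S_\mu(\bar\rho)$ and $Q(\bar\rho^*)\le Q(\bar\rho)=0$ since $\alpha>0$. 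A dilation restores $Q=0$ and does not increase the energy. The equality case of Riesz's inequality for the strictly decreasing kernel $|\boldsymbol{x}|^{-\alpha}$ then forces $\bar\rho$ to be a translate of $\bar\rho^*$. Uniqueness then identifies $\bar\rho$ with $\bar\rho_\mu$, the scaling family accounting for dilations.

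The main obstacle is this rearrangement step: checking that the dilation back to $\mathcal K$ strictly lowers $S_\mu$ whenever $Q(\bar\rho^*)<0$, which requires $S_\mu$ restricted to $\mathcal{K}$ to be expressed as a function of the dilation parameter. If this fails, the fallback is the moving-plane method for the integral equation.
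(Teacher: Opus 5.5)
Your Euler--Lagrange step is essentially the paper's argument: perturb $\bar\rho$, dilate back into $\mathcal K$, and read off the first variation. Noting that $\partial_\lambda S_\mu=Q$ vanishes at $\bar\rho$, so that $\lambda'(0)$ never has to be computed, is a cleaner route than the paper's explicit expansion of $\lambda^*(\bar\rho_\tau)$.

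The gap is in the symmetrization step. Write $\bar\rho^{\#}$ for your $\bar\rho^*$, and $\lambda_0:=\lambda^*(\bar\rho^{\#})$ for the dilation that puts it into $\mathcal K$ (paper's parametrization $\varrho_\lambda=\lambda\varrho(\lambda^{1/n}\cdot)$). You intend the chain $S_\mu((\bar\rho^{\#})_{\lambda_0})\le S_\mu(\bar\rho^{\#})\le S_\mu(\bar\rho)=\ell_\mu$, with the first inequality strict when $Q(\bar\rho^{\#})<0$. That first inequality goes the wrong way. By \eqref{firstderiva} (your own identity applied at every point of the orbit), $S_\mu$ increases along a dilation orbit while $Q>0$ and decreases while $Q<0$. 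So $\mathcal K$ meets each orbit exactly at its \emph{maximum} (Lemma \ref{facts}(f)). Dilating $\bar\rho^{\#}$ into $\mathcal K$ therefore strictly \emph{raises} $S_\mu$ above $S_\mu(\bar\rho^{\#})$ whenever $Q(\bar\rho^{\#})\neq0$. The check you single out as the main obstacle is false, and the chain produces no contradiction.

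The repair needs no moving planes: never pass through $S_\mu(\bar\rho^{\#})$. Since $Q(\bar\rho^{\#})\le Q(\bar\rho)=0$, Lemma \ref{facts}(d),(e) give $\lambda_0\le1$. Rearrangement preserves $\int\varrho^\gamma$, and the dilation multiplies it by $\lambda_0^{\gamma-1}$. As both $(\bar\rho^{\#})_{\lambda_0}$ and $\bar\rho$ lie in $\mathcal K$, \eqref{overk} gives
\[
S_\mu\big((\bar\rho^{\#})_{\lambda_0}\big)-\mu M=a_0\frac{\alpha-n(\gamma-1)}{\alpha(\gamma-1)}\,\lambda_0^{\gamma-1}\int_{\mathbb R^n}\bar\rho^\gamma\dd\boldsymbol{x}\le S_\mu(\bar\rho)-\mu M,
\]
with strict inequality if $\lambda_0<1$. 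Minimality forces $\lambda_0=1$, i.e.\ $Q(\bar\rho^{\#})=Q(\bar\rho)$, hence the two interaction energies are equal. Only then does the equality case of Riesz's inequality show that $\bar\rho$ is a translate of $\bar\rho^{\#}$. This is presumably what your remark about expressing $S_\mu|_{\mathcal K}$ through the dilation parameter was reaching for.

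The paper proceeds in the other order. It dilates $\bar\rho$ first, which cannot raise $S_\mu$ because $\bar\rho$ sits at the top of its own orbit. It then rearranges, using $(\bar\rho_{\lambda_0})^{\#}=(\bar\rho^{\#})_{\lambda_0}$, to get $\ell_\mu\le S_\mu((\bar\rho^{\#})_{\lambda_0})\le S_\mu(\bar\rho_{\lambda_0})\le S_\mu(\bar\rho)=\ell_\mu$.

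A smaller point concerns the steady-state identity for $\alpha\in[n-1,n)$. There, $\bar\rho\in L^1\cap L^\infty$ only makes $\Phi_\alpha*\bar\rho$ H\"older continuous, so HLS alone does not give the Lipschitz regularity you use. You need an iterated H\"older bootstrap through the fixed-point relation, which works since $\frac1{\gamma-1}>1$. The paper does not address this and stops at the fixed-point formula.
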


However, as will be shown in Section \ref{instability}, uniqueness is not essential for our approach; 
rather, the key ingredient is the scaling structure.
In particular, for any $\alpha \in (0,n)$, 
if there exists a unique steady state $\bar{\rho}_{\mu_0}$ 
satisfying \eqref{station} for any given $\mu_0 > 0$, 
then, by the scaling argument above, 
there exists a unique steady state $\bar{\rho}_\mu$ 
satisfying \eqref{station} for each $\mu > 0$, given by
\[
\bar{\rho}_\mu(\boldsymbol{x})
= \mu^\frac{1}{\gamma - 1}\bar{\rho}_1(\mu^\frac{2-\gamma}{(\gamma - 1)(n-\alpha)} \boldsymbol{x}).
\]
Let
\[  M_1 := \int_{\mathbb{R}^n} \bar{\rho}_1 \dd \boldsymbol{x},
    \]
denote the mass of the unique steady state for $\mu = 1$.
Then, for any $\mu>0$, the mass of $\bar{\rho}_\mu$ is given by
    \[ \int_{\mathbb{R}^n} \bar{\rho}_\mu \dd \boldsymbol{x} = \mu^\frac{n\gamma - (n + \alpha)}{(\gamma-1)(n-\alpha)} M_1.
    \]
Consequently, for any prescribed mass $M>0$, choosing
    \[ \mu_M := \Big(\frac{M}{M_1}\Big)^\frac{(\gamma - 1)(n - \alpha)}{n\gamma - (n + \alpha)} > 0,
    \]
 we obtain that $\bar{\rho}_{\mu_M}$ is the unique steady state with mass $M>0$. 
Thus, the correspondence between steady states parametrized by $\mu$ 
 and those with prescribed mass $M>0$ is a bijection. 
 Although the admissible sets and energy functionals considered in different formulations may differ,
 their respective minimization problems are expected to yield the same class of steady states; 
 see Sections \ref{instability}--\ref{stability}.

\subsubsection{Steady states in the general setting}
We define a steady state as follows:
\begin{definition}\label{definesteady}
$\bar{\rho} \in L^1_+(\mathbb{R}^n) \cap L^\infty(\mathbb{R}^n)$ is 
called a steady state of CEREs \eqref{1.1} 
if $p(\bar{\rho}) \in {W}^{1,1}_\text{\rm loc}(\mathbb{R}^n)$, $\mathcal{\tilde{W}}_\alpha=\Phi_\alpha\ast \bar{\rho}$ with
$\nabla \mathcal{\tilde{W}}_\alpha \in L^1_\text{\rm loc}(\mathbb{R}^n)$, and
\[
\nabla p(\bar{\rho}) + \kappa \bar{\rho} \nabla \mathcal{\tilde{W}}_\alpha = \boldsymbol{0}
\]
is satisfied in the sense of distributions{\rm ;} in addition, 
when $\alpha \in [n-1,n)$, 
$\bar{\rho} \in C^{0,\beta}(\mathbb{R}^n)$ is required for some $\beta \in (1 + \alpha - n,1)$.
\end{definition}

In particular, let $\Pi:[0,\infty) \to [0,\infty)$ be defined 
by $\Pi(\tau) = (\tau e(\tau))_{\tau}$. Then 
$$
\Pi'(\tau) = \frac{p'(\tau)}{\tau} > 0\qquad \mbox{for $\tau > 0$}, 
$$
so that $\Pi$ is strictly increasing and hence invertible. 
Extending $\Pi^{-1}$ to $\Pi^{-1}_+:\mathbb{R} \to [0,\infty)$ by setting it equal 
to zero on $(-\infty, 0)$,
we see that \eqref{station} is equivalent to
\begin{equation}\label{newlm}
 \bar{\rho} = \Pi_+^{-1}(-\Phi_\alpha * \bar{\rho} - \mu).
\end{equation}
In the case of a polytropic pressure law $p(\rho) = a_0 \rho^\gamma$, 
equation \eqref{newlm} reduces to 
\begin{equation*}
\bar{\rho} = \Big ( \frac{\gamma - 1}{a_0 \gamma} \Big )^\frac{1}{\gamma - 1} 
(- \Phi_\alpha * \bar{\rho}- \mu)_+^\frac{1}{\gamma - 1}.
\end{equation*}

\subsection{Growth of Solutions to the Attractive CEREs}
While we will later establish the stability of minimizers for both general and polytropic pressure
laws with $\gamma > \frac{n + \alpha}{n}$, we also show that solutions with positive initial energy 
exhibit growth of their support. 
In some cases, we derive a lower bound for the linear growth of the support. 
We assume that the gas occupies a bounded spatial domain $\Omega(t)\subset\mathbb{\R}^n$ 
at time $t$, with $\rho|_{\partial\Omega(t)}=0$ and $\rho>0$ in $\Omega(t)$. 
For steady solutions, the domain is time-independent, that is, 
$\Omega(t)=\Omega$. We define the radius of the domain $\Omega(t)$ by
\begin{equation}\label{2.5a}
 R(t)=\max_{\boldsymbol{x}\in\Omega(t)}\{|\boldsymbol{x}|\}.
\end{equation}
To generalize the behavior of polytropic pressure laws to more general pressure functions, 
we introduce the following exponent:
 \begin{equation}\label{gammainf}
  \gamma_{\inf} := 1 + \inf_{\varrho \in \mathbb{R}_+} \Big( \frac{p(\varrho)}{\varrho e(\varrho)} \Big).
\end{equation}
The main results concerning the growth of the support are summarized in the following theorem:

\begin{theorem}\label{growth}
Suppose that $(\rho,\mathcal{M})$ is a global classical solution of 
the attractive CEREs {\rm (}$\kappa = 1${\rm )} \eqref{1.1} with energy $E = E(\rho,\mathcal{M})>0$. 
Then the following hold{\rm :}
 \begin{enumerate}
 \item[{\rm(a)}] If $\alpha\in(0,2]\cap (0,n-1)$ and $\gamma_{\inf} \geq \frac{n + \alpha}{n}$, then
  $$
  \liminf_{t\rightarrow\infty}\Big(\frac{R(t)}{t}\Big)\geq\Big(\frac{\alpha E}{M}\Big)^{\frac{1}{2}};
  $$
 \item[{\rm(b)}] If $\alpha\in(2,n-1),$ $\gamma_{\inf} > \frac{n + 2}{n}$, 
 and $\lim_{\rho\to\infty}p(\rho)\rho^{-\frac{n+\alpha}{n}} = \infty$, then there exists 
 $\tilde{M} \in (0,\infty]$ such that, if $M < \tilde{M}$, 
 there is $\Lambda_{\min}(M) > 0$ so that
 $$
 \liminf_{t\rightarrow\infty}\Big(\frac{R(t)}{t}\Big)
 \geq\Big(\frac{ \Lambda_{\min}(M)}{M}\Big)^{\frac{1}{2}}.
 $$
 \end{enumerate}
 Moreover, if the pressure is given by a power law with 
 $\gamma > \frac{n+\alpha}{n}$, $\alpha\in(0,2]\cap (0,n-1)$, 
 and $E = E(\rho,\mathcal{M})=0$, 
 then there exists a sequence $(t_m)_m \subset [0,\infty)$ 
 such that $R(t_m) \to \infty$ as $m \to \infty$.
 \end{theorem}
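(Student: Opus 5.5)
The plan is the classical virial argument of Deng--Liu--Yang--Yao, applied to the second moment
\[
I(t)=\int_{\Omega(t)}|\boldsymbol{x}|^2\rho\,\dd\boldsymbol{x}.
\]

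\textbf{Virial identity.} Using the equations and $\rho|_{\partial\Omega(t)}=0$, I first compute
\[
I'(t)=2\int \boldsymbol{x}\cdot\mathcal{M}\,\dd\boldsymbol{x}.
\]
Differentiating again gives
\[
I''(t)=2\int\Big|\frac{\mathcal M}{\sqrt\rho}\Big|^2\dd\boldsymbol{x}+2n\int p(\rho)\,\dd\boldsymbol{x}-2\int\rho\,\boldsymbol{x}\cdot\nabla\Phi_\alpha*\rho\,\dd\boldsymbol{x}.
\]
For the last term I symmetrize in $(\boldsymbol{x},\boldsymbol{y})$. Since $(\boldsymbol{x}-\boldsymbol{y})\cdot\nabla\Phi_\alpha(\boldsymbol{x}-\boldsymbol{y})=|\boldsymbol{x}-\boldsymbol{y}|^{-\alpha}=-\alpha\Phi_\alpha(\boldsymbol{x}-\boldsymbol{y})$, it equals $-\alpha\int\rho\,\Phi_\alpha*\rho$. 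This step needs $\alpha<n-1$, so that $\nabla\Phi_\alpha$ is locally integrable. Hence
\[
I''=2\int\Big|\frac{\mathcal M}{\sqrt\rho}\Big|^2+2n\int p(\rho)+\alpha\int\rho\,\Phi_\alpha*\rho .
\]
Write $K$, $U=\int\rho e(\rho)$ and $P=\frac12\int\rho\Phi_\alpha*\rho\le0$, so that $E=K+U+P$ is conserved. Eliminating $P=E-K-U$ gives
\[
I''=2\alpha E+(4-2\alpha)K+2n\int p(\rho)-2\alpha U .
\]
By \eqref{gammainf}, $p\ge(\gamma_{\inf}-1)\rho e(\rho)$, so
\[
2n\int p-2\alpha U\ge 2\big(n(\gamma_{\inf}-1)-\alpha\big)U\ge0
\]
when $\gamma_{\inf}\ge\frac{n+\alpha}{n}$.

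\textbf{Case (a).} Here $\alpha\le2$, so $(4-2\alpha)K\ge0$ and therefore $I''\ge2\alpha E$. Integrating twice gives $I(t)\ge \alpha E t^2+I'(0)t+I(0)$. Since $I(t)\le MR(t)^2$, we get $R(t)^2\ge\alpha E t^2/M+O(t)$, which yields the stated liminf.

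For the zero-energy power-law case, $\gamma>\frac{n+\alpha}{n}$ gives
\[
I''\ge 2\big(n(\gamma-1)-\alpha\big)U+(4-2\alpha)K\ge cU,\qquad c>0.
\]
Suppose $R$ stays bounded. Then HLS/Hölder on a bounded set bounds $|P|$ below by a positive constant depending on $M$ and $R$, since mass $M$ sits in a bounded region. Because $E=0$, this forces $U\ge|P|-K$. Combining with the $K$ term (when $\alpha<2$), or using $U+K=-P\ge c_R>0$ directly, gives $I''\ge c'>0$. Then $I\to\infty$, contradicting the bound on $I$. So $R$ is unbounded along some sequence $t_m$. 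When $\alpha=2$ the $K$ term vanishes; there I would instead use $U\ge -P-K$ together with the fact that $I'$ grows, to conclude in the same way.

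\textbf{Case (b), the main obstacle.} Now $4-2\alpha<0$, so the kinetic term has the wrong sign. I will eliminate $K$ instead, using $K=E-U-P$:
\[
I''=4E+\big(2n\int p-4U\big)+(4-2\alpha)(-P)\cdot(-1)\ \text{rearranged}
\]
that is,
\[
I''=4E+2n\int p-4U+(2\alpha-4)P .
\]
Since $\gamma_{\inf}>\frac{n+2}{n}$, we have $2n\int p-4U\ge\delta U$ with $\delta=2n(\gamma_{\inf}-1)-4>0$. It therefore suffices to show that $\delta U\ge(2\alpha-4)|P|-\eta$ with $\eta<4E$, or better that $\delta U\ge(2\alpha-4)|P|$.

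I will compare $|P|$ with $U$ via HLS:
\[
|P|\le C\|\rho\|_{L^{\frac{2n}{2n-\alpha}}}^2\le C M^{\frac{n-\alpha}{n}}\|\rho\|_{L^{\frac{n+\alpha}{n}}}^{\frac{n+\alpha}{n}}.
\]
The growth assumption $p\rho^{-\frac{n+\alpha}{n}}\to\infty$, together with $e(\rho)\ge\int^\rho p/s^2$, makes $\rho e(\rho)$ dominate $\Lambda\rho^{\frac{n+\alpha}{n}}$ for large $\rho$, with $\Lambda$ arbitrarily large. For small and moderate $\rho$, the bound $\gamma_{\inf}>1$ gives $\rho e(\rho)\gtrsim\rho^{\frac{n+\alpha}{n}}$ on compact ranges of $\rho$, up to constants. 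Splitting $\rho$ into large and small parts and absorbing, I expect to obtain $(2\alpha-4)|P|\le \frac{\delta}{2}U+C(M)$ with $C(M)\to0$ as $M\to0$.

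Then
\[
I''\ge 4E-C(M)+\tfrac{\delta}{2}U\ge\Lambda_{\min}(M):=4E-C(M).
\]
Take $\tilde M$ to be the threshold where $\Lambda_{\min}$ stays positive. I expect the delicate bookkeeping to be here: making the constant in $C(M)$ small via the HLS exponent $\frac{n-\alpha}{n}$ on $M$ while keeping the dependence on $E$ consistent. Once $I''\ge\Lambda_{\min}(M)>0$, the same integration as in case (a) gives $\liminf R/t\ge(\Lambda_{\min}/M)^{1/2}$.
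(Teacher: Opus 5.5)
Part (a), including your virial identity and the use of $p\ge(\gamma_{\inf}-1)\rho e(\rho)$, is the paper's argument. For the zero-energy case with $\alpha<2$, your route is valid and differs from the paper's. If $R(t)\le R_0$, then $|\boldsymbol{x}-\boldsymbol{y}|\le 2R_0$ on the support gives $-P\ge \frac{M^2}{2\alpha(2R_0)^{\alpha}}$; this is an elementary bound, since HLS only gives upper bounds. Together with $K+U=-P$ and $4-2\alpha>0$, this yields $I''\ge c>0$.

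The endpoint $\alpha=2$, which is admissible for $n\ge4$, is a genuine gap. There $I''\ge 2(n(\gamma-1)-2)U$, while you control only $U+K$ from below. Your suggested fix does not close: convexity of $I$ and $0\le I\le MR_0^2$ only give $I'(t)\to0$ and $\int_0^\infty U\,\mathrm{d}t<\infty$. That is no contradiction as long as $K$ carries $-P$. The missing ingredient is to bound $U$ itself from below by H\"older's inequality on the bounded support. This is what the paper does, and it works for every $\alpha\in(0,2]$ and needs only $E\ge0$. From $M\le(\int\rho^\gamma)^{1/\gamma}|\Omega(t)|^{(\gamma-1)/\gamma}$ you get $I''\ge 2a_0\frac{n(\gamma-1)-\alpha}{\gamma-1}M^\gamma\big(\frac{\omega_n}{n}R_0^n\big)^{1-\gamma}>0$, contradicting $I\le MR_0^2$.

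In (b), your claim that $\gamma_{\inf}>1$ gives $\rho e(\rho)\gtrsim\rho^{\frac{n+\alpha}{n}}$ on compact ranges fails near $\rho=0$. The inequality $p\ge(\gamma_{\inf}-1)\rho e$ says that $\rho e(\rho)\rho^{-\gamma_{\inf}}$ is nondecreasing, which is an \emph{upper} bound near $0$. For example, a polytrope with $\gamma>\frac{n+\alpha}{n}$ satisfies every hypothesis of (b), yet has $\rho e\ll\rho^{\frac{n+\alpha}{n}}$ as $\rho\to0$. Small densities must instead be charged to the mass: $\int_{\{\rho\le\rho_0\}}\rho^{\frac{n+\alpha}{n}}\le\rho_0^{\alpha/n}M$. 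Large densities are charged to $U$: $\int_{\{\rho>\rho_0\}}\rho^{\frac{n+\alpha}{n}}\le U/\Lambda(\rho_0)$, where $\Lambda(\rho_0):=\inf_{s>\rho_0}se(s)s^{-\frac{n+\alpha}{n}}\to\infty$ as $\rho_0\to\infty$ (use $e(s)\ge\int_{s/2}^s p(\tau)\tau^{-2}\,\mathrm{d}\tau$). Fixing $\rho_0$ once for $M\le1$ gives $C(M)=\frac{(\alpha-2)C_*}{\alpha}\rho_0^{\alpha/n}M^{\frac{2n-\alpha}{n}}$, and your argument is then complete. Note also that $I''\ge\Lambda$ integrates to $I\ge\frac{\Lambda}{2}t^2+O(t)$, so you should set $\Lambda_{\min}:=2E-\frac12C(M)$ to get the stated rate.

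With these corrections, your (b) is a cruder version of the paper's route. The paper writes the lower bound as $4E+2\int g(\rho)\,\mathrm{d}\boldsymbol{x}$ with $g(s)=np(s)-2se(s)-\frac{\alpha-2}{2\alpha}C_*M^{\frac{n-\alpha}{n}}s^{\frac{n+\alpha}{n}}$. It then notes that $\int g(\rho)\,\mathrm{d}\boldsymbol{x}\ge M\inf_{s>0}g(s)/s$, with the infimum approached by the flat profiles $\sigma_R$. This gives the optimal constant $\Lambda_{\min}(M)=2E+M\inf_{s>0}g(s)/s$ and identifies $\tilde M$ as its first zero. Your split bounds $g(s)/s$ separately on $s\le\rho_0$ and $s>\rho_0$, which yields a smaller threshold that is still sufficient for the existence statement.
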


 \begin{remark}
 The faster growth rate for larger $\alpha$, 
 as indicated in {\rm Theorem \ref{growth}(a)}, 
 is consistent with the weaker long-range attractive interaction 
 for more singular kernels.
 \end{remark}

\subsection{Instability of Steady States of the Attractive CEREs for \texorpdfstring{$\frac{2n}{2n - \alpha} < \gamma < \frac{n + \alpha}{n}$}{2n/(2n-\alpha) < \gamma < (n+\alpha)/n}}

In the polytropic case, when  $\frac{2n}{2n-\alpha} < \gamma < \frac{n + \alpha}{n}$, 
we establish the instability of steady states. 
The homogeneity of the pressure plays a crucial
role in the analysis here. The main result is stated in the following theorem:

\begin{theorem}\label{smallgamma}
Suppose that $(\rho(t),\mathcal{M}(t))$ is a classical solution of the attractive 
CEREs {\rm (}$\kappa = 1${\rm )} \eqref{1.1}, for $\alpha \in (0,n)$ and 
$\frac{2n}{2n - \alpha} < \gamma < \frac{n + \alpha}{n}$, 
with initial conditions $(\rho_0,\mathcal{M}_0)$ satisfying $Q(\rho_0) > 0$ and
    \begin{equation}\label{energymasscond}
  \int_{\mathbb{R}^n} \rho_0 \dd \boldsymbol{x} < \Big ( \frac{n + \alpha - n\gamma}{(2n-\alpha)\gamma - 2n} \Big )^\frac{n + \alpha - n\gamma}{(2n-\alpha)\gamma - 2n} \Big ( \ell_1 \frac{(2n-\alpha)\gamma - 2n}{(n - \alpha)(\gamma - 1)} \Big )^\frac{(n - \alpha)(\gamma - 1)}{(2n-\alpha)\gamma - 2n} E(\rho_0,\mathcal{M}_0)^\frac{n\gamma - n - \alpha}{(2n - \alpha)\gamma - 2n}.
    \end{equation}
Then there exists a sequence $t_m \to \infty$ as $m \to \infty$ such that
$$
R(t_m) \to \infty \qquad \mbox{as $m \to \infty$}.
$$
\end{theorem}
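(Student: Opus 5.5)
The plan is a virial argument on the second moment $I(t):=\int_{\mathbb{R}^n}|\boldsymbol{x}|^2\rho(t,\boldsymbol{x})\dd\boldsymbol{x}$, combined with an invariant-region (ground-state-level) argument for the sign of $Q(\rho(t))$. Write $K(t)=\frac12\int|\mathcal{M}|^2/\rho$, $A(t)=a_0\int\rho^\gamma$ and $P(t)=\int\rho\,\Phi_\alpha*\rho<0$, so that $E=K+\frac{A}{\gamma-1}+\frac P2$ is conserved along classical solutions, and so is the mass $M$.

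\textbf{Step 1 (virial identity).} Differentiating twice and using the equations, I get $I'=2\int\boldsymbol{x}\cdot\mathcal{M}$. For the interaction term I symmetrize in $(\boldsymbol{x},\boldsymbol{y})$ and use $\boldsymbol{z}\cdot\nabla\Phi_\alpha(\boldsymbol{z})=|\boldsymbol{z}|^{-\alpha}=-\alpha\Phi_\alpha(\boldsymbol{z})$. This gives
\[
I''(t)=4K+2nA+\alpha P=4K+2Q(\rho(t)),
\]
which shows that $Q$ is exactly the dilation derivative of $S_\mu$ appearing in the definition of $\mathcal{K}$. Eliminating $P$ through the energy gives the second form
\[
Q=\alpha E-\alpha K-\Big(\frac{\alpha}{\gamma-1}-n\Big)A ,
\]
where $\frac{\alpha}{\gamma-1}-n>0$ because $\gamma<\frac{n+\alpha}{n}$.

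\textbf{Step 2 (trapping $Q>0$).} I interpolate between $L^1$ and $L^\gamma$ in the HLS inequality (Lemma~\ref{HLSineq}), which is admissible since $\gamma>\frac{2n}{2n-\alpha}$. This should give $|P|\le \ell_1^{-1}M^{a}A^{b}$ with $b=\frac{\alpha}{n(\gamma-1)}>1$ and a suitable $a>0$; I expect $\ell_1$ in \eqref{energymasscond} to be the sharp constant of exactly this Gagliardo--Nirenberg-type bound. Consider $f(A):=\frac{A}{\gamma-1}-\frac{1}{2\ell_1}M^aA^b$, which is increasing on $[0,A_*]$ and decreasing afterwards. Its maximum $f(A_*)$ scales like $M^{-\frac{(n-\alpha)(\gamma-1)}{\ldots}}$, and after solving for $M$, the condition \eqref{energymasscond} is designed to say precisely that $E<f(A_*)$.

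Now suppose $Q(\rho(t_0))=0$ at some first time $t_0$. Then $nA=\frac{\alpha}{2}|P|\le\frac{\alpha}{2\ell_1}M^aA^b$, which forces $A(t_0)\ge A_0$, where $A_0$ is the zero of $nA-\frac{\alpha}{2\ell_1}M^aA^b$. This $A_0$ coincides with the critical point $A_*$ of $f$, since $b(\gamma-1)\cdot\frac{n}{\alpha}=1$. But then $E\ge f(A(t_0))$, and at $Q=0$ also $E\ge A(\frac1{\gamma-1}-\frac n\alpha)$; together these give $E\ge f(A_*)$, a contradiction. Hence $Q(\rho(t))>0$ for all $t$. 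Moreover, $E\ge f(A(t))$ together with continuity keeps $A(t)\le A_1<A_*$, where $A_1$ is the smaller root of $f=E$.

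\textbf{Step 3 (uniform lower bound and conclusion).} On the interval $A\in[\varepsilon,A_1]$, I have $Q\ge nA-\frac{\alpha}{2\ell_1}M^aA^b\ge\delta_\varepsilon>0$, so $I''\ge2\delta_\varepsilon$. When $A<\varepsilon$, I instead average the two expressions for $I''$ with weights chosen so that the $K$-terms cancel with a nonnegative sign. Concretely, $I''=4K+2Q$, and the second form of $Q$ gives $I''\ge 2\min(1,\tfrac{2}{\alpha})\big(\alpha E-c\,A\big)$ up to harmless constants; this is $\ge \alpha E>0$ once $\varepsilon$ is chosen small. Note that $E>0$ is forced by \eqref{energymasscond}.

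Thus $I''\ge\delta>0$ for all $t$, and so $I(t)\ge\frac\delta2t^2+I'(0)t+I(0)\to\infty$. Since $I(t)\le R(t)^2M$ by conservation of mass, it follows that $R(t)\to\infty$, and in particular along some sequence $t_m\to\infty$.

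The main obstacle is Step 2: matching the exponents and the constant so that \eqref{energymasscond} is exactly the statement $E<\max f$. It also requires checking that the interpolated HLS inequality holds with constant $\ell_1$ in this mixed range of $(\alpha,\gamma)$. The treatment of the small-$A$ regime in Step 3, where $Q$ could degenerate, is the secondary delicate point.
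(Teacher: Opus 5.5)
Your argument works and takes a genuinely different route from the paper. However, the step you flagged as the main obstacle rests on a wrong guess, and that guess must be replaced. The constant $\ell_1$ in \eqref{energymasscond} is not the sharp constant of the interpolated HLS bound; it is $\ell_1=\inf_{\mathcal K}S_1$. The link between the two is a pure scaling computation and needs no minimizer.

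First, the map $T_\mu\varrho(\boldsymbol{x})=\mu^{\frac{1}{\gamma-1}}\varrho\big(\mu^{\frac{2-\gamma}{(\gamma-1)(n-\alpha)}}\boldsymbol{x}\big)$ sends $\mathcal K$ onto itself with $S_\mu(T_\mu\varrho)=\mu^{s}S_1(\varrho)$, where $s=\frac{(2n-\alpha)\gamma-2n}{(\gamma-1)(n-\alpha)}\in(0,1)$. Hence $\ell_\mu=\mu^s\ell_1$, and the elementary maximization in Lemma \ref{union} shows that \eqref{energymasscond} says exactly $E<L(M):=\max_{\mu>0}(\ell_\mu-\mu M)$.

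Second, take $\rho$ of mass $M$. Then $\rho_{\lambda^*(\rho)}\in\mathcal K$ has mass $M$, $A(\rho_\lambda)=\lambda^{\gamma-1}A(\rho)$, and $\big(\lambda^*(\rho)\big)^{\gamma-1}=\big(2nA/(\alpha|P|)\big)^{1/(b-1)}$. Insert this into $S_\mu(\rho_{\lambda^*(\rho)})\ge\ell_\mu$ for every $\mu>0$ and use \eqref{overk}. With $D:=\frac{\alpha-n(\gamma-1)}{\alpha(\gamma-1)}$ this gives
\[
|P|\le\frac{2n}{\alpha}\Big(\frac{D}{L(M)}\Big)^{b-1}A^{b}.
\]
With this constant your critical point is $A_*=L(M)/D$ and $f(A_*)=DA_*=L(M)$, so $E<f(A_*)$ is literally \eqref{energymasscond}. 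The same threshold also comes out of the sharp constant $C_*$ of Lemma \ref{HLSineq} with $p=\gamma$, combined with the two-parameter scaling $c\,\varrho(\lambda\boldsymbol{x})$. In that form the optimal constant in your bound is a multiple of $\ell_1^{-(n-\alpha)/n}$, not $\ell_1^{-1}$.

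Two small points. Writing $Q=\alpha(E-K-DA)$, the hypothesis $Q(\rho_0)>0$ gives both $E>0$ and $A(0)<E/D<A_*$; this is what places $A(t)$ in $[0,A_1]$. Also, for $\alpha>2$ your small-$A$ bound yields $I''\ge 2E$ rather than $\alpha E$, which is harmless.

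Comparison with the paper. The paper proves that $\ell_\mu$ is attained on $\mathcal K$ (via the radial compactness Lemma \ref{weaktostrong}) and shows invariance of $\mathcal I_\mu$. It then uses concavity of $\lambda\mapsto S_{\mu,\lambda}$ (Lemma \ref{concavity}) to get $Q(\rho(t))\ge n(\ell_\mu-I_\mu)/(\lambda^*(\rho(t))-1)$. That bound is useless when $\lambda^*(\rho(t))$ is unbounded, so the paper ends with the dichotomy of Lemma \ref{boundbel}. Either $Q\ge\Lambda$, or $Q(\rho(t_m))\to0$ with $\|\rho(t_m)\|_{L^\gamma}\to0$, which via H\"older yields only $R(t_m)\to\infty$.

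Your potential-well argument needs neither minimizers nor concavity. By keeping the term $4K$ in your $I''$ and eliminating $P$ through the energy, you cover exactly that degenerate regime, where necessarily $K\to E>0$. You therefore get $I''\ge\delta>0$ for all $t$, hence $\liminf_{t\to\infty}R(t)/t>0$, which is stronger than the stated conclusion. What the variational route adds is the link to steady states: minimizers of $S_\mu$ on $\mathcal K$ are steady states with $I_\mu=\ell_\mu$. This is how the paper shows that arbitrarily small dilations $\bar\rho_\lambda$, $\lambda<1$, satisfy the hypotheses.
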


\smallskip
\subsection{Stability of Steady States for the Attractive CEREs}

We extend our previous result \cite[Theorem 2.6]{Carrillo2025} to the case of general pressure laws. 
In this context, we assume that both \eqref{lower} and \eqref{higherpress} are satisfied;
that is, the pressure exhibits the asymptotic behavior comparable to a polytropic power law with 
exponent $\gamma > \frac{n+\alpha}{n}$ when the density is near both zero and infinity. 
Our main result on the stability of steady states under general pressure laws is stated as
follows:

\begin{theorem}[Nonlinear Stability of Steady States]\label{ns}
Suppose that $\bar{\rho}$ is a unique minimizer {\rm(}up to translation{\rm)} 
of functional $\mathcal{G}$ in $X_M$. 
Suppose that $\alpha \in (0,n)$ and $p \in C^2([0,\infty))$ satisfying \eqref{Press}
and \eqref{lower}--\eqref{higherpress}. 
Let $(\rho,\mathcal{M})(t,\boldsymbol{x})$ be a global weak solution of 
the attractive CEREs {\rm (}$\kappa = 1${\rm )} in the sense of {\rm Definition {\ref{definition}}} for all $T>0$ with
$$\int_{\R^n} \rho_0(\boldsymbol{x})\,\dd \boldsymbol{x}=\int_{\R^n}  \bar{\rho}(\boldsymbol{x})\,\dd \boldsymbol{x}=M.$$
Then, for any $\v>0$, there exists $\delta>0$ such that, if
\begin{align*}
d(\rho_0,\bar{\rho})+ \| \rho_0 - \bar{\rho} \|_{L^\frac{2n}{2n - \alpha}}^2+\frac{1}{2}\int_{\R^n} \Big|\frac{\mathcal{M}_0}{\sqrt{\rho_0}}\Big|^2\,\dd \boldsymbol{x}<\delta,
\end{align*}
there exists a translation function $\boldsymbol{y}=\boldsymbol{y}(t)\in \R^n$ such that
\begin{align*}
d(\rho(t),T^{\boldsymbol{y}}\bar{\rho})+ \| \rho(t) - T^{\boldsymbol{y}}\bar{\rho} \|_{L^{\frac{2n}{2n - \alpha}}}^2+\frac{1}{2}\int_{\R^n} \Big|\frac{\mathcal{M}}{\sqrt{\rho}}\Big|^2\,\dd\boldsymbol{x}<\v
\end{align*}
for almost every $t>0$, where $T^{\boldsymbol{y}}\bar{\rho}(\boldsymbol{x})=\bar{\rho}(\boldsymbol{x}+\boldsymbol{y})$.
\end{theorem}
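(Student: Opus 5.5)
We argue by contradiction in the classical Cazenave--Lions/Rein fashion. The argument combines three ingredients: the concentration-compactness compactness of minimizing sequences (up to translation), the conservation (non-increase) of energy and mass for the weak solutions of Definition \ref{definition}, and the uniqueness of the minimizer up to translation.

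\textbf{Step 1: compactness of minimizing sequences.} Under \eqref{Press} and \eqref{lower}--\eqref{higherpress}, we first show that any minimizing sequence $(\varrho_m)\subset X_M$ of $\mathcal{G}$ has a translated subsequence $\varrho_m(\cdot+\boldsymbol{y}_m)$ converging to a minimizer strongly in $L^{\frac{2n}{2n-\alpha}}$, with $\mathcal{G}(\varrho_m)\to\mathcal{G}(\bar\rho)$. The steps are as follows.
\begin{itemize}
\item Condition \eqref{higherpress}, together with the HLS inequality (Lemma \ref{HLSineq}) and interpolation between $L^1$ and $L^{\frac{n+\alpha}{n}}$, makes $\mathcal{G}$ bounded below and coercive. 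Hence minimizing sequences are bounded in $L^1\cap L^{\frac{n+\alpha}{n}}$, and, using the behavior of $p$ at infinity, also in the internal-energy norm.
\item Condition \eqref{lower} gives the strict negativity $\inf_{X_M}\mathcal{G}<0$, by concentrating a fixed profile via a mass-preserving dilation. It also gives the strict subadditivity $I_M<I_{M_1}+I_{M-M_1}$, using the scaling $\varrho\mapsto\theta\varrho$ and the fact that the interaction term is quadratic while the internal energy is controlled near zero density.
\item Lions' lemma then rules out vanishing and dichotomy, so the sequence is tight after translation.
\item The weak $L^{\frac{n+\alpha}{n}}$ limit plus tightness gives convergence of the interaction term: on bounded sets we use the HLS and Hardy-type bounds, and the tails are small. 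Weak lower semicontinuity of the convex internal energy then identifies the limit as a minimizer.
\item Convergence of the energies, together with strict convexity of $\rho e(\rho)$ (from $p'>0$), upgrades this to strong convergence in $L^{\frac{2n}{2n-\alpha}}$ and also yields $d(\varrho_m(\cdot+\boldsymbol{y}_m),\bar\rho)\to0$. For the latter, note that $d(\varrho,\bar\rho)=\mathcal{G}(\varrho)-\mathcal{G}(\bar\rho)+\frac12\int(\varrho-\bar\rho)\Phi_\alpha*(\varrho-\bar\rho)$, and the last term is controlled by $C\|\varrho-\bar\rho\|_{L^{\frac{2n}{2n-\alpha}}}^2$ via HLS.
\end{itemize}

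\textbf{Step 2: the contradiction argument.} Suppose the claim fails. Then there are $\v_0>0$, initial data $(\rho_{0,m},\mathcal{M}_{0,m})$ whose $\delta$-quantity tends to $0$, and times $t_m$ with
\[\inf_{\boldsymbol{y}}\Big(d(\rho_m(t_m),T^{\boldsymbol{y}}\bar\rho)+\|\rho_m(t_m)-T^{\boldsymbol{y}}\bar\rho\|^2_{L^{\frac{2n}{2n-\alpha}}}\Big)+\frac12\int\Big|\frac{\mathcal M_m}{\sqrt{\rho_m}}\Big|^2(t_m)\ge\v_0.\]
Since $d(\rho_{0,m},\bar\rho)$ and $\|\rho_{0,m}-\bar\rho\|_{L^{\frac{2n}{2n-\alpha}}}$ tend to $0$, the identity above together with HLS gives $\mathcal{G}(\rho_{0,m})\to\mathcal{G}(\bar\rho)$. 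The kinetic energy also tends to $0$, so $E(\rho_{0,m},\mathcal{M}_{0,m})\to\mathcal{G}(\bar\rho)=\inf_{X_M}\mathcal{G}$.

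By the energy inequality $E(t)\le E(0)$, valid for a.e.\ $t$ (which is why the conclusion holds only for almost every $t$, and we choose the $t_m$ in this full-measure set), and by mass conservation, we get
\[\mathcal{G}(\bar\rho)\le\mathcal{G}(\rho_m(t_m))\le E(\rho_m,\mathcal M_m)(t_m)\le E(0)\to\mathcal{G}(\bar\rho).\]
Hence $\rho_m(t_m)$ is a minimizing sequence and the kinetic energy at $t_m$ tends to $0$. Step 1 together with uniqueness up to translation provides $\boldsymbol{y}_m$ with $d(\rho_m(t_m),T^{\boldsymbol{y}_m}\bar\rho)+\|\rho_m(t_m)-T^{\boldsymbol{y}_m}\bar\rho\|^2\to0$, which contradicts the assumption. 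For each $t$ we then pick $\boldsymbol{y}(t)$ as a near-minimizer of the infimum over translations; measurability of $\boldsymbol{y}(\cdot)$ is not required by the statement.

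\textbf{Main obstacle.} The main difficulty is Step 1 for a general pressure. Ruling out dichotomy via strict subadditivity requires a scaling argument when $p$ is not homogeneous. We would try to show $I_{\theta M}<\theta I_M$ for $\theta>1$ by comparing $\theta\varrho$ with dilations $\lambda^n\varrho(\lambda\cdot)$, using \eqref{lower} to guarantee $I_M<0$ and the monotonicity of $p(\rho)/\rho$ implied by \eqref{Press} to handle the internal energy. A secondary difficulty is passing to the limit in the nonlocal term for $\alpha$ close to $n$, which we handle by HLS with exponent $\frac{2n}{2n-\alpha}$, interpolated between $L^1$ and $L^{\frac{n+\alpha}{n}}$ (admissible since $\frac{2n}{2n-\alpha}<\frac{n+\alpha}{n}$).
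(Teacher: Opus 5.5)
Your Step 2 is essentially the paper's proof of Theorem \ref{ns}. It is Rein's contradiction argument: $E(\rho^j_0,\mathcal{M}^j_0)\to\mathcal{G}(\bar\rho)$; energy non-increase makes $\rho^j(t_j)$ a minimizing sequence in $X_M$; compactness up to translation plus uniqueness; and the relative-energy identity with HLS forces $d$ and the kinetic term to zero. The difference is Step 1. The paper does not re-derive concentration-compactness; it imports it from Lions (Theorem \ref{minimizer} and Corollary \ref{cor} with $\Psi=-\Phi_\alpha$, $\beta=\alpha$), after checking $-\infty<g_M<0$ in Lemma \ref{negative}. In re-deriving these inputs you use two mechanisms that fail as stated.

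First, $g_M<0$ does not come from \emph{concentrating} a profile. For $\varrho_\lambda=\lambda^n\varrho(\lambda\,\cdot)$ the interaction term scales exactly like $\lambda^\alpha$. The internal energy is $\int\Pi(\varrho_\lambda)\dd\boldsymbol{x}=\lambda^{-n}\int\Pi(\lambda^n\varrho)\dd\boldsymbol{x}$ with $\Pi(\rho)=\rho e(\rho)$. As $\lambda\to\infty$ this probes the large-density regime, where \eqref{higherpress} says the internal energy dominates, so $\mathcal{G}(\varrho_\lambda)>0$ eventually. Condition \eqref{lower} concerns $\rho\to0$, so you must \emph{spread} instead. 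For bounded $\varrho\in X_M$,
\[
\limsup_{\lambda\to0}\lambda^{-\alpha}\mathcal{G}(\varrho_\lambda)\le\frac{n}{\alpha}\Big(\limsup_{s\to0}p(s)s^{-\frac{n+\alpha}{n}}\Big)\|\varrho\|_{L^{\frac{n+\alpha}{n}}}^{\frac{n+\alpha}{n}}+\frac12\int_{\mathbb{R}^n}\varrho\,\Phi_\alpha*\varrho\,\dd\boldsymbol{x}.
\]
This is negative for a near-extremal $\varrho$ of Lemma \ref{HLSineq} when \eqref{lower} holds; it is Lemma \ref{negative}(b) with $R\to0$.

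Second, strict subadditivity does not follow from the amplitude scaling $\varrho\mapsto\theta\varrho$. Convexity and $\Pi(0)=0$ give only $\int\Pi(\theta\varrho)\ge\theta\int\Pi(\varrho)$. If $p$ grows faster than $\rho^2$, the internal energy grows like $\theta^\gamma$ and swamps the $\theta^2$ gain in the interaction for large $\theta$. Smallness ``near zero density'' does not help, since minimizing sequences are not small. The right scaling, which is the content of Corollary \ref{cor}, is the amplitude-preserving dilation $\varrho^{(\theta)}(\boldsymbol{x}):=\varrho(\theta^{-1/n}\boldsymbol{x})$. It multiplies the mass and $\int\Pi(\varrho)\dd\boldsymbol{x}$ \emph{exactly} by $\theta$, whatever the pressure law, and the interaction by $\theta^{2-\frac{\alpha}{n}}$:
\[
\mathcal{G}(\varrho^{(\theta)})=\theta\,\mathcal{G}(\varrho)+\big(\theta^{2-\frac{\alpha}{n}}-\theta\big)\frac12\int_{\mathbb{R}^n}\varrho\,\Phi_\alpha*\varrho\,\dd\boldsymbol{x}.
\]
If $\mathcal{G}(\varrho)\le\frac12 g_M$, then $\frac12\int\varrho\,\Phi_\alpha*\varrho\,\dd\boldsymbol{x}\le\frac12 g_M<0$ because $\Pi\ge0$. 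Hence $g_{\theta M}<\theta g_M$ for $\theta>1$, and strict subadditivity follows in the standard way. So the ``main obstacle'' you anticipate (non-homogeneity of $p$) does not arise. No monotonicity of $p(\rho)/\rho$ is needed, and \eqref{Press} does not imply it anyway.

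With these corrections your Step 1 becomes a self-contained version of what the paper imports from Lions. Your upgrade to strong convergence, via strict convexity of $\Pi$ and convergence of $\int\Pi$, is a valid alternative to reading strong $L^1\cap L^{\frac{n+\alpha}{n}}$ compactness off Theorem \ref{minimizer}. On bounded sets, however, HLS bounds alone do not give convergence of the interaction term. You need an actual compactness mechanism, e.g.\ splitting $|\boldsymbol{x}|^{-\alpha}$ into a part small in $L^s$ with $s<\frac{n}{\alpha}$ and a bounded uniformly continuous part.
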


To quantify the stability of steady states, we employ 
a relative entropy type method to obtain a stability estimate. In this context, 
a pair of functions $(\eta(\rho,m),q(\rho,m))$ is called an entropy pair of the $1$-D 
Euler system if they satisfy
\begin{equation*}
\partial_t \eta(\rho(t,r),m(t,r)) + \partial_r q(\rho(t,r),m(t,r)) = 0
\end{equation*}
for any smooth solution $(\rho,m)$ of the Euler system. 
Moreover, $\eta$ is called a weak entropy if
\begin{equation*}
\eta|_{\rho = 0}=0 \qquad \text{ for all fixed $u = \frac{m}{\rho}$}.
\end{equation*}
These considerations naturally lead us to the notion of 
(weak) entropy solutions to \eqref{1.1}, which we define below.

\begin{definition}\label{globalweak}
    A finite-energy solution $(\rho,\mathcal{M})$, as defined in Definition \ref{definition} below,
    is called an entropy solution if
    \[
 \partial_t \eta(\rho,\mathcal{M}) + \nabla \cdot q(\rho,\mathcal{M}) \leq -\kappa \mathcal{M}\cdot\nabla \mathcal{W}_\alpha
    \]
    in the sense of distributions for the mechanical entropy
    \[ \eta(\rho, \mathcal{M})
  = \frac{|\mathcal{M}|^{2}}{2\rho} + h(\rho),
  \qquad
  q(\rho,\mathcal{M})
  =\frac12 \mathcal{M}\frac{|\mathcal{M}|^{2}}{\rho^{2}}+\mathcal{M}h'(\rho),
\]
where $h(\rho) = \rho e(\rho)$.
\end{definition}
We will show a growth estimate on the relative entropy
\begin{equation*}
    \mathcal{E}(t) := \int_{\mathbb{R}^n} \eta(\rho,\mathcal{M} \,|\, \bar{\rho}) \dd \boldsymbol{x},
\end{equation*}
where 
\[
 \eta(\rho,\mathcal{M} \,|\, \bar{\rho}) := \eta(\rho,\mathcal{M})-\eta(\bar\rho,\boldsymbol{0})
 -\partial_{\rho}\eta(\bar\rho,\boldsymbol{0})(\rho-\bar\rho)
 -\nabla_{\mathcal{M}}\eta(\bar\rho,\boldsymbol{0})\cdot \mathcal{M}.
\] 

Recall that 
$\bar{\mathcal W}_\alpha=\Phi_\alpha*\bar\rho$, there is a constant $\mu>0$ such that
\[ h'(\bar\rho)+\bar{\mathcal W}_\alpha=-\mu \quad\hbox{on }\{\bar\rho>0\},
\qquad
h'(0)+\bar{\mathcal W}_\alpha\ge -\mu
 \quad\hbox{on }\{\bar\rho=0\}.
\]
Denote that, for polytropic pressure laws, 
\[
 G_\mu(\boldsymbol{x}):=\mu+\bar{\mathcal W}_\alpha(\boldsymbol{x}),
 \qquad
 h(\rho)=\frac{a_0}{\gamma-1}\rho^\gamma,
\]
and define the effective relative entropy
\[
\mathcal E_\mu(t):= \int_{\mathbb R^n}\Big( \frac{|\mathcal M|^2}{2\rho} +h(\rho)-h(\bar\rho)+G_\mu(\rho-\bar\rho)\Big)(t,\boldsymbol{x})\,\dd\boldsymbol{x}.
\]

We remark that this is a modified relative entropy adapted to the
Euler--Lagrange equation for the steady state. To see this, we define
\[
    H(t) := \int_{\mathbb R^n}\big( h(\rho)-h(\bar\rho)-h'(\bar\rho)(\rho-\bar\rho)\big)(t,\boldsymbol{x})\,\dd\boldsymbol{x}, \qquad  H_\mu(t) := \int_{\mathbb R^n}\big( h(\rho)-h(\bar\rho)+G_\mu(\rho-\bar\rho)\big)(t,\boldsymbol{x})\,\dd\boldsymbol{x},
\]
such that
\[
    \mathcal{E}(t) = \int_{\mathbb R^n} \frac{|\mathcal M|^2}{2\rho}(t,\boldsymbol{x})\,\dd\boldsymbol{x} + H(t), \qquad \mathcal{E}_\mu(t) = \int_{\mathbb R^n} \frac{|\mathcal M|^2}{2\rho}(t,\boldsymbol{x})\,\dd\boldsymbol{x} + H_\mu(t).
\]
Indeed,  $G_\mu=-h'(\bar\rho)$ on $\Gamma=\{\bar\rho>0\}$, and therefore
$h_\mu(\rho\mid\bar\rho)$ coincides with the standard relative
internal energy.  On the vacuum region $\Gamma^{\rm c}$, however,
$G_\mu\ge0=-h'(0)$, so that
\[
 h_\mu(\rho\mid0)=h(\rho)+G_\mu\rho .
\]
Thus, the modified relative entropy contains an additional nonnegative
term in the vacuum region, which is essential for controlling the density
perturbations outside the support of the steady state. More specifically, we see that
\begin{align*}
    H_\mu(t) & = \int_{\Gamma}\big( h(\rho)-h(\bar\rho)+G_\mu(\rho-\bar\rho)\big)(t,\boldsymbol{x})\,\dd\boldsymbol{x} + \int_{\Gamma^{\rm c}}\big( h(\rho)+G_\mu\rho\big)(t,\boldsymbol{x})\,\dd\boldsymbol{x} \\
    & \geq \int_{\Gamma}\big( h(\rho)-h(\bar\rho)-h'(\bar\rho)(\rho-\bar\rho)\big)(t,\boldsymbol{x})\,\dd\boldsymbol{x} + \int_{\Gamma^{\rm c}} h(\rho)(t,\boldsymbol{x})\,\dd\boldsymbol{x} \\
    & = H(t).
\end{align*}
Hence, $0 \leq \mathcal{E}(t) \leq \mathcal{E}_\mu(t)$, meaning that not only is the effective relative entropy nonnegative, but it controls the relative entropy, which in turn has a control on the difference of $\rho(t)$ and $\bar\rho$. In addition, we can prove the following finite-time relative entropy lemma:

\begin{theorem}[Weighted $L^2$--Stability Around Steady States]\label{thm:stability}
Let $\alpha\in(0,n-1)$, and let $(\bar\rho,\boldsymbol{0})$ be a compactly supported polytropic steady state characterized by \eqref{station}.
Let $(\rho,\mathcal M)$ be a global entropy solution of 
the attractive CEREs {\rm(}$\kappa=1${\rm)} \eqref{1.1} as in
{\rm Definition \ref{globalweak}}, and let
\[
\max \Big\{\frac{n + \alpha}{n},\frac{3n}{3n - 2(\alpha + 1)} \Big\}<\gamma\le2.
\]
Suppose that $\mathcal E_\mu(0)<\infty$ and
\[
(\rho^{\gamma + \theta}, \,\frac{|\mathcal{M}|^{3}}{\rho^{2}})
\in L^1(0,T;L^1(\mathbb{R}^n))
\qquad \mbox{for $\theta = \frac{\gamma - 1}{2}$}.
\]
Then, for every fixed $T>0$, there exists a constant
$C_T=C_T(\mathcal E_\mu(0),\bar{\rho},T)>0$ such that
\begin{equation}\label{finitetimerelative}
 \mathcal E_\mu(t)
 \le C_T\mathcal E_\mu(0)
 \qquad \hbox{for {\it a.e.} }t\in[0,T].
\end{equation}
\end{theorem}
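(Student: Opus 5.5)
The plan is a relative entropy argument combined with Gronwall's inequality. I will derive a differential inequality for $\mathcal E_\mu(t)$ in which the only bad term is the nonlocal interaction difference, control that term by $\mathcal E_\mu$, and conclude.

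\emph{Step 1: energy identity.} Since $(\bar\rho,\boldsymbol 0)$ is steady, $\int G_\mu\,\bar\rho\,\dd\boldsymbol x$ and $\int h(\bar\rho)\,\dd\boldsymbol x$ are constant in time. Mass conservation also gives $\int\rho\,\dd\boldsymbol x=M$. Expanding the interaction term,
\[
\mathcal E_\mu(t)=E(\rho,\mathcal M)(t)-\tfrac12\int\rho\,\Phi_\alpha*\rho\,\dd\boldsymbol x+\int\rho\,\bar{\mathcal W}_\alpha\,\dd\boldsymbol x+C(\bar\rho,\mu,M).
\]
Using the quadratic structure of the interaction, this can be rewritten as
\[
\mathcal E_\mu(t)=E(\rho,\mathcal M)(t)-E(\bar\rho,\boldsymbol 0)-\tfrac12\int(\rho-\bar\rho)\,\Phi_\alpha*(\rho-\bar\rho)\,\dd\boldsymbol x .
\]
The entropy inequality of Definition \ref{globalweak} gives $E(t)\le E(0)$, which is the energy inequality. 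It also gives the weak formulation of the momentum equation, which is justified by the integrability hypotheses on $\rho^{\gamma+\theta}$ and $|\mathcal M|^3/\rho^2$; these are needed to make sense of the flux terms and of the test functions $\bar{\mathcal W}_\alpha$. Define the nonlocal quantity
\[
D(t):=-\tfrac12\int(\rho-\bar\rho)\,\Phi_\alpha*(\rho-\bar\rho)\,\dd\boldsymbol x=\tfrac1{2\alpha}\iint\frac{(\rho-\bar\rho)(\boldsymbol x)(\rho-\bar\rho)(\boldsymbol y)}{|\boldsymbol x-\boldsymbol y|^\alpha}\,\dd\boldsymbol x\,\dd\boldsymbol y\ge0,
\]
which is nonnegative by positive definiteness of the Riesz kernel. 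Then $\mathcal E_\mu(t)\le \mathcal E_\mu(0)+D(t)-D(0)$. Because $D\ge0$ this gives no closure by itself, so I will instead differentiate $D$. Testing the continuity equation with $\Phi_\alpha*(\rho-\bar\rho)$ gives
\[
D'(t)=-\int\mathcal M\cdot\nabla\Phi_\alpha*(\rho-\bar\rho)\,\dd\boldsymbol x\le \int\frac{|\mathcal M|^2}{\rho}\,\dd\boldsymbol x+\tfrac14\int\rho\,\big|\nabla\Phi_\alpha*(\rho-\bar\rho)\big|^2\,\dd\boldsymbol x .
\]
The first term is at most $2\mathcal E_\mu$. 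The difficulty is therefore the weighted term $\int\rho\,|\nabla\Phi_\alpha*(\rho-\bar\rho)|^2\,\dd\boldsymbol x$, as anticipated in the introduction.

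\emph{Step 2: coercivity of $H_\mu$ (Lemma \ref{coer}).} Split $\mathbb R^n$ into three regions.
\begin{itemize}
\item On $\Gamma$, for $\gamma\le2$, convexity of $h$ gives $h(\rho|\bar\rho)\gtrsim \min\{|\rho-\bar\rho|^2,|\rho-\bar\rho|^\gamma\}$ up to weights. This yields control of $\rho-\bar\rho$ in $L^2+L^\gamma$ on the support.
\item On a bounded vacuum region $B_R\setminus\Gamma$, the variational inequality $G_\mu\ge0$ together with $h(\rho)=\tfrac{a_0}{\gamma-1}\rho^\gamma$ gives $\int\rho^\gamma\le C H_\mu$.
\item In the far field, $G_\mu\to\mu>0$ because $\bar{\mathcal W}_\alpha\to0$, so $\int_{|\boldsymbol x|>R}\rho\le \mu^{-1}H_\mu$. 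This is an $L^1$ estimate.
\end{itemize}
Since $\gamma\le2$, small vacuum perturbations are controlled at least quadratically. Altogether I expect $\|\rho-\bar\rho\|_{L^{q}}^2\lesssim \mathcal E_\mu(1+\mathcal E_\mu(0))$ for the relevant exponents $q\in[\gamma,2]$, with constants depending on $\bar\rho$ and on the uniform bound from the energy inequality.

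\emph{Step 3: the nonlocal term (Lemma \ref{relativenonlocal}).} Write $\nabla\Phi_\alpha*f$, which is homogeneous of degree $-(\alpha+1)$, as a Riesz potential of order $n-\alpha-1>0$; this is where $\alpha<n-1$ is used. Apply Hölder to get
\[
\int\rho\,|\nabla\Phi_\alpha*f|^2\le\|\rho\|_{L^{s}}\,\|\nabla\Phi_\alpha*f\|_{L^{2s'}}^2 ,
\]
and then Hardy--Littlewood--Sobolev to get $\|\nabla\Phi_\alpha*f\|_{L^{2s'}}\lesssim\|f\|_{L^q}$ with $\frac1q=\frac1{2s'}+\frac{n-\alpha-1}{n}$. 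The exponent $s$ is taken between $1$ and $\gamma$, with $\|\rho\|_{L^s}$ bounded uniformly by the energy and the mass. The condition $\gamma>\frac{3n}{3n-2(\alpha+1)}$ is exactly what should make $q$ land in the range controlled in Step 2. The result is $\int\rho|\nabla\Phi_\alpha*(\rho-\bar\rho)|^2\le C\mathcal E_\mu$.

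\emph{Step 4: closing.} Combining the steps, for a.e. $t$,
\[
\mathcal E_\mu(t)\le \mathcal E_\mu(0)+\int_0^t D'(s)\,\dd s\le \mathcal E_\mu(0)+C\int_0^t\mathcal E_\mu(s)\,\dd s .
\]
Gronwall then gives \eqref{finitetimerelative} with $C_T=e^{CT}$. The constant depends on $\mathcal E_\mu(0)$ and $\bar\rho$ through the uniform $L^1\cap L^\gamma$ bounds.

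The main obstacle is Step 2 combined with Step 3: obtaining an estimate that is genuinely linear in $\mathcal E_\mu$, not merely sublinear, from the degenerate coercivity near the free boundary $\partial\Gamma$, where $\bar\rho$ vanishes. My first attempt would be to use the local quadratic bound $|\rho-\bar\rho|^2\lesssim (\bar\rho+\rho)^{2-\gamma}h(\rho|\bar\rho)$, valid for $\gamma\le2$, together with a Hölder step absorbing the weight $(\bar\rho+\rho)^{2-\gamma}$ by the higher-integrability norms. A backup is to make the weight time-integrable through the hypothesis on $\rho^{\gamma+\theta}$; this would turn the Gronwall factor into $\exp\big(C\int_0^T(1+\|\rho\|_{L^{\gamma+\theta}}^{\gamma+\theta})\,\dd s\big)$, which is still finite by assumption.
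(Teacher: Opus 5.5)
Your argument is correct in outline. It reaches the paper's integrated inequality $\mathcal E_\mu(t)\le\mathcal E_\mu(0)-\int_0^t\!\int\mathcal M\cdot\nabla\Phi_\alpha*f\,\dd\boldsymbol x\,\dd s$ by a different road.

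\emph{How the paper does it.} The paper adds $G_\mu$ times the continuity equation to the local entropy inequality of Definition \ref{globalweak}, which gives the local relative entropy inequality of Lemma \ref{effrelativeei}. It then tests this with $\vartheta_\varepsilon(\tau)\zeta_L(\boldsymbol x)$. The hypotheses on $\rho^{\gamma+\theta}$ and $|\mathcal M|^3/\rho^2$ are used only to make the flux through the shells $\{L<|\boldsymbol x|<L+1\}$ vanish as $L\to\infty$.

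\emph{How your route differs.} You combine the global energy inequality, which is already part of Definition \ref{definition}, with $D'(t)$ computed from the continuity equation. This needs no cutoffs, no momentum equation, and, as far as I can see, none of the flux hypotheses. In exchange you owe a justification of $D(t)-D(0)=\int_0^tD'(s)\,\dd s$ for weak solutions. Spatial mollification of the continuity equation works, since $\mathcal M\cdot\nabla\Phi_\alpha*f\in L^\infty(0,T;L^1)$ by Cauchy--Schwarz and your Step 3. Also, $\mathcal E_\mu=E(t)-E(\bar\rho,\boldsymbol 0)+D(t)$ holds only up to the constant $\mu\big(\int\rho_0\,\dd\boldsymbol x-\int\bar\rho\,\dd\boldsymbol x\big)$; this is harmless because it cancels in differences.

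\emph{The nonlocal term.} The paper writes $\rho\le\bar\rho+|f|$. It bounds a quadratic part by $\|f\|_{L^{p_{\rm loc}}}^2$ and a cubic part by $\|f\|_{L^{p_0}}^3$, and the cubic part is where $p_0=\frac{3n}{3n-2(\alpha+1)}$ enters. You absorb the weight through $\sup_t\|\rho\|_{L^s}$ and need only one quadratic norm $\|f\|_{L^q}^2$; the same threshold appears at $s=\gamma$. The paper's \eqref{relativeestimate} is intrinsic in $\mathcal E_\mu$. Since it too must be linearized by an a priori bound at the end, your version loses nothing.

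Two points in your write-up need fixing.

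\emph{The range in Step 2.} Step 2 does not control $\|f\|_{L^q}$ for $q\in[\gamma,2]$. The quadratic control on $\Gamma$ holds only where $|f|\le1$, and off $\Gamma$ you only have $\rho$ in $L^\gamma$, plus $L^1$ in the far field. What Lemma \ref{coer} and the three-region split actually give is
$\|f\|_{L^q}^2\le C\big(H_\mu+H_\mu^{2/q}+H_\mu^{2/\gamma}\big)$ for $q\in(1,\gamma]$.
This is exactly the range Step 3 needs. At $s=\gamma$ the HLS exponent satisfies $q\le\gamma$ precisely when $\gamma\ge p_0$. If that $q$ is $\le1$, lower $s$ until $q\in(1,\gamma]$, which is possible because $\frac1\gamma<1$.

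\emph{The obstacle in your last paragraph.} It is not there. For $\gamma\le2$ one has $h''(t)=a_0\gamma t^{\gamma-2}\ge a_0\gamma\big(\tfrac32\|\bar\rho\|_{L^\infty}\big)^{\gamma-2}$ for $t\le\tfrac32\|\bar\rho\|_{L^\infty}$. So the vanishing of $\bar\rho$ at $\partial\Gamma$ causes no degeneracy, and every power above is $\ge1$, that is, superlinear rather than sublinear.

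What closes the argument, which you allude to but do not derive, is the a priori bound $\operatorname*{ess\,sup}_{0\le t\le T}\mathcal E_\mu(t)\le C(\mathcal E_\mu(0),\bar\rho)$. The paper obtains it from three facts:
\begin{itemize}
\item $\mathcal E_\mu(t)\le A(t)+C_{\bar\rho}+\mu M$, where $A(t):=\int\big(\frac{|\mathcal M|^2}{2\rho}+h(\rho)\big)\dd\boldsymbol x$;
\item the energy bound on $A(t)$;
\item $A(0)+M\le C_{\bar\rho}(1+\mathcal E_\mu(0))$, using $G_\mu\ge\mu/2$ for large $|\boldsymbol x|$ and Young's inequality on $\Gamma$.
\end{itemize}
This bound linearizes the superlinear powers. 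It also makes your $\sup_t\|\rho\|_{L^s}$ depend only on $\mathcal E_\mu(0)$ and $\bar\rho$. Drop the backup: putting $\int_0^T\|\rho\|_{L^{\gamma+\theta}}^{\gamma+\theta}\,\dd t$ into the Gronwall exponent would not give a constant of the stated form $C_T(\mathcal E_\mu(0),\bar\rho,T)$.
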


\medskip
\begin{remark}
The condition $\gamma > \frac{3n}{3n - 2(\alpha + 1)}$ is also required in 
 {\rm\cite{Carrillo2025}} in order to prove the existence of global spherically symmetric energy decreasing
 solutions to the attractive CEREs.
\end{remark}

\begin{remark}
An important earlier use of the relative entropy as an $L^2$--type stability
functional for multidimensional compressible flow is in {\rm Chen-Chen \cite{ChenChen2007}}. They employed this idea to study the stability
of rarefaction waves and vacuum states for the multidimensional Euler
equations in a broad class of bounded entropy solutions.
\end{remark}

Denote $$f:=\rho-\bar\rho.$$ We note that the relative energy can be written as 
\begin{equation*}
E(\rho,\mathcal{M}\, | \, \bar{\rho}, \boldsymbol{0})(t)=\mathcal E_\mu(t)+\mathcal Q_\alpha(t),
\end{equation*}
where $$ \mathcal Q_\alpha(t)
 :=\frac12\int_{\mathbb R^n} f(t,\boldsymbol{x})\,\Phi_\alpha*f(t,\boldsymbol{x})\,\dd\boldsymbol{x}.$$
 This can be seen by using the fact that the mass of $\rho(t)$ and $\bar\rho$ are the same and the following computation:
 \begin{align*}
    \mathcal{E}_\mu(t)+\mathcal Q_\alpha(t) & = \int_{\mathbb R^n} \frac{|\mathcal M|^2}{2\rho}(t,\boldsymbol{x})\,\dd\boldsymbol{x} + \int_{\mathbb R^n}\big(h(\rho)-h(\bar\rho)+(\mu + \Phi_\alpha * \bar\rho)(\rho-\bar\rho)\big)(t,\boldsymbol{x})\,\dd\boldsymbol{x} \\
    & \qquad + \frac12\int_{\mathbb R^n} f(t,\boldsymbol{x})\,\Phi_\alpha*f(t,\boldsymbol{x})\,\dd\boldsymbol{x} \\
    & = \int_{\mathbb R^n} \frac{|\mathcal M|^2}{2\rho}(t,\boldsymbol{x})\,\dd\boldsymbol{x} + \int_{\mathbb R^n}\big(h(\rho)-h(\bar\rho)+(\rho-\bar\rho)\Phi_\alpha * \bar\rho\big)(t,\boldsymbol{x})\,\dd\boldsymbol{x} \\
    & \qquad + \frac{1}{2} \int_{\mathbb R^n} \big (\rho\,\Phi_\alpha*\rho - 2 \rho\,\Phi_\alpha*\bar \rho + \bar\rho\,\Phi_\alpha*\bar\rho \big ) \,\dd\boldsymbol{x} \\
    & = \int_{\mathbb R^n} \Big ( \frac{|\mathcal M|^2}{2\rho} + h(\rho)+\frac{1}{2}\rho\,\Phi_\alpha * \rho\Big)(t,\boldsymbol{x})\,\dd\boldsymbol{x} - \int_{\mathbb R^n} \Big ( h(\bar\rho)+\frac{1}{2}\bar\rho\,\Phi_\alpha * \bar\rho\Big)(\boldsymbol{x})\,\dd\boldsymbol{x} \\[1mm]
    & = E(\rho,\mathcal{M}\, | \, \bar{\rho}, \boldsymbol{0})(t).
 \end{align*}

Thus, owing to the fact that the solutions of \eqref{1.1} are non-increasing in energy, we have the following corollary:
\begin{corollary}[Weighted $L^2$--Stability Around Steady States II]
Let $(\rho,\mathcal M)$ be a global entropy solution of CEREs {\rm(}$\kappa=1${\rm)} \eqref{1.1} as in
{\rm Definition \ref{globalweak}}, and let
\[
\max \Big\{\frac{n + \alpha}{n},\frac{3n}{3n - 2(\alpha + 1)} \Big\}<\gamma\le2.
\]
Let $(\bar\rho,\boldsymbol{0})$ be a compactly supported polytropic steady state satisfying
\eqref{station} with mass given by the initial mass of $\rho_0$, and let $(\rho,\mathcal M)$ be a finite-energy entropy solution of
\eqref{1.1} with $\kappa=1$ in the sense of {\rm Definition \ref{globalweak}} such that
\begin{equation*}
 \rho^{\gamma+\theta}+\frac{|\mathcal M|^3}{\rho^2}
 \in L^1((0,T)\times\mathbb R^n).
\end{equation*}
If $E(\rho_0,\mathcal{M}_0\, | \, \bar{\rho}, \boldsymbol{0})<\infty$, then the relative-energy 
dissipation inequality holds{\rm :}
\begin{equation*}
E(\rho,\mathcal{M}\, | \, \bar{\rho}, \boldsymbol{0})(t) \le E(\rho_0,\mathcal{M}_0\, | \, \bar{\rho}, \boldsymbol{0})
 \qquad\text{for a.e. }t\in(0,T).
\end{equation*}
\end{corollary}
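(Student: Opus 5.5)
The corollary follows from two ingredients already available: the algebraic identity $E(\rho,\mathcal M\,|\,\bar\rho,\boldsymbol{0})(t)=\mathcal E_\mu(t)+\mathcal Q_\alpha(t)$, and the energy inequality satisfied by entropy solutions. The first step is to note that the relative energy differs from the total energy $E(\rho,\mathcal M)(t)$ only by the constant $E(\bar\rho,\boldsymbol 0)=\mathcal G(\bar\rho)$. Since $\bar\rho\in L^1\cap L^\infty$ is compactly supported, this constant is finite by the HLS inequality (Lemma \ref{HLSineq}). It therefore suffices to show that
\[
E(\rho,\mathcal M)(t)\le E(\rho_0,\mathcal M_0)\qquad\text{for a.e. } t\in(0,T).
\]
The hypothesis $E(\rho_0,\mathcal M_0\,|\,\bar\rho,\boldsymbol 0)<\infty$ guarantees that the right-hand side is finite. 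Mass conservation makes $\int\rho(t)=\int\bar\rho$, so the identity above holds for every $t$.

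To obtain the energy inequality, I would integrate the entropy inequality of Definition \ref{globalweak} against a test function $\varphi(t)\psi_R(\boldsymbol x)$. Here $\varphi\ge0$ approximates $\mathbf 1_{[t_1,t_2]}$ and $\psi_R$ is a smooth cutoff equal to $1$ on $B_R$. This gives
\[
\int\eta\,\psi_R\,\dd\boldsymbol x\Big|_{t_1}^{t_2}\le\int_{t_1}^{t_2}\!\!\int q\cdot\nabla\psi_R\,\dd\boldsymbol x\dd t-\int_{t_1}^{t_2}\!\!\int\mathcal M\cdot\nabla\mathcal W_\alpha\,\psi_R\,\dd\boldsymbol x\dd t .
\]
The flux term is bounded by $R^{-1}\int_0^T\!\int_{R<|\boldsymbol x|<2R}\big(|\mathcal M|^3/\rho^2+|\mathcal M|h'(\rho)\big)$. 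The first piece tends to zero by the assumed integrability of $|\mathcal M|^3/\rho^2$. For the second, Young's inequality gives $|\mathcal M|\rho^{\gamma-1}\lesssim |\mathcal M|^3/\rho^2+\rho^{(3\gamma-2)/2}$. Since $(3\gamma-2)/2=\gamma+\theta$ with $\theta=\frac{\gamma-1}{2}$, this is controlled by the assumed $L^1$ bound on $\rho^{\gamma+\theta}$. Hence the flux term vanishes as $R\to\infty$.

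For the potential term, the continuity equation and the symmetry of $\Phi_\alpha$ give, formally,
\[
-\int\mathcal M\cdot\nabla\mathcal W_\alpha=\int\partial_t\rho\,\mathcal W_\alpha=\frac{\dd}{\dd t}\,\frac12\int\rho\,\Phi_\alpha*\rho .
\]
Moving this term to the left produces the total energy, so the inequality yields $E(t_2)\le E(t_1)$ for a.e.\ $t_1<t_2$. Letting $t_1\to0$ with the weak continuity of the initial data from Definition \ref{definition} gives the desired bound.

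I expect the main obstacle to be justifying this potential-energy identity for weak solutions. The plan is to mollify $\mathcal W_\alpha$ in space. For $\alpha<n-1$, $\nabla\Phi_\alpha$ is locally integrable, so $\nabla\Phi_\alpha*\rho\in L^\infty_tL^r$ for suitable $r$ by HLS and the bound $\rho\in L^\infty_t(L^1\cap L^\gamma)$. One then pairs it with $\mathcal M=\sqrt\rho\cdot\mathcal M/\sqrt\rho\in L^\infty_tL^{2\gamma/(\gamma+1)}$. This pairing needs $\gamma>\frac{3n}{3n-2(\alpha+1)}$ or, more simply, $\gamma>\frac{n+\alpha}{n}$; both hold under the standing assumptions. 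Alternatively, if Definition \ref{definition} already includes the energy inequality for finite-energy solutions, I would invoke it directly and skip this derivation. In that case the corollary reduces to subtracting $E(\bar\rho,\boldsymbol0)$.
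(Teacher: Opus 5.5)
Your proposal is correct, and your closing fallback is the paper's entire proof. Definition \ref{globalweak} requires an entropy solution to be a finite-energy solution in the sense of Definition \ref{definition}. Item (b) there, in its second inequality for $\kappa=1$, already gives $E(\rho,\mathcal M)(t)\le E_0=E(\rho_0,\mathcal M_0)$ for a.e.\ $t$. By \eqref{Energydiff} the relative energy is just $E(\rho,\mathcal M)(t)-E(\bar\rho,\boldsymbol 0)$ with a finite constant subtracted, so the corollary follows at once. This is the paper's "solutions are non-increasing in energy"; the decomposition into $\mathcal E_\mu+\mathcal Q_\alpha$ is not even needed.

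Your rederivation from the entropy inequality is therefore unnecessary, and as written it contains slips you would need to fix if you kept it:
\begin{enumerate}
\item The potential identity has the wrong sign. By the continuity equation and the symmetry of $\Phi_\alpha$, $-\int\mathcal M\cdot\nabla\mathcal W_\alpha\,\dd\boldsymbol x=-\int\partial_t\rho\,\mathcal W_\alpha\,\dd\boldsymbol x=-\frac{\d}{\d t}\frac12\int\rho\,\Phi_\alpha*\rho\,\dd\boldsymbol x$. This sign is what makes $\int\eta\,\dd\boldsymbol x+\frac12\int\rho\,\Phi_\alpha*\rho\,\dd\boldsymbol x$ the non-increasing quantity.
\item The Young exponent should be $\frac{3\gamma-1}{2}=\gamma+\theta$, not $\frac{3\gamma-2}{2}$.
\item The condition $\gamma>\frac{n+\alpha}{n}$ alone does not make the HLS pairing of $\nabla\Phi_\alpha*\rho$ with $\mathcal M\in L^\infty_tL^{2\gamma/(\gamma+1)}$ work. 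For $n=3$, $\alpha=1$ one needs $\gamma\ge\frac95>\frac43$. Only $\gamma\ge\frac{3n}{3n-2(\alpha+1)}$ suffices.
\item The limit $t_1\to0$ cannot rest on weak continuity of the data. Weak continuity gives at best lower semicontinuity of the energy, which is the wrong direction. You would need the entropy inequality tested with the initial-data term included, as in \eqref{testinequality}.
\end{enumerate}
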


\begin{remark}
    While it is proven in our previous work {\rm \cite[Section 5]{Carrillo2025}} that the relative energy is non-increasing in time, it is still unknown whether the relative functional
    \[
    H_\mu(t) +\mathcal Q_\alpha(t) = \int_{\mathbb R^n} \Big ( h(\rho)+\frac{1}{2}\rho\,\Phi_\alpha * \rho\Big)(t,\boldsymbol{x})\,\dd\boldsymbol{x} - \int_{\mathbb R^n} \Big ( h(\bar\rho)+\frac{1}{2}\bar\rho\,\Phi_\alpha * \bar\rho\Big)(\boldsymbol{x})\,\dd\boldsymbol{x},
    \]
    can be used to control the distance between $\rho(t)$ and $\bar\rho$ in some suitable norms{\rm ;} 
    see {\rm\cite{YY20}} for related functional inequalities.
\end{remark}

\begin{remark}
The methods and results developed in this paper are expected to extend to general pressure laws and to CEREs in rotationally symmetric settings, including the rotating-star configurations{\rm ;} these extensions will be addressed elsewhere.
\end{remark}

\subsection{Weak Solutions of CEREs}

We now define global finite-energy solutions of \eqref{1.1}--\eqref{1.1-2}.

For the existence of finite-energy solutions, we consider a general pressure of the form:
\begin{enumerate}
    \item[(a)] The pressure function $p \in C^4(\mathbb{R}_+)$.
    \item[(b)] There exists a constant $\rho_* > 0$ such that
    \begin{align}\label{1.2}
  p(\rho) = \kappa_1 \rho^{\gamma_1}(1 + \mathcal{P}_1(\rho)) \qquad \text{ for } \rho \in [0,\rho_*),
    \end{align}
    with $\gamma_1 \in (\frac{2n}{2n - \alpha},3)$, $\kappa_1 > 0$, and a function $\mathcal{P}_1 \in C^4(\mathbb{R}_+)$ satisfying
    \begin{align}\label{1.3}
 |\mathcal{P}^{(j)}_1(\rho)| \leq C_\#\rho^{\gamma_1 - 1 - j},
    \end{align}
    for $\rho \in (0,\rho_*)$ and $j \in \{0,\dots,4\}$, where $C_\# > 0$ is a constant depending only on $\rho_*$.
    \item[(c)] There exists a constant $\rho^* > \rho_* > 0$ such that
    \begin{align}\label{1.4}
  p(\rho) = \kappa_2 \rho^{\gamma_2}(1 + \mathcal{P}_2(\rho)) \qquad \text{ for } \rho \in [\rho^*,\infty),
    \end{align}
    for $\gamma_2 \in (\frac{2n}{2n - \alpha},\gamma_1]$, $\kappa_2 >0$, and a function $\mathcal{P}_2 \in C^4(\mathbb{R}_+)$ satisfying
    \begin{align}\label{1.5}
 |\mathcal{P}^{(j)}_2(\rho)| \leq C^\#\rho^{- \varepsilon - j},
    \end{align}
    for $\rho \in [\rho^*,\infty)$, $\varepsilon > 0$ (we can take $\varepsilon < \gamma_2 - 1$ for simplicity), and $j \in \{0,\dots,4\}$, where $C^\# > 0$ is a constant depending only on $\rho^*$.
\end{enumerate}

\smallskip
Given a solution $(\rho,\mathcal{M})$ of \eqref{1.1}, the energy of the solution is given by
\begin{equation}\label{energies}
E(\rho,\mathcal{M})(t)
:= \int_{\mathbb{R}^n}\Big((\rho e(\rho))(t,\boldsymbol{x})
+ \frac{1}{2} \Big|\frac{\mathcal{M}}{\sqrt{\rho}}\Big|^2(t,\boldsymbol{x}) + \frac{\kappa}{2}\rho(t,\boldsymbol{x})(\Phi_\alpha \ast \rho)(t,\boldsymbol{x})\Big)\,\dd\boldsymbol{x}.
\end{equation}

We first assume that the given initial data function $(\rho_0,\M_0)(\boldsymbol{x})$ has finite total energy and
total mass:
\begin{align}
&E_0 := \int_{\mathbb{R}^n}\rho_0 \Big( \frac{1}{2} \Big| \frac{\M_0}{\rho_0}\Big|^2
+ e(\rho_0) + \frac{\kappa}{2} (\Phi_\alpha * \rho_0) \Big)(\boldsymbol{x})
\dd \boldsymbol{x} < \infty,\label{0.7}\\[2mm]
&M := \int_{\mathbb{R}^n} \rho_0(\boldsymbol{x}) \dd \boldsymbol{x} = \omega_n \int^\infty_0 \rho_0(r) r^{n-1} \dd r < \infty.\nonumber
\end{align}
When
$\alpha \in (-1,0]$, we also assume that $(\rho_0,\M_0)(\boldsymbol{x})$ has
finite $(-\alpha)$-moment:
\begin{equation}\label{0.81}
\int_{\mathbb{R}^n} \rho_0(\boldsymbol{x}) k_\alpha (1 + |\boldsymbol{x}|^2) \dd \boldsymbol{x} = \omega_n \int^\infty_0 \rho_0(r) k_\alpha (1 + r^2) r^{n-1} \dd r < \infty,
\end{equation}
where
\begin{equation*}
    k_\alpha(r) = \begin{cases}
  r^{- \frac{\alpha}{2}} \quad &\mbox{for $\alpha \in (-1,0)$}, \\
  \log(r) &\mbox{for $\alpha = 0$},
    \end{cases}
\end{equation*}
and $\omega_n$ represents the surface area of the unit sphere in $\mathbb{R}^n$.
Here we require the initial data so that the convolution $\Phi_\alpha*\rho_0$ is well-defined and differentiable. 

\begin{definition}\label{definition}
A measurable vector-valued function $(\rho,\M)(t, \boldsymbol{x})$ is a finite-energy weak solution 
of the Cauchy problem \eqref{1.1}--\eqref{1.1-2} if
$(\rho,\M)(t, \boldsymbol{x})$ satisfies the following conditions for all $T>0${\rm :}
\begin{enumerate}
\item[(a)] $\rho(t,\boldsymbol{x}) \geq 0$ {\it a.e.} and $(\M, \frac{\M}{\sqrt{\rho}})(t,\boldsymbol{x}) = \boldsymbol{0}$ {\it a.e.} on the vacuum states $\{ (t,\boldsymbol{x}) \, : \, \rho(t,\boldsymbol{x}) = 0 \}$,
\begin{align*}
\rho\in L^{\infty}(0,T; L^1(\mathbb{R}^n)), \quad \,\, 
\rho e(\rho)\in L^{\infty}(0,T; L^1(\mathbb{R}^n)), \quad\,\,
 \frac{\M}{\sqrt{\rho}}\in L^{\infty}(0,T; L^2(\mathbb{R}^n)).
    \end{align*}
 Moreover, for $\alpha \in (0,n)$,
    \[
\Phi_\alpha * \rho\in L^\infty(0,T;L^{\frac{2n}{\alpha}}(\mathbb{R}^n) ),\quad
\nabla\Phi_\alpha * \rho\in L^\infty(0,T;L^{\frac{2n}{\alpha + 2}}(\mathbb{R}^n)),
  \]
 with, for $\alpha \in [n-1,n)$,
 \[
 \rho \in L^\infty(0,T;C^{0,\beta}(\mathbb{R}^n)),
  \]
    for some $\beta \in (1 + \alpha - n,1)$; and, for $\alpha \in (-1,0]$,
    \[
 \Phi_\alpha * \rho\in L^\infty(0,T;L^\infty_{\rm loc}(\mathbb{R}^n) ),\quad
\nabla\Phi_\alpha * \rho\in L^\infty(0,T;L_{\rm loc}^{\frac{2n}{\alpha + 2}}(\mathbb{R}^n)).
 \]
\item[(b)] For {\it a.e.} $t>0$, the total energy is finite{\rm :}

\smallskip
\begin{itemize}
 \item For $\kappa = -1$,
\begin{equation*}
 \int_{\mathbb{R}^n} \Big( \frac{1}{2} \Big| \frac{\M}{\sqrt{\rho}}\Big|^2
 + \rho e(\rho) - \frac{1}{2} \rho (\Phi_\alpha * \rho) \Big)(t,\boldsymbol{x})
 \dd \boldsymbol{x} \leq E_0.
\end{equation*}
\item For $\kappa = 1$,
\begin{align*}
\begin{cases}
\displaystyle
\int_{\mathbb{R}^n} \Big( \frac{1}{2} \Big| \frac{\M}{\sqrt{\rho}}\Big|^2
+ \rho e(\rho) - \frac{1}{2} \rho (\Phi_\alpha * \rho) \Big)(t,\boldsymbol{x})
\dd \boldsymbol{x} \leq C(E_0,M), \\[4mm]
\displaystyle
\int_{\mathbb{R}^n} \Big( \frac{1}{2}\Big|\frac{\M}{\sqrt{\rho}}\Big|^2
+ \rho e(\rho) + \frac{1}{2} \rho (\Phi_\alpha * \rho) \Big)(t,\boldsymbol{x})
\dd \boldsymbol{x} \leq E_0.
\end{cases}
\end{align*}
\end{itemize}
\item[(c)] For any $\zeta \in C^1_0(\mathbb{R}_+ \times \mathbb{R}^n)$,
\begin{equation*}
\int_{\mathbb{R}_+} \int_{\mathbb{R}^n} \big(\rho \zeta_t
+ \M \cdot \nabla \zeta\big) \dd \boldsymbol{x} \dd t
+ \int_{\mathbb{R}^n} \rho_0(\boldsymbol{x})\,\zeta(0,\boldsymbol{x}) \dd \boldsymbol{x} = 0.
\end{equation*}
\item[(d)] For any $\boldsymbol{\psi} \in (C^1_0(\mathbb{R}_+ \times \mathbb{R}^n))^n$,
    \begin{align*}
 & \int_{\mathbb{R}_+} \int_{\mathbb{R}^n}
  \Big(\M \cdot \boldsymbol{\psi}_t
 + \frac{\M}{\sqrt{\rho}} \cdot \big( \frac{\M}{\sqrt{\rho}} \cdot \nabla \big)
 \boldsymbol{\psi} + p(\rho)\nabla \cdot \boldsymbol{\psi} \Big)(t,\boldsymbol{x})
 \dd \boldsymbol{x} \dd t
  + \int_{\mathbb{R}^n} \M_0(\boldsymbol{x}) \cdot \boldsymbol{\psi}(0,\boldsymbol{x}) \dd \boldsymbol{x} \\
  & = \int_{\mathbb{R}_+} \int_{\mathbb{R}^n} \kappa(\rho \nabla \mathcal{W}_\alpha \cdot \boldsymbol{\psi})(t,\boldsymbol{x}) \dd \boldsymbol{x} \dd t.
    \end{align*}
\end{enumerate}
\end{definition}

Furthermore, defining constant $B_\beta$ by
    \begin{align*}
  B_\beta := \frac{C_{\alpha,n}}{2 \alpha} \omega_n^\frac{\alpha - n(\gamma_2 - 1)}{n(\gamma_2 - 1)} (C_{\text{max}}(\beta) )^{\frac{(2n-\alpha)\gamma_2 - 2n}{n(\gamma_2 - 1)}}
    \end{align*}
    for the sharp HLS constant $C_{\alpha,n} > 0$ given in Lemma \ref{simplehls}, 
    the $\beta$--dependent critical mass $M_{\rm c}^{\alpha}(\beta) > 0$ is given 
    by the unique solution of
    \begin{align*}
  \begin{cases}
  \displaystyle
  M_{\rm c}^{\alpha}(\beta) = B_\beta^{-\frac{n}{n - \alpha}} 
 & \mbox{for $\gamma_2 = \frac{n + \alpha}{n}$}, \\
\displaystyle
\frac{\alpha - n(\gamma_2 - 1)}{\alpha} 
\left ( \frac{\alpha B_\beta M_{\rm c}^{\alpha}(\beta)}{n(\gamma_2 -1)} 
 \right)^{-\frac{n(\gamma_2 - 1)}{\alpha - n(\gamma_2 -1)}}
 -  \omega_n^{-1} \beta M_{\rm c}^{\alpha}(\beta) = E_0 
 \qquad & \mbox{for $\gamma_2 \in ( \frac{2n}{2n - \alpha}, \frac{n + \alpha}{n} )$},
  \end{cases}
    \end{align*}
    and the critical mass $M_{\rm c}^{\alpha}$ by
    \begin{align*}
  M_{\rm c}^{\alpha} = \sup_{\beta > 0} M_{\rm c}^{\alpha}(\beta).
    \end{align*}
    Then, as outlined in Section \ref{existy}, 
    we can prove the existence of finite-energy solutions. 

\begin{theorem}[Existence of Spherically Symmetric Solutions of CEREs]\label{existence}
Consider problem \eqref{1.1}--\eqref{1.1-2} with the pressure $p$ satisfying \eqref{Press} and \eqref{1.2}--\eqref{1.5}, $\alpha \in (-1,n-1)$,
and the initial data of spherical symmetry. Assume that $(\rho_0,\M_0)$ satisfy \eqref{0.7}--\eqref{0.81}, $\gamma_2 > \frac{1}{n-1-\alpha}$, and
\begin{itemize}
    \item When $\kappa = -1$,  $\,\,\rho_0 \in L^{\frac{2n}{2n - \alpha}}(\mathbb{R}^n)$ and
    \begin{itemize}
  \item [{\rm(a)}] $\gamma_2 > 1$ for $\alpha \in (-1,n-2]$,
\item[{\rm(b)}] $\gamma_2 \geq \frac{3n}{3n - 2(1+\alpha)}$ for $\alpha \in (n-2,n-1)${\rm ;}
    \end{itemize}
    \item When $\kappa = 1$, $\,\,\gamma_2 \geq \frac{3n}{3n - 2(1+\alpha)}$
when $\alpha \neq n-2$ and    
    \begin{itemize}
 \item[{\rm(c)}]  $\gamma_2 > 1$ for $\alpha \in (-1,0]$,
\item [{\rm(d)}] $\gamma_2 > \frac{n + \alpha}{n}$ for $\alpha \in (0,n-1)$,
\item[{\rm(e)}] $M < M_{\rm c}^\alpha$ for
 $\gamma_2 \in (\frac{2n}{2n - \alpha}, \frac{n + \alpha}{n}]$ and $\alpha \in (0,n-1)$.
    \end{itemize}
\end{itemize}
Then there exists a global spherically symmetric finite-energy solution
$(\rho,\M)(t,\boldsymbol{x})$ of problem \eqref{1.1}--\eqref{1.1-2}
in the sense of {\rm Definition \ref{definition}}.
\end{theorem}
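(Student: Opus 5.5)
The plan is to follow the standard compensated-compactness scheme used for the polytropic case in \cite{Carrillo2025} and for general pressure in \cite{Chen_2024}, adapted to the Riesz interaction.

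\textbf{Step 1: approximate solutions.} We write the spherically symmetric system in the variables $(\rho,m)(t,r)$ with $m=\mathcal M\cdot\boldsymbol{x}/r$. The geometric source is $-\frac{n-1}{r}(m,\frac{m^2}{\rho})$, and the nonlocal force is $\kappa\rho\,\partial_r(\Phi_\alpha*\rho)$; in radial symmetry the latter is expressed through a one-dimensional kernel acting on $\rho(s)s^{n-1}$. Following \cite{Chen2021,Chen_2024}, we construct approximate solutions $(\rho^\varepsilon,m^\varepsilon)$ of a compressible Navier--Stokes--Riesz system with density-dependent viscosity on truncated annular domains $[\varepsilon,R_\varepsilon]$, using suitable free or Dirichlet boundary conditions and initial data regularized from $(\rho_0,\mathcal M_0)$. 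These are smooth and strictly positive, and they converge to the data in the energy norms.

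\textbf{Step 2: uniform estimates.} We derive four bounds, uniform in $\varepsilon$.
\begin{enumerate}[label=(\roman*)]
\item Energy bound. In the repulsive case $\kappa=-1$, or in the attractive case $\kappa=1$ with $\alpha\le0$ where the moment bound \eqref{0.81} controls the potential, the energy estimate is direct. For $\kappa=1$ and $\alpha>0$, we bound $\int\rho\,\Phi_\alpha*\rho$ by the sharp HLS inequality (Lemma \ref{simplehls}) together with interpolation between $L^1$ and the $\gamma_2$-growth of $\rho e(\rho)$ at large densities from \eqref{1.4}--\eqref{1.5}. When $\gamma_2>\frac{n+\alpha}{n}$, the internal energy absorbs the potential energy. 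When $\gamma_2\in(\frac{2n}{2n-\alpha},\frac{n+\alpha}{n}]$, we instead use the hypothesis $M<M^\alpha_{\rm c}$: the $\beta$-dependent constant $C_{\max}(\beta)$ is defined precisely so that the resulting one-variable inequality for the internal energy closes.
\item BD-type entropy estimate, giving $\sqrt\varepsilon\,\partial_r\sqrt{\rho}$ bounds.
\item Local higher integrability of the density: $\int\!\int_K\rho\,p(\rho)\,r^{n-1}\,\dd r\,\dd t\le C(K)$. This is obtained by testing the momentum equation against $\int_0^r\rho\,\cdot$ with a cutoff. The Riesz force produces a term $\rho\,\partial_r(\Phi_\alpha*\rho)$, which we estimate using the decay and regularity lemmas for radial nonlocal potentials from the appendices, together with the local $L^{\gamma_2}$ bound. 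The restriction $\alpha<n-1$ and the condition $\gamma_2\ge\frac{3n}{3n-2(1+\alpha)}$ enter exactly here, to make $\partial_r\Phi_\alpha*\rho$ locally integrable against $\rho$.
\item Local higher integrability of the velocity, $\rho|u|^3$, obtained as in \cite{Chen_2024}, with the extra force term handled the same way as in (iii).
\end{enumerate}

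\textbf{Step 3: $H^{-1}$ compactness and the Young measure.} Using (i)--(iv), we show that for weak entropy pairs generated by test functions, $\partial_t\eta(\rho^\varepsilon,m^\varepsilon)+\partial_r q(\rho^\varepsilon,m^\varepsilon)$ is compact in $H^{-1}_{\rm loc}$. For this we need bounds on the weak entropy kernels of the general pressure law; these follow from \eqref{1.2}--\eqref{1.5}, as in \cite{Chen_2024}. The Riesz source term is bounded in $L^1_{\rm loc}$, so interpolation applies. We then invoke the reduction of Young measures for general pressure laws from \cite{Chen_2024}, which gives strong convergence of $(\rho^\varepsilon,m^\varepsilon)$ in $L^p_{\rm loc}$.

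\textbf{Step 4: passing to the limit.} We pass to the limit in the nonlocal term and recover Definition \ref{definition}. Strong convergence of $\rho^\varepsilon$ in $L^1_{\rm loc}$, combined with tightness from the energy bound (or from \eqref{0.81} when $\alpha\le0$), gives convergence of $\nabla\Phi_\alpha*\rho^\varepsilon$ in $L^{2n/(\alpha+2)}$ by HLS. The energy inequality then follows by lower semicontinuity, with convergence of the potential part.

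I expect the main obstacle to be step (iii): the nonlocal higher-integrability estimate near the origin for singular $\alpha$ close to $n-1$, together with the attractive energy closure under $M<M^\alpha_{\rm c}$.
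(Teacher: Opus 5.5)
The overall architecture is the paper's: a viscous Navier--Stokes--Riesz approximation with BD viscosity, energy, BD and higher-integrability bounds, then compensated compactness with the general-pressure kernels. The paper poses it on the free-boundary domain $b^{-1}\le r\le b(t)$ and lets $b\to\infty$ before $\varepsilon\to0$. The genuine gap is step (ii), which you treat as routine.

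\textbf{The BD estimate.} In the standard BD computation the nonlocal force contributes $\varepsilon\kappa\int_0^t\!\int\rho_r\,\partial_r(\Phi_\alpha*\rho)\,r^{n-1}\dd r\dd s$. Integrating this by parts has two problems:
\begin{itemize}
\item it requires $\Phi_\alpha*\rho$ to be twice differentiable, which holds only for $\alpha\le n-2$;
\item it produces the inner-boundary term $\varepsilon\kappa\int_0^t(\rho\,\partial_r(\Phi_\alpha*\rho))(s,a)\,a^{n-1}\dd s$, which has the wrong sign for $\kappa=1$ and vanishes only when $\alpha=n-2$. Any approximate problem posed away from the origin, including your $[\varepsilon,R_\varepsilon]$, meets this term.
\end{itemize}
The paper instead closes the estimate by a Gr\"onwall-type argument with no integration by parts.
\begin{itemize}
\item Write $\rho_r=2\sqrt\rho(\sqrt\rho)_r$ and bound the force term by $\int_0^t\big(\varepsilon^2\int|(\sqrt\rho)_r|^2r^{n-1}\dd r+\int\rho|\nabla\Phi_\alpha*\rho|^2\dd\boldsymbol{x}\big)\dd s$.
\item The first piece is absorbed by Gr\"onwall.
\item For the second, H\"older and HLS give $\int\rho|\nabla\Phi_\alpha*\rho|^2\dd\boldsymbol{x}\le\|\rho\|_{L^{p_0}}\|\nabla\Phi_\alpha*\rho\|_{L^{2p_0/(p_0-1)}}^2\le C\|\rho\|_{L^{p_0}}^3$. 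This works exactly because $p_0=\frac{3n}{3n-2(1+\alpha)}$ satisfies $\frac1{p_0}-\frac{n-\alpha-1}{n}=\frac{p_0-1}{2p_0}$.
\item $\|\rho\|_{L^{p_0}}$ is then controlled by the mass and the $L^{\gamma_2}$ energy bound as soon as $\gamma_2\ge p_0$.
\end{itemize}
This is why $\gamma_2\ge\frac{3n}{3n-2(1+\alpha)}$ is imposed precisely where the first BD estimate fails: $\kappa=1$ with $\alpha\neq n-2$, and $\kappa=-1$ with $\alpha>n-2$. You attribute this hypothesis to the density higher-integrability step instead. What that step needs is local boundedness of $\partial_r(\Phi_\alpha*\rho)$ through the radial kernel $|r-\eta|^{n-2-\alpha}$, which is the separate hypothesis $\gamma_2>\frac{1}{n-1-\alpha}$. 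As written, your plan has no mechanism to close the BD bound in the attractive case. Without it you have no derivative control on $\rho$, hence no compactness of the approximate solutions and no control of the viscous terms in the entropy dissipation.

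\textbf{Compactness in Step 3.} This step claims more than the available bounds give. For general pressure, Lemma~\ref{entropyan} yields only $|\eta^\psi|\le C\rho$ and $|q^\psi|\le C\rho^{1+\theta_2}$ for large $\rho$. Higher integrability gives only $\rho^{\gamma_2+1}\in L^1_{\rm loc}$, so $q^\psi$ is bounded merely in $L^2_{\rm loc}$. Murat's interpolation needs a $W^{-1,p}_{\rm loc}$ bound with some $p>2$ to give $H^{-1}_{\rm loc}$ compactness. Here you only get compactness of $\partial_t\eta^\psi+\partial_rq^\psi$ in $W^{-1,p}_{\rm loc}$ for $p\in[1,2)$, which is what the paper proves. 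The commutator relation for the Young measure must then come from the Conti--Dolzmann--M\"uller div--curl lemma \cite{Conti_2011}, not the classical one.

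\textbf{Passing to the limit in Step 4.} For $\alpha>0$ the energy bound gives no tightness, since there is no moment control. So global strong convergence of $\nabla\Phi_\alpha*\rho^\varepsilon$ via HLS is unavailable. What you actually need is convergence of $\int\rho^\varepsilon\,\Phi_\alpha*\rho^\varepsilon$, because lower semicontinuity goes the wrong way when $\kappa=1$. This can be obtained from the radial structure: the kernel bound $|K(r,\eta)|\le C(r\eta)^{-\alpha/2}$ of Lemma~\ref{potentially} makes $\rho^\varepsilon\,\Phi_\alpha*\rho^\varepsilon$ uniformly small for large $r$. Combine this with local strong convergence.
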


With this theorem and the previous stability result of Theorem \ref{ns}, we have the following corollary:

\begin{corollary}
Suppose that the conditions of {\rm Theorem \ref{existence}} hold, 
with $\gamma_2 \geq \frac{n+\alpha}{n}$, such that 
both \eqref{lower} and \eqref{higherpress} hold. 
Then, for any $\v>0,$ there exists $\delta>0$ such that, 
if the initial data $(\rho_0,\mathcal{M}_0)$ satisfy
\begin{align*}
d(\rho_0,\bar{\rho})+ \| \rho_0 - \bar{\rho} \|_{L^\frac{2n}{2n - \alpha}}^2+\frac{1}{2}\int_{\R^n} \Big|\frac{\mathcal{M}_0}{\sqrt{\rho_0}}\Big|^2\,\dd \boldsymbol{x}<\delta,
\end{align*}
then there exists a finite-energy solution $(\rho(t),\mathcal{M}(t))$ of the attractive 
CEREs {\rm (}$\kappa = 1${\rm )} \eqref{1.1} corresponding to $(\rho_0,\mathcal{M}_0)$ in the sense of {\rm Definition \ref{definition}} 
and $\boldsymbol{y}=\boldsymbol{y}(t)\in \R^n$ such that
\begin{align*}
d(\rho(t),T^{\boldsymbol{y}}\bar{\rho})+ \| \rho(t) - T^{\boldsymbol{y}}\bar{\rho} \|_{L^{\frac{2n}{2n - \alpha}}}^2+\frac{1}{2}\int_{\R^n} \Big|\frac{\mathcal{M}}{\sqrt{\rho}}\Big|^2\,\dd\boldsymbol{x}<\v,
\end{align*}
for almost every $t>0$, where $T^{\boldsymbol{y}}\bar{\rho}(\boldsymbol{x})=\bar{\rho}(\boldsymbol{x}+\boldsymbol{y})$.
\end{corollary}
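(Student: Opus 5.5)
The corollary follows by combining Theorem \ref{existence} with Theorem \ref{ns}. The only genuine work is to check that the solution produced by the existence theorem has exactly the properties that Theorem \ref{ns} needs.

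\textbf{Step 1: produce the solution.} Under the hypotheses of Theorem \ref{existence} with $\gamma_2\ge\frac{n+\alpha}{n}$ (and $\kappa=1$, $\alpha\in(0,n-1)$), take the spherically symmetric initial data $(\rho_0,\mathcal{M}_0)$ with mass $M$. Theorem \ref{existence} gives a global finite-energy solution $(\rho,\mathcal{M})$ in the sense of Definition \ref{definition} for every $T>0$. This is precisely the class of solutions that Theorem \ref{ns} assumes to exist; in Theorem \ref{ns} existence was a hypothesis, and here it is supplied unconditionally.

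\textbf{Step 2: verify the hypotheses of Theorem \ref{ns}.}
\begin{itemize}
\item The pressure must satisfy \eqref{Press} and \eqref{lower}--\eqref{higherpress}. These are assumed explicitly in the corollary.
\item The constraint $p\in C^2$ is implied by $p\in C^4$.
\item A unique minimizer $\bar\rho$ of $\mathcal{G}$ over $X_M$, up to translation, must exist. I would read this as a standing assumption carried over from Theorem \ref{ns}. Existence itself comes from the concentration-compactness argument under \eqref{lower}--\eqref{higherpress}.
\end{itemize}

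\textbf{Step 3: the energy inequality.} This is the key ingredient in the proof of Theorem \ref{ns}. That proof presumably combines three facts: conservation of mass, the inequality $E(\rho,\mathcal{M})(t)\le E_0$, and the compactness of minimizing sequences. Definition \ref{definition}(b) for $\kappa=1$ gives the energy inequality for a.e.\ $t$. Mass conservation follows from the weak continuity equation (c), tested against cutoffs, together with the uniform $L^1$ bound and the tightness implied by finite energy.

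\textbf{Step 4: conclude.} With Steps 1--3 in place, Theorem \ref{ns} applies directly. For every $\varepsilon>0$, the $\delta$ it provides produces the translation $\boldsymbol{y}(t)$ and the stated bound for a.e.\ $t>0$.

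\textbf{Main obstacle.} The delicate point is compatibility between the two theorems. Theorem \ref{ns} measures closeness in an $L^{2n/(2n-\alpha)}$ topology, while Theorem \ref{existence} requires spherically symmetric data satisfying \eqref{0.7}. The corollary is therefore a statement only for such data, which is consistent because the radial minimizer $\bar\rho$ centered at the origin is itself admissible. I would also check that mass is exactly conserved, and not merely bounded, for these weak solutions. If only $\int\rho(t)\le M$ were available, the relative energy argument would fail. The radial structure together with the finite-energy bounds gives tightness and hence equality.
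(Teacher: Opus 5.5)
Your proposal is correct and is exactly the paper's argument: Theorem~\ref{existence} supplies a global finite-energy solution for the admissible (radial) data, and then Theorem~\ref{ns} applies verbatim. That theorem's $\delta$ depends only on $\varepsilon$, because it is obtained by contradiction over arbitrary sequences of weak solutions, and it relies on the paper's standing assumption that the minimizer is unique up to translation. One small correction: exact conservation of mass does not come from tightness provided by finite energy or radial symmetry. It comes from testing the continuity equation with spatial cutoffs $\zeta(\boldsymbol{x}/R)$ and using $\|\mathcal{M}\|_{L^1}\le\|\rho\|_{L^1}^{1/2}\|\mathcal{M}/\sqrt{\rho}\|_{L^2}$, so that the flux term is $O(t/R)$.
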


\section{Growth of Solutions for General and Polytropic Pressures with $\gamma \geq \frac{n + \alpha}{n}$}\label{section3}

Throughout this section, we assume that $(\rho,\mathcal{M})$ is a classical solution
of CEREs with compactly supported density
and sufficiently regular free boundary, so that all integrations by parts are justified.
In the polytropic case, as shown in \cite{Carrillo2025}, we obtain a form of
local Lyapunov stability of steady states when $\gamma > \frac{n + \alpha}{n}$.
On the other hand, the question of global stability has remained open.
In fact, for a solution with positive initial energy, the radius of the support of 
the solution grows linearly in time.

However, in the critical case $\gamma=\frac{n+\alpha}{n}$, 
there exists a unique steady state (up to translation and mass-preserving dilation) 
corresponding to 
a critical mass $M=M_{\rm c}$; see \cite[Theorem 3]{Calvez_2021}. 
In this case, the total energy of the steady state is zero. 
Consequently, any small perturbation $(\rho_0,\mathcal{M}_0)$ of the steady state $(\bar{\rho},\boldsymbol{0})$ has positive energy, unless it is merely a translation 
of the steady state. 
Since the steady state has compact support, 
the linear growth of the radius of the support of the solution implies 
the instability of the steady state. 

To extend this argument to more general pressure laws, we use 
$\gamma_{\inf}$ defined in \eqref{gammainf}.
In the polytropic case, if $\gamma \geq \frac{n + \alpha}{n}$, 
then $\gamma_{\inf} = \gamma \geq \frac{n + \alpha}{n}$. 
For global classical solutions $(\rho,\mathcal{M})$ of \eqref{1.1} with compact support, 
the total energy $E(\rho(t),\mathcal{M}(t))$ is conserved for all $t\geq 0$. 
Accordingly, for the rest of this section, we denote 
$E := E(\rho_0,\mathcal{M}_0)$. 

Different from \cite{Deng_2002}, where the analysis is carried out for polytropic CEPEs, 
we can extend their approach to the case of general pressure laws and the nonlocal Riesz potential. 
We assume that the gas occupies a bounded spatial domain $\Omega(t)\subset\mathbb{\R}^n$ at time $t$, 
with $\rho=0$ on $\partial\Omega(t)$ and $\rho>0$ in $\Omega(t)$.  
For steady solutions, the domain is time-independent, {\it i.e.}, $\Omega(t)=\Omega$.

\begin{lemma}\label{E>0,general}
Let $\alpha\in(0,2]\cap (0,n-1)$ and $\gamma_{\inf} \geq \frac{n + \alpha}{n}$, and 
let $(\rho,\mathcal{M})$ be a global classical solution of the attractive 
CEREs {\rm (}$\kappa = 1${\rm )} \eqref{1.1}. 
If the total energy $E>0,$ then
$$\liminf_{t\rightarrow\infty}\Big(\frac{R(t)}{t}\Big)\geq\Big(\frac{\alpha E}{M}\Big)^{\frac{1}{2}},$$
with $R(t)=\max_{\boldsymbol{x}\in\Omega(t)}\{|\boldsymbol{x}|\}$ as defined in \eqref{2.5a}.
\end{lemma}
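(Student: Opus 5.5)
We argue via the virial identity for the second moment
\[
I(t):=\int_{\Omega(t)}\rho|\boldsymbol{x}|^2\,\dd\boldsymbol{x},
\]
following \cite{Deng_2002}. Using the continuity equation and integrating by parts (justified since $\rho=0$ on $\partial\Omega(t)$), we have $I'(t)=2\int \boldsymbol{x}\cdot\mathcal{M}\,\dd\boldsymbol{x}$. Differentiating again and using the momentum equation gives
\[
I''(t)=2\int_{\Omega(t)}\Big(\Big|\frac{\mathcal{M}}{\sqrt{\rho}}\Big|^2+n\,p(\rho)\Big)\dd\boldsymbol{x}-2\int_{\Omega(t)}\rho\,\boldsymbol{x}\cdot\nabla(\Phi_\alpha*\rho)\,\dd\boldsymbol{x}.
\]
For the nonlocal term we symmetrize in $(\boldsymbol{x},\boldsymbol{y})$. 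Since $\nabla\Phi_\alpha(\boldsymbol{z})=|\boldsymbol{z}|^{-\alpha-2}\boldsymbol{z}$ and $\Phi_\alpha$ is homogeneous of degree $-\alpha$, we get $\boldsymbol{z}\cdot\nabla\Phi_\alpha(\boldsymbol{z})=-\alpha\Phi_\alpha(\boldsymbol{z})$. Hence
\[
\int\rho\,\boldsymbol{x}\cdot\nabla\Phi_\alpha*\rho=\tfrac12\iint\rho(\boldsymbol{x})\rho(\boldsymbol{y})(\boldsymbol{x}-\boldsymbol{y})\cdot\nabla\Phi_\alpha(\boldsymbol{x}-\boldsymbol{y})=-\tfrac{\alpha}{2}\int\rho\,\Phi_\alpha*\rho .
\]
Since $\alpha<n-1$, $\nabla\Phi_\alpha$ is locally integrable, so this manipulation is legitimate for classical compactly supported solutions. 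Writing $P:=\frac12\int\rho\Phi_\alpha*\rho\le0$, this gives $I''=2K_2+2n\int p+2\alpha P$, where $K_2=\int|\mathcal{M}|^2/\rho$ is twice the kinetic energy.

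Next we eliminate $P$ using energy conservation, $P=E-\tfrac12K_2-\int\rho e(\rho)$:
\[
I''(t)=2\alpha E+(2-\alpha)K_2+2\int_{\Omega(t)}\big(n\,p(\rho)-\alpha\,\rho e(\rho)\big)\dd\boldsymbol{x}.
\]
By the definition \eqref{gammainf}, $p(\rho)\ge(\gamma_{\inf}-1)\rho e(\rho)$. The assumption $\gamma_{\inf}\ge\frac{n+\alpha}{n}$ then yields $n p-\alpha\rho e\ge\big(n(\gamma_{\inf}-1)-\alpha\big)\rho e\ge0$, using $e\ge0$. Since $\alpha\le2$, the kinetic term is also nonnegative, so $I''(t)\ge2\alpha E>0$. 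Integrating twice gives
\[
I(t)\ge\alpha E t^2+I'(0)t+I(0),\qquad\text{so}\qquad \liminf_{t\to\infty}\frac{I(t)}{t^2}\ge\alpha E.
\]

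Finally, mass conservation gives $I(t)\le R(t)^2\int_{\Omega(t)}\rho\,\dd\boldsymbol{x}=MR(t)^2$. Therefore $R(t)^2/t^2\ge I(t)/(Mt^2)$, and taking $\liminf$ yields the claimed bound $(\alpha E/M)^{1/2}$.

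The main obstacle is the rigorous justification of the virial identity for the nonlocal term: the symmetrization relies on $\boldsymbol{z}\cdot\nabla\Phi_\alpha=-\alpha\Phi_\alpha$ and on Fubini, which requires the integrability of $|\boldsymbol{x}-\boldsymbol{y}|^{-\alpha}\rho(\boldsymbol{x})\rho(\boldsymbol{y})$. This integrability holds for bounded compactly supported $\rho$ with $\alpha<n$. We also need the boundary terms in the integrations by parts to vanish on the free boundary, which follows from $\rho=p(\rho)=0$ on $\partial\Omega(t)$. The sign bookkeeping in the identity $I''=2\alpha E+(2-\alpha)K_2+\dots$ is exactly where the restriction $\alpha\le2$ enters.
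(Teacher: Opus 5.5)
Your proposal is correct and follows the paper's proof. Both compute the second moment, symmetrize the nonlocal term to $-\frac{\alpha}{2}\int\rho\,\Phi_\alpha*\rho$, eliminate the potential energy via energy conservation, use $\alpha\le 2$ and $\gamma_{\inf}\ge\frac{n+\alpha}{n}$ to get $H''\ge 2\alpha E$, and close with $H(t)\le MR(t)^2$. The only difference is cosmetic: you bound $np-\alpha\rho e(\rho)$ below by a multiple of $\rho e(\rho)$ rather than of $p$, which is equivalent.
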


\begin{proof}
    Taking derivatives of the second moment of inertia:
\begin{equation}\label{3.1a}
H(t)=\int_{\Omega(t)}\rho|\boldsymbol{x}|^2\dd \boldsymbol{x}.
\end{equation}
Then, using $\eqref{1.1}_1$, integration by parts, 
and $\rho |_{\partial \Omega(t)} \equiv 0$, we have
 $$H'(t)= \int_{\partial \Omega(t)} |\boldsymbol{x}|^2 \mathcal{M} \cdot \nu \dd S(\boldsymbol{x}) + \int_{\Omega(t)}\rho_t|\boldsymbol{x}|^2\dd \boldsymbol{x} = 2\int_{\Omega(t)}\mathcal{M} \cdot \boldsymbol{x}\dd \boldsymbol{x},$$
where $\nu(\boldsymbol{x})$ is the normal to surface $\partial \Omega(t)$ 
at $\boldsymbol{x} \in \partial \Omega(t)$. 
Similarly, using $\eqref{1.1}_2$, we obtain
 \begin{equation}\label{2.2}
 \begin{split}
 H''(t)& =2\int_{\partial \Omega(t)} \Big( \frac{\mathcal{M} \otimes \mathcal{M}}{\rho} : \boldsymbol{x}\otimes \nu \Big) \dd S(\boldsymbol{x}) + 2\int_{\Omega(t)}\mathcal{M}_t \cdot \boldsymbol{x}\dd \boldsymbol{x} \\
 & = 2\Big(\int_{\Omega(t)}\Big| \frac{\mathcal{M}}{\sqrt{\rho}} \Big|^2\dd \boldsymbol{x}
 +n\int_{\Omega(t)}p\dd \boldsymbol{x}-\int_{\Omega(t)}\rho\nabla\Phi_\alpha\ast\rho\cdot \boldsymbol{x}\dd \boldsymbol{x}\Big).
 \end{split}
 \end{equation}
A simple calculation shows 
 \begin{align*}\nonumber
 I&:=\int_{\mathbb{R}^n}\rho\,\nabla\Phi_\alpha\ast\rho\cdot \boldsymbol{x}\,\dd \boldsymbol{x}\\
&=\int_{\mathbb{R}^n}\rho(\boldsymbol{x})\int_{\mathbb{R}^n}
\frac{\rho(\boldsymbol{y})(\boldsymbol{x}-\boldsymbol{y})\cdot \boldsymbol{x}}
{|\boldsymbol{x}-\boldsymbol{y}|^{\alpha+2}}
\,\dd \boldsymbol{y}\,\dd \boldsymbol{x}\\
&=\int_{\mathbb{R}^n}\rho(\boldsymbol{x})\int_{\mathbb{R}^n}
\frac{\rho(\boldsymbol{y})|\boldsymbol{x}-\boldsymbol{y}|^2}
{|\boldsymbol{x}-\boldsymbol{y}|^{\alpha+2}}
\,\dd \boldsymbol{y}\,\dd \boldsymbol{x}
+\int_{\mathbb{R}^n}\rho(\boldsymbol{x})\int_{\mathbb{R}^n}
\frac{\rho(\boldsymbol{y})(\boldsymbol{x}-\boldsymbol{y})\cdot \boldsymbol{y}}
{|\boldsymbol{x}-\boldsymbol{y}|^{\alpha+2}}
\,\dd \boldsymbol{y}\,\dd \boldsymbol{x}\\
&=-\alpha\int_{\mathbb{R}^n}\rho\,\Phi_\alpha\ast\rho\,\dd \boldsymbol{x}
+\int_{\mathbb{R}^n}\rho(\boldsymbol{y})\int_{\mathbb{R}^n}
\frac{\rho(\boldsymbol{x})(\boldsymbol{y}-\boldsymbol{x})\cdot \boldsymbol{x}}
{|\boldsymbol{x}-\boldsymbol{y}|^{\alpha+2}}
\,\dd \boldsymbol{x}\,\dd \boldsymbol{y}\\
&=-\alpha\int_{\mathbb{R}^n}\rho\,\Phi_\alpha\ast\rho\,\dd \boldsymbol{x}-I.
 \end{align*}
 Then we have 
 $$
 I=-\frac{\alpha}{2}\int_{\Omega(t)}\rho\Phi_\alpha\ast\rho\dd \boldsymbol{x}.
 $$
 Since
 $$
 E=\int_{\mathbb{R}^n}\Big( \frac{1}{2} \Big|\frac{\mathcal{M}}{\sqrt{\rho}}\Big|^2(t,\boldsymbol{x})
 + (\rho e(\rho))(t,\boldsymbol{x}) + \frac{1}{2}\rho(t,\boldsymbol{x})(\Phi_\alpha \ast \rho)(t,\boldsymbol{x})\Big)\,\dd\boldsymbol{x},
 $$
 then it follows from \eqref{2.2} that
 \begin{align*}
 \begin{split}
 H''(t)& =2\int_{\Omega(t)}\Big| \frac{\mathcal{M}}{\sqrt{\rho}} \Big|^2\dd \boldsymbol{x}
 +2n\int_{\Omega(t)}p\dd \boldsymbol{x}+2\alpha\Big(E-\int_{\Omega(t)}\big(\frac{1}{2}\Big | \frac{\mathcal{M}}{\sqrt{\rho}} \Big |^2+\rho e(\rho)\big)\dd \boldsymbol{x}\Big) \\
 & = 2\alpha E + (2-\alpha)\int_{\Omega(t)}\Big| \frac{\mathcal{M}}{\sqrt{\rho}} \Big|^2\dd \boldsymbol{x} 
 + 2\int_{\Omega(t)}(np - \alpha \rho e(\rho))\dd \boldsymbol{x} \\
 & \geq 2\alpha E + (2-\alpha)\int_{\Omega(t)}\Big| \frac{\mathcal{M}}{\sqrt{\rho}} \Big|^2\dd \boldsymbol{x} 
 + 2 \Big(n-\frac{\alpha}{\gamma_{\inf} - 1} \Big)\int_{\Omega(t)}p\dd \boldsymbol{x}.
 \end{split}
 \end{align*}
 Thus, we need to assume $\alpha\in(0,2]$ and $\gamma_{\inf} \geq \frac{n + \alpha}{n}$ 
 such that $H''(t)\geq 2\alpha E$, which implies
 $$
 H(t)\geq H(0)+H'(0)t+\alpha E t^2.
 $$
 On the other hand, we have
 $$
H(t)=\int_{\Omega(t)}\rho(t,\boldsymbol{x})|\boldsymbol{x}|^2\dd \boldsymbol{x}\leq R(t)^2M.
 $$
 Then 
 $$
 H(0)+H'(0)t+\alpha Et^2\leq H(t)\leq R(t)^2M.
 $$
 This implies 
 $$
 O(t^{-1})+\alpha E\leq \frac{R(t)^2M}{t^2}.
 $$
 Therefore, we conclude
 $$
 \liminf_{t\rightarrow\infty}\Big(\frac{R(t)}{t}\Big)\geq\Big(\frac{\alpha E}{M}\Big)^{\frac{1}{2}}.
 $$
\end{proof}

For $\alpha > 2$, the previous method no longer yields a positive lower bound 
for the second derivative of $H(t)$. 
To overcome this difficulty, we analyze the balance between the internal energy and the gravitational potential energy. In particular, we show that, for sufficiently small initial mass, 
the internal energy dominates the gravitational potential energy appearing in the second derivative of 
$H(t)$, thereby restoring a positive lower bound.

\begin{lemma}
Suppose that $\alpha\in(2,n-1),$ $\gamma_{\inf} > \frac{n + 2}{n}$, 
$\lim_{\rho\to\infty}p(\rho)\rho^{-\frac{n+\alpha}{n}} = \infty$, and the total energy $E>0$.
Let $(\rho,\mathcal{M})$ be a global classical solution of the attractive 
CEREs {\rm (}$\kappa = 1${\rm )} \eqref{1.1}. 
Then there exists an $\tilde{M} \in (0,\infty]$ such that, if $M < \tilde{M}$, 
\begin{equation}\label{*mass}
 \Lambda_{\min}(M) := 2E 
 + \inf_{R>0} \frac{M}{R} \Big (np(R) - 2 R e(R) 
 - \frac{(\alpha - 2)}{2\alpha} C_* M^\frac{n-\alpha}{n} R^\frac{n + \alpha}{n} \Big) > 0,
\end{equation}
and
$$
\liminf_{t\rightarrow\infty}\Big(\frac{R(t)}{t}\Big)
\geq\Big(\frac{ \Lambda_{\min}(M)}{M}\Big)^{\frac{1}{2}},
$$
where $R(t)$ is defined in \eqref{2.5a}.
\end{lemma}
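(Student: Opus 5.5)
The plan is to rerun the virial argument of Lemma \ref{E>0,general}, but to use the energy identity to eliminate the kinetic energy rather than the potential energy. The term $(2-\alpha)\int|\mathcal M/\sqrt\rho|^2$ then no longer appears with a bad sign. The price is a negative multiple of the interaction energy, which I will control by the HLS inequality (Lemma \ref{HLSineq}) and absorb into the internal-energy terms using smallness of the mass.

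\emph{Step 1 (virial identity).} Let $H(t)=\int_{\Omega(t)}\rho|\boldsymbol x|^2\,\dd\boldsymbol x$ and $K(t)=\int_{\Omega(t)}|\mathcal M/\sqrt\rho|^2\,\dd\boldsymbol x$. From \eqref{2.2} and the identity $I=-\frac\alpha2\int\rho\,\Phi_\alpha*\rho$ in the proof of Lemma \ref{E>0,general},
\[
H''=2K+2n\int p\,\dd\boldsymbol x+\alpha\int\rho\,\Phi_\alpha*\rho\,\dd\boldsymbol x .
\]
Energy conservation gives $K=2E-2\int\rho e(\rho)-\int\rho\,\Phi_\alpha*\rho$. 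Substituting this for $2K$ yields
\[
H''=4E+\int\big(2np-4\rho e(\rho)\big)\dd\boldsymbol x-(\alpha-2)\int\rho\,(-\Phi_\alpha*\rho)\,\dd\boldsymbol x .
\]
Since $\alpha>2$ and $-\Phi_\alpha*\rho\ge0$, the last term is the only bad one.

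\emph{Step 2 (HLS and a pointwise bound).} Write $-\int\rho\,\Phi_\alpha*\rho=\frac1\alpha\iint\rho(\boldsymbol x)\rho(\boldsymbol y)|\boldsymbol x-\boldsymbol y|^{-\alpha}$. By Lemma \ref{HLSineq}, in the normalization consistent with \eqref{lower}--\eqref{higherpress},
\[
-\int\rho\,\Phi_\alpha*\rho\le \frac{C_*}{\alpha}M^{\frac{n-\alpha}{n}}\int\rho^{\frac{n+\alpha}{n}}\dd\boldsymbol x .
\]
Hence $H''\ge 2\big(2E+\int F(\rho)\,\dd\boldsymbol x\big)$, where
\[
F(R)=np(R)-2Re(R)-\frac{\alpha-2}{2\alpha}C_*M^{\frac{n-\alpha}{n}}R^{\frac{n+\alpha}{n}} .
\]
Writing $F(\rho)=\rho\cdot\frac{F(\rho)}{\rho}\ge\rho\,\inf_{R>0}\frac{F(R)}{R}$ on $\{\rho>0\}$ and integrating, I get $H''(t)\ge2\Lambda_{\min}(M)$ with $\Lambda_{\min}$ as in \eqref{*mass}. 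Integrating twice gives $H(t)\ge H(0)+H'(0)t+\Lambda_{\min}t^2$. Combining this with $H(t)\le MR(t)^2$, exactly as in Lemma \ref{E>0,general}, yields the stated lower bound on $\liminf R(t)/t$ once $\Lambda_{\min}(M)>0$.

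\emph{Step 3 (positivity of $\Lambda_{\min}$; the main obstacle).} I need $\inf_R F(R)/R>-\infty$, and moreover $M\inf_R F(R)/R>-2E$ for small $M$.
\begin{itemize}
\item Since $\gamma_{\inf}>\frac{n+2}{n}$, we have $2Re(R)\le\frac{2}{\gamma_{\inf}-1}p(R)$, so $np-2Re\ge\big(n-\frac{2}{\gamma_{\inf}-1}\big)p>0$.
\item Since $p(R)R^{-\frac{n+\alpha}{n}}\to\infty$, there is $R_0(M)<\infty$ with $F(R)>0$ for $R\ge R_0(M)$. The function $R_0$ can be chosen nondecreasing in $M$, because the negative term increases with $M$.
\item For $R<R_0(M)$, simply drop the positive part: $F(R)/R\ge-c\,M^{\frac{n-\alpha}{n}}R_0(M)^{\alpha/n}$, where $c=\frac{\alpha-2}{2\alpha}C_*$.
\end{itemize}
Therefore
\[
M\inf_{R>0}\frac{F(R)}{R}\ge -c\,M^{\frac{2n-\alpha}{n}}R_0(M)^{\alpha/n}.
\]
The right-hand side is monotone in $M$ and tends to $0$ as $M\to0$, since $R_0$ stays bounded as $M$ decreases. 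I then set $\tilde M:=\sup\{M:\ cM^{\frac{2n-\alpha}{n}}R_0(M)^{\alpha/n}<2E\}\in(0,\infty]$, so that $\Lambda_{\min}(M)>0$ for $M<\tilde M$.

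The delicate points are the monotonicity of $R_0(M)$ and the bookkeeping of $\tilde M$, since $E$ itself is tied to the solution. I would treat $E>0$ as a given datum and define $\tilde M$ in terms of it.
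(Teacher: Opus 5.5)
Your proposal is correct and follows the paper's proof: the same virial identity with the kinetic energy eliminated via energy conservation, the same HLS bound, and the same pointwise reduction $F(\rho)\ge\rho\inf_{R>0}F(R)/R$ (the paper phrases this through the step functions $\sigma_R$). The only difference is how $\tilde M$ is defined. The paper takes the first zero of $\Lambda_{\min}$, appealing to continuity of $M\mapsto\Lambda_{\min}(M)$ and to its monotonicity on $\{\Lambda_{\min}\le 0\}$. Your monotone lower bound $2E-cM^{\frac{2n-\alpha}{n}}R_0(M)^{\frac{\alpha}{n}}$ gives a possibly smaller but fully justified threshold, which is enough for the existence statement.
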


\begin{remark}
    In the polytropic case, \eqref{*mass} is equivalent to $\gamma \geq \frac{n + \alpha}{n}$ with the critical mass given by $\tilde{M} = M_*$, defined by
\[
 M_* := \begin{cases}
  \bigg ( \Big( \frac{4n\alpha (\gamma - 1) E}{(\alpha - 2)(n(\gamma - 1) - \alpha)C_*} 
  \Big )^{n(\gamma - 1)} 
\Big( \frac{(n(\gamma - 1) - 2)(n(\gamma - 1) - \alpha)}{2\alpha E(\gamma - 1)} \Big)^\alpha  
 \bigg )^\frac{1}{(2n - \alpha)\gamma - 2n} & \mbox{for $\gamma > \frac{n + \alpha}{n}$}, \\
 \bigg ( \frac{2\alpha (n(\gamma -1) - 2)}{(\alpha - 2)(\gamma - 1)C_*} \bigg )^\frac{n(\gamma -1)}{(2n-\alpha)\gamma - 2n} & \mbox{for $\gamma = \frac{n + \alpha}{n}$},
\end{cases} 
\]
where $C_* > 0$ is the sharp constant from the variant of the HLS inequality.
\end{remark}

\begin{proof}
As in Lemma \ref{E>0,general}, for $H(t)$ defined in \eqref{3.1a}, we have
\begin{align}\label{alpha>2,general}
 \begin{split}
 H''(t) & =2\Big(\int_{\Omega(t)}\Big| \frac{\mathcal{M}}{\sqrt{\rho}} \Big |^2\dd \boldsymbol{x}+n\int_{\Omega(t)}p\dd \boldsymbol{x} + \frac{\alpha}{2} \int_{\Omega(t)}\rho \Phi_\alpha\ast\rho\dd \boldsymbol{x}\Big) \\
 & = 4 E + 2\int_{\Omega(t)}(np - 2 \rho e(\rho))\dd \boldsymbol{x} + (\alpha - 2)\int_{\Omega(t)}\rho \Phi_\alpha\ast\rho\dd \boldsymbol{x}. \\
  \end{split}
    \end{align}
Applying the variant of the HLS inequality in Lemma \ref{HLSineq} to \eqref{alpha>2,general}, we obtain
 \begin{equation}\label{next}
  H''(t) \geq 4 E + 2\int_{\Omega(t)}(np - 2 \rho e(\rho))\dd \boldsymbol{x} - \frac{(\alpha - 2)}{\alpha} C_* \| \rho \|_{L^1}^\frac{n-\alpha}{n} \| \rho \|_{L^\frac{n+\alpha}{n}}^\frac{n+\alpha}{n}.
        \end{equation}
We seek a lower bound for the right-hand side of \eqref{next}. 
To this end, we now show that it is minimized by a function of the form: 
\begin{equation}\label{scale}
    \sigma_R(\boldsymbol{x}) := R \sigma ((R \|\sigma\|_{L^1})^\frac{1}{n} M^{-\frac{1}{n}} \boldsymbol{x} ),
\end{equation}
for $\sigma(\boldsymbol{x}) = \mathds{1}_{B_1}(\boldsymbol{x})$. In this case, it can be shown that $\sigma_{R}(\boldsymbol{x}) = R\mathds{1}_{B_{(\frac{nM}{R \omega_n})^{1/n}}}(\boldsymbol{x})$.

Therefore, we need to show that the following function $f_M:X_M \to \mathbb{R}$ for fixed $M$ is minimized by $\sigma_{R}$ for some $R$, where
\[
f_M(\rho) := \int_{\Omega(t)}(np - 2 \rho e(\rho))\dd \boldsymbol{x} - \frac{(\alpha - 2)}{2\alpha} C_* M^\frac{n-\alpha}{n} \int_{\Omega(t)} \rho^\frac{n+\alpha}{n} \dd \boldsymbol{x}.
\]
Plugging in $\sigma_R$ yields
\begin{equation}\label{mini}
f_M(\sigma_{R}) = \frac{M}{R} \Big (np(R) - 2 R e(R) - \frac{(\alpha - 2)}{2\alpha} C_* M^\frac{n-\alpha}{n} R^\frac{n + \alpha}{n} \Big).
\end{equation}
Thus, we consider the infimum $ \Lambda_{\min}(M)$ of this plus $2E$:
\[
 \Lambda_{\min}(M) = 2E + \inf_{R > 0} f_M(\sigma_{R}).
\]
Using \eqref{mini} and $\gamma_{\inf} > \frac{n + 2}{n}$ for $\gamma_{\inf}$ 
defined in \eqref{gammainf}, we deduce 
\begin{equation}\label{lowerforf}
    f_M(\sigma_{R}) \geq \frac{M}{R} \Big (\big(n-\frac{2}{\gamma_{\inf} - 1}\big)p(R)
    - \frac{(\alpha - 2)}{2\alpha} C_* M^\frac{n-\alpha}{n} R^\frac{n + \alpha}{n} \Big).
\end{equation}
Since $\lim_{\rho\to\infty}p(\rho)\rho^{-\frac{n+\alpha}{n}} = \infty$, we have
\begin{equation*}
\lim_{R \to \infty} f_M(\sigma_{R}) = \infty.
\end{equation*}
This implies that $ \Lambda_{\min}(M) > - \infty$. 
One can see that $M \mapsto  \Lambda_{\min}(M)$ is continuous, 
as $p(R)$ and $e(R)$ are continuous. 
Combining \eqref{lowerforf} with $\lim_{\rho\to\infty}p(\rho)\rho^{-\frac{n+\alpha}{n}} = \infty$, 
we conclude that there exists $C \in \mathbb{R}$ such that, for $0<M<1$,
\[
    \frac{1}{M} f_M(\sigma_{R}) \geq \Big (\big(n-\frac{2}{\gamma_{\inf} - 1}\big)\frac{p(R)}{R} 
    - \frac{(\alpha - 2)}{2\alpha} C_* R^\frac{\alpha}{n} \Big) \geq C.
\]
Thus, $\liminf_{M \to 0} \Lambda_{\min}(M) \geq 2E > 0$.

\smallskip
\noindent
{\bf Case 1.} {\it $\Lambda_{\min}(M) > 0$ for all $M>0$}.
Then we let $\tilde{M} := \infty$.

\smallskip
\noindent
{\bf Case 2.} {\it There exists $M > 0$ such that $\Lambda_{\min}(M) \leq 0$}.
Then we can see that $M \mapsto  f_M(\sigma_{R})$ is strictly decreasing in the set:
$\{ M > 0 \, : \, f_M(\sigma_{R}) \leq 0\}$. 
Hence, $M \mapsto  \Lambda_{\min}(M)$ is decreasing in the 
set: $\{ M > 0 \, : \, \Lambda_{\min}(M) \leq 0\}$. 
Since $\liminf_{M \to 0} \Lambda_{\min}(M) > 0$, there exists $\tilde{M} \in (0,\infty)$ (the first mass $M>0$ such that $\Lambda_{\min}(M)=0$) such that $ \Lambda_{\min}(M) \leq 0 $ for all $M \geq \tilde{M}$ and $ \Lambda_{\min}(M) > 0$ for all $M < \tilde{M}$.

We now show that $ \Lambda_{\min}(M) - 2E$ is the infimum of $f_M$. 
For $\rho \in X_M$, we see that, by the definition of $\Lambda_{\min}(M)$, 
\begin{equation*} 
    f_M(\rho) = \frac{1}{M}\int_{\{ \rho > 0 \}} f_M(\sigma_{\rho(\boldsymbol{x})}) \rho(\boldsymbol{x}) \dd \boldsymbol{x}  \geq \frac{ \Lambda_{\min}(M) -2E}{M} \int_{\mathbb{R}^n} \rho(\boldsymbol{x}) \dd \boldsymbol{x} =  \Lambda_{\min}(M) - 2E,
\end{equation*}
which implies that $\Lambda_{\min}(M) = 2E + \inf_{\rho \in X_M} f_M(\rho)$. 
Therefore, for $M < \tilde{M}$, employing \eqref{next}, we have
\[ H''(t) \geq 2 \Lambda_{\min}(M) > 0,
\]
so that 
$$
H(t)\geq H(0)+H'(0)t+ \Lambda_{\min}(M) t^2.
$$
On the other hand, since
$$
H(t)=\int_{\Omega(t)}\rho(t,\boldsymbol{x})|\boldsymbol{x}|^2\dd \boldsymbol{x}\leq R(t)^2M,
$$
then
$$
H(0)+H'(0)t+\Lambda_{\min}(M)t^2\leq H(t)\leq R(t)^2M.
$$
This implies
$$
O(t^{-1})+\Lambda_{\min}(M)\leq \frac{R(t)^2M}{t^2}.
$$
Therefore, we have
$$
\liminf_{t\rightarrow\infty}\Big(\frac{R(t)}{t}\Big)
\geq\Big(\frac{\Lambda_{\min}(M)}{M}\Big)^{\frac{1}{2}}.
$$
\end{proof}
\setcounter{case}{0}

In the polytropic case with $\alpha \in (0,2]$, 
we develop a different approach from \cite{Deng_2002} to show
the growth of solutions with total energy $E=0$, stated below.

\begin{lemma}
Let $\gamma>\frac{n+\alpha}{n}$ and $\alpha\in(0,2]\cap (0,n-1)$, 
and let $(\rho,\mathcal{M})$ be a global classical solution of 
the attractive CEREs {\rm (}$\kappa = 1${\rm )} \eqref{1.1}. 
If the total energy $E=0$, then there exists a sequence $(t_m)_m \subset [0,\infty)$ such that
$R(t_m) \to \infty$ as $m \to \infty$.
\end{lemma}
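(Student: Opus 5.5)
We argue by contradiction, using the virial identity from the proof of Lemma \ref{E>0,general}. Suppose the support stays bounded, that is, $R(t)\le R_0$ for all $t\ge0$; otherwise a sequence $t_m$ with $R(t_m)\to\infty$ exists and we are done. With $H(t)=\int_{\Omega(t)}\rho|\boldsymbol{x}|^2\dd\boldsymbol{x}$ as in \eqref{3.1a}, the computation there with $E=0$ and the polytropic relation $\rho e(\rho)=\frac{p}{\gamma-1}$ gives
\[
H''(t)=(2-\alpha)\int_{\Omega(t)}\Big|\frac{\mathcal M}{\sqrt\rho}\Big|^2\dd\boldsymbol{x}+2\Big(n-\frac{\alpha}{\gamma-1}\Big)\int_{\Omega(t)}p\dd\boldsymbol{x}.
\]
Since $\gamma>\frac{n+\alpha}{n}$, the constant $c_0:=2(n-\frac{\alpha}{\gamma-1})$ is strictly positive, and $\alpha\le2$ makes the kinetic term nonnegative. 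Hence $H''(t)\ge c_0 a_0\int\rho^\gamma\dd\boldsymbol{x}$.

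The next step is a uniform positive lower bound for $\int\rho^\gamma$ under the boundedness assumption. Mass is conserved, $\int\rho\,\dd\boldsymbol{x}=M>0$, and the support lies in $B_{R_0}$. By H\"older's inequality,
\[
M\le |B_{R_0}|^{1-\frac1\gamma}\|\rho\|_{L^\gamma},
\]
so $\int\rho^\gamma\ge M^\gamma|B_{R_0}|^{1-\gamma}=:c_1>0$ for all $t$. Therefore $H''(t)\ge c_0a_0c_1>0$, and hence $H(t)\ge H(0)+H'(0)t+\tfrac12c_0a_0c_1t^2\to\infty$. This contradicts $H(t)\le R_0^2M$. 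Consequently $\sup_t R(t)=\infty$, which yields a sequence $t_m$ (taken increasing, with $t_m\to\infty$ by continuity of $R$ on compact time intervals) such that $R(t_m)\to\infty$.

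The only delicate point is making sure $E=0$ is actually used consistently. The identity $H''=2\alpha E+(2-\alpha)\|\mathcal M/\sqrt\rho\|^2+2\int(np-\alpha\rho e)$ holds for any energy; for $E=0$ its first term simply vanishes, unlike in Lemma \ref{E>0,general}. Positivity therefore has to come from the strict inequality $\gamma>\frac{n+\alpha}{n}$ combined with the H\"older mass bound, rather than from the energy. I expect no further obstacle beyond justifying the integrations by parts, which is granted by the standing classical-solution assumptions of this section.
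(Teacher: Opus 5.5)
Your argument is correct and is essentially the paper's: the same $E=0$ virial identity, the same H\"older bound $\int\rho^\gamma\ge M^\gamma|\Omega(t)|^{1-\gamma}$, and the same comparison with $H\le R(t)^2M$. You organize it as a contradiction under a uniform bound $R\le R_0$, whereas the paper uses a Taylor expansion with intermediate times $\xi_t$; your version is slightly tidier because it avoids having to control the factor $R(\xi_t)$ multiplying the lower-order terms. The parenthetical claim that $t_m\to\infty$ is not required by the statement and relies on a continuity of $R$ you have not justified, so drop it.
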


\begin{proof}
As in Lemma \ref{E>0,general}, for $H(t)$ as defined in \eqref{3.1a},
we have
    \begin{align*}
 H''(t)=2\int_{\Omega(t)}\Big| \frac{\mathcal{M}}{\sqrt{\rho}} \Big|^2\dd \boldsymbol{x}+2n\int_{\Omega(t)}p\dd \boldsymbol{x}-2\alpha\int_{\Omega(t)}\frac{1}{2}\Big| \frac{\mathcal{M}}{\sqrt{\rho}} \Big|^2
 +\rho e(\rho)\dd \boldsymbol{x}.
    \end{align*}
Then we require $\alpha\in(0,2]$ and $\gamma\geq\frac{n+\alpha}{n}$ to obtain
    \begin{align*}
  H''(t)\geq 2a_0\frac{n(\gamma - 1) - \alpha}{\gamma - 1}\int_{\Omega(t)} \rho^\gamma \dd \boldsymbol{x}.
    \end{align*}
By the H\"older inequality, we have
    \[
    M = \int_{\Omega(t)} \rho \dd \boldsymbol{x} 
    \leq \Big ( \int_{\Omega(t)} \rho^\gamma \dd \boldsymbol{x} \Big )^\frac{1}{\gamma} |\Omega(t)|^\frac{\gamma - 1}{\gamma}.
    \]
    Therefore, we obtain the lower bound for the second derivative of $H$ given by
    \begin{align}\label{lowerH}
 H''(t)\geq 2a_0\frac{n(\gamma - 1) - \alpha}{\gamma - 1} M^\gamma |\Omega(t)|^{1 - \gamma} \geq 2a_0\frac{n(\gamma - 1) - \alpha}{\gamma - 1} M^\gamma 
 \Big(\frac{\omega_n}{n}R(t)^n \Big)^{1 - \gamma},
\end{align}
    where $\omega_n$ is the surface area of the unit hypersphere in $\mathbb{R}^n$. 
By Taylor's theorem, for each $t \in [0, \infty)$, there exists $\xi_t \in [0, t)$ such that
    \begin{align}\label{remain}
 \begin{split}
  H(t) = H(0) + tH'(0) + \frac{t^2}{2} H''(\xi_t).
        \end{split}
    \end{align}
    Therefore, combining \eqref{lowerH} with \eqref{remain}, we obtain
    \begin{align*}
  \begin{split}
  H(t) \geq H(0) + tH'(0) + t^2a_0\frac{n(\gamma - 1) - \alpha}{\gamma - 1} M^\gamma \Big(\frac{\omega_n}{n}R(\xi_t)^n \Big)^{1 - \gamma}.
  \end{split}
    \end{align*}
    From the same approach as in Lemma \ref{E>0,general}, we have
    \[
    \liminf_{t\rightarrow\infty}\bigg(\frac{R(t) R(\xi_t)^\frac{n(\gamma - 1)}{2}}{t}\bigg)\geq\bigg(a_0\frac{n(\gamma - 1) - \alpha}{(\gamma - 1)}\Big(\frac{nM}{\omega_n}\Big)^{\gamma - 1}\bigg)^{\frac{1}{2}}.
    \]
    Therefore, taking any sequence $(t_m)_m \subset [0,\infty)$ such that $t_m \to \infty$ as $m \to \infty$, we obtain that
    $R(t_m) R(\xi_{t_m})^\frac{n(\gamma - 1)}{2} \to \infty$ as $m \to \infty$. 
    Therefore, there exists a subsequence (still denoted as) $(t_m)_m$ such that 
    either $R(t_m) \to \infty$ or $R(\xi_{t_m}) \to \infty$ as $m \to \infty$. In the latter case, we relabel $\xi_{t_m}$ to $t_m$.
\end{proof}

In the previous lemmas, we have required the solutions to be strong. 
We now turn to the study of finite-energy solutions of the attractive CEREs \eqref{1.1} 
with finite second moment.

\begin{corollary}
Suppose that $(\rho,\mathcal{M})$ is a finite energy solution as in {\rm Definition \ref{definition}} 
with finite second-moment 
$\rho(t,\boldsymbol{x}) |\boldsymbol{x}|^2 \in L^1(0,T;L^1(\mathbb{R}^n))$ for all $T>0$. 
If there exists $\Lambda > 0$ such that
\[
\Lambda \leq \int_{\mathbb{R}^n} \Big| \frac{\mathcal{M}}{\sqrt{\rho}} \Big|^2\dd \boldsymbol{x}+n\int_{\mathbb{R}^n}p\dd \boldsymbol{x} + \frac{\alpha}{2}\int_{\mathbb{R}^n}\rho\Phi_\alpha\ast\rho\dd \boldsymbol{x} 
\qquad\mbox{for {\it a.e.} $t \geq 0$},
\]
then either there exists a set $A \subseteq (0,\infty)$ with non-zero measure 
such that $R(t) = \infty$ for all $t \in A$ or
    \[
    \lim_{t \to \infty} \underset{s \geq t}{\text{ess}\inf}
    \Big(\frac{R(s)}{s}\Big)\geq \Big(\frac{\Lambda}{M}\Big)^{\frac{1}{2}},
    \]
where $R(t)$ is the radius of domain $\Omega$ as defined in \eqref{2.5a}.
\end{corollary}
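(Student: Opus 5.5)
The plan is to redo the virial argument of Lemma \ref{E>0,general} at the level of weak solutions, by testing the equations against truncated versions of $|\boldsymbol{x}|^2$ and $\boldsymbol{x}$. Define
\[
H(t)=\int_{\mathbb{R}^n}\rho(t,\boldsymbol{x})|\boldsymbol{x}|^2\,\dd\boldsymbol{x}\in L^1_{\rm loc}([0,\infty)).
\]
Take the test function $\zeta=\phi(t)\chi_L(\boldsymbol{x})|\boldsymbol{x}|^2$ in Definition \ref{definition}(c), and $\boldsymbol{\psi}=\phi(t)\chi_L(\boldsymbol{x})\boldsymbol{x}$ in (d). Here $\phi\in C^1_c((0,\infty))$ and $\chi_L$ is a smooth radial cutoff equal to $1$ on $B_L$. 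This gives, in $\mathcal{D}'(0,\infty)$,
\[
H'=2\int\mathcal{M}\cdot\boldsymbol{x}\,\dd\boldsymbol{x},\qquad
H''=2\Big(\int\Big|\frac{\mathcal{M}}{\sqrt\rho}\Big|^2+n\int p+\frac{\alpha}{2}\int\rho\,\Phi_\alpha*\rho\Big)\,\dd\boldsymbol{x}.
\]
The last identity is the symmetrization computation of $I$ from Lemma \ref{E>0,general}. At the weak level it is done directly on the double integral: the kernel $(\boldsymbol{x}-\boldsymbol{y})\cdot\boldsymbol{x}\,|\boldsymbol{x}-\boldsymbol{y}|^{-\alpha-2}$ is symmetrized, and the resulting integrand is bounded by $|\boldsymbol{x}-\boldsymbol{y}|^{-\alpha}\rho(\boldsymbol{x})\rho(\boldsymbol{y})$. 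That integrand is integrable by the HLS inequality (Lemma \ref{HLSineq}) and the energy bounds.

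To remove the cutoff, let $L\to\infty$ using dominated convergence. The error terms carry $\nabla\chi_L$ and $\nabla^2\chi_L$, which are supported in $\{L\le|\boldsymbol{x}|\le 2L\}$, with $|\boldsymbol{x}||\nabla\chi_L|$ and $|\boldsymbol{x}|^2|\nabla^2\chi_L|$ bounded. They are controlled respectively by:
\begin{itemize}
\item $|\mathcal{M}||\boldsymbol{x}|\le \frac{|\mathcal{M}|^2}{\rho}+\rho|\boldsymbol{x}|^2$, using the finite second moment $\rho|\boldsymbol{x}|^2\in L^1(0,T;L^1)$;
\item $|\mathcal{M}|^2/\rho$ and $p$, which are in $L^1$ by the energy bound;
\item the nonlocal term, handled through the cross term in the symmetrized kernel, which tends to $0$ by dominated convergence.
\end{itemize}
This step is the main obstacle. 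One must justify $H'$ with only an $L^1_t$ moment bound, and control the nonlocal interaction tail with cutoff. If needed, I would first pass to limits in the time-integrated (mollified-in-time) form, so that only $L^1_{t,\boldsymbol{x}}$ integrability is required.

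With the distributional identity in hand, the hypothesis gives $H''\ge 2\Lambda$ in $\mathcal{D}'(0,\infty)$. Hence $H(t)-\Lambda t^2$ is a convex distribution, so it agrees a.e. with a convex function. It follows that for a.e. $t$,
\[
H(t)\ge a+bt+\Lambda t^2 \qquad\text{for some constants } a,b.
\]

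Now split into the two alternatives of the statement. If $R(t)=\infty$ on a set of positive measure, we are done. Otherwise $R(t)<\infty$ for a.e. $t$. Mass conservation, which follows from (c) with the cutoff, then gives $H(t)\le R(t)^2M$ for a.e. $t$. Therefore, for a.e. $s$,
\[
\frac{R(s)^2}{s^2}\ge\frac{1}{M}\Big(\Lambda+\frac{b}{s}+\frac{a}{s^2}\Big).
\]
Taking the essential infimum over $s\ge t$ and letting $t\to\infty$ yields
\[
\lim_{t\to\infty}\underset{s\ge t}{\text{ess}\inf}\,\frac{R(s)}{s}\ge\Big(\frac{\Lambda}{M}\Big)^{1/2}.
\]
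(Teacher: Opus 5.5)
Your argument is correct, but it justifies the virial inequality differently from the paper.

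\textbf{The paper's route.} The paper splits into cases at the outset. If $\rho(t,\cdot)$ fails to be compactly supported on a set of times of positive measure, it is done. Otherwise it mollifies in time and space and computes $H_{\varepsilon,\delta}''$ pointwise from the mollified equations, discarding boundary terms by appealing to the compact support of $\rho(t,\cdot)$. It then applies the classical Taylor bound $H_{\varepsilon,\delta}(t)\ge H_{\varepsilon,\delta}(t_0)+(t-t_0)H_{\varepsilon,\delta}'(t_0)+\Lambda(t-t_0)^2$ and lets $\varepsilon\to0$ at Lebesgue points.

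\textbf{Your route.} You test with the compactly supported weights $\chi_L|\boldsymbol{x}|^2$ and $\chi_L\boldsymbol{x}$. You remove the cutoff using the second-moment and energy bounds; the uniform-in-$L$ Lipschitz bound on $\chi_L(\boldsymbol{x})\boldsymbol{x}$ is exactly what dominates the symmetrized nonlocal term by $C\rho(\boldsymbol{x})\rho(\boldsymbol{y})|\boldsymbol{x}-\boldsymbol{y}|^{-\alpha}$. You then replace the mollify-and-Taylor step by the observation that $H''\ge 2\Lambda$ in $\mathcal{D}'(0,\infty)$ forces $H-\Lambda t^2$ to agree a.e. with a convex function.

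\textbf{What each buys.} In your approach the quadratic lower bound $H(t)\ge a+bt+\Lambda t^2$ holds for every finite-energy solution with finite second moment, with no support assumption. The dichotomy then enters only through $H\le R^2M$. This is more robust than the paper's route: compact support of $\rho(t,\cdot)$ for a.e.\ $t$ does not by itself bound the supports of the time-mollified quantities that are integrated against $|\boldsymbol{x}|^2$ and $\boldsymbol{x}$. In exchange, the paper's route keeps the computation identical to the classical one in Lemma \ref{E>0,general}.

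\textbf{Two minor remarks.} First, only first derivatives of $\chi_L$ actually appear, since the weak formulations involve only $\nabla\zeta$ and $\nabla\boldsymbol{\psi}$. Second, the Fubini swap in the symmetrization needs local integrability of $\rho(\boldsymbol{x})\rho(\boldsymbol{y})|\boldsymbol{x}-\boldsymbol{y}|^{-\alpha-1}$ before the $|\boldsymbol{x}-\boldsymbol{y}|^{-\alpha}$ bound can be used. The paper's own computation of $I$ makes this assumption implicitly as well.
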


\begin{proof} We separate the discussion into two cases:
\begin{case}
{\it The map: $\boldsymbol{x} \mapsto \rho(t,\boldsymbol{x})$ does not have compact support on a non-zero measure 
set of $t \geq 0$.}  Then $R(t) = \infty$ on a non-zero measure set of $t \geq 0$. 
\end{case}
\begin{case}
{\it The map: $\boldsymbol{x} \mapsto \rho(t,\boldsymbol{x})$ has compact support for {\it a.e.} $t \geq 0$}. 
Let $\eta \in C^\infty_{\rm c}(\mathbb{R})$ and $\mu\in C^\infty_{\rm c}(\mathbb{R}^n)$ 
be standard spherically symmetric mollifiers. As usual, 
let $\eta_{\varepsilon}(t) = \frac{1}{{\varepsilon}}\eta(\frac{t}{{\varepsilon}})$ 
and $\mu_{\delta}(\boldsymbol{x}) = \frac{1}{{\delta}^n}\mu(\frac{\boldsymbol{x}}{{\delta}})$. 
Then we define the following notation for taking the mollification of 
a function $f:[0,\infty)\times\mathbb{R}^n \to \mathbb{R}^m$ by
\[
\begin{cases}
    f_{\varepsilon}(t,\boldsymbol{x}) = (f(\cdot,\boldsymbol{x})*\eta_{\varepsilon})(t)&\quad \text{ for } (t,\boldsymbol{x}) \in [{\varepsilon},\infty)\times\mathbb{R}^n, \\
    f_{\delta}(t,\boldsymbol{x}) = (f(t,\cdot)*\mu_{\delta})(\boldsymbol{x})&\quad\text{ for } (t,\boldsymbol{x}) \in [0,\infty)\times\mathbb{R}^n, \\
    f_{{\varepsilon},{\delta}} = (f_{\varepsilon})_{\delta} = (f_{\delta})_{\varepsilon}.
\end{cases}
\]
Notice that, for $f(t,\cdot) \in L^1(\mathbb{R}^n)$, 
\begin{equation}\label{mollme}
\int_{\mathbb{R}^n} f_{\delta}(t,\boldsymbol{x}) \dd \boldsymbol{x} 
= \int_{\mathbb{R}^n} f(t,\boldsymbol{x}) \dd \boldsymbol{x}.
\end{equation}
We define $H_{{\varepsilon},{\delta}}: [{\varepsilon},\infty) \to \mathbb{R}$ by
\[
H_{{\varepsilon},{\delta}}(t) := \int_{\mathbb{R}^n} \rho_{{\varepsilon},{\delta}}(t,\boldsymbol{x}) |\boldsymbol{x}|^2 \dd \boldsymbol{x}.
\]
Since $(\rho,\mathcal{M})$ is a weak solution, we see that, 
for $(t,\boldsymbol{x}) \in ({\varepsilon},\infty)\times\mathbb{R}^n$, 
the mollification satisfies
\begin{equation}\label{mollmass}
    \begin{split}
  &(\rho_{{\varepsilon},{\delta}})_t + \nabla \cdot (\mathcal{M}_{{\varepsilon},{\delta}}) \\
  &= \int^\infty_0 \int_{\mathbb{R}^n} \Big ( \rho(s,\boldsymbol{y}) (\eta_{\varepsilon})_t(t-s) \mu_{\delta}(\boldsymbol{x} - \boldsymbol{y}) + \mathcal{M}(s,\boldsymbol{y}) \cdot \eta_{\varepsilon}(t-s) \nabla_{\boldsymbol{x}} \mu_{\delta}(\boldsymbol{x} - \boldsymbol{y}) \Big ) \dd \boldsymbol{y} \dd s \\
 &= - \int^\infty_0 \int_{\mathbb{R}^n} \Big ( \rho(s,\boldsymbol{y}) \big[\eta_{\varepsilon}(t-\cdot) \mu_{\delta}(\boldsymbol{x} - \cdot)\big]_s(s,\boldsymbol{y}) + \mathcal{M}(s,\boldsymbol{y}) \cdot \nabla_{\boldsymbol{y}} \big[\eta_{\varepsilon}(t-\cdot) \mu_{\delta}(\boldsymbol{x} - \cdot)\big](s,\boldsymbol{y}) \Big ) \dd \boldsymbol{y} \dd s\\
 & = 0,
    \end{split}
\end{equation}
where the last line follows from the fact that  
$\big[\eta_{\varepsilon}(t-\cdot) \mu_{\delta}(\boldsymbol{x}- \cdot)\big] 
\in C^\infty_{\rm c}((0,\infty) \times \mathbb{R}^n)$ due to $t>\epsilon$
and can then be used as a test function in the definition of weak solutions of CEREs. 
Similarly, one can show that, for $(t,\boldsymbol{x}) \in ({\varepsilon},\infty)\times\mathbb{R}^n$,
\begin{equation}\label{mollmoment}
\partial_t \M_{{\varepsilon},{\delta}}
+ \nabla \cdot \Big(\frac{\M\otimes\M}{\rho}\Big)_{{\varepsilon},{\delta}}
+\nabla p_{{\varepsilon},{\delta}} + (\rho \nabla \Phi_\alpha * \rho)_{{\varepsilon},{\delta}} 
= \boldsymbol{0}.
\end{equation}
Since $\rho(t,\boldsymbol{x}) |\boldsymbol{x}|^2 \in L^1(0,T;L^1(\mathbb{R}^n))$ 
and $\frac{\mathcal{M}}{\sqrt{\rho}} \in L^{\infty}(0,T; L^2(\mathbb{R}^n))$, then
\[
    \int_{\mathbb{R}^n}\mathcal{M} \cdot \boldsymbol{x}\dd \boldsymbol{x} \in L^1(0,T).
\]
Now, when differentiating $H_{{\varepsilon},{\delta}}$ in $t$, 
we apply \eqref{mollmass}, Lebesgue's differentiation theorem, 
Fubini's theorem, and integration by parts to obtain
\begin{equation*}
  H_{{\varepsilon},{\delta}}'(t) = 2\int_{\mathbb{R}^n}\mathcal{M}_{{\varepsilon},{\delta}}(t) \cdot \boldsymbol{x}\dd \boldsymbol{x} = 2\int_{\mathbb{R}^n}\mathcal{M}_{\varepsilon}(t) \cdot \boldsymbol{x}_{\delta} \dd \boldsymbol{x} 
= 2\,\Big( \int_{\mathbb{R}^n}\mathcal{M} \cdot \boldsymbol{x} \dd \boldsymbol{x} \Big)_{\varepsilon}(t).
\end{equation*}
Now, differentiating once more, applying \eqref{mollmoment}, 
and using the fact that 
$\frac{\mathcal{M}}{\sqrt{\rho}} \in L^{\infty}(0,T; L^2(\mathbb{R}^n))$, 
$p(\rho) \in L^{\infty}(0,T; L^1(\mathbb{R}^n))$, 
$\rho \Phi_\alpha * \rho \in L^{\infty}(0,T; L^1(\mathbb{R}^n))$, 
and $\boldsymbol{x} \mapsto \rho(t,\boldsymbol{x})$ has compact support for {\it a.e.} $t \geq 0$,
we can integrate by parts and conclude that the boundary terms vanish. Hence, we obtain
\begin{equation}\label{weakdouble}
 H_{{\varepsilon},{\delta}}''(t)=2\bigg(\int_{\mathbb{R}^n} \bigg ( \Big| \frac{\mathcal{M}}{\sqrt{\rho}} \Big|^2 \bigg )_{{\varepsilon},{\delta}} \dd \boldsymbol{x}+n\int_{\mathbb{R}^n}p_{{\varepsilon},{\delta}}\dd \boldsymbol{x}-\int_{\mathbb{R}^n}(\rho\nabla\Phi_\alpha\ast\rho)_{{\varepsilon},{\delta}} \cdot \boldsymbol{x}\dd \boldsymbol{x}\bigg).
\end{equation}
Since $\boldsymbol{x} \mapsto 
(\rho\nabla\Phi_\alpha\ast\rho)_{{\delta}}(s,\boldsymbol{x}) \cdot \boldsymbol{x}$ 
has compact support for {\it a.e.} $s \geq 0$ and 
is continuous for each fixed $s\geq 0$,
we can apply Fubini's theorem to obtain
\begin{equation*}
 \int_{\mathbb{R}^n}(\rho\nabla\Phi_\alpha\ast\rho)_{{\delta}} \cdot \boldsymbol{x}\dd \boldsymbol{x} = \int_{\mathbb{R}^n}(\rho\nabla\Phi_\alpha\ast\rho) \cdot \boldsymbol{x}_{{\delta}} \dd \boldsymbol{x}.
\end{equation*}
Since $\boldsymbol{x}_{\delta} = \boldsymbol{x}$, then 
$(\rho\nabla\Phi_\alpha\ast\rho) \cdot \boldsymbol{x}$ is integrable 
for {\it a.e.} $s \geq 0$ so that
\begin{align*}\nonumber
I&:=\int_{\mathbb{R}^n}\rho\,\nabla\Phi_\alpha\ast\rho\cdot \boldsymbol{x}\,\dd \boldsymbol{x}\\
&=\int_{\mathbb{R}^n}\rho(\boldsymbol{x})\int_{\mathbb{R}^n}
\frac{\rho(\boldsymbol{y})(\boldsymbol{x}-\boldsymbol{y})\cdot \boldsymbol{x}}
{|\boldsymbol{x}-\boldsymbol{y}|^{\alpha+2}}
\,\dd \boldsymbol{y}\,\dd \boldsymbol{x}\\
&=\int_{\mathbb{R}^n}\rho(\boldsymbol{x})\int_{\mathbb{R}^n}
\frac{\rho(\boldsymbol{y})|\boldsymbol{x}-\boldsymbol{y}|^2}
{|\boldsymbol{x}-\boldsymbol{y}|^{\alpha+2}}
\,\dd \boldsymbol{y}\,\dd \boldsymbol{x}
+\int_{\mathbb{R}^n}\rho(\boldsymbol{x})\int_{\mathbb{R}^n}
\frac{\rho(\boldsymbol{y})(\boldsymbol{x}-\boldsymbol{y})\cdot \boldsymbol{y}}
{|\boldsymbol{x}-\boldsymbol{y}|^{\alpha+2}}
\,\dd \boldsymbol{y}\,\dd \boldsymbol{x}\\
&=-\alpha\int_{\mathbb{R}^n}\rho\,\Phi_\alpha\ast\rho\,\dd \boldsymbol{x}
+\int_{\mathbb{R}^n}\rho(\boldsymbol{y})\int_{\mathbb{R}^n}
\frac{\rho(\boldsymbol{x})(\boldsymbol{y}-\boldsymbol{x})\cdot \boldsymbol{x}}
{|\boldsymbol{x}-\boldsymbol{y}|^{\alpha+2}}
\,\dd \boldsymbol{x}\,\dd \boldsymbol{y}\\
&=-\alpha\int_{\mathbb{R}^n}\rho\,\Phi_\alpha\ast\rho\,\dd \boldsymbol{x}-I.
 \end{align*}
Then we have 
$$
\int_{\mathbb{R}^n}(\rho\nabla\Phi_\alpha\ast\rho)_{{\delta}} \cdot \boldsymbol{x}\dd \boldsymbol{x}
=I=-\frac{\alpha}{2}\int_{\mathbb{R}^n}\rho\Phi_\alpha\ast\rho\dd \boldsymbol{x}.
$$
Now, for $t > {\varepsilon}$, multiplying by $\eta_{\varepsilon}(t-s)$ and integrating over $s$, we obtain
 $$\int_{\mathbb{R}^n}(\rho\nabla\Phi_\alpha\ast\rho)_{{\varepsilon},{\delta}} \cdot \boldsymbol{x}\dd \boldsymbol{x}=-\frac{\alpha}{2}\int_{\mathbb{R}^n}(\rho\Phi_\alpha\ast\rho)_{\varepsilon}\dd \boldsymbol{x}.$$
 Therefore, applying \eqref{mollme} and \eqref{weakdouble} yields
 \[
 \begin{split}
 H_{{\varepsilon},{\delta}}''(t) & =2\bigg(\int_{\mathbb{R}^n} \bigg ( \Big| \frac{\mathcal{M}}{\sqrt{\rho}} \Big|^2 \bigg )_{{\varepsilon}} \dd \boldsymbol{x}+n\int_{\mathbb{R}^n}p_{{\varepsilon}}\dd \boldsymbol{x} + \frac{\alpha}{2}\int_{\mathbb{R}^n}(\rho\Phi_\alpha\ast\rho)_{\varepsilon}\dd \boldsymbol{x}\bigg) \\
 & = 2\bigg(\int_{\mathbb{R}^n} \Big| \frac{\mathcal{M}}{\sqrt{\rho}} \Big|^2 \dd \boldsymbol{x}+n\int_{\mathbb{R}^n}p\dd \boldsymbol{x} + \frac{\alpha}{2}\int_{\mathbb{R}^n}(\rho\Phi_\alpha\ast\rho)\dd \boldsymbol{x}\bigg)_{{\varepsilon}}
 \end{split}
\]
and so is independent of ${\delta}$. 
Therefore, if there exists $\Lambda > 0$ such that
\[
 \Lambda \leq \int_{\mathbb{R}^n} \Big| \frac{\mathcal{M}}{\sqrt{\rho}} \Big|^2\dd \boldsymbol{x}+n\int_{\mathbb{R}^n}p\dd \boldsymbol{x} + \frac{\alpha}{2}\int_{\mathbb{R}^n}\rho\Phi_\alpha\ast\rho\dd \boldsymbol{x}
 \qquad\mbox{for {\it a.e.} $t > 0$},
\]
then, since $\eta$ is non-negative, we have
\[ H_{{\varepsilon},{\delta}}''(t) \geq 2\Lambda.
\]
This implies
\begin{equation}\label{molled}
    H_{{\varepsilon},{\delta}}(t) \geq H_{{\varepsilon},{\delta}}(t_0) + (t-t_0)H_{{\varepsilon},{\delta}}'(t_0) + (t - t_0)^2 \Lambda
    \qquad\mbox{for {\it a.e.} $\varepsilon < t_0 < t$}.
\end{equation}
 Since $\int_{\mathbb{R}^n}\rho(\boldsymbol{x})|\boldsymbol{x}|^2\dd \boldsymbol{x}, \,\int_{\mathbb{R}^n}\mathcal{M} \cdot \boldsymbol{x}\dd \boldsymbol{x} \in L^1(0,T)$ for all $T>0$, 
 we see that, for {\it a.e.} $0 < t_0 < t$, letting $\varepsilon \to 0$ in \eqref{molled} yields
\[
H(t) \geq H(t_0) + 2 (t-t_0)\int_{\mathbb{R}^n} \mathcal{M}(t_0) \cdot \boldsymbol{x} \dd \boldsymbol{x} 
+ (t - t_0)^2 \Lambda.
\]

This is true for all $T > 0$, so it can be extended to all $t > 0$. 
Consequently, applying the arguments used in Lemma \ref{E>0,general}, we obtain 
\[
\lim_{t \to \infty} \underset{s \geq t}{\text{ess}\inf}\Big(\frac{R(s)}{s}\Big)
 \geq \Big(\frac{\Lambda}{M}\Big)^{\frac{1}{2}}.
\]
\end{case}
In either case, we obtain the instability of solutions.
\end{proof}
\setcounter{case}{0}

\medskip
\section{Instability for $\frac{2n}{2n - \alpha} < \gamma < \frac{n + \alpha}{n}$}\label{instability}

Given $\varrho \in L^1_+(\R^n) \cap L^\gamma(\mathbb{R}^n)$  
and 
$\frac{2n}{2n - \alpha} < \gamma < \frac{n + \alpha}{n}$
for the polytropic case $p(\varrho) = a_0 \varrho^\gamma$, we define
\begin{equation*}
Q(\varrho) := n a_0 \int_{\mathbb{R}^n} \varrho^\gamma \dd \boldsymbol{x} + \frac{\alpha}{2} \int_{\mathbb{R}^n} \varrho \Phi_\alpha * \varrho \dd \boldsymbol{x}.
\end{equation*}
This quantity will play a crucial role in this section.
More precisely, we write energy \eqref{energies} as
\begin{equation}\label{energy2}
 E(\rho,\mathcal{M}) = \int_{\mathbb{R}^n} \frac12 
  \Big |\frac{\mathcal{M}}{\sqrt{\rho}} \Big|^2\,\dd \boldsymbol{x}
  +S_\mu(\rho)- \mu \int_{\mathbb{R}^n} \rho \dd \boldsymbol{x},
\end{equation}
where the action functional $S_\mu(\varrho)$ is defined similarly to \cite{Cheng_2025} as
\[
S_\mu(\varrho) := \frac{a_0}{\gamma - 1} \int_{\mathbb{R}^n} \varrho^\gamma \dd \boldsymbol{x} 
+ \frac{1}{2} \int_{\mathbb{R}^n} \varrho \Phi_\alpha * \varrho \dd \boldsymbol{x} 
+ \mu \int_{\mathbb{R}^n} \varrho \dd \boldsymbol{x}.
\]
As we will see next, identity \eqref{energy2} is convenient for the rest of this section, 
and functional $Q(\varrho)$ is closely related to the derivative of $S_\mu(\varrho)$ 
with respect to the scaling parameter associated with mass-preserving dilations. 
Our goal is to show that, if a solution $(\rho(t),\mathcal{M}(t))$ of \eqref{1.1} 
with initial conditions $(\rho_0,\mathcal{M}_0)$ satisfies
\[
 \int_{\mathbb{R}^n} \rho_0 \dd \boldsymbol{x} < \Big ( \frac{n + \alpha - n\gamma}{(2n-\alpha)\gamma - 2n} \Big )^\frac{n + \alpha - n\gamma}{(2n-\alpha)\gamma - 2n} \Big ( \ell_1 \frac{(2n-\alpha)\gamma - 2n}{(n - \alpha)(\gamma - 1)} \Big )^\frac{(n - \alpha)(\gamma - 1)}{(2n-\alpha)\gamma - 2n} E(\rho_0,\mathcal{M}_0)^\frac{n\gamma - n - \alpha}{(2n - \alpha)\gamma - 2n},
\]
for suitably chosen $\ell_1>0$, then either $R(t) \to \infty$ as $t \to \infty$ or there exists 
some $\Lambda > 0$ such that $Q(\rho(t)) \geq \Lambda$ for all $t \geq 0$. 

In the latter case, we can show that $H''(t) \geq 2 \Lambda$, as in \S 3,
which yields the instability of steady states satisfying \eqref{station}. 
We recall that $S_\mu(\varrho)$ is constructed so that \eqref{station} arises as its 
Euler-Lagrange equation 
under some restriction on $\varrho$. 
As will be shown later, any unique steady state $\bar\rho_\mu$ satisfies $Q(\bar\rho_\mu) = 0$. 
This motivates the study of minimizers of $S_\mu$ over the admissible set:
\[
\mathcal{K} := \big\{ \varrho \in L^1 \cap L^\gamma(\mathbb{R}^n) \, : \, Q(\varrho) = 0, 
\, \varrho \geq 0 \text{ {\it a.e.}, } \varrho \not\equiv 0\big\},
\]
as perturbations of $\bar\rho_\mu$. We define 
\begin{equation*}
\ell_\mu := \inf_{\varrho \in \mathcal{K}} S_\mu(\varrho),
\end{equation*}
the minimization of $S_\mu$ over $\mathcal{K}$. 

For $\varrho \in \mathcal{K}$, since $\gamma < \frac{n + \alpha}{n}$, then
\begin{equation}\label{overk}
S_\mu(\varrho) = a_0\frac{\alpha - n(\gamma - 1)}{\alpha(\gamma - 1)}\int_{\mathbb{R}^n} \varrho^\gamma \dd \boldsymbol{x} + \mu \int_{\mathbb{R}^n} \varrho \dd \boldsymbol{x} \geq 0.
\end{equation}
Consequently, $\ell_\mu \geq 0$. We will later show that, in fact, $\ell_\mu > 0$. 

To state the main theorem later, we first introduce the following definition:
\begin{definition}
We define
    \[
 \mathcal{I}_\mu :=\big\{ (\rho,\mathcal{M}) \,\,: \, Q(\rho) > 0, 
 \, I_\mu(\rho,\mathcal{M})< \ell_\mu\big\},
    \]
    where
    $$
        I_\mu(\rho,\mathcal{M}):= E(\rho,\mathcal{M}) + \mu \int_{\mathbb{R}^n} \rho \dd \boldsymbol{x}.
    $$
\end{definition}

We use the standard scaling: 
$\varrho_\lambda(\boldsymbol{x}) := 
\lambda \varrho(\lambda^\frac{1}{n} \boldsymbol{x})$ for $\lambda > 0$,
to preserve $L^1$ mass:
\[
\int_{\mathbb{R}^n} \varrho_\lambda \dd \boldsymbol{x} = \int_{\mathbb{R}^n} \varrho \dd \boldsymbol{x}.
\]
We define $S_{\mu,\lambda}(\varrho) := S_\mu(\varrho_\lambda)$ and state some important facts that will be useful later. For the rest of the section, we denote the negative interaction energy  by
\[
    A_\alpha(\varrho):=-\int_{\mathbb R^n}\varrho\,\Phi_\alpha*\varrho\,\dd \boldsymbol{x} \ge 0.
\]

\begin{lemma}\label{facts}
 Let $\alpha \in (0,n)$ and $\frac{2n}{2n - \alpha} < \gamma < \frac{n + \alpha}{n}$, 
 and let $\varrho \in L^1_+ \cap L^\gamma (\mathbb{R}^n)$ with $\varrho \not\equiv 0$. 
 Then the following statements are true{\rm :}
    \begin{enumerate}
 \item[{\rm(a)}] If $\varrho_m \rightharpoonup \tilde{\varrho}$ in $L^\gamma(\mathbb{R}^n)$ as $m \to \infty$, then $(\varrho_m)_\lambda \rightharpoonup \tilde{\varrho}_\lambda$ in $L^\gamma(\mathbb{R}^n)$ as $m \to \infty$.
\item[{\rm(b)}] $(\varrho_\lambda)^\# = (\varrho^\#)_\lambda$ for all $\lambda > 0$, where $(\cdot)^\#$ represents the spherically decreasing rearrangement.
 \item[{\rm(c)}] There exists a unique $\lambda^*(\varrho) > 0$ such that $\varrho_{\lambda^*(\varrho)} \in \mathcal{K}$.
 \item[{\rm(d)}] $\lambda^*(\varrho) > 1 \iff Q(\varrho) > 0$.
 \item[{\rm(e)}] $\lambda^*(\varrho) = 1 \iff Q(\varrho) = 0$.
 \item[{\rm(f)}] $\lambda \mapsto S_{\mu,\lambda}(\varrho)$ is strictly increasing {\rm(}decreasing{\rm)} for $\lambda \leq \lambda^*(\varrho)$ {\rm(}$\lambda \geq \lambda^*(\varrho)${\rm)}. Moreover, $S_{\mu,\lambda}(\varrho) < S_{\mu,\lambda^*(\varrho)}(\varrho)$ for all $\lambda \neq \lambda^*(\varrho)$ and $\lambda > 0$.
    \end{enumerate}
\end{lemma}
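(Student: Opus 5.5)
The whole lemma follows from explicit scaling identities. For $\varrho_\lambda(\boldsymbol{x})=\lambda\varrho(\lambda^{1/n}\boldsymbol{x})$, a change of variables $\boldsymbol{y}=\lambda^{1/n}\boldsymbol{x}$ gives
\[
\int_{\mathbb{R}^n}\varrho_\lambda^\gamma\,\dd\boldsymbol{x}=\lambda^{\gamma-1}\int_{\mathbb{R}^n}\varrho^\gamma\,\dd\boldsymbol{x},
\qquad
A_\alpha(\varrho_\lambda)=\lambda^{\frac{\alpha}{n}}A_\alpha(\varrho).
\]
The second identity uses the homogeneity $\Phi_\alpha(\lambda^{-1/n}\boldsymbol{z})=\lambda^{\alpha/n}\Phi_\alpha(\boldsymbol{z})$. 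Writing $a=\int\varrho^\gamma>0$ and $b=A_\alpha(\varrho)>0$ (positive since $\varrho\not\equiv0$ and $\Phi_\alpha<0$), we obtain
\[
S_{\mu,\lambda}(\varrho)=\frac{a_0a}{\gamma-1}\lambda^{\gamma-1}-\frac b2\lambda^{\alpha/n}+\mu M,
\qquad
Q(\varrho_\lambda)=na_0a\lambda^{\gamma-1}-\frac{\alpha b}{2}\lambda^{\alpha/n}.
\]
A direct computation shows $\lambda\frac{\dd}{\dd\lambda}S_{\mu,\lambda}(\varrho)=\frac1n Q(\varrho_\lambda)$. This identity is the backbone of (c)--(f).

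For (c), I solve $Q(\varrho_\lambda)=0$, which is equivalent to $\lambda^{\gamma-1-\alpha/n}=\frac{\alpha b}{2na_0a}$. The hypothesis $\gamma<\frac{n+\alpha}{n}$ makes the exponent $\gamma-1-\alpha/n$ strictly negative, so there is exactly one positive root:
\[
\lambda^*(\varrho)=\Big(\frac{2na_0a}{\alpha b}\Big)^{\frac{n}{\alpha-n(\gamma-1)}}.
\]
I also check that $\varrho_{\lambda^*}$ lies in $L^1\cap L^\gamma$, is nonnegative, and is not identically zero, so $\varrho_{\lambda^*}\in\mathcal K$.

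For (d) and (e), I factor $Q(\varrho_\lambda)=\lambda^{\alpha/n}\big(na_0a\lambda^{\gamma-1-\alpha/n}-\frac{\alpha b}{2}\big)$. The bracket is strictly decreasing in $\lambda$ and vanishes only at $\lambda^*$. Hence $Q(\varrho_\lambda)>0$ for $\lambda<\lambda^*$ and $Q(\varrho_\lambda)<0$ for $\lambda>\lambda^*$. Evaluating at $\lambda=1$ gives both equivalences.

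Part (f) follows from the same sign analysis combined with $\lambda\partial_\lambda S_{\mu,\lambda}=\frac1nQ(\varrho_\lambda)$. The derivative is positive on $(0,\lambda^*)$ and negative on $(\lambda^*,\infty)$, and it vanishes only at $\lambda^*$. So $S_{\mu,\lambda}(\varrho)$ is strictly increasing, then strictly decreasing, with a strict global maximum at $\lambda^*$.

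For (b), I observe that $\varrho_\lambda$ is $\lambda$ times $\varrho$ composed with the dilation $\boldsymbol{x}\mapsto\lambda^{1/n}\boldsymbol{x}$. The level sets therefore satisfy $\{\varrho_\lambda>t\}=\lambda^{-1/n}\{\varrho>t/\lambda\}$, and this set has measure $\lambda^{-1}|\{\varrho>t/\lambda\}|$. The rearrangement $(\varrho^\#)_\lambda$ has the same distribution function and is radially symmetric decreasing. By uniqueness of the symmetric decreasing rearrangement with a given distribution function, the two coincide.

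For (a), I fix $g\in L^{\gamma'}(\mathbb{R}^n)$ and change variables:
\[
\int_{\mathbb{R}^n}(\varrho_m)_\lambda\, g\,\dd\boldsymbol{x}=\int_{\mathbb{R}^n}\varrho_m(\boldsymbol{y})\,g(\lambda^{-1/n}\boldsymbol{y})\,\dd\boldsymbol{y}.
\]
The function $g(\lambda^{-1/n}\cdot)$ is still in $L^{\gamma'}$, so the right side converges to $\int\tilde\varrho\,g(\lambda^{-1/n}\cdot)\,\dd\boldsymbol{y}=\int\tilde\varrho_\lambda\,g\,\dd\boldsymbol{x}$ by the assumed weak convergence. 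This proves $(\varrho_m)_\lambda\rightharpoonup\tilde\varrho_\lambda$.

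I expect no genuine obstacle. The only points needing care are justifying $0<A_\alpha(\varrho)<\infty$ and $\int\varrho^\gamma<\infty$, and keeping track of the sign of the exponent $\gamma-1-\alpha/n$. Finiteness of $A_\alpha(\varrho)$ follows from the Hardy--Littlewood--Sobolev inequality, since $\gamma>\frac{2n}{2n-\alpha}$ places $\varrho$ in $L^{\frac{2n}{2n-\alpha}}$ by interpolation between $L^1$ and $L^\gamma$.
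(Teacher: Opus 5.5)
Your proposal is correct and follows essentially the same route as the paper. Both arguments use the explicit scaling identities for $\int\varrho_\lambda^\gamma$ and $A_\alpha(\varrho_\lambda)$, the closed-form $\lambda^*(\varrho)$ in \eqref{lambdastar}, and the identity $\frac{\d S_{\mu,\lambda}(\varrho)}{\d\lambda}=\frac{Q(\varrho_\lambda)}{n\lambda}$ to drive the sign analysis for (d)--(f). Your change-of-variables argument for (a), the distribution-function argument for (b), and the HLS check that $0<A_\alpha(\varrho)<\infty$ supply details the paper leaves as trivial.
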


\begin{proof}
Both (a) and (b) follow trivially. For (c), we have
\[
Q(\varrho_\lambda) = n a_0 \lambda^{\gamma - 1} \int_{\mathbb{R}^n} \varrho^\gamma \dd \boldsymbol{x} - \frac{\alpha \lambda^{\frac{\alpha}{n}}}{2} A_\alpha(\varrho).
\]
Then there exists a unique $\lambda^*(\varrho) > 0$ such that $\varrho_{\lambda^*(\varrho)} \in \mathcal{K}$. We have
\begin{equation}\label{lambdastar}
   \lambda^*(\varrho) := \bigg ( \frac{2na_0 \int_{\mathbb{R}^n} \varrho^\gamma \dd \boldsymbol{x}}{\alpha A_\alpha(\varrho)} \bigg )^{\frac{n}{\alpha - n(\gamma - 1)}}.
\end{equation}
Both (d) and (e) follow from \eqref{lambdastar}. For (f), we calculate by a change of variables 
to obtain
\[
S_{\mu,\lambda}(\varrho) = \frac{a_0 \lambda^{\gamma - 1}}{\gamma - 1} \int_{\mathbb{R}^n} \varrho^\gamma \dd \boldsymbol{x} - \frac{\lambda^{\frac{\alpha}{n}}}{2} A_\alpha(\varrho) + \mu \int_{\mathbb{R}^n} \varrho \dd \boldsymbol{x}.
\]
Then differentiating with respect to $\lambda$ yields
\begin{equation}\label{firstderiva}
    \frac{\d S_{\mu,\lambda}(\varrho)}{\d \lambda} = a_0 \lambda^{\gamma - 2} \int_{\mathbb{R}^n} \varrho^\gamma \dd \boldsymbol{x} - \frac{\alpha \lambda^{\frac{\alpha - n}{n}}}{2n} A_\alpha(\varrho) = \frac{Q(\varrho_\lambda)}{n\lambda}.
\end{equation}
From part (d) and (e), we see that $Q(\varrho_\lambda) > 0$ when $\lambda < \lambda^*(\varrho)$
and $Q(\varrho_\lambda) < 0$ when $\lambda > \lambda^*(\varrho)$. 
Therefore, the map: $\lambda \mapsto S_{\mu,\lambda}(\varrho)$ is strictly increasing for $0 < \lambda < \lambda^*(\varrho)$ and strictly decreasing for $\lambda > \lambda^*(\varrho)$, which implies that
part (f) follows.
\end{proof}

We now study the concavity of the map: $\lambda \mapsto S_{\mu,\lambda}(\varrho)$. 
As mentioned previously, the concavity of the free energy with respect to the mass-preserving scaling $\lambda$ is fundamental in proving the convexity of the second moment of the density,
allowing us to prove the growth of the support of the density.

\begin{lemma}\label{concavity}
 Let $\alpha \in (0,n)$ and 
 $\frac{2n}{2n - \alpha} < \gamma < \frac{n + \alpha}{n}$, and 
 let $\varrho \in L^1_+ \cap L^\gamma (\mathbb{R}^n)$ with  $\varrho \not\equiv 0$. 
 Then there exists a neighborhood of $\lambda^*(\varrho)$ such that $\lambda \mapsto S_{\mu,\lambda}(\varrho)$ is a concave mapping. In particular, the map{\rm :} $\lambda \mapsto S_{\mu,\lambda}(\varrho)$ is concave for $0 < \lambda \leq \lambda^*(\varrho)$.
\end{lemma}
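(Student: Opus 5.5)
The plan is a direct computation: $S_{\mu,\lambda}(\varrho)$ is an explicit function of $\lambda$, and I will show its second derivative is negative. By the change of variables in the proof of Lemma \ref{facts}(f), with $J:=\int_{\mathbb{R}^n}\varrho^\gamma\,\dd\boldsymbol{x}>0$ and $A:=A_\alpha(\varrho)>0$ (both finite and positive since $\varrho\not\equiv0$ and $\varrho\in L^1\cap L^\gamma$, with $A$ finite by the HLS inequality because $\gamma>\frac{2n}{2n-\alpha}$),
\[
S_{\mu,\lambda}(\varrho)=\frac{a_0\lambda^{\gamma-1}}{\gamma-1}J-\frac{\lambda^{\alpha/n}}{2}A+\mu\int_{\mathbb{R}^n}\varrho\,\dd\boldsymbol{x}.
\]
Differentiating \eqref{firstderiva} once more gives
\[
\frac{\d^2 S_{\mu,\lambda}(\varrho)}{\d\lambda^2}=\lambda^{-2}\Big(a_0(\gamma-2)\lambda^{\gamma-1}J+\frac{\alpha(n-\alpha)}{2n^2}\lambda^{\alpha/n}A\Big).
\]
The first term is negative, since $\gamma<\frac{n+\alpha}{n}<2$. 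The second term is positive, since $\alpha<n$. So the whole argument reduces to comparing these two terms.

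First I evaluate the second derivative at $\lambda^*=\lambda^*(\varrho)$. Using $Q(\varrho_{\lambda^*})=0$, which is \eqref{lambdastar} rewritten as $a_0(\lambda^*)^{\gamma-1}J=\frac{\alpha}{2n}(\lambda^*)^{\alpha/n}A$, substitution gives
\[
\frac{\d^2 S_{\mu,\lambda}}{\d\lambda^2}\Big|_{\lambda=\lambda^*}=(\lambda^*)^{\alpha/n-2}\frac{\alpha A}{2n}\Big(\gamma-2+\frac{n-\alpha}{n}\Big)=(\lambda^*)^{\alpha/n-2}\frac{\alpha A}{2n}\Big(\gamma-\frac{n+\alpha}{n}\Big)<0.
\]
The map $\lambda\mapsto S_{\mu,\lambda}(\varrho)$ is smooth on $(0,\infty)$, so strict negativity persists on a neighborhood of $\lambda^*$. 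This proves the first claim.

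For the global statement on $(0,\lambda^*]$, I factor out the positive term and look at the ratio
\[
r(\lambda):=\frac{a_0\lambda^{\gamma-1}J}{\lambda^{\alpha/n}A}=\frac{a_0J}{A}\,\lambda^{\gamma-1-\alpha/n}.
\]
The exponent $\gamma-1-\frac{\alpha}{n}$ is negative, so $r$ is strictly decreasing. Hence $r(\lambda)\ge r(\lambda^*)=\frac{\alpha}{2n}$ for all $0<\lambda\le\lambda^*$. Then
\[
\lambda^{2-\alpha/n}A^{-1}\frac{\d^2 S_{\mu,\lambda}}{\d\lambda^2}=(\gamma-2)r(\lambda)+\frac{\alpha(n-\alpha)}{2n^2}\le(\gamma-2)\frac{\alpha}{2n}+\frac{\alpha(n-\alpha)}{2n^2}=\frac{\alpha}{2n}\Big(\gamma-\frac{n+\alpha}{n}\Big)<0,
\]
which gives strict concavity on $(0,\lambda^*]$.

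There is no genuine obstacle here. The only points needing care are the sign bookkeeping, namely that $\gamma<2$ and $\gamma-1-\alpha/n<0$ both follow from $\gamma<\frac{n+\alpha}{n}$ with $\alpha<n$, and the check that $J$ and $A$ are finite and positive so that the formulas make sense.
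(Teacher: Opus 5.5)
Your proposal is correct and follows the paper's proof: differentiate \eqref{firstderiva} once more, use $Q(\varrho_{\lambda^*})=0$ to show the second derivative is strictly negative at $\lambda^*$ and hence near it, then use monotonicity in $\lambda$ to get negativity on $0<\lambda\le\lambda^*(\varrho)$. The only cosmetic difference is the normalization: you factor out the interaction term $\lambda^{\alpha/n-2}A$ and use that the ratio $r(\lambda)$ is decreasing, while the paper factors out $\lambda^{\gamma-3}$ and notes that the remaining bracket is increasing in $\lambda$.
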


\begin{proof}
Differentiating \eqref{firstderiva} again, we see
\begin{equation}\label{secondS}
\frac{\d^2 S_{\mu,\lambda}(\varrho)}{\d \lambda^2} 
= \lambda^{\gamma - 3} 
\Big( a_0 (\gamma - 2)  \int_{\mathbb{R}^n} \varrho^\gamma \dd \boldsymbol{x} 
+ \frac{\alpha(\alpha - n) \lambda^{\frac{\alpha - n(\gamma - 1)}{n}}}{2n^2} 
\int_{\mathbb R^n}\varrho\Phi_\alpha*\varrho\dd \boldsymbol{x} \Big).
\end{equation}
Substituting in $\lambda^*(\varrho)$ and using the fact that $\gamma < \frac{n + \alpha}{n}$, we obtain
\begin{equation}\label{cri}
    \frac{\d^2 S_{\mu,\lambda}(\varrho)}{\d \lambda^2} \bigg |_{\lambda^*(\varrho)} = \frac{a_0 (n(\gamma - 1) - \alpha) \lambda^*(\varrho)^{\gamma - 3}}{n} \int_{\mathbb{R}^n} \varrho^\gamma \dd \boldsymbol{x} < 0.
\end{equation}
Since $\lambda \mapsto \frac{\d^2 S_{\mu,\lambda}(\varrho)}{\d \lambda^2}$ is continuous from \eqref{secondS}, we obtain the concavity in a neighborhood of $\lambda^*(\varrho)$. 
For $\alpha \in (0,n)$, we see that the map:
    \[
    \lambda \mapsto a_0 (\gamma - 2)  \int_{\mathbb{R}^n} \varrho^\gamma \dd \boldsymbol{x} + \frac{\alpha (\alpha - n) \lambda^{\frac{\alpha - n(\gamma - 1)}{n}}}{2n^2} \int_{\mathbb R^n}\varrho\Phi_\alpha*\varrho\dd \boldsymbol{x}
    \]
    is increasing in $\lambda$. Hence, applying similar calculations to that of \eqref{cri} and using \eqref{secondS}, we obtain that, for $0< \lambda \leq \lambda^*(\varrho)$,
    \[
  \frac{\d^2 S_{\mu,\lambda}(\varrho)}{\d \lambda^2} \leq \frac{a_0 (n(\gamma - 1) - \alpha) \lambda^{\gamma - 3}}{n} \int_{\mathbb{R}^n} \varrho^\gamma \dd \boldsymbol{x} < 0.
    \]
Thus, $\lambda \mapsto S_{\mu,\lambda}(\varrho)$ is concave in $(0,\lambda^*(\varrho)]$.
\end{proof}

Now, we come back to showing that, given a global classical solution $(\rho(t),\mathcal{M}(t))$ 
of the attractive CEREs \eqref{1.1} with suitable enough initial conditions 
on $(\rho_0,\mathcal{M}_0)$, then 
either $R(t) \to \infty$ as $t \to \infty$ or $Q(\rho(t)) \geq \Lambda$ 
for all $t \geq 0$ and some $\Lambda > 0$. 
To do this, we require our initial conditions to be in some suitable set 
of form $\mathcal{I}_\mu$.

\begin{lemma}[Invariance of $\mathcal{I}_\mu$ under the Solution Semiflow]\label{invariant}
Let $\alpha\in(0,n)$ and $\frac{2n}{2n-\alpha}<\gamma<\frac{n+\alpha}{n}$,
and let $(\rho(t),\mathcal{M}(t))$ be a classical solution of the attractive CEREs {\rm(}$\kappa=1${\rm)} with initial data $(\rho_0,\mathcal{M}_0)\in \mathcal{I}_\mu$. Then
\[
(\rho(t),\mathcal{M}(t))\in \mathcal{I}_\mu \qquad \text{for all } t>0.
\]
\end{lemma}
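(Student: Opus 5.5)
The plan is to combine the conservation laws with a continuity argument in time. For a classical solution with compactly supported density, both the total mass $\int_{\mathbb{R}^n}\rho\,\dd\boldsymbol{x}$ and the energy $E(\rho,\mathcal{M})$ are conserved. Hence $I_\mu(\rho(t),\mathcal{M}(t)) = I_\mu(\rho_0,\mathcal{M}_0) < \ell_\mu$ for all $t\ge0$. The second defining condition of $\mathcal{I}_\mu$ is therefore automatically propagated, and it remains to show that $Q(\rho(t))>0$ for all $t>0$.

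I will argue by contradiction. The map $t\mapsto Q(\rho(t))$ is continuous for a classical solution, since $\int\rho^\gamma$ and $\int\rho\,\Phi_\alpha*\rho$ depend continuously on $t$ (the latter via the HLS inequality and continuity of $\rho$ in $L^1\cap L^\gamma$). Since $Q(\rho_0)>0$, if the claim fails there is a first time $t_0>0$ with $Q(\rho(t_0))=0$. Mass is conserved and positive, so $\rho(t_0)\not\equiv0$, and $\rho(t_0)\in L^1\cap L^\gamma$. Thus $\rho(t_0)\in\mathcal{K}$, and the definition of $\ell_\mu$ gives $S_\mu(\rho(t_0))\ge \ell_\mu$.

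The contradiction comes from identity \eqref{energy2}, which rewrites $I_\mu$ as
\[
I_\mu(\rho,\mathcal{M})=\frac12\int_{\mathbb{R}^n}\Big|\frac{\mathcal{M}}{\sqrt\rho}\Big|^2\dd\boldsymbol{x}+S_\mu(\rho)\ \ge\ S_\mu(\rho).
\]
Evaluating at $t_0$ gives
\[
\ell_\mu\le S_\mu(\rho(t_0))\le I_\mu(\rho(t_0),\mathcal{M}(t_0))=I_\mu(\rho_0,\mathcal{M}_0)<\ell_\mu,
\]
which is impossible. Hence $Q(\rho(t))>0$ for all $t$, and $(\rho(t),\mathcal{M}(t))\in\mathcal{I}_\mu$.

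Lemma \ref{facts} is not strictly needed here, but it gives a useful cross-check. If $Q(\rho(t))>0$, then $\lambda^*(\rho(t))>1$ by part (d), and part (f) then yields $S_\mu(\rho(t))< S_{\mu,\lambda^*}(\rho(t))$.

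The main obstacle is justifying the continuity of $t\mapsto Q(\rho(t))$ and the exact conservation of energy. Both rely on the solution being classical with compact support, so that the boundary terms vanish as in Section \ref{section3}. I would state these regularity facts explicitly and invoke them.
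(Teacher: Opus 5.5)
Your proposal is correct and matches the paper's proof: conservation of mass and energy keeps $I_\mu(\rho(t),\mathcal{M}(t))=I_\mu(\rho_0,\mathcal{M}_0)<\ell_\mu$, and continuity of $t\mapsto Q(\rho(t))$ gives a time $t_0$ with $\rho(t_0)\in\mathcal{K}$, where $\ell_\mu\le S_\mu(\rho(t_0))\le I_\mu(\rho(t_0),\mathcal{M}(t_0))<\ell_\mu$ is a contradiction. You also check explicitly, via mass conservation, that $\rho(t_0)\not\equiv0$ (needed for $\rho(t_0)\in\mathcal{K}$), which the paper leaves implicit.
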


\begin{proof}
By the conservation laws of mass and energy,
\[
I_\mu(\rho(t),\mathcal{M}(t)) = I_\mu(\rho_0,\mathcal{M}_0) < \ell_\mu
\qquad \text{for all } t >0.
\]
Suppose by contradiction that there exists $t_0>0$ such that $Q(\rho(t_0))\le 0$.
Since $Q(\rho_0)>0$ and $t\mapsto Q(\rho(t))$ is continuous,
there exists a first time $t^\ast>0$ such that $Q(\rho(t^\ast))=0$.
By definition of $\ell_\mu$, we have
\[
S_\mu(\rho(t^\ast))\ge \ell_\mu.
\]
On the other hand,
\[
S_\mu(\rho(t^\ast))
\le I_\mu(\rho(t^\ast),\mathcal{M}(t^\ast))
= I_\mu(\rho_0,\mathcal{M}_0)
< \ell_\mu,
\]
which is a contradiction.
\end{proof}

We now prove the existence of minimizers fo the variational problem:
$$
\ell_\mu = \inf_{\varrho \in \mathcal{K}} S_\mu(\varrho),
$$ 
in order to obtain a more explicit characterization of 
$\bigcup_{\mu > 0} \mathcal{I}_\mu$.
A key step is to show that, for any minimizing sequence of $S_\mu$ over $\mathcal{K}$, 
the associated nonlocal potential term converges. 
To this end, we first establish weak convergence of minimizing sequences 
by using the following lemma:

\begin{lemma}\label{weaklowersemi}
Let $q > 1$, and let $\varrho_k \in L^1_+ \cap L^q (\mathbb{R}^n)$ a bounded sequence. 
Then there exists $\tilde{\varrho} \in L^1_+ \cap L^q (\mathbb{R}^n)$ 
such that  $\varrho_k \rightharpoonup \tilde{\varrho}$ 
in $L^q(\R^n)$ {\rm(}up to a subsequence{\rm)} and
    \[
    \|\tilde{\varrho}\|_{L^1} \leq \liminf_{k \to \infty} \|\varrho_k\|_{L^1}.
    \]
\end{lemma}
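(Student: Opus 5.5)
Since $q>1$, the space $L^q(\R^n)$ is reflexive, and the sequence $(\varrho_k)$ is bounded in it. By the Banach--Alaoglu theorem (equivalently, Eberlein--\v{S}mulian in a reflexive space) we extract a subsequence, still denoted $\varrho_k$, and some $\tilde{\varrho}\in L^q(\R^n)$ with $\varrho_k \rightharpoonup \tilde{\varrho}$ weakly in $L^q(\R^n)$. It then remains to show two things: that $\tilde\varrho\ge 0$ a.e., and that $\tilde\varrho\in L^1$ with the stated lower semicontinuity of the $L^1$ norm.

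For nonnegativity, we test against $\phi=\mathds{1}_{\{\tilde\varrho<0\}\cap B_R}$, which belongs to $L^{q'}$ because $B_R$ has finite measure. This gives
\[
\int_{\{\tilde\varrho<0\}\cap B_R}\tilde\varrho \,\dd\boldsymbol{x} = \lim_{k\to\infty}\int_{\{\tilde\varrho<0\}\cap B_R}\varrho_k \,\dd\boldsymbol{x} \ge 0,
\]
so $\tilde\varrho\ge0$ a.e. on $B_R$. Letting $R\to\infty$ yields $\tilde\varrho\ge 0$ a.e. on $\R^n$.

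For the $L^1$ bound, the key point is that the constant function $1$ is not in $L^{q'}(\R^n)$, so weak $L^q$ convergence cannot be tested directly against it. The remedy is to localize: $\mathds{1}_{B_R}\in L^{q'}$, and since $\varrho_k\ge 0$ we get
\[
\int_{B_R}\tilde\varrho\,\dd\boldsymbol{x}=\lim_{k\to\infty}\int_{B_R}\varrho_k\,\dd\boldsymbol{x}\le \liminf_{k\to\infty}\|\varrho_k\|_{L^1}.
\]
Here the limit is taken along the extracted subsequence, and the liminf along that subsequence is bounded by the liminf of the full sequence only after a further passage to a subsequence realizing the original liminf. To handle this cleanly, we first pass to a subsequence along which $\|\varrho_k\|_{L^1}$ converges to $\liminf_k\|\varrho_k\|_{L^1}$, and only then extract the weakly convergent subsequence. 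Finally, monotone convergence as $R\to\infty$, using $\tilde\varrho\ge0$, gives $\|\tilde\varrho\|_{L^1}\le\liminf_k\|\varrho_k\|_{L^1}<\infty$, so $\tilde\varrho\in L^1_+\cap L^q(\R^n)$.

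The only real subtlety is this non-admissibility of the test function $1$ on $\R^n$. It is overcome by the cutoff $\mathds{1}_{B_R}$ combined with the nonnegativity of $\varrho_k$ and $\tilde\varrho$, and the monotone limit $R\to\infty$. Everything else is standard functional analysis.
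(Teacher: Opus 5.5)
Your argument is correct and complete. The one implicit restriction is $q<\infty$, which reflexivity needs and which holds in the paper's application $q=\gamma$. The paper gives no proof of its own and defers to \cite{Cheng_2025}; your route is the standard argument for this fact: weak compactness in $L^q$, then testing against the admissible cutoffs $\mathds{1}_{B_R}$ and $\mathds{1}_{\{\tilde\varrho<0\}\cap B_R}$ (after first fixing a subsequence that realizes the $\liminf$), then monotone convergence in $R$.
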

The proof is the same as in \cite{Cheng_2025}. \\\

Unlike the setting in \cite{Cheng_2025}, when $\alpha \neq n-2$, 
the potential no longer corresponds to the well-studied Poisson equation. 
Consequently, establishing convergence of the nonlocal term requires 
a different approach.
To this end, we obtain compactness by analyzing the potential in suitably chosen Sobolev spaces, 
or, in the case $\alpha \geq n-1$, in fractional Sobolev spaces. 
We exploit compact embedding results for these spaces, together with decay estimates 
for radial functions, to achieve the desired convergence.

For the next lemma, we denote $L^p_{\text{rad}}(\mathbb{R}^n)$ the Banach space of all radial 
functions in $L^p(\mathbb{R}^n)$, endowed with the subspace norm.

\begin{lemma}\label{weaktostrong}
    Let $n \geq 2$, $\alpha \in (0,n)$ and $\frac{2n}{2n - \alpha} <\gamma < \frac{n}{n - \alpha}$, 
    and let $\varrho_m \in L^1_{\text{rad}} \cap L^\gamma_{\text{rad}} (\mathbb{R}^n)$ 
    is a bounded sequence and $\varrho_m \rightharpoonup \tilde{\varrho}$ as $m \to \infty$ in $L^\gamma_{\text{rad}}(\mathbb{R}^n)$. Then, {\rm(}up to a subsequence{\rm)},
    \[
 \int_{\mathbb{R}^n} \varrho_m \Phi_\alpha * \varrho_m \dd \boldsymbol{x} \longrightarrow \int_{\mathbb{R}^n} \tilde{\varrho} \Phi_\alpha * \tilde{\varrho} \dd \boldsymbol{x}
 \qquad \mbox{as $m \to \infty$}.
     \]
\end{lemma}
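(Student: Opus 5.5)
We split the interaction energy into a near-field and a far-field part and obtain compactness of each separately. For $R>0$ choose a smooth cutoff and write
\[
\int_{\mathbb{R}^n} \varrho_m \Phi_\alpha * \varrho_m \dd \boldsymbol{x}
= \int_{B_R} \varrho_m \Phi_\alpha * \varrho_m \dd \boldsymbol{x}
+ \int_{B_R^c} \varrho_m \Phi_\alpha * \varrho_m \dd \boldsymbol{x}.
\]
Let $V_m := \Phi_\alpha*\varrho_m$. Since $\varrho_m$ is bounded in $L^1\cap L^\gamma$ and $\gamma>\frac{2n}{2n-\alpha}$, interpolation bounds $\varrho_m$ uniformly in $L^{\frac{2n}{2n-\alpha}}$. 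The HLS inequality then bounds $V_m$ uniformly in $L^{\frac{2n}{\alpha}}$ and bounds $\nabla V_m$ uniformly in $L^{\frac{2n}{\alpha+2}}$ when $\alpha<n-1$. When $\alpha\ge n-1$ the gradient is too singular, so we use instead that $V_m$ is, up to a constant, the inverse of the fractional Laplacian $(-\Delta)^{\frac{n-\alpha}{2}}$ applied to $\varrho_m$. This gives a uniform bound on $V_m$ in a fractional Sobolev space $W^{s,q}_{\rm loc}$ with $s\in(0,n-\alpha)$ and $q$ near $\gamma$. The upper restriction $\gamma<\frac{n}{n-\alpha}$ is used here: it places $V_m$ in $L^q$ spaces with $q<\infty$ (the Sobolev exponent $\frac{n\gamma}{n-(n-\alpha)\gamma}$ of $V_m$ is finite), and it leaves room to pair $V_m$ with the H\"older conjugate $\gamma'$.

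The near field is handled by compactness. By the compact embeddings of $W^{1,p}(B_R)$ or $W^{s,q}(B_R)$ (Rellich--Kondrachov and its fractional analogue from the appendix), a subsequence satisfies $V_m\to \tilde V$ strongly in $L^{\gamma'}(B_R)$. The exponent gap is what allows this: since $\gamma<\frac{n}{n-\alpha}$, the conjugate $\gamma'$ lies strictly below the critical Sobolev exponent of $V_m$. The limit is identified as $\tilde V=\Phi_\alpha*\tilde\varrho$ because $\varrho_m\rightharpoonup\tilde\varrho$ and convolution against $\Phi_\alpha$ tested with $C_c^\infty$ functions passes to the weak limit. A weak--strong pairing then gives $\int_{B_R}\varrho_m V_m\to\int_{B_R}\tilde\varrho\tilde V$. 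A diagonal argument over $R\in\mathbb{N}$ yields a single subsequence that works for every $R$.

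The tail uses radial symmetry, and I expect this step to be the main obstacle. We need
\[
\sup_m\Big|\int_{B_R^c}\varrho_m V_m\,\dd\boldsymbol{x}\Big|\to0\qquad\text{as } R\to\infty.
\]
For radial $L^1\cap L^\gamma$ functions, the Strauss-type decay estimate from the appendix gives $|\varrho_m(\boldsymbol{x})|\le C|\boldsymbol{x}|^{-n/\gamma}\|\varrho_m\|_{L^\gamma}$ up to averaging, so $\|\varrho_m\|_{L^{\frac{2n}{2n-\alpha}}(B_R^c)}$ is uniformly small. We interpolate between the $L^1$ bound and a uniform weighted pointwise bound, which is valid because $\frac{2n}{2n-\alpha}<\gamma$. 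Combining this with the uniform HLS bound for $V_m$ in $L^{\frac{2n}{\alpha}}$ through H\"older makes the tail uniformly small. The same estimate applies to $\tilde\varrho$, which is radial, lies in $L^1\cap L^\gamma$ by Lemma \ref{weaklowersemi}, and has norms controlled by weak lower semicontinuity.

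The difficulty is getting the radial decay strong enough to make the $L^{\frac{2n}{2n-\alpha}}$ norm on $B_R^c$ uniformly small with only $L^1$ mass control. No tightness is assumed, and mass could in principle escape to infinity. If the pointwise decay estimate is too weak, the fallback is to bound $\int_{B_R^c}\varrho_m\,|\Phi_\alpha|*\varrho_m$ directly through the radial structure of the kernel. We split the interaction into $|\boldsymbol{x}-\boldsymbol{y}|$ large, which contributes at most $R^{-\alpha}M^2$ after using that radial mass spread over shells far from the origin interacts weakly, and $|\boldsymbol{x}-\boldsymbol{y}|$ small, which is handled by HLS on thin shells combined with the decay. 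Letting $m\to\infty$ and then $R\to\infty$ completes the proof.
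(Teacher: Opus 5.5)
Your near-field step is sound and matches the paper's Steps 1--2 in substance. The paper localizes the density, working with $\Phi_\alpha*(\mathds{1}_{B_R}\varrho_m)$, and bounds the $W^{1,p}$ norm or the Gagliardo seminorm by hand; you use global HLS bounds on $V_m$ instead, and both work. One correction: it is $\gamma>\frac{2n}{2n-\alpha}$, not $\gamma<\frac{n}{n-\alpha}$, that puts $\gamma'$ below the critical exponent $\frac{2n}{\alpha}$.

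The gap is in the tail. A radial function bounded in $L^1\cap L^\gamma$ has no pointwise decay. The bound $\varrho(\boldsymbol{x})\lesssim|\boldsymbol{x}|^{-n/\gamma}\|\varrho\|_{L^\gamma}$ holds only for radially \emph{decreasing} functions. The appendix contains no such estimate for densities; Appendix B bounds the spherically averaged kernels $K$ and $\omega$. In fact $\sup_m\|\varrho_m\|_{L^{2n/(2n-\alpha)}(B_R^{\rm c})}$ need not tend to zero. Take $\varrho_m=\varrho_0+\mathds{1}_{A_m}$, where $\varrho_0\ge0$ is a fixed radial bump and $A_m$ is a spherical shell of radius $r_m\to\infty$ and volume one. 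This sequence is radial, bounded in $L^1\cap L^\gamma$, and converges weakly to $\varrho_0$. Yet $\|\varrho_m\|_{L^{2n/(2n-\alpha)}(B_R^{\rm c})}\ge1$ once $r_m>R$, while $|V_m|\ge|\Phi_\alpha*\varrho_0|$ pointwise keeps $\|V_m\|_{L^{2n/\alpha}}$ bounded below. So H\"older plus the global HLS bound cannot make the tail small. The shell's self-interaction vanishes only because the kernel, averaged over a large sphere, is small. Your fallback gestures at ``the radial structure of the kernel'' but never states the needed estimate, and its short-range part again relies on the invalid decay.

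The missing ingredient is the averaged-kernel bound of Lemma \ref{potentially}. Writing $\Phi_\alpha*\varrho(\boldsymbol{x})=\int_0^\infty K(|\boldsymbol{x}|,\eta)\varrho(\eta)\eta^{n-1}\dd\eta$, one has $|K(r,\eta)|\lesssim(r\eta)^{-\alpha/2}$ for $\alpha\in(0,n-1)$, uniformly even at $r=\eta$. Hence $\big|\int_{B_{R'}^{\rm c}}\varrho_m\Phi_\alpha*(\mathds{1}_{B_R^{\rm c}}\varrho_m)\dd\boldsymbol{x}\big|\lesssim(RR')^{-\alpha/2}$ from the mass bound alone. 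The paper pairs this with the separation estimate $(R'-R)^{-\alpha}$ for the near--far cross terms (Steps 3 and 5).

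For $\alpha\in(n-1,n)$ the averaged kernel keeps a singularity, $|K|\lesssim(r\eta)^{-\frac{n-1}{2}}|r-\eta|^{n-1-\alpha}$. The paper then uses a one-dimensional HLS inequality in $L^{\frac{2}{n+1-\alpha}}$ with weight $r^{\frac{n-1}{n+1-\alpha}}$. This weight is $r^{n-1}$ times a negative power of $r$, so the tail decays uniformly, using $1<\frac{2}{n+1-\alpha}<\gamma$ and interpolation between $L^1$ and $L^\gamma$. The case $\alpha=n-1$ follows by dominating $|\Phi_{n-1}|$, up to constants, by $|\Phi_{n-1-\delta}|+|\Phi_{n-1+\delta}|$.

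Your argument would close as written if the $\varrho_m$ were radially decreasing. Then $\varrho_m(\boldsymbol{x})\lesssim M|\boldsymbol{x}|^{-n}$ and the $L^{2n/(2n-\alpha)}$ tails are uniformly small. That is the situation in Lemma \ref{minexist}, but the lemma itself assumes only radial symmetry.
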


\begin{proof} We divide the proof into five steps:

\medskip
\noindent
{\bf 1.} There are two cases:

\medskip
\noindent
{\bf Case 1. $\alpha \in (0,n-1)$}.
For $\frac{n\gamma}{n(\gamma - 1) + \gamma} < p < \frac{n \gamma}{n - \gamma(n - \alpha - 1)}$, 
then $\frac{np}{n + p(n-\alpha)} < \frac{np}{n + p(n-\alpha-1)} < \gamma$.
For $R, R' > 0$, using the HLS inequality and H\"older's inequality, we have
    \begin{align*}
  \begin{split}
   \|\Phi_\alpha * (\mathds{1}_{B_R} \varrho ) \|_{L^p(B_{R'})} & \lesssim \||\cdot|^{-\alpha} * (\mathds{1}_{B_R} \varrho ) \|_{L^p(B_{R'})} \\
  & \lesssim \|\varrho\|_{L^\frac{np}{n + p(n-\alpha)}(B_R)} \\
  & \lesssim R^\frac{\gamma(n + p(n-\alpha)) - np}{p\gamma} \|\varrho\|_{L^\gamma},
 \end{split}
    \end{align*}
    and
    \begin{align*}
 \begin{split}
 \|\nabla \Phi_\alpha * (\mathds{1}_{B_R} \varrho ) \|_{L^p(B_{R'})} & \leq \||\cdot|^{-(\alpha + 1)} * (\mathds{1}_{B_R} \varrho ) \|_{L^p(B_{R'})} \\
 & \lesssim \|\varrho\|_{L^\frac{np}{n + p(n-\alpha-1)}(B_R)} \\
& \lesssim R^\frac{\gamma(n + p(n - \alpha - 1)) - np}{p\gamma} \|\varrho\|_{L^\gamma}.
  \end{split}
    \end{align*}
Since $\varrho_m$ is a bounded sequence in $L^\gamma(\mathbb{R}^n)$, 
then $\Phi_\alpha * (\mathds{1}_{B_R} \varrho_m ) \in W^{1,p}(B_{R'})$ is a bounded sequence.
The Rellich-Kondrachov theorem tells us that, for $n \geq 2$ and $\gamma < \frac{n}{n-\alpha}$, 
then $1 < p < \frac{n \gamma}{n - \gamma(n - \alpha - 1)} < n$ and
    \[
 W^{1,p}(B_{R'}) \Subset L^q(B_{R'})\qquad \mbox{for any $1 \leq q < \frac{n p}{n - p}$}.
    \]
Since $\frac{n\gamma}{n(\gamma - 1) + \gamma} < p$, we have
    \[
 \frac{n p}{n - p} > \frac{\gamma}{\gamma - 1} = \gamma',
    \]
    where $\gamma' > 1$ is the H\"older conjugate of $\gamma$, so that
    \begin{equation*}
        W^{1,p}(B_{R'}) \Subset L^{\gamma'}(B_{R'}).
    \end{equation*}
    Thus, by compactness, there exists $g \in L^{\gamma'}(B_{R'})$ such that, up to a subsequence,
    \[
  \Phi_\alpha * (\mathds{1}_{B_R} \varrho_m ) \longrightarrow g
 \qquad \mbox{in $L^{\gamma'}(B_{R'})$ as $m \to \infty$}. 
    \]
   
\medskip    
\noindent   
{\bf Case 2. $\alpha \in [n-1,n)$}.
In order to prove the local convergence of the potential in the more singular case, 
we need to consider fractional Sobolev spaces.
We take $0 < s < \mu < n - \alpha \leq 1$ and 
$\frac{n\gamma}{n(\gamma - 1) + s\gamma} < p < \frac{n \gamma}{n - \gamma(n - \alpha - s)}$. 
Then, for $R,R'>0$ and $\varrho \in L^\gamma(\mathbb{R}^n)$, by the HLS inequality, we have
 \begin{equation}\label{simpfrac}
 \|\Phi_\alpha*(\mathds{1}_{B_R}\varrho)\|_{L^p(B_{R'})}
 \lesssim \|\varrho\|_{L^\frac{np}{n + p(n-\alpha)}(B_R)}.
 \end{equation}
 Since $p < \frac{n \gamma}{n - \gamma(n - \alpha - s)}$, 
 then $\frac{np}{n + p(n-\alpha)} < \gamma$. 
 Therefore, combining this with \eqref{simpfrac} and using H\"older's inequality, we obtain
  \[ \|\Phi_\alpha*(\mathds{1}_{B_R}\varrho)\|_{L^p(B_{R'})} \lesssim R^\frac{\gamma(n + p(n-\alpha)) - np}{p\gamma} \|\varrho\|_{L^\gamma}.
 \]
We now consider the semi-norm for $W^{s,p}(B_{R'})$, as defined in Definition \ref{fraccy}, given by
\begin{align}\label{seminorm}
 \begin{split}
 [\Phi_\alpha*(\mathds{1}_{B_R}\varrho)]_{W^{s,p}(B_{R'})}^p & = \int_{B_{R'}} \int_{B_{R'}} \frac{|\Phi_\alpha*(\mathds{1}_{B_R}\varrho)(\boldsymbol{x}) - \Phi_\alpha*(\mathds{1}_{B_R}\varrho)(\boldsymbol{y})|^p}{|\boldsymbol{x} - \boldsymbol{y}|^{n + sp}} \dd \boldsymbol{y} \dd \boldsymbol{x} \\
 & = \int_{B_{R'}} \int_{B_{R'}} \Big | \int_{B_R} \frac{|\boldsymbol{x} - \boldsymbol{z}|^{-\alpha} - |\boldsymbol{y} - \boldsymbol{z}|^{-\alpha}}{\alpha |\boldsymbol{x} - \boldsymbol{y}|^{\frac{n}{p} + s}} \varrho(\boldsymbol{z}) \dd \boldsymbol{z} \Big|^p \dd \boldsymbol{y} \dd \boldsymbol{x}.
\end{split}
\end{align}
Considering the difference of the two Riesz potentials in \eqref{seminorm}, we have
\begin{align}\label{fracfirst}
 \begin{split}
 \Big| \frac{|\boldsymbol{x} - \boldsymbol{z}|^{-\alpha} - |\boldsymbol{y} - \boldsymbol{z}|^{-\alpha}}{\alpha} \Big|
 & = \Big| \frac{1}{\alpha} \int^1_0 \frac{\d}{\d t} \big ( t |\boldsymbol{x} - \boldsymbol{z}| + (1-t) |\boldsymbol{y} - \boldsymbol{z}| \big )^{- \alpha} \dd t \Big| \\
 & = \Big| \int^1_0 \frac{|\boldsymbol{x} - \boldsymbol{z}| - |\boldsymbol{y} - \boldsymbol{z}|}{\big ( t |\boldsymbol{x} - \boldsymbol{z}| + (1-t) |\boldsymbol{y} - \boldsymbol{z}| \big )^{\alpha + 1}} \dd t \Big| \\
& \leq  \int^1_0 \frac{|\boldsymbol{x} - \boldsymbol{y}|}{\big ( t |\boldsymbol{x} - \boldsymbol{z}| + (1-t) |\boldsymbol{y} - \boldsymbol{z}| \big )^{\alpha + 1}} \dd t.
\end{split}
\end{align}
        
For $\boldsymbol{x},\boldsymbol{y},\boldsymbol{z} \in \mathbb{R}^n$, then at least one of the following
inequalities holds:
$$
|\boldsymbol{x} - \boldsymbol{y}| \leq 2 |\boldsymbol{x} - \boldsymbol{z}|
\qquad\mbox{or}\qquad |\boldsymbol{x} - \boldsymbol{y}| \leq 2 |\boldsymbol{y} - \boldsymbol{z}|.
$$ 
Indeed, if the first inequality fails, then
$$
|\boldsymbol{x} - \boldsymbol{y}| > 2 |\boldsymbol{x} - \boldsymbol{z}|.
$$ 
By the triangle inequality, we have
\[
 |\boldsymbol{x} - \boldsymbol{y}| \leq |\boldsymbol{x} - \boldsymbol{z}| + |\boldsymbol{y} - \boldsymbol{z}| < \frac{1}{2} |\boldsymbol{x} - \boldsymbol{y}| + |\boldsymbol{y} - \boldsymbol{z}|,
 \]
which implies
$$
|\boldsymbol{x} - \boldsymbol{y}| \leq 2 |\boldsymbol{y} - \boldsymbol{z}|.
$$
Thus, one of the two inequalities must hold.

In particular, if $|\boldsymbol{x} - \boldsymbol{z}| \leq |\boldsymbol{y} - \boldsymbol{z}|$, then $$
|\boldsymbol{x} - \boldsymbol{y}| \leq 2 |\boldsymbol{y} - \boldsymbol{z}|.
$$
Combining this with \eqref{fracfirst}, we obtain
 \begin{align}\label{fracsecond}
 \begin{split}
 \Big| \frac{|\boldsymbol{x} - \boldsymbol{z}|^{-\alpha} - |\boldsymbol{y} - \boldsymbol{z}|^{-\alpha}}{\alpha} \Big| & \leq |\boldsymbol{x} - \boldsymbol{y}|^\mu |\boldsymbol{x} - \boldsymbol{z}|^{-\mu - \alpha} \int^1_0 \Big ( \frac{1-t}{2}\Big )^{\mu - 1} \dd t \\
 & = \frac{2^{1-\mu}}{\mu}|\boldsymbol{x} - \boldsymbol{y}|^\mu |\boldsymbol{x} - \boldsymbol{z}|^{-\mu - \alpha}.
  \end{split}
 \end{align}
A symmetric estimate holds for $|\boldsymbol{y} - \boldsymbol{z}| \leq |\boldsymbol{x} - \boldsymbol{z}|$. 

We therefore define the set:
 \[
 B^{(\boldsymbol{x},\boldsymbol{y})}_R := B_R 
 \cap \{ \boldsymbol{z} \in \mathbb{R}^n \,\,: \, |\boldsymbol{x} - \boldsymbol{z}| \leq |\boldsymbol{y} - \boldsymbol{z}|\}.  \]
Thus, combining \eqref{seminorm} with \eqref{fracsecond}, we have
    \begin{align}\label{seminorm1}
 \begin{split}
  & [\Phi_\alpha*(\mathds{1}_{B_R}\varrho)]_{W^{s,p}(B_{R'})}^p \\
 & \leq \int_{B_{R'}} \int_{B_{R'}} |\boldsymbol{x} - \boldsymbol{y}|^{(\mu -s)p - n} \Big( \Big( \int_{B^{(\boldsymbol{x},\boldsymbol{y})}_R} + \int_{B^{(\boldsymbol{y},\boldsymbol{x})}_R} \Big) \Big| \frac{|\boldsymbol{x} - \boldsymbol{z}|^{-\alpha} - |\boldsymbol{y} - \boldsymbol{z}|^{-\alpha}}{\alpha |\boldsymbol{x} - \boldsymbol{y}|^{\mu}} \Big| \varrho(\boldsymbol{z}) \dd \boldsymbol{z} \Big)^p \dd \boldsymbol{y} \dd \boldsymbol{x} \\
 & \lesssim \int_{B_{R'}} \int_{B_{R'}} |\boldsymbol{x} - \boldsymbol{y}|^{(\mu -s)p - n} 
 \Big( \int_{B_R} \big (|\boldsymbol{x} - \boldsymbol{z}|^{-\mu - \alpha} + |\boldsymbol{y} - \boldsymbol{z}|^{-\mu - \alpha} \big )\varrho(\boldsymbol{z}) \dd \boldsymbol{z} \Big)^p \dd \boldsymbol{y} \dd \boldsymbol{x} \\
 & = \int_{B_{R'}} \int_{B_{R'}} |\boldsymbol{x} - \boldsymbol{y}|^{(\mu -s)p - n} \big ( |\cdot|^{-\mu - \alpha}*(\mathds{1}_{B_R}\varrho) (\boldsymbol{x}) + |\cdot|^{-\mu - \alpha}*(\mathds{1}_{B_R}\varrho) (\boldsymbol{y}) \big )^p \dd \boldsymbol{y} \dd \boldsymbol{x}.
 \end{split}
 \end{align}
 Applying Minkowski's inequality and Tonelli's theorem to \eqref{seminorm1}, we obtain
\begin{align}\label{seminorm2}
 \begin{split}
 & [\Phi_\alpha*(\mathds{1}_{B_R}\varrho)]_{W^{s,p}(B_{R'})}^p \\
& \lesssim \bigg ( \Big( \int_{B_{R'}} \int_{B_{R'}} |\boldsymbol{x} - \boldsymbol{y}|^{(\mu -s)p - n} \big ( |\cdot|^{-\mu - \alpha}*(\mathds{1}_{B_R}\varrho) (\boldsymbol{x}) \big )^p \dd \boldsymbol{y} \dd \boldsymbol{x} \Big)^\frac{1}{p} \\
 & \qquad\,\, + \Big( \int_{B_{R'}} \int_{B_{R'}} |\boldsymbol{x} - \boldsymbol{y}|^{(\mu -s)p - n} \big ( |\cdot|^{-\mu - \alpha}*(\mathds{1}_{B_R}\varrho) (\boldsymbol{y}) \big )^p \dd \boldsymbol{y} \dd \boldsymbol{x} \Big)^\frac{1}{p} \bigg )^p \\
 & = 2^p \int_{B_{R'}} \int_{B_{R'}} |\boldsymbol{x} - \boldsymbol{y}|^{(\mu -s)p - n} \big ( |\cdot|^{-\mu - \alpha}*(\mathds{1}_{B_R}\varrho) (\boldsymbol{y}) \big )^p \dd \boldsymbol{y} \dd \boldsymbol{x} \\
 & \leq 2^p \int_{B_{R'}} |\cdot|^{(\mu -s)p - n} * \big (|\cdot|^{-\mu - \alpha}*(\mathds{1}_{B_R}\varrho) \big )^p \dd \boldsymbol{x}.
\end{split}
\end{align}
Now, taking $1 < r \leq \frac{n \gamma}{p(n - \gamma(n-\alpha-s))}$, then
$\frac{n\gamma}{n(\gamma - 1) + s\gamma} < pr \leq \frac{n \gamma}{n - \gamma(n - \alpha - s)}$. Applying H\"older's inequality to the last integral in \eqref{seminorm2} and applying the HLS inequality twice, we have
 \begin{align}\label{seminorm3}
 \begin{split}
[\Phi_\alpha*(\mathds{1}_{B_R}\varrho)]_{W^{s,p}(B_{R'})}^p & \lesssim (R')^\frac{n(r-1)}{r} \big \| \, |\cdot|^{(\mu -s)p - n} * \big (|\cdot|^{-\mu - \alpha}*(\mathds{1}_{B_R}\varrho) \big )^p \, \big \|_{L^r(\mathbb{R}^n)} \\
 & \lesssim (R')^\frac{n(r-1)}{r} \big \| \, |\cdot|^{-\mu - \alpha}*(\mathds{1}_{B_R}\varrho) \, \big \|_{L^\frac{npr}{n + pr(\mu -s)}(\mathbb{R}^n)}^p \\
 & \lesssim (R')^\frac{n(r-1)}{r} \| \varrho \|_{L^\frac{npr}{n + pr(n - \alpha - s)}(B_R)}^p.
 \end{split}
\end{align}
 Notice that both the applications of the HLS inequality are well-defined. That is,
 under the conditions: $0 < n - (\mu -s)p$ and $\mu + \alpha < n$, using 
 $\frac{n\gamma}{n(\gamma - 1) + s\gamma} < pr$ and $\frac{2n}{2n - \alpha} < \gamma < \frac{n}{n-\alpha}$, we obtain
 \[
 1 < \frac{n\gamma}{(2n - \alpha)\gamma - n} < \frac{npr}{n + pr(n - \alpha - s)} < \frac{npr}{n + pr(\mu -s)}.
\] 
Moreover, Since $pr \leq \frac{n \gamma}{n - \gamma(n - \alpha - s)}$, 
it follows that $\frac{npr}{n + pr(n - \alpha - s)} \leq \gamma$. 
Combining this with \eqref{seminorm3} and applying H\"older's inequality, we deduce
 \begin{equation*}
 [\Phi_\alpha*(\mathds{1}_{B_R}\varrho)]_{W^{s,p}(B_{R'})}^p \lesssim (R')^\frac{n(r-1)}{r} R^\frac{n + pr(n - \alpha - s)}{r} \| \varrho \|_{L^\gamma}^p.
\end{equation*} 
Therefore, since $\varrho_m$ is a bounded sequence in $L^\gamma(\mathbb{R}^n)$, 
then $\Phi_\alpha * (\mathds{1}_{B_R} \varrho_m ) \in W^{s,p}(B_{R'})$ is 
a bounded sequence. 
For fractional Sobolev spaces, since $sp < n$, by Lemma \ref{compactfraccy}, we have
 \[
  W^{s,p}(B_{R'}) \Subset L^q(B_{R'})\qquad \mbox{for any $1 \leq q < \frac{n p}{n - sp}$}. 
 \]
Since $\frac{n\gamma}{n(\gamma - 1) + s\gamma} < p$, we have
    \[
  \frac{n p}{n - sp} > \frac{\gamma}{\gamma - 1} = \gamma',
    \]
so that
    \begin{equation*}
  W^{s,p}(B_{R'}) \Subset L^{\gamma'}(B_{R'}).
    \end{equation*}
    Thus, by compactness, there exists $g \in L^{\gamma'}(B_{R'})$ such that, up to a subsequence,
    \[ \Phi_\alpha * (\mathds{1}_{B_R} \varrho_m ) \longrightarrow g,
    \]
    in $L^{\gamma'}(B_{R'})$ as $m \to \infty$.
    
\bigskip
\noindent
{\bf 2.}
Notice that $\varrho_m \rightharpoonup \tilde{\varrho}$ in $L^\gamma(\mathbb{R}^n)$
as $m \to \infty$, for $\gamma > \frac{2n}{2n - \alpha}$. 
Let $f \in L^{\gamma}(B_{R'})$, extended by zero outside $B_{R'}$.
By the HLS inequality and the fact that $B_R$ has finite measure, 
we have
$$
\Phi_\alpha * f \in L^\frac{n\gamma}{n - \gamma (n-\alpha)}(B_R) \subset L^{\gamma'}(B_R).
$$
Using Fubini's theorem, we obtain
    \[
 \int_{B_{R'}} f \Phi_\alpha * (\mathds{1}_{B_R}\varrho_m) \dd \boldsymbol{x} = \int_{B_{R}} \varrho_m  \Phi_\alpha * f \dd \boldsymbol{x} \longrightarrow \int_{B_{R}} \tilde{\varrho}  \Phi_\alpha * f \dd \boldsymbol{x} = \int_{B_{R'}} f \Phi_\alpha * (\mathds{1}_{B_R}\tilde{\varrho}) \dd \boldsymbol{x},
\]
as $m \to \infty$. 
It follows that
$$
\Phi_\alpha * (\mathds{1}_{B_R}\varrho_m) \rightharpoonup \Phi_\alpha * (\mathds{1}_{B_R}\tilde{\varrho})
\qquad \mbox{in $L^{\gamma'}(B_{R'})$  as $m \to \infty$.}
$$
By uniqueness of weak limits,  we coclude that
$$
g \equiv \Phi_\alpha * (\mathds{1}_{B_R}\tilde{\varrho}) \qquad\mbox{in $L^{\gamma'}(\R^n)$}.
$$
Therefore, up to a subsequence,
    \[
  \Phi_\alpha * (\mathds{1}_{B_R}\varrho_m) \longrightarrow \Phi_\alpha * (\mathds{1}_{B_R}\tilde{\varrho})\qquad\mbox{strongly in $L^{\gamma'}(B_{R'})$ as $m \to \infty$}.
    \]
Consequently, again up to a subsequence,
    \begin{equation}\label{localcon}
 \int_{B_{R'}} \varrho_m \Phi_\alpha * (\mathds{1}_{B_R}\varrho_m) \dd \boldsymbol{x} \longrightarrow \int_{B_{R'}} \tilde{\varrho} \Phi_\alpha * (\mathds{1}_{B_R}\tilde{\varrho}) \dd \boldsymbol{x}
 \qquad\mbox{as $m \to \infty$.}
    \end{equation}
  
\bigskip
\noindent
{\bf 3.}
    Using that $\varrho_m$ are bounded in $L^1(\mathbb{R}^n)$ and taking $R' > R > 0$, we estimate
    \begin{align}\label{R'R}
\begin{split}
  \Big|\int_{B_{R'}^{\rm c}} \varrho_m \Phi_\alpha * (\mathds{1}_{B_R}\varrho_m) \dd \boldsymbol{x}\Big| 
 & \lesssim \frac{1}{(R' - R)^\alpha} \int_{B_{R'}^{\rm c}} |\varrho_m(\boldsymbol{x})| \int_{B_{R}} |\varrho_m(\boldsymbol{y})| \dd \boldsymbol{y} \dd \boldsymbol{x} \\
  & \lesssim \frac{1}{(R' - R)^\alpha}.
 \end{split}
    \end{align}
Consequently, by choosing $R'>0$ sufficiently large compared to $R>0$ in \eqref{R'R} and combining 
this estimate with \eqref{localcon}, we obtain
    \begin{equation}\label{lesslocalcon}
 \int_{\mathbb{R}^n} \varrho_m \Phi_\alpha * (\mathds{1}_{B_R}\varrho_m) \dd \boldsymbol{x} \longrightarrow \int_{\mathbb{R}^n} \tilde{\varrho} \Phi_\alpha * (\mathds{1}_{B_R}\tilde{\varrho}) \dd \boldsymbol{x}\qquad \mbox{as $m \to \infty$}.
    \end{equation}
    
\bigskip
\noindent
{\bf 4.}
Since $\varrho_m$ is radial, we can use the radial representation formula with its bounds 
as in Lemma \ref{potentially} to obtain
    \[
 (\Phi_\alpha * \varrho_m)(\boldsymbol{x}) = \int^\infty_0 K(|\boldsymbol{x}|,\eta) \varrho_m(\eta) \eta^{n-1} \dd \eta,
    \]
    where
    \[
 K(r,\eta) := -\int_{\mathbb{S}^{n-1}} \frac{|r\boldsymbol{e_1}-\eta \boldsymbol{y}|^{-\alpha}}{-\alpha} \dd S (\boldsymbol{y})
    \]
    satisfies that, for $\alpha \neq n-1$,
    \[
 |K(r,\eta)| \lesssim
        \begin{cases}
\displaystyle
(r\eta)^{-\frac{\alpha}{2}} & \mbox{ for $\alpha \in(0,n-1)$}, \\[1mm]
 \displaystyle
 (r\eta)^{-\frac{n-1}{2}} |r-\eta|^{n-1-\alpha}
 & \mbox{ for $\alpha \in (n-1,n)$}.
\end{cases}
    \]
   
   \smallskip
   \noindent
   {\bf Case 1.} $\alpha \in (0,n-1)$. Taking $R,R'>0$ and using that $\varrho_m$ are bounded in $L^1(\mathbb{R}^n)$, we have
    \begin{align}\label{biggRs1}
 \begin{split}
  \Big| \int_{B_{R'}^{\rm c}} \varrho_m \Phi_\alpha * (\mathds{1}_{B_R^{\rm c}}\varrho_m) \dd \boldsymbol{x} \Big| 
 & = \Big| \int^\infty_{R'} \varrho_m(r) r^{n-1} \int^\infty_R K(r,\eta) \varrho_m(\eta) \eta^{n-1} \dd \eta \dd r \Big| \\
 & \lesssim \frac{1}{(R'R)^\frac{\alpha}{2}} \int^\infty_{R'} |\varrho_m(r)| r^{n-1} \int^\infty_R |\varrho_m(\eta)| \eta^{n-1} \dd \eta \dd r \\
 & \lesssim \frac{1}{(R'R)^\frac{\alpha}{2}}.
 \end{split}
    \end{align}
      
  \smallskip
  \noindent
  {\bf Case 2}. $\alpha \in (n-1,n)$.
  Taking $R,R'>0$, and using that $\varrho_m$ are bounded in $L^1 \cap L^\gamma (\mathbb{R}^n)$, the HLS inequality, and $\frac{2}{n + 1 - \alpha} < \frac{2n}{2n - \alpha} < \gamma$, we obtain
    \begin{align}\label{biggRs2}
      \begin{split}
 \Big| \int_{B_{R'}^{\rm c}} \varrho_m \Phi_\alpha * (\mathds{1}_{B_R^{\rm c}}\varrho_m) \dd \boldsymbol{x} \Big|
 & = \Big| \int^\infty_{R'} \varrho_m(r) r^{n-1} \int^\infty_R K(r,\eta) \varrho_m(\eta) \eta^{n-1} \dd \eta \dd r \Big| \\
 & \lesssim \int^\infty_{R'} |\varrho_m(r)| r^{\frac{n-1}{2}} \int^\infty_R |r-\eta|^{n-1-\alpha} |\varrho_m(\eta)| \eta^{\frac{n-1}{2}} \dd \eta \dd r \\
 & \lesssim \Big( \int^\infty_{R'} |\varrho_m(r)|^\frac{2}{n + 1 - \alpha} r^{\frac{n-1}{n + 1 - \alpha}} \int^\infty_R |\varrho_m(\eta)|^\frac{2}{n + 1 - \alpha} \eta^{\frac{n-1}{n + 1 - \alpha}} \dd \eta \dd r \Big)^\frac{n + 1 - \alpha}{2} \\
 & \leq \frac{1}{(R'R)^\frac{(n-1)(n-\alpha)}{n + 1 - \alpha}} \|\varrho_m\|_{L^\frac{2}{n + 1 - \alpha}(\mathbb{R}^n)}^2 \\
 & \leq \frac{1}{(R'R)^\frac{(n-1)(n-\alpha)}{n + 1 - \alpha}} \|\varrho_m\|_{L^1}^\frac{\gamma(n + 1 - \alpha) - 2}{\gamma - 1} \|\varrho_m\|_{L^\gamma}^\frac{\gamma(\alpha - (n-1))}{\gamma - 1} \\
 & \lesssim \frac{1}{(R'R)^\frac{(n-1)(n-\alpha)}{n + 1 - \alpha}}.
 \end{split}
    \end{align}

\smallskip
\noindent
{\bf Case 3}. $\alpha = n-1$. 
Taking $R,R'>0$, by considering the behavior of $\Phi_\alpha$ around and away from 
the origin separately, we obtain control of the potential energy, 
by the use of less and more singular Riesz potentials. 
That is, given $0 < \delta < 1$, we have
 \begin{equation}\label{split}
 \Big| \int_{B_{R'}^{\rm c}} \varrho_m \Phi_{n-1} * (\mathds{1}_{B_R^{\rm c}}\varrho_m) \dd \boldsymbol{x} \Big| \lesssim \Big| \int_{B_{R'}^{\rm c}} \varrho_m \Phi_{n-1- \delta} * (\mathds{1}_{B_R^{\rm c}}\varrho_m) \dd \boldsymbol{x} \Big| 
  + \Big| \int_{B_{R'}^{\rm c}} \varrho_m \Phi_{n - 1 + \delta} * (\mathds{1}_{B_R^{\rm c}}\varrho_m) 
 \dd \boldsymbol{x} \Big|.
 \end{equation}
Thus, taking $0 < \delta < \frac{n-1}{n} = \frac{\alpha}{n}$, 
we see that $\frac{2}{2- \delta} < \frac{2n}{n + 1} = \frac{2n}{2n - \alpha}$. 
Combining this with \eqref{biggRs1}--\eqref{split}, we have
 \begin{equation}\label{biggRs3}
  \Big| \int_{B_{R'}^{\rm c}} \varrho_m \Phi_{n-1} * (\mathds{1}_{B_R^{\rm c}}\varrho_m) \dd \boldsymbol{x} \Big| \lesssim \frac{1}{(R'R)^\frac{n - 1 - \delta}{2}} + \frac{1}{(R'R)^\frac{(n-1)(1-\delta)}{2 - \delta}}.
  \end{equation}
   
  \bigskip     
 \noindent
 {\bf 5.}
For $R > R' > 0$, as in \eqref{R'R}, 
using that $\varrho_m$ are bounded in $L^1(\mathbb{R}^n)$ and Fubini's theorem, 
we obtain by symmetry that
\begin{equation}\label{R'Rother}
\Big| \int_{B_{R'}} \varrho_m \Phi_\alpha * (\mathds{1}_{B_R^{\rm c}}\varrho_m) \dd \boldsymbol{x} \Big| \lesssim \frac{1}{(R - R')^\alpha}.
    \end{equation}
Combining \eqref{lesslocalcon}--\eqref{biggRs2} with \eqref{biggRs3}--\eqref{R'Rother} and 
taking $R > R' > 0$ large with $R>0$ sufficiently larger than $R'>0$, we obtain the desired result.
 \end{proof}
    \setcounter{step}{0}
    \setcounter{case}{0}

\begin{lemma}\label{minexist}
Let $n \geq 2$, $\alpha \in (0,n)$, 
and $\frac{2n}{2n - \alpha} < \gamma < \frac{n + \alpha}{n}$. 
Then  $\ell_\mu > 0$, and there exists a minimizer of $S_\mu$ over $\mathcal{K}$.
\end{lemma}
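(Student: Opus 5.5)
The plan is to prove both claims directly from the structure of $S_\mu$ on $\mathcal{K}$ recorded in \eqref{overk}, using the HLS inequality, symmetric decreasing rearrangement, Lemma \ref{facts}, Lemma \ref{weaklowersemi} and Lemma \ref{weaktostrong}.

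\emph{Positivity of $\ell_\mu$.} For $\varrho\in\mathcal{K}$ the constraint $Q(\varrho)=0$ reads $n a_0\|\varrho\|_{L^\gamma}^\gamma=\frac{\alpha}{2}A_\alpha(\varrho)$. First apply HLS, $A_\alpha(\varrho)\lesssim\|\varrho\|_{L^{2n/(2n-\alpha)}}^2$. Then interpolate between $L^1$ and $L^\gamma$ with $\frac{2n-\alpha}{2n}=\theta+\frac{1-\theta}{\gamma}$, which gives $2(1-\theta)=\frac{\alpha\gamma}{n(\gamma-1)}$. This yields
\[
\|\varrho\|_{L^\gamma}^\gamma\lesssim\|\varrho\|_{L^1}^{2\theta}\,\|\varrho\|_{L^\gamma}^{\frac{\alpha\gamma}{n(\gamma-1)}}.
\]
Since $\gamma<\frac{n+\alpha}{n}$, the exponent $\frac{\alpha\gamma}{n(\gamma-1)}$ is strictly larger than $\gamma$. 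Hence on $\mathcal{K}$ we get a lower bound $\|\varrho\|_{L^\gamma}^\gamma\ge c\,\|\varrho\|_{L^1}^{-\beta}$ for some $\beta>0$. Inserting this into \eqref{overk} gives $S_\mu(\varrho)\ge c'\|\varrho\|_{L^1}^{-\beta}+\mu\|\varrho\|_{L^1}$. The right-hand side is bounded below by a positive constant uniformly over mass $m\in(0,\infty)$, so $\ell_\mu>0$.

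\emph{Reduction to radial minimizing sequences.} Take a minimizing sequence $(\varrho_m)\subset\mathcal{K}$ and replace each $\varrho_m$ by its symmetric decreasing rearrangement $\varrho_m^\#$. Rearrangement preserves the $L^1$ and $L^\gamma$ norms, and by the Riesz rearrangement inequality it does not decrease $A_\alpha$. Therefore $Q(\varrho_m^\#)\le0$, so $\lambda^*(\varrho_m^\#)\le1$ by Lemma \ref{facts}(d)--(e). By Lemma \ref{facts}(b)--(c) the rescaled function $(\varrho_m^\#)_{\lambda^*}$ lies in $\mathcal{K}$ and is radial. Evaluating \eqref{overk} along the rescaling multiplies the $L^\gamma$ term by $\lambda^{*\,\gamma-1}\le1$ and leaves the mass unchanged. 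So $S_\mu((\varrho_m^\#)_{\lambda^*})\le S_\mu(\varrho_m)$, and we may assume the minimizing sequence is radial. By \eqref{overk}, boundedness of $S_\mu(\varrho_m)$ gives bounds on $\|\varrho_m\|_{L^1}$ and $\|\varrho_m\|_{L^\gamma}$.

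\emph{Passage to the limit.} Lemma \ref{weaklowersemi} gives a weak limit $\tilde\varrho\in L^1_+\cap L^\gamma$, with $\|\tilde\varrho\|_{L^1}\le\liminf\|\varrho_m\|_{L^1}$ and $\|\tilde\varrho\|_{L^\gamma}\le\liminf\|\varrho_m\|_{L^\gamma}$. Note that $\frac{n+\alpha}{n}\le\frac{n}{n-\alpha}$, so Lemma \ref{weaktostrong} applies and yields $A_\alpha(\varrho_m)\to A_\alpha(\tilde\varrho)$; this nonlocal convergence is the main obstacle, and it is exactly what that lemma provides. The bound from the positivity step, combined with the upper bound on the masses, gives $A_\alpha(\varrho_m)=\frac{2na_0}{\alpha}\|\varrho_m\|_{L^\gamma}^\gamma\ge c>0$. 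Hence $A_\alpha(\tilde\varrho)>0$, and in particular $\tilde\varrho\not\equiv0$. Weak lower semicontinuity then gives $Q(\tilde\varrho)\le\liminf Q(\varrho_m)=0$, so $\lambda^*:=\lambda^*(\tilde\varrho)\le1$. Finally $\tilde\varrho_{\lambda^*}\in\mathcal{K}$, and
\[
S_\mu(\tilde\varrho_{\lambda^*})=a_0\tfrac{\alpha-n(\gamma-1)}{\alpha(\gamma-1)}(\lambda^*)^{\gamma-1}\|\tilde\varrho\|_{L^\gamma}^\gamma+\mu\|\tilde\varrho\|_{L^1}\le\liminf_{m\to\infty}S_\mu(\varrho_m)=\ell_\mu.
\]
Thus $\tilde\varrho_{\lambda^*}$ is a minimizer of $S_\mu$ over $\mathcal{K}$.
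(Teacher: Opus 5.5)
Your proof is correct and follows the paper's route. The main steps match: rearrange and re-dilate the minimizing sequence back into $\mathcal{K}$, use HLS plus $L^1$--$L^\gamma$ interpolation (exactly \eqref{sequenceHLS}) to rule out vanishing, apply Lemma \ref{weaktostrong} to the nonlocal term, and rescale the weak limit into $\mathcal{K}$. The only difference is bookkeeping: where the paper invokes Lemma \ref{facts}(f) (an element of $\mathcal{K}$ maximizes $S_\mu$ along its dilation orbit), you use the reduced form \eqref{overk} together with $\lambda^*\le 1$ deduced from $Q\le 0$, which is equally valid and gives $\ell_\mu>0$ uniformly over $\mathcal{K}$ rather than only along a minimizing sequence.
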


\begin{proof} We divide the proof into two steps:

\medskip
\noindent
{\bf 1}. Suppose that $\varrho_m \in \mathcal{K}$ is a minimizing sequence of $S_\mu$, {\it i.e.,} 
    \[
    Q(\varrho_m) = 0,
    \]
    and
    \[
    \lim_{m \to \infty} S_{\mu}(\varrho_m) = \ell_\mu.
    \]
 We aim to reduce to a minimizing sequence consisting of spherically symmetric function. 
 To this end, we first take the spherically decreasing rearrangement to define $w_m = \varrho_m^\#$. 
 However, it is not immediate that $w_m \in \mathcal{K}$.
 We therefore introduce the rescale functions:
 $\tilde{w}_m = (w_m)_{\lambda^*(w_m)}$. 
 By Lemma \ref{facts} (b) and (c), 
we see that $\tilde{w}_m = ((\varrho_m)_{\lambda^*(w_m)})^\#$, so that
$\tilde{w}_m \in \mathcal{K}$ are spherically symmetric. 
Moreover, using Lemma \ref{facts} (f) and the same argument as in Lemma \ref{symm}, 
we see that $\tilde{w}_m$ is still a minimizing sequence, since
    \[
    \ell_\mu \leq S_\mu(\tilde{w}_m) \leq S_\mu((\varrho_m)_{\lambda^*(w_m)}) \leq S_\mu(\varrho_m).
    \]
Thus, without loss of generality, we may assume that $\{\varrho_m\}_m$ is a sequence of spherically symmetric, radially decreasing, and non-negative functions. 

From \eqref{overk}, we obtain the uniform bound:
    \begin{equation}\label{seqbound}
 \| \varrho_m \|_{L^\gamma} + \| \varrho_m \|_{L^1} \lesssim 1.
    \end{equation}
  Since $\varrho_m \in \mathcal{K}$, we also have
    \begin{equation}\label{sequenceHLS}
 \| \varrho_m \|_{L^\gamma}^\gamma = - \frac{\alpha}{2 n a_0} \int_{\mathbb{R}^n} \varrho_m \Phi_\alpha * \varrho_m \dd \boldsymbol{x} \lesssim \| \varrho_m \|_{L^1}^\frac{\gamma(2n - \alpha) - 2n}{n(\gamma - 1)} \| \varrho_m \|_{L^\gamma}^\frac{\gamma \alpha}{n(\gamma - 1)}.
    \end{equation}
Combining \eqref{seqbound}, \eqref{sequenceHLS}, and the condition:
    $\frac{2n}{2n - \alpha} < \gamma < \frac{n + \alpha}{n}$, 
we deduce 
    \[
 \| \varrho_m \|_{L^\gamma}^\frac{\gamma(n(\gamma - 1) - \alpha)}{n(\gamma - 1)} \lesssim \| \varrho_m \|_{L^1}^\frac{\gamma(2n - \alpha) - 2n}{n(\gamma - 1)} \lesssim 1,
    \]
so that
    \begin{equation}\label{lowerseq}
  \| \varrho_m \|_{L^\gamma} \gtrsim 1.
    \end{equation}
It follows from \eqref{overk} that $\ell_\mu > 0$. By Lemma \ref{weaklowersemi}, we conclude that \eqref{seqbound} 
    implies the existence of $\tilde{\varrho} \in L^\gamma_{\text{rad}}(\mathbb{R}^n)$ 
    such that, up to a subsequence, 
    $\varrho_m \rightharpoonup \tilde{\varrho}$ as $m \to \infty$ in $L^\gamma(\mathbb{R}^n)$. Since $\frac{n+\alpha}{n} < \frac{n}{n-\alpha}$, Lemma \ref{weaktostrong} implies that,
    up to a subsequence,
    \begin{equation}\label{potentialstrong}
 \int_{\mathbb{R}^n} \varrho_m \Phi_\alpha * \varrho_m \dd \boldsymbol{x} \longrightarrow \int_{\mathbb{R}^n} \tilde{\varrho} \Phi_\alpha * \tilde{\varrho} \dd \boldsymbol{x}
      \qquad\mbox{as $m \to \infty$.}  
    \end{equation}
    Combining \eqref{sequenceHLS} with \eqref{lowerseq}, we obtain 
    \[
    - \int_{\mathbb{R}^n} \varrho_m \Phi_\alpha * \varrho_m \dd \boldsymbol{x} \gtrsim 1.
    \]
Passing to the limit in view of \eqref{potentialstrong}, we deduce
    \[
    - \int_{\mathbb{R}^n} \tilde{\varrho} \Phi_\alpha * \tilde{\varrho} \dd \boldsymbol{x} \gtrsim 1,
    \]
   which implies that $\tilde{\varrho} \not \equiv 0$.

 \medskip   
 \noindent
 {\bf 2.}
 On the other hand, $\tilde{\varrho}$ may not satisfy the constraint: $Q(\tilde{\varrho}) = 0$. 
 Therefore, we take $\bar{\rho} := \tilde{\varrho}_{\lambda^*(\tilde{\varrho})} \in \mathcal{K}$. 
 By Lemma \ref{facts} (a), it follows that,  up to a subsequence, 
 $\varrho_m^* := (\varrho_m)_{\lambda^*(\tilde{\varrho})} \rightharpoonup \bar{\rho}$ 
 as $m \to \infty$ in $L^\gamma(\mathbb{R}^n)$. 
 From Lemma \ref{weaklowersemi}, we see that $\tilde{\varrho} \geq 0$ {\it a.e.} and
    \[
    \| \bar{\rho} \|_{L^\gamma} + \| \bar{\rho} \|_{L^1} \lesssim 1.
    \]
    Furthermore, by Lemma \ref{weaklowersemi} and the weak lower semi-continuity 
   of the map: $\rho\mapsto \int_{\mathbb{R}^n} \varrho^\gamma \dd \boldsymbol{x}$, we have
    \[
    \int_{\mathbb{R}^n} \bar{\rho} \dd \boldsymbol{x} \leq \liminf_{m \to \infty} \int_{\mathbb{R}^n} \varrho_m^* \dd \boldsymbol{x}, \qquad \,\,
    \int_{\mathbb{R}^n} (\bar{\rho})^\gamma \dd \boldsymbol{x} \leq \liminf_{m \to \infty} \int_{\mathbb{R}^n} (\varrho_m^*)^\gamma \dd \boldsymbol{x}.
    \]
Consequently, using Lemma \ref{facts} (f), we obtain 
    $$
    S_\mu(\bar{\rho}) \leq \liminf_{m \to \infty} S_\mu(\varrho_m^*) \leq \liminf_{m \to \infty} S_\mu(\varrho_m) = \ell_\mu.
    $$
Therefore, $\bar{\rho}$ is a spherically symmetric minimizer.
    \end{proof}
    \setcounter{step}{0}

\begin{theorem}
Suppose there exists an energy minimizer $\bar{\rho}$ of the energy functional $S_\mu$ 
over $\mathcal{K}$, and define
\[
 \Gamma := \big\{ \boldsymbol{x} \in \mathbb{R}^n \, : \, \bar{\rho}(\boldsymbol{x}) > 0\big\}.
\]
Then $\bar{\rho}$ is a steady state and satisfies
\begin{align*}
\begin{cases}
\frac{\dd}{\dd \rho}(\rho e(\rho))|_{\rho=\bar{\rho}}+\Phi_\alpha\ast\bar{\rho}= - \mu
 \quad  &\mbox{for $\boldsymbol{x}\in \Gamma$},\\[1mm]
\Phi_\alpha\ast\bar{\rho}\geq - \mu &\mbox{for $\boldsymbol{x}\in \R^n\setminus\Gamma$}.
\end{cases}
\end{align*}
Moreover, when $\alpha \in (0,n) \cap [n-2,n)$, $\bar{\rho}$ coincides, up to translation, 
with $\bar{\rho}_\mu$.
\end{theorem}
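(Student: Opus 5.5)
The plan is to derive the Euler--Lagrange conditions for the constrained problem $\ell_\mu=\inf_{\mathcal K}S_\mu$, show that the multiplier attached to the constraint $Q=0$ vanishes, and then read off \eqref{station}. For the constraint I would use the dilation structure of Lemma \ref{facts} rather than an abstract multiplier rule. Fix a perturbation $\varrho_\varepsilon=\bar\rho+\varepsilon\phi$ with $\varrho_\varepsilon\ge0$. Here $\phi\in L^1\cap L^\infty$ is compactly supported and either arbitrary in sign on $\{\bar\rho>\delta\}$, or nonnegative in general. Since $\bar\rho\not\equiv0$ and $A_\alpha(\bar\rho)>0$, the dilation parameter $\lambda^*(\varrho_\varepsilon)$ from \eqref{lambdastar} is well defined. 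It is differentiable in $\varepsilon$ at $0$, with $\lambda^*(\bar\rho)=1$ by Lemma \ref{facts}(e). Then $(\varrho_\varepsilon)_{\lambda^*(\varrho_\varepsilon)}\in\mathcal K$, so
\[
S_\mu(\bar\rho)\le S_\mu\big((\varrho_\varepsilon)_{\lambda^*(\varrho_\varepsilon)}\big)=\max_{\lambda>0}S_{\mu,\lambda}(\varrho_\varepsilon),
\]
where the equality is Lemma \ref{facts}(f). Write $F(\varepsilon,\lambda):=S_{\mu,\lambda}(\varrho_\varepsilon)$. We have $\partial_\lambda F(0,1)=Q(\bar\rho)/n=0$ by \eqref{firstderiva}, and $\partial_\lambda^2F(0,1)<0$ by \eqref{cri}. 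An envelope argument then gives
\[
\frac{\d}{\d\varepsilon}\Big|_{\varepsilon=0}F(\varepsilon,\lambda^*(\varrho_\varepsilon))=\partial_\varepsilon F(0,1)=\int_{\mathbb R^n}\big(h'(\bar\rho)+\Phi_\alpha*\bar\rho+\mu\big)\phi\,\dd\boldsymbol{x},
\]
with $h(\rho)=\frac{a_0}{\gamma-1}\rho^\gamma$. The point of this route is that it never sees the $Q$-constraint: $\bar\rho$ is a genuine critical point of $S_\mu$ modulo dilations. So the multiplier for $Q$ is automatically zero.

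Minimality now gives a one-sided inequality for nonnegative $\phi$ and equality for signed $\phi$ supported where $\bar\rho>\delta$. Letting $\delta\to0$ yields
\[
h'(\bar\rho)+\Phi_\alpha*\bar\rho=-\mu \ \text{ a.e. on }\Gamma,\qquad \Phi_\alpha*\bar\rho\ge-\mu \ \text{ a.e. on }\Gamma^{\rm c}.
\]
On $\Gamma^{\rm c}$ I use $h'(0)=0$, valid since $\gamma>1$. This is exactly \eqref{station}, equivalently $\bar\rho=\big(\frac{\gamma-1}{a_0\gamma}\big)^{\frac1{\gamma-1}}(-\Phi_\alpha*\bar\rho-\mu)_+^{\frac1{\gamma-1}}$. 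To show that $\bar\rho$ is a steady state in the sense of Definition \ref{definesteady}, I would argue as follows.
\begin{itemize}
\item $\Phi_\alpha*\bar\rho\in L^\infty$: since $-\Phi_\alpha*\bar\rho\ge0$, bootstrapping with the HLS inequality and the local integrability of $|\cdot|^{-\alpha}$ gives $\bar\rho\in L^\infty$, using $\gamma>\frac{2n}{2n-\alpha}$.
\item $\bar\rho$ is compactly supported: $\Phi_\alpha*\bar\rho\to0$ at infinity while $\mu>0$, so $\bar\rho$ vanishes outside a ball.
\item The equation holds: on $\Gamma$, $p(\bar\rho)=\frac{(\gamma-1)}{\gamma}\bar\rho\,h'(\bar\rho)$, so $\nabla p(\bar\rho)=\bar\rho\nabla h'(\bar\rho)=-\bar\rho\nabla\Phi_\alpha*\bar\rho$. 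Across the free boundary, $p(\bar\rho)$ is a Lipschitz-type function of $(-\Phi_\alpha*\bar\rho-\mu)_+$, with exponent $\frac{\gamma}{\gamma-1}>1$. Hence $p(\bar\rho)\in W^{1,1}_{\rm loc}$ and the identity holds distributionally.
\end{itemize}
For $\alpha\in[n-1,n)$, the Hölder regularity of $\bar\rho$ follows from the Hölder regularity of $\Phi_\alpha*\bar\rho$ for bounded compactly supported $\bar\rho$, via the appendix estimates.

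For the final assertion, assume $\alpha\in(0,n)\cap[n-2,n)$. Then $\bar\rho$ solves \eqref{station} with this $\mu$, and $v=-\Phi_\alpha*\bar\rho$ solves $(-\Delta)^{\frac{n-\alpha}{2}}v=a(v-\mu)_+^{1/(\gamma-1)}$ with $\frac1{\gamma-1}<\frac{2n-\alpha}{\alpha}$. This inequality is equivalent to $\gamma>\frac{2n}{2n-\alpha}$. To apply the uniqueness result of \cite{Chan_2020,Flucher_1998}, I need radial symmetry. Any minimizer can be replaced by its rearrangement $\bar\rho^\#$: rearrangement preserves $\int\varrho^\gamma$ and $\int\varrho$, does not decrease $A_\alpha$ by Riesz's rearrangement inequality, and after rescaling by $\lambda^*(\bar\rho^\#)$ does not increase $S_\mu$. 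Equality forces equality in the Riesz inequality, and the strict version for the strictly decreasing kernel $|\cdot|^{-\alpha}$ shows that $\bar\rho$ is a translate of $\bar\rho^\#$. Uniqueness of the radial solution then identifies $\bar\rho$ with $\bar\rho_\mu$ up to translation.

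\textbf{Main obstacle.} The step I expect to be most delicate is justifying the envelope differentiation for one-sided perturbations near the vacuum boundary. This needs differentiability of $\varepsilon\mapsto\int(\bar\rho+\varepsilon\phi)^\gamma$ and of $\lambda^*(\varrho_\varepsilon)$. Both follow from $\gamma>1$, dominated convergence, and bilinearity of $A_\alpha$, but the sign bookkeeping has to be done carefully.
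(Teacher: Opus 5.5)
Your proposal is correct and follows essentially the paper's argument: perturb $\bar\rho$, pull the perturbation back into $\mathcal K$ with the dilation $\lambda^*(\cdot)$, and read off the first variation $\int (h'(\bar\rho)+\Phi_\alpha*\bar\rho+\mu)\phi\,\dd\boldsymbol{x}\ge 0$; then identify $\bar\rho$ with $\bar\rho_\mu$ up to translation via rearrangement, the strict Riesz inequality, and uniqueness of the radial solution. The differences are minor. You get the first variation from the envelope identity $\partial_\lambda S_{\mu,\lambda}(\bar\rho)|_{\lambda=1}=Q(\bar\rho)/n=0$, whereas the paper Taylor-expands $(\lambda^*(\bar\rho_\tau))^{\gamma-1}$ in the reduced form \eqref{overk}. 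You also obtain compact support from an $L^\infty$ bootstrap plus uniform decay of $\Phi_\alpha*\bar\rho$, rather than from $|\Gamma|<\infty$ plus radial monotonicity; this is cleaner, since the paper invokes radial monotonicity before it has shown that $\bar\rho$ is a translate of $\bar\rho^\#$.
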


\begin{proof} We divide the proof into two steps:
 
\bigskip\noindent
{1.} We derive the Euler-Lagrange equations for the minimization problem. 
For $\varepsilon > 0$, define
    \[\Gamma_\varepsilon := \big\{ \boldsymbol{x} \in \mathbb{R}^n \, : \, \varepsilon < \bar{\rho}(\boldsymbol{x}) < \frac{1}{\varepsilon} \big\}.
    \]
Let $w \in L^\infty(\mathbb{R}^n)$ have compact support and satisfy $w\ge 0$ 
on $\mathbb{R}^n \setminus \Gamma_\varepsilon$. 
For $\tau > 0$ sufficiently small, set  
    \[
    \bar{\rho}_\tau := \bar{\rho} + \tau w
    \]
so that $\bar{\rho}_\tau \geq 0$ {\it a.e.}, 
    $\bar{\rho}_\tau \not \equiv 0$, and 
    $\bar{\rho}_\tau \in L^1 \cap L^\gamma (\mathbb{R}^n)$.
    By Lemma \ref{facts} (c), there exists a unique
    \[
 \lambda^*(\bar{\rho}_\tau) = \bigg ( -\frac{2na_0 \int_{\mathbb{R}^n} \bar{\rho}_\tau^\gamma \dd \boldsymbol{x}}{\alpha \int_{\mathbb{R}^n} \bar{\rho}_\tau \Phi_\alpha * \bar{\rho}_\tau \dd \boldsymbol{x}} \bigg )^\frac{n}{\alpha - n(\gamma - 1)},
    \]
    such that $(\bar{\rho}_\tau)_{\lambda^*(\bar{\rho}_\tau)} \in \mathcal{K}$. 
Since $\bar{\rho}$ is a minimizer of $S_\mu$ over $\mathcal{K}$, 
using the simplified form of $S_\mu(\rho)$ for $\rho \in \mathcal{K}$ in \eqref{overk},
we obtain
    \begin{align}\label{comparemin}
 \begin{split}
  0 \leq S_\mu((\bar{\rho}_\tau)_{\lambda^*(\bar{\rho}_\tau)}) - S_\mu(\bar{\rho})
 & = a_0(\lambda^*(\bar{\rho}_\tau))^{\gamma - 1} \frac{\alpha - n(\gamma - 1)}{\alpha(\gamma - 1)}\int_{\mathbb{R}^n} \bar{\rho}_\tau^\gamma \dd \boldsymbol{x} \\
  & \quad\, + \tau \mu \int_{\mathbb{R}^n} w \dd \boldsymbol{x}
 - a_0 \frac{\alpha - n(\gamma - 1)}{\alpha(\gamma - 1)}\int_{\mathbb{R}^n} \bar{\rho}^\gamma \dd \boldsymbol{x}.
  \end{split}
    \end{align}
Using
Lemma \ref{facts} (e) and the Taylor expansion, 
we have
    \begin{align*}
  \begin{split}
      (\lambda^*(\bar{\rho}_\tau))^{\gamma - 1}
 & = (\lambda^*(\bar{\rho}))^{\gamma - 1} \\
 &\quad\,- \tau \frac{2n^2(\gamma - 1)}{\alpha(\alpha - n(\gamma - 1))} \Bigg ( \frac{a_0 \gamma \int_{\mathbb{R}^n} \bar{\rho}^{\gamma - 1} w \dd \boldsymbol{x}}{\int_{\mathbb{R}^n} \bar{\rho} \Phi_\alpha * \bar{\rho} \dd \boldsymbol{x}} 
 + \frac{2a_0 \int_{\mathbb{R}^n} \bar{\rho}^\gamma \dd \boldsymbol{x}}{\big (\int_{\mathbb{R}^n} \bar{\rho} \Phi_\alpha * \bar{\rho} \dd \boldsymbol{x} \big )^2} \int_{\mathbb{R}^n} w \Phi_\alpha * \bar{\rho} \dd \boldsymbol{x}  \Bigg ) \\
 &\quad\, + o(\tau) \\
 & = 1 + \tau \frac{\gamma - 1}{\alpha - n(\gamma - 1)}\,
 \frac{n a_0 \gamma \int_{\mathbb{R}^n} \bar{\rho}^{\gamma - 1} w \dd \boldsymbol{x} + \alpha \int_{\mathbb{R}^n} w \Phi_\alpha * \bar{\rho} \dd \boldsymbol{x}}{a_0 \int_{\mathbb{R}^n} \bar{\rho}^\gamma \dd \boldsymbol{x}} + o(\tau),
\end{split}
    \end{align*}
and
    \begin{equation*}
  \int_{\mathbb{R}^n} \bar{\rho}_\tau^\gamma \dd \boldsymbol{x} = \int_{\mathbb{R}^n} \bar{\rho}^\gamma \dd \boldsymbol{x} + \tau \gamma \int_{\mathbb{R}^n} \bar{\rho}^{\gamma - 1} w \dd \boldsymbol{x} + o(\tau).
    \end{equation*}
Substituting them into \eqref{comparemin} yields
    \[
 0 \leq \tau \int_{\mathbb{R}^n} \Big ( \frac{a_0\gamma}{\gamma - 1} \bar{\rho}^{\gamma - 1} + \Phi_\alpha * \bar{\rho} + \mu \Big) w \dd \boldsymbol{x} + o(\tau).
    \]
Then the coefficient of $\tau$ must be non-negative:
    \[
 \int_{\mathbb{R}^n} \Big ( \frac{a_0\gamma}{\gamma - 1} \bar{\rho}^{\gamma - 1} + \Phi_\alpha * \bar{\rho} + \mu \Big) w \dd \boldsymbol{x} \geq 0
 \qquad\mbox{ for any such $w$}.
    \]
This implies
\[
    \begin{split}
        & \frac{a_0\gamma}{\gamma - 1} \bar{\rho}^{\gamma - 1} + \Phi_\alpha * \bar{\rho} + \mu = 0, \qquad  \text{ {\it a.e.} on } \Gamma_\varepsilon,\\
        & \frac{a_0\gamma}{\gamma - 1} \bar{\rho}^{\gamma - 1} + \Phi_\alpha * \bar{\rho} + \mu \geq 0, \quad \,\,\,\,\,\, \text{ {\it a.e.} on } \mathbb{R}^n \setminus \Gamma_\varepsilon.
    \end{split}
\]
Letting $\varepsilon \to 0$, we obtain 
\begin{equation}\label{EulerLagrange}
    \begin{split}
        & \frac{a_0\gamma}{\gamma - 1} \bar{\rho}^{\gamma - 1} + \Phi_\alpha * \bar{\rho} + \mu = 0, \qquad  \text{ {\it a.e.} on } \Gamma,\\
        & \Phi_\alpha * \bar{\rho} + \mu \geq 0, \qquad \qquad \qquad \quad \,\,\,\, \text{ {\it a.e.} on } \mathbb{R}^n \setminus \Gamma.
    \end{split}
\end{equation}
Using $\bar{\rho} \in L^1 \cap L^\gamma (\mathbb{R}^n)$ and by HLS, 
we deduce  
$$
\Phi_\alpha * \bar{\rho} \in L^p(\mathbb{R}^n)
\qquad\mbox{for $\frac{n}{\alpha} < p 
\leq \frac{n\gamma}{n - \gamma (n - \alpha)}$}.
$$. 
Since $\frac{2n}{2n - \alpha} < \gamma < \frac{n + \alpha}{n}$
implies that 
$\frac{n}{\alpha} < \frac{\gamma}{\gamma - 1} \leq \frac{n\gamma}{n - \gamma (n - \alpha)}$,
we obtain
$$
\bar{\rho}^{\gamma - 1}, \Phi_\alpha * \bar{\rho} \in L^\frac{\gamma}{\gamma - 1}(\mathbb{R}^n).
$$ 
Combining this with \eqref{EulerLagrange} yields 
$$
\mu \mathds{1}_{\Gamma} \in L^\frac{\gamma}{\gamma - 1}(\mathbb{R}^n),
$$
which implies that $|\Gamma| < \infty$. 
Since $\bar{\rho}$ is spherically decreasing, its support is compact. 

Finally, rewriting \eqref{EulerLagrange}, we obtain
    \[
    \bar{\rho} = \Big ( \frac{\gamma - 1}{a_0 \gamma} \Big )^\frac{1}{\gamma - 1} (- \Phi_\alpha * \bar{\rho} - \mu)_+^\frac{1}{\gamma - 1}.
    \]

   \medskip
   \noindent
   {\bf 2.}
    For $\alpha \in [n-2,n)$, as noted at the beginning of the section, 
    $\bar{\rho}_\mu$ is the unique spherically symmetric solution 
    of the Euler-Lagrange equation. 
    We now show that $\bar{\rho}$ must be a translation of $\bar{\rho}_\mu$.
    
    By Lemma \ref{facts} (c), we see that $(\bar{\rho}^\#)_{\lambda^*(\bar{\rho}^\#)} \in \mathcal{K}$. 
    Using Lemma \ref{facts} (b) and (f), we obtain
    \[
    \ell_\mu \leq S_\mu ((\bar{\rho}^\#)_{\lambda^*(\bar{\rho}^\#)} ) 
    = S_\mu (((\bar{\rho})_{\lambda^*(\bar{\rho}^\#)})^\#)
    \leq S_\mu( (\bar{\rho})_{\lambda^*(\bar{\rho}^\#)}) 
    \leq S_\mu ( \bar{\rho} ) = \ell_\mu.
    \]
    Thus, all inequalities must be equalities above.
    In particular,
    $$
    \lambda^*(\bar{\rho}^\#) = \lambda^*(\bar{\rho}) = 1,
    \qquad 
    S_\mu(\bar{\rho}^\#) = S_\mu(\bar{\rho}).
    $$
    It follows that $\bar{\rho}^\#$ is also minimizer, and therefore
    $$
   \bar{\rho}^\# = \bar{\rho}_\mu.
   $$ 
    Moreover, we have
    \[
    \int_{\mathbb{R}^n} \bar{\rho}^\# \Phi_\alpha * \bar{\rho}^\# \dd \boldsymbol{x} = \int_{\mathbb{R}^n} \bar{\rho} \Phi_\alpha * \bar{\rho} \dd \boldsymbol{x}.
    \]
    By Lemma \ref{symm}, we conclude 
    that $\bar{\rho}$ coincides with $\bar{\rho}^\# = \bar{\rho}_\mu$ up to translation.
    Hence, $\bar{\rho}$ is a translation of  $\bar{\rho}_\mu$. 
\end{proof}
\setcounter{step}{0}

We now give a more explicit description of $\bigcup_{\mu > 0} \mathcal{I}_\mu$.

\begin{lemma}\label{union}
For $n \geq 2$, $\alpha \in (0,n)$, and $\frac{2n}{2n - \alpha} < \gamma < \frac{n + \alpha}{n}$, 
 \[
\begin{split}
\bigcup_{\mu > 0} \mathcal{I}_\mu  
= \mathcal{I} := \Big\{ (\rho,\mathcal{M}) \, : \,\, Q(\rho) > 0, \,
\int_{\mathbb{R}^n} \rho \dd \boldsymbol{x} < C(n,\alpha,\gamma,\ell_1)                     
   E(\rho,\mathcal{M})^\frac{n\gamma - n - \alpha}{(2n - \alpha)\gamma - 2n} \Big\},
\end{split}
\]
where 
$$
C(n,\alpha,\gamma,\ell_1):=
\Big ( \frac{n + \alpha - n\gamma}{(2n-\alpha)\gamma - 2n} \Big )^\frac{n + \alpha - n\gamma}{(2n-\alpha)\gamma - 2n}
\Big ( \ell_1 \frac{(2n-\alpha)\gamma - 2n}{(n - \alpha)(\gamma - 1)} \Big )^\frac{(n - \alpha)(\gamma - 1)}{(2n-\alpha)\gamma - 2n}. 
$$    
\end{lemma}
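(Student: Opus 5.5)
The plan is to reduce everything to a scaling law for $\ell_\mu$ in $\mu$, and then optimize over $\mu$. Since $(\rho,\mathcal M)\in\mathcal I_\mu$ means $Q(\rho)>0$ and $E(\rho,\mathcal M)+\mu M<\ell_\mu$ with $M=\int_{\mathbb R^n}\rho\,\dd\boldsymbol{x}$, the union over $\mu>0$ is characterized by
\[
E(\rho,\mathcal M)<\sup_{\mu>0}\big(\ell_\mu-\mu M\big).
\]
So I need $\ell_\mu$ explicitly in terms of $\ell_1$.

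\textbf{Step 1: scaling of $\ell_\mu$.} Use the same two-parameter scaling as for the steady states. Set $\beta=\frac{2-\gamma}{(\gamma-1)(n-\alpha)}$ and $T_\mu\varrho(\boldsymbol{x}):=\mu^{\frac1{\gamma-1}}\varrho(\mu^{\beta}\boldsymbol{x})$. A change of variables shows the following.

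\begin{itemize}
\item $\int(T_\mu\varrho)^\gamma$ picks up the factor $\mu^{\frac{\gamma}{\gamma-1}-n\beta}$.
\item $\int T_\mu\varrho\,\Phi_\alpha*T_\mu\varrho$ picks up $\mu^{\frac{2}{\gamma-1}-(2n-\alpha)\beta}$.
\item $\mu\int T_\mu\varrho$ picks up $\mu^{\frac{\gamma}{\gamma-1}-n\beta}$ relative to $\int\varrho$.
\end{itemize}

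The choice of $\beta$ is exactly $\frac{\gamma-2}{\gamma-1}+(n-\alpha)\beta=0$, so all three exponents coincide. Hence $Q(T_\mu\varrho)=\mu^{k}Q(\varrho)$ and $S_\mu(T_\mu\varrho)=\mu^kS_1(\varrho)$, with
\[
k=\frac{\gamma}{\gamma-1}-n\beta=\frac{(2n-\alpha)\gamma-2n}{(\gamma-1)(n-\alpha)}.
\]
In particular $T_\mu$ is a bijection of $\mathcal K$ onto itself, with inverse $T_{\mu}^{-1}$ of the same type. This gives $\ell_\mu=\mu^k\ell_1$, where $\ell_1>0$ by Lemma \ref{minexist}.

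\textbf{Step 2: optimizing in $\mu$.} The hypothesis $\frac{2n}{2n-\alpha}<\gamma<\frac{n+\alpha}{n}$ gives $0<k<1$. Indeed, $k<1$ is equivalent to $n\gamma<n+\alpha$. So $g(\mu):=\ell_1\mu^k-\mu M$ is strictly concave on $(0,\infty)$, starts at $0$, and tends to $-\infty$. Its maximum is attained at $\mu_*=(k\ell_1/M)^{\frac1{1-k}}$, with value
\[
g(\mu_*)=\frac{1-k}{k}\,(k\ell_1)^{\frac1{1-k}}M^{-\frac{k}{1-k}}.
\]
Thus $(\rho,\mathcal M)\in\bigcup_\mu\mathcal I_\mu$ if and only if $Q(\rho)>0$ and $E<g(\mu_*)$. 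Because the map $\mu\mapsto g(\mu)$ is continuous, a strict inequality at the maximizer yields an open set of admissible $\mu$, so both inclusions hold.

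\textbf{Step 3: solving for $M$.} For $E>0$, solving $E<g(\mu_*)$ for $M$ gives
\[
M<\Big(\tfrac{1-k}{k}\Big)^{\frac{1-k}{k}}(k\ell_1)^{\frac1k}E^{-\frac{1-k}{k}}.
\]
Substituting $\frac{1-k}{k}=\frac{n+\alpha-n\gamma}{(2n-\alpha)\gamma-2n}$ and $\frac1k=\frac{(\gamma-1)(n-\alpha)}{(2n-\alpha)\gamma-2n}$ reproduces $C(n,\alpha,\gamma,\ell_1)$ and the exponent $\frac{n\gamma-n-\alpha}{(2n-\alpha)\gamma-2n}$ in the statement.

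\textbf{Main obstacles.} The main point needing care is Step 1, namely that $T_\mu$ maps $\mathcal K$ onto $\mathcal K$ (including $L^1\cap L^\gamma$ and nonnegativity), so that the infima genuinely scale. The rest is elementary calculus. One should also remark how the case $E\le0$ is read: then $E<g(\mu_*)$ holds for every $M$, consistent with the power of $E$ blowing up or being interpreted as $+\infty$, and the statement is understood for $E>0$ as written.
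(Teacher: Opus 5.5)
Your argument is correct. It shares the paper's skeleton: first the scaling law $\ell_\mu=\mu^k\ell_1$ with $k=\frac{(2n-\alpha)\gamma-2n}{(\gamma-1)(n-\alpha)}\in(0,1)$, then maximization of $\mu\mapsto\ell_1\mu^k-\mu M$. Where you differ is in how you obtain the scaling law, and your route is more economical.

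The paper takes a minimizer $\bar\rho_1$ of $S_1$ over $\mathcal K$ from Lemma \ref{minexist}. It identifies this minimizer as a steady state via Theorem \ref{minform} and rescales it to $\bar\rho_\mu$. It then argues by contradiction, rescaling a hypothetical better competitor back to $\mu=1$, that $\bar\rho_\mu$ minimizes $S_\mu$; hence $\ell_\mu=S_\mu(\bar\rho_\mu)=\mu^k\ell_1$.

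You instead observe that $T_\mu$ is a bijection of $\mathcal K$ with $S_\mu\circ T_\mu=\mu^kS_1$. This gives the identity directly at the level of infima. It needs only $\ell_1>0$, and neither the existence of a minimizer nor its Euler--Lagrange characterization. The paper's contradiction step is in fact this same bijection in disguise.

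What the paper's detour buys is the identification of the minimizers of $S_\mu$ with the rescaled steady states $\bar\rho_\mu$. The subsequent remark uses this to place the mass-preserving dilations $(\bar\rho)_\lambda$, $0<\lambda<1$, of a steady state in $\mathcal I$. For the lemma itself, your proof is cleaner.

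One small correction to your closing caveat: the case $E\le 0$ never occurs. Indeed, $Q(\rho)>0$ gives $\frac12A_\alpha(\rho)<\frac{na_0}{\alpha}\int_{\mathbb R^n}\rho^\gamma\,\dd\boldsymbol{x}$. Hence $E(\rho,\mathcal M)>a_0\frac{\alpha-n(\gamma-1)}{\alpha(\gamma-1)}\int_{\mathbb R^n}\rho^\gamma\,\dd\boldsymbol{x}\ge 0$ for $\gamma<\frac{n+\alpha}{n}$, which is the same computation as \eqref{overk}. So the power of $E$ in the definition of $\mathcal I$ is always well defined.
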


\begin{proof}
    By the definition of $\mathcal{I}_\mu$, 
    if $(\rho,\mathcal{M}) \in \mathcal{I}_\mu$, 
    then $E(\rho,\mathcal{M}) < \ell_\mu - \mu \int_{\mathbb{R}^n} \rho \dd \boldsymbol{x}$. 
    Define
    $$
    f(\mu) := \ell_\mu - \mu \int_{\mathbb{R}^n} \rho \dd \boldsymbol{x}.
    $$
Then, if $(\rho,\mathcal{M}) \in \bigcup_{\mu > 0} \mathcal{I}_\mu$, we have
    \begin{equation}\label{muset}
        E(\rho,\mathcal{M}) < \max_{\mu > 0} f(\mu).
    \end{equation}
    By Lemma \ref{minexist}, there exists $\bar{\rho}_1 \in \mathcal{K}$ that is a minimizer 
    of $S_1$ over $\mathcal{K}$.
    By Proposition \ref{minform}, $\bar{\rho}_1$ is a solution of \eqref{station} for $\mu = 1$. 
    Using the scaling relation:
    $$
    \bar{\rho}_\mu(\boldsymbol{x}) 
    = \mu^\frac{1}{\gamma - 1}\bar{\rho}_1(\mu^\frac{2-\gamma}{(\gamma - 1)(n-\alpha)} \boldsymbol{x}),
    $$ 
    we obtain a solution of \eqref{station} for each $\mu > 0$.
    By construction, $\bar{\rho}_\mu \in \mathcal{K}$. 
    
    We now claim that $\bar{\rho}_\mu$ is a minimizer of $S_\mu$ over $\mathcal{K}$. 
    If not, then there exists $\varrho^*_\mu \in \mathcal{K}$ such that 
    $S_\mu(\varrho^*_\mu) < S_\mu(\bar{\rho}_\mu)$. 
    Define the rescaled function:
    $$
    \varrho^*_1(\boldsymbol{x}) 
    := \mu^\frac{-1}{\gamma - 1}\varrho^*_\mu 
    (\mu^\frac{\gamma - 2}{(\gamma - 1)(n-\alpha)} \boldsymbol{x}).
    $$ 
    Then $\varrho^*_1 \in \mathcal{K}$ and, by scaling
    $$
    S_1(\varrho^*_1) = \mu^\frac{2n - \gamma(2n - \alpha)}{(\gamma - 1)(n - \alpha)} S_\mu(\varrho^*_\mu) < \mu^\frac{2n - \gamma(2n - \alpha)}{(\gamma - 1)(n - \alpha)} S_\mu(\bar{\rho}_\mu) = S_1(\bar{\rho}_1),
    $$
which contradicts the minimality of $\bar{\rho}_1$. 
Hence $\bar{\rho}_\mu$ is a minimizer of $S_\mu$ over $\mathcal{K}$ so that
    \[
\ell_\mu = \mu^\frac{(2n - \alpha)\gamma - 2n}{(\gamma - 1)(n - \alpha)} \ell_1.
    \]
Then
    \[
 f(\mu) = \mu^\frac{(2n - \alpha)\gamma - 2n}{(\gamma - 1)(n - \alpha)} \ell_1 - \mu \int_{\mathbb{R}^n} \rho \dd \boldsymbol{x}.
    \]

We now determine the maximum point $\mu_0$ of $f(\mu)$. 
Solving $f'(\mu_0)=0$ yields 
    \begin{equation}\label{mu0}
 \mu_0 = \bigg ( \frac{(n-\alpha)(\gamma - 1)\int_{\mathbb{R}^n} \rho \dd \boldsymbol{x}}{((2n - \alpha)\gamma - 2n) \ell_1} \bigg )^\frac{(\gamma - 1)(n - \alpha)}{n\gamma - n - \alpha}.
    \end{equation}
Since $\frac{2n}{2n - \alpha} < \gamma < \frac{n + \alpha}{n}$, we see that $\mu_0 > 0$ and 
    $f''(\mu_0) < 0$, so $f$ attains its maximum at $\mu_0$ indeed.
Consequently, we have
\[
  \max_{\mu > 0} f(\mu) = f(\mu_0),
\]        
which gives an explicit expression in terms of $\int_{\mathbb{R}^n} \rho \dd \boldsymbol{x}$.

Rearranging and applying \eqref{muset}, together with $n(\gamma - 1) - \alpha < 0$, we deduce
    \[
 \int_{\mathbb{R}^n} \rho \dd \boldsymbol{x} < C(n,\alpha,\gamma,\ell_1)
 E(\rho,\mathcal{M})^\frac{n\gamma - n - \alpha}{(2n - \alpha)\gamma - 2n}.
    \]
Therefore, $(\rho,\mathcal{M}) \in \mathcal{I}$, which implies that
    $\bigcup_{\mu > 0} \mathcal{I}_\mu \subseteq \mathcal{I}$. 

    Conversely, if $(\rho,\mathcal{M}) \in \mathcal{I}$, then, by the above construction with $\mu_0$ defined in \eqref{mu0}, we conclude that $(\rho,\mathcal{M}) \in \mathcal{I}_{\mu_0}$, and hence
    $\mathcal{I} \subseteq \bigcup_{\mu > 0} \mathcal{I}_\mu$. 
    This completes the proof.
\end{proof}

\begin{remark}
From the above calculation, we deduce that, 
    for any $\mu > 0$ and any minimizer $\bar{\rho} \in \mathcal{K}$ of $S_\mu$ over $\mathcal{K}$, 
    \[
 E(\bar{\rho},\boldsymbol{0}) = \ell_\mu - \mu \int_{\mathbb{R}^n} \bar{\rho} \dd \boldsymbol{x} \leq f(\mu_0).
    \]
Since $Q(\bar{\rho}) = 0$, it follows that
\[
\int_{\mathbb{R}^n} \bar{\rho} \dd \boldsymbol{x} \leq C(n,\alpha,\gamma,\ell_1)
 E(\bar{\rho},\boldsymbol{0})^\frac{n\gamma - n - \alpha}{(2n - \alpha)\gamma - 2n}.
 \]
 Now let $0< \lambda < 1$. By {\rm Lemma \ref{facts} (d), (e)}, and {\rm(f)}, 
 we see that $Q(\bar\rho_\lambda) > 0$ and 
 $$
 E(\bar\rho_\lambda,\boldsymbol{0}) = S_{\mu,\lambda}(\bar\rho) 
 - \mu \int_{\mathbb{R}^n} \bar\rho_\lambda \dd \boldsymbol{x} < S_{\mu}(\bar\rho) 
 - \mu \int_{\mathbb{R}^n} \bar\rho \dd \boldsymbol{x} = E(\bar\rho,\boldsymbol{0}).
 $$
 Notice that $\frac{n\gamma - n - \alpha}{(2n - \alpha)\gamma - 2n} < 0$
 for $\frac{2n}{2n-\alpha} < \gamma < \frac{n+\alpha}{n}$.
 Therefore, for $0<\lambda<1$, then $(\bar\rho_\lambda,\boldsymbol{0}) \in \mathcal{I}$. 
 Consequently, there exists arbitrarily small perturbations $(\rho_0,\mathcal{M}_0)$ 
 of $(\bar{\rho},\boldsymbol{0})$ such that $Q(\rho_0) > 0$ and \eqref{energymasscond} holds. 
\end{remark}

\smallskip
We now show that, if $(\rho_0,\mathcal{M}_0) \in \mathcal{I}_\mu$, 
then either $R(t) \to \infty$ as $t \to \infty$ or $Q(\rho(t))$ 
must remain bounded below by a positive constant.

\begin{remark}
In the 
spherically symmetric case,  it was proved in {\rm\cite{Carrillo2025}} that, if
$M < M^\alpha_{\rm c}(\gamma)$
and $\gamma \geq \frac{3n}{3n - 2(1+\alpha)}$ {\rm (}for $\alpha \neq 2${\rm )}, 
then global finite-energy solutions to \eqref{1.1} exist 
with initial conditions $(\rho_0,\mathcal{M}_0)$.
\end{remark}

The following Lemma allows us to characterize the asymptotic behavior of either
the radius of the support, $R(t)$, or the quantity $Q(\rho(t))$. 
In contrast to \cite{Cheng_2025}, 
it is not guaranteed that $Q(\rho(t))$ remains uniformly bounded below by a positive constant 
for all time. Nevertheless, in the case where this lower bound holds, we can derive the 
corresponding asymptotic behavior of $R(t)$.

\begin{lemma}\label{boundbel}
    Suppose that $(\rho(t),\mathcal{M}(t))$ is a classical solution of the attractive CEREs {\rm (}$\kappa = 1${\rm )} \eqref{1.1} with $\alpha \in (0,n)$,
    $\frac{2n}{2n - \alpha} < \gamma < \frac{n + \alpha}{n}$, and
    initial data $(\rho_0,\mathcal{M}_0) \in \mathcal{I}_\mu$. 
    Then one of the following alternatives holds{\rm :}
    \begin{enumerate}
\item[\rm (i)] $Q(\rho(t)) > 0$ for all $t \in [0,\infty)$ and there exists a sequence $t_m \to \infty$ as $m \to \infty$ such that $Q(\rho(t_m)) \to 0$ and $R(t_m) \to \infty$ as $m \to \infty${\rm ;}

\smallskip
\item[\rm (ii)] There exists $\Lambda > 0$ 
    such that $Q(\rho(t)) \geq \Lambda$ for all $t \geq 0$.
\end{enumerate}
\end{lemma}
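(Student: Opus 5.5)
The plan is to combine the invariance of $\mathcal{I}_\mu$ with the concavity of the dilation curve. Together they give a quantitative relation between $Q(\rho(t))$ and $\lambda^*(\rho(t))$. We then show that a bounded support forces $\lambda^*$ to stay bounded.

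\emph{Step 1 (reduction).} By Lemma \ref{invariant}, $(\rho(t),\mathcal{M}(t))\in\mathcal{I}_\mu$ for all $t\ge0$. In particular $Q(\rho(t))>0$ and $I_\mu(\rho(t),\mathcal{M}(t))=I_\mu(\rho_0,\mathcal{M}_0)<\ell_\mu$. If $\inf_{t\ge0}Q(\rho(t))>0$, then alternative (ii) holds with $\Lambda$ equal to this infimum. Otherwise there is a sequence $t_m$ with $Q(\rho(t_m))\to0$. Since $t\mapsto Q(\rho(t))$ is continuous and positive, it has a positive minimum on every compact interval $[0,T]$, so necessarily $t_m\to\infty$. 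It remains to prove $R(t_m)\to\infty$. We argue by contradiction: after passing to a subsequence, assume $R(t_m)\le R_0<\infty$ for all $m$.

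\emph{Step 2 (concavity gives $\lambda^*\to\infty$).} Write $\varrho=\rho(t_m)$. Since $Q(\varrho)>0$, Lemma \ref{facts} (d) gives $\lambda^*(\varrho)>1$. Lemma \ref{concavity} says $\lambda\mapsto S_{\mu,\lambda}(\varrho)$ is concave on $(0,\lambda^*(\varrho)]$, so it lies below its tangent line at $\lambda=1$. Using \eqref{firstderiva} for the slope, this yields
\[
\ell_\mu\le S_{\mu,\lambda^*(\varrho)}(\varrho)\le S_\mu(\varrho)+\big(\lambda^*(\varrho)-1\big)\frac{Q(\varrho)}{n}.
\]
Since $S_\mu(\varrho)\le I_\mu(\rho(t_m),\mathcal{M}(t_m))=I_\mu(\rho_0,\mathcal{M}_0)$, we obtain
\[
\big(\lambda^*(\rho(t_m))-1\big)\,Q(\rho(t_m))\ge n\big(\ell_\mu-I_\mu(\rho_0,\mathcal{M}_0)\big)>0.
\]
Because $Q(\rho(t_m))\to0$, this forces $\lambda^*(\rho(t_m))\to\infty$.

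\emph{Step 3 (bounded support bounds $\lambda^*$).} By \eqref{lambdastar}, it suffices to bound $\int\varrho^\gamma$ from above and $A_\alpha(\varrho)$ from below, uniformly in $m$.

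For the upper bound, use $Q(\varrho)>0$, which reads $A_\alpha(\varrho)<\frac{2na_0}{\alpha}\int\varrho^\gamma$. Inserting this into $S_\mu$ gives
\[
I_\mu(\rho_0,\mathcal{M}_0)\ge S_\mu(\varrho)\ge a_0\Big(\frac{1}{\gamma-1}-\frac{n}{\alpha}\Big)\int\varrho^\gamma+\mu M.
\]
The coefficient is positive precisely because $\gamma<\frac{n+\alpha}{n}$, so $\int\varrho^\gamma\le C$ uniformly in $m$.

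For the lower bound, both $\boldsymbol{x}$ and $\boldsymbol{y}$ lie in the support, contained in $B_{R_0}$, so $|\boldsymbol{x}-\boldsymbol{y}|\le 2R_0$. Hence $A_\alpha(\varrho)\ge \alpha^{-1}(2R_0)^{-\alpha}M^2$, using conservation of mass.

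Therefore $\lambda^*(\rho(t_m))$ is bounded uniformly in $m$, which contradicts Step 2. We conclude $R(t_m)\to\infty$, giving alternative (i).

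The main obstacle is Step 2, namely converting the smallness of $Q$ into growth of $\lambda^*$. It relies on the concavity holding on all of $(0,\lambda^*]$ from Lemma \ref{concavity}, and on the strict energy gap $\ell_\mu-I_\mu>0$. After that, the remaining estimates in Step 3 are elementary.
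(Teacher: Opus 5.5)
Your proof is correct and follows the paper's argument: invariance of $\mathcal{I}_\mu$, then the concavity tangent-line bound $(\lambda^*(\rho(t))-1)\,Q(\rho(t))\ge n\big(\ell_\mu-I_\mu(\rho_0,\mathcal{M}_0)\big)$, then a contradiction with bounded support. The only differences are in the endgame. You read off $\lambda^*(\rho(t_m))\to\infty$ directly, which makes the paper's case split on $\liminf_m Q(\rho(t_m))/A_\alpha(\rho(t_m))$ unnecessary, and you close with the diameter bound $A_\alpha(\varrho)\ge\alpha^{-1}(2R_0)^{-\alpha}M^2$ instead of the paper's H\"older argument with $\|\rho(t_m)\|_{L^\gamma}\to0$.
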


\begin{proof}
Suppose that there does not exist $\Lambda > 0$ such that $Q(\rho(t)) \geq \Lambda$ 
for all $t>0$. Then there exists
$t_0 \in (0,\infty]$ and a sequence $t_m \to t_0$ as $m \to \infty$ such that 
$$
Q(\rho(t))> 0 \,\,\,\,\mbox{ for all $t \in [0,t_0)$}, \qquad\,\,\, 
\lim_{m \to \infty} Q(\rho(t_m)) = 0.
$$

We claim that $t_0 = \infty$. Indeed, if $t_0 < \infty$, then $Q(\rho(t_0)) = 0$, 
which contradicts the invariance of $\mathcal{I}_\mu$ established 
in Lemma \ref{invariant}. 
Hence, $t_0=\infty$ so that
$$
Q(\rho(t))> 0 \qquad\mbox{ for all $t\ge 0$}.
$$
It then follows from the scaling properties in Lemma \ref{facts}
that 
$$
\lambda^*(\rho(t)) > 1 \qquad\mbox{for $t \in [0,\infty)$}.
$$
By Lemma \ref{concavity}, the map: $\lambda \mapsto S_{\mu,\lambda}(\rho(t))$ is concave 
on $[1,\lambda^*(\rho(t))]$. Using the concavity together with 
the relation of the derivative of $S_\mu(\rho)$ 
at $\lambda=1$
and $Q(\rho)$ (see \eqref{firstderiva}), we obtain that, for all $t > 0$,
    \begin{align*}
 \ell_\mu \leq S_{\mu, \lambda^*(\rho(t))}(\rho(t)) 
 &\leq S_\mu(\rho(t)) 
 + \big(\lambda^*(\rho(t)) - 1\big)\frac{\d S_{\mu,\lambda}(\rho(t))}{\d \lambda} \bigg |_{\lambda = 1}\\ 
 &= S_\mu(\rho(t)) + \big(\lambda^*(\rho(t)) - 1\big) \frac{Q(\rho(t))}{n}.
    \end{align*}
    Rearranging, and using the conservation of energy, we deduce
    \begin{equation}\label{lamb}
    Q(\rho(t)) \geq \frac{n(\ell_\mu - S_\mu(\rho(t)))}{\lambda^*(\rho(t)) - 1} \geq \frac{n(\ell_\mu - I_\mu(\rho_0,\mathcal{M}_0))}{\lambda^*(\rho(t)) - 1},
    \end{equation}
where $\ell_\mu - I_\mu(\rho_0,\mathcal{M}_0) > 0$.

\begin{case}
Suppose that
    \begin{equation*}
 \liminf_{m \to \infty} \frac{Q(\rho(t_m))}{A_\alpha(\rho(t_m))} > 0.
    \end{equation*}
Since $\lim_{m \to \infty} Q(\rho(t_m)) = 0$, it follows that 
$$
\lim_{m \to \infty}\int_{\mathbb R^n}\rho(t_m)\Phi_\alpha*\rho(t_m)\dd \boldsymbol{x} = 0,
\qquad \lim_{m \to \infty} \int_{\mathbb{R}^n} \rho(t_m)^\gamma \dd \boldsymbol{x} = 0.
$$
Assume for contradiction that $R(t_m) \not \to \infty$ as $m \to \infty$. 
Then there exist both a subsequence of $(t_m)_m$ (which we relabel as $(t_m)_m$) and a constant 
$C>0$ such that $R(t_m) \leq C$ for all $m$.
Hence, we have
    \[
    M = \int_{\Omega(t_m)} \rho(t_m) \dd \boldsymbol{x} = \int_{B_{\rm c}} \rho(t_m) \dd \boldsymbol{x}. 
    \]
By H\"{o}lder's inequality, we obtain    
\[ 
M\le |B_{\rm c}|^{\frac{\gamma-1}{\gamma}}    
    \|\rho(t_m)\|_{L^\gamma} 
   =\Big(\frac{\omega_n C^{n}}{n}\Big)^\frac{\gamma - 1} {\gamma} \|\rho(t_m)\|_{L^\gamma} 
    \rightarrow 0\qquad \mbox{as $m \to \infty$}. 
    \]
Since $\|\rho(t_m)\|_{L^\gamma}\to 0$ as $m\to \infty$, 
it follows that $M\to 0$, which contradicts the assumption that $M>0$.
Therefore, we conclude that
$R(t) \to \infty$ as $t \to \infty$.
\end{case}

\begin{case}
    Otherwise, suppose that
    \begin{equation*}
 \liminf_{m \to \infty} \frac{Q(\rho(t_m))}{A_\alpha(\rho(t_m))} = 0.
    \end{equation*}
    Then there exists a subsequence of $(t_m)_m$ (which we relabel as $(t_m)_m$) such that
    \begin{equation}\label{sequ}
  \lim_{m \to \infty} \frac{Q(\rho(t_m))}{A_\alpha(\rho(t_m))} = 0.
    \end{equation}
From \eqref{lambdastar}, we have
    \begin{equation}\label{newlamb}
 \lambda^*(\rho) = \Big( 1 + \frac{2 Q(\rho)}{\alpha A_\alpha(\rho)} \Big)^\frac{n}{\alpha - n(\gamma - 1)}.
    \end{equation}
Combining \eqref{sequ} with \eqref{newlamb}, 
we deduce 
$$
\lim_{m \to \infty} \lambda^*(\rho(t_m)) = 1.
$$ 
Substituting this into \eqref{lamb}, we obtain 
$\lim_{m \to \infty} Q(\rho(t_m)) = \infty$.
This is a contradiction.
Therefore, this case cannot occur, and we must be in Case 1.
\end{case}
\setcounter{case}{0}
\end{proof}

\begin{remark}
 Unlike in {\rm\cite{Cheng_2025}}, the implication
 $$
 \lim_{t \to t_0} Q(\rho(t)) = 0
 \quad\Longrightarrow \quad \lim_{t \to t_0} \lambda^*(\rho(t)) = 1 
 $$
 does not hold here.
 Therefore, it is necessary to distinguish between the cases 
 in which $\lim_{t \to t_0} \lambda^*(\rho(t)) = 1$ and in which this limit does not hold, and to
 analyze the consequences in each scenario.
\end{remark}

\begin{lemma}\label{asymlem}
    Suppose that  $(\rho(t),\mathcal{M}(t))$ is a classical solution of the attractive CEREs ${\rm (\kappa = 1)}$ \eqref{1.1}
    for $\alpha \in (0,n)$ and $\frac{2n}{2n - \alpha} < \gamma < \frac{n + \alpha}{n}$ 
    such that there exists a constant $\Lambda > 0$ 
    satisfying $Q(\rho(t)) \geq \Lambda$ for all $t \in [0,\infty)$. Then 
    \[
    \liminf_{t \to \infty} \Big ( \frac{R(t)}{t} \Big ) \geq \Big ( \frac{\Lambda}{M} \Big )^\frac{1}{2}.
    \]
\end{lemma}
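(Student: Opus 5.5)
We argue through the virial identity for the second moment $H(t)=\int_{\Omega(t)}\rho|\boldsymbol{x}|^2\dd\boldsymbol{x}$, exactly as in Lemma \ref{E>0,general}. Differentiating twice and using \eqref{1.1}, $\rho|_{\partial\Omega(t)}=0$, and integration by parts, we obtain \eqref{2.2}:
\[
H''(t)=2\Big(\int_{\Omega(t)}\Big|\frac{\mathcal M}{\sqrt\rho}\Big|^2\dd\boldsymbol{x}+n\int_{\Omega(t)}p\dd\boldsymbol{x}-\int_{\Omega(t)}\rho\nabla\Phi_\alpha*\rho\cdot\boldsymbol{x}\dd\boldsymbol{x}\Big).
\]
We then apply the symmetrization identity $\int\rho\nabla\Phi_\alpha*\rho\cdot\boldsymbol{x}\dd\boldsymbol{x}=-\frac{\alpha}{2}\int\rho\Phi_\alpha*\rho\dd\boldsymbol{x}$, already derived there; it uses only the antisymmetry of $\nabla\Phi_\alpha$ and the homogeneity of the kernel, so it is valid for every $\alpha\in(0,n)$. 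With $p=a_0\rho^\gamma$ this gives
\[
H''(t)=2\int_{\Omega(t)}\Big|\frac{\mathcal M}{\sqrt\rho}\Big|^2\dd\boldsymbol{x}+2Q(\rho(t))\ \ge\ 2Q(\rho(t))\ \ge\ 2\Lambda .
\]
This is the key point: unlike in Lemma \ref{E>0,general}, we never substitute the energy, so no term with an unfavorable sign such as $(2-\alpha)\int|\mathcal M|^2/\rho$ appears, and no restriction on $\alpha$ is needed.

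Integrating twice yields $H(t)\ge H(0)+H'(0)t+\Lambda t^2$, where $H'(0)=2\int\mathcal M_0\cdot\boldsymbol{x}\dd\boldsymbol{x}$ is finite because the data are classical and compactly supported. Conservation of mass gives $H(t)\le R(t)^2M$. Dividing by $t^2$, we get $\Lambda+O(t^{-1})\le MR(t)^2/t^2$, and taking $\liminf_{t\to\infty}$ gives the claimed bound $(\Lambda/M)^{1/2}$.

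The only genuinely delicate step is justifying the integration by parts in the more singular range $\alpha\ge n-1$. There $\nabla\Phi_\alpha*\rho$ must be read in the principal-value form, and the symmetrization has to be carried out in the double integral with the singular kernel. For classical (H\"older continuous, compactly supported) densities, we plan to handle this by first cutting off the kernel at $|\boldsymbol{x}-\boldsymbol{y}|>\epsilon$, where the antisymmetrization is exact. We then let $\epsilon\to0$, using the H\"older regularity of $\rho$ to control the principal value, exactly as in the corollary following Lemma \ref{E>0,general}. The boundary terms vanish because $\rho=0$ and $\mathcal M=0$ on $\partial\Omega(t)$.
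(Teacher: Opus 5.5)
Your proposal is correct and is essentially the paper's own proof. As in the paper, the virial identity gives $H''(t)=2\int_{\Omega(t)}\Big|\frac{\mathcal{M}}{\sqrt{\rho}}\Big|^2\dd\boldsymbol{x}+2Q(\rho(t))\ge 2\Lambda$ directly, without substituting the energy, and then $H(0)+H'(0)t+\Lambda t^2\le H(t)\le R(t)^2M$ yields the bound. The paper handles the range $\alpha\ge n-1$ only with ``as in Lemma \ref{E>0,general}''; your kernel cutoff at $|\boldsymbol{x}-\boldsymbol{y}|>\epsilon$, combined with the H\"older continuity of $\rho$, is a sound way to justify the symmetrization there. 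Note, though, that the corollary you cite works by mollification, not by a kernel cutoff.
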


\begin{proof}
    As in the proof of Lemma \ref{E>0,general}, we have 
    \[
  H''(t) = 2\Big(\int_{\Omega(t)}\Big| \frac{\mathcal{M}}{\sqrt{\rho}} \Big|^2(t) \dd \boldsymbol{x}+na_0\int_{\Omega(t)}\rho^\gamma(t)\dd \boldsymbol{x} + \frac{\alpha}{2} \int_{\Omega(t)}\rho(t) \Phi_\alpha\ast\rho(t)\dd \boldsymbol{x}\Big) \geq 2Q(\rho(t)) \geq 2\Lambda.
    \]
    Thus, as in Lemma \ref{E>0,general}, we obtain
    \[
    H(0)+H'(0)t+\Lambda t^2\leq H(t)\leq R(t)^2M.
    \]
    The result follows.
\end{proof}

\begin{proof}[Proof of Theorem \ref{smallgamma}]
    Lemma \ref{union} shows that the condition $(\rho_0,\mathcal{M}_0) \in \bigcup_{\mu > 0} \mathcal{I}_\mu$ is equivalent to $Q(\rho_0) > 0$ together with \eqref{energymasscond}. 
    
    By Lemma \ref{boundbel}, since $(\rho_0,\mathcal{M}_0) \in \mathcal{I}_{\mu}$ 
    for some $\mu > 0$, then either there exists a sequence $t_m \to \infty$ as $m \to \infty$ such that $R(t_m) \to \infty$ as $m \to \infty$, 
    or there exists $\Lambda > 0$ such that $Q(\rho(t)) \geq \Lambda$ for all $t \in [0,\infty)$. 
   
    In the latter case, Lemma \ref{asymlem} yields
    \[
 \liminf_{t \to \infty} \Big ( \frac{R(t)}{t} \Big ) \geq \Big ( \frac{\Lambda}{M} \Big )^\frac{1}{2}.
    \]
\end{proof}

\setcounter{step}{0}
\setcounter{case}{0}

\section{Stability of Steady States for General and Polytropic Pressures with $\gamma > \frac{n + \alpha}{n}$} \label{stability}

In this section, we turn to the stability of steady states. 
In Section 4, we have proved the instability of steady states in the mass-supercritical range:
$\frac{2n}{2n-\alpha} < \gamma < \frac{n + \alpha}{n}$. 
We now consider the complementary range: $\gamma > \frac{n + \alpha}{n}$ in the polytropic case. 
We show that, in this range, steady states exhibit stability properties. 
Furthermore, the analysis extends naturally to general pressure laws 
with suitable asymptotic behavior near both the origin and infinity.

\subsection{Global Minimizers of the Free Energy}

We now employ the classical concentration compactness principle of Lions 
in \cite[Theorem II.2]{Lions_1984} to prove the existence of minimizers of the problem
\begin{equation*}
g_M = \inf_{\rho \in X_M} \mathcal{G}(\rho).
\end{equation*}
We first prove some facts about the minimizers of general free energies. 
To do this, we consider a general nonlocal interaction kernel of the form: 
\begin{equation}\label{potential}
\Psi \in L^p_{\rm w}(\mathbb{R}^n) \,\,{\rm (}\text{{\it i.e.}, weak $L^p$ space{\rm )}}\,\,\,
\text{for $p \in (1,\infty)$
\,\,\,\,\,  with $\,\Psi \geq 0\,$  {\it a.e.}}
\end{equation}
such that there exists $\beta \in (0,n)$ satisfying
\begin{equation}\label{Psi}
\Psi(t \xi) \geq t^{-\beta} \Psi(\xi)
\qquad\mbox{ for all $t \geq 1$ and $\xi \in \mathbb{R}^n$ {\it a.e.}}
\end{equation}
We take a convex function $\Pi$ satisfying
    \begin{equation}\label{convex}
\begin{cases}
\Pi:[0,\infty) \to [0,\infty) \text{ strictly convex,}
\\
 \lim_{t \to 0^+} \Pi(t)t^{-1} = 0, \\
 \liminf_{t \to \infty} \Pi(t)t^{-q} > 0,
  \end{cases}
    \end{equation}
    where $q = \frac{p + 1}{p}$. We define the associated free energy
\begin{equation*}
    \mathcal{F}(\varrho):=\int_{\mathbb{R}^n}\Big(\Pi(\varrho(\boldsymbol{x})) - \frac{1}{2}\varrho(\boldsymbol{x})(\Psi \ast \varrho)(\boldsymbol{x})\Big)\,\dd\boldsymbol{x},
\end{equation*}
with the corresponding admissible set
\begin{equation*}
  \mathcal{A}_M := \Big\{ \varrho \in L^1_+(\mathbb{R}^n) \cap L^q(\mathbb{R}^n) \,:
  \int_{\mathbb{R}^n} \varrho \, \dd \boldsymbol{x} = M \Big\}.
    \end{equation*}
Define the minimum of $\mathcal{F}$ over $\mathcal{A}_M$ by
\begin{equation*}
    F_M := \inf_{\varrho \in \mathcal{A}_M} \mathcal{F}(\varrho).
\end{equation*}
Under analogous conditions on mass $M$ to that of \eqref{lower}--\eqref{higherpress}, 
we can show the following useful facts about the sign of the free energy.

\begin{lemma}\label{negative}
Let $\Psi$ and $\Pi$ satisfy \eqref{potential} and \eqref{convex}, respectively.
\begin{enumerate}
    \item[{\rm(}a{\rm)}] When
    \begin{equation}\label{masscond}
    M < \bigg ( \frac{2 \liminf_{t \to \infty} \Pi(t) t^{-q}}{C_*^\Psi} \bigg )^\frac{1}{2-q},
    \end{equation}
    where $C_*^\Psi>0$ is the sharp constant from weak Young's convolution inequality {\rm Lemma \ref{precised}}, then $F_M > - \infty$.

    \item[{\rm(}b{\rm)}] Let $\Psi$ further satisfy \eqref{Psi} 
    for $\beta \leq \frac{n}{p}$ and $\limsup_{t \to 0} \Pi(t) t^{-q} < \infty$. 
    When $\beta = \frac{n}{p}$ and
    $$
    M > \bigg ( \frac{2 \limsup_{t \to 0} \Pi(t) t^{-q}}{C_*^\Psi} \bigg )^\frac{1}{2-q},
    $$
    then $F_M < 0$.
\end{enumerate}
\end{lemma}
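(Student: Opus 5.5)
For part (a), I will bound the interaction term from below by a power of the $L^q$ norm using the weak Young inequality of Lemma \ref{precised}. Since $\Psi\in L^p_{\rm w}$ and $\frac1q+\frac1p+\frac1q=2$ (because $q=\frac{p+1}{p}$), that lemma gives
\[
\int_{\mathbb{R}^n}\varrho\,\Psi*\varrho\,\dd\boldsymbol{x}\le C_*^\Psi\|\varrho\|_{L^q}^2 .
\]
I will then interpolate, $\|\varrho\|_{L^q}^2\le \|\varrho\|_{L^1}^{2-q}\|\varrho\|_{L^q}^{q}$. This uses $\frac1q=\frac{\theta}{1}+\frac{1-\theta}{q}$ with the appropriate exponents; I will check the arithmetic so that the $L^1$ power is exactly $2-q$ and the $L^q$ power is exactly $q$. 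Hence
\[
\frac12\int\varrho\,\Psi*\varrho\le \frac{C_*^\Psi}{2}M^{2-q}\int\varrho^q .
\]
Writing $L:=\liminf_{t\to\infty}\Pi(t)t^{-q}$, condition \eqref{masscond} gives some $\eta>0$ with $\frac{C_*^\Psi}{2}M^{2-q}<L-2\eta$. I then pick $T$ with $\Pi(t)\ge (L-\eta)t^q$ for $t\ge T$, and split $\int\Pi(\varrho)$ over $\{\varrho\ge T\}$ and $\{\varrho<T\}$. On $\{\varrho<T\}$ I only use $\Pi\ge0$, while $\int_{\{\varrho<T\}}\varrho^q\le T^{q-1}M$. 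This yields
\[
\mathcal F(\varrho)\ge (L-\eta)\int_{\{\varrho\ge T\}}\varrho^q-(L-2\eta)\int\varrho^q\ge -(L-2\eta)T^{q-1}M,
\]
a uniform lower bound. The case $L=\infty$ is easier, since any finite $L'$ can be used in place of $L$.

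For part (b), I will construct a test function with negative energy by concentrating mass through the mass-preserving dilation $\varrho_\lambda(\boldsymbol{x})=\lambda\varrho(\lambda^{1/n}\boldsymbol{x})$, this time sending $\lambda\to0$ (spreading out, so that densities become small). Condition \eqref{Psi} with $\beta=\frac np$ controls the interaction. Substituting $\boldsymbol{x}=\lambda^{-1/n}\boldsymbol{x}'$ and $t=\lambda^{-1/n}\ge1$ gives
\[
\int\varrho_\lambda\,\Psi*\varrho_\lambda=\lambda^{2-2}\!\!\int\!\!\int\varrho(\boldsymbol{x})\varrho(\boldsymbol{y})\Psi(\lambda^{-1/n}(\boldsymbol{x}-\boldsymbol{y}))\ge \lambda^{\beta/n}\int\varrho\,\Psi*\varrho=\lambda^{q-1}\int\varrho\,\Psi*\varrho,
\]
since $\beta/n=1/p=q-1$. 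The internal energy satisfies $\int\Pi(\varrho_\lambda)=\lambda^{-1}\int\Pi(\lambda\varrho)$. As $\lambda\to0$, with $\ell:=\limsup_{t\to0}\Pi(t)t^{-q}$, for a bounded $\varrho$ this is at most $(\ell+\eta)\lambda^{q-1}\int\varrho^q$. Therefore
\[
\lambda^{1-q}\mathcal F(\varrho_\lambda)\le(\ell+\eta)\int\varrho^q-\frac12\int\varrho\,\Psi*\varrho .
\]
It thus suffices to find a bounded $\varrho\in\mathcal A_M$ with $\frac12\int\varrho\Psi*\varrho>\ell\int\varrho^q$.

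The main obstacle is this last step. It requires near-optimizers of the weak Young/HLS-type inequality, normalized to mass $M$, satisfying $\int\varrho\Psi*\varrho\ge (C_*^\Psi-\epsilon)M^{2-q}\int\varrho^q$ in the interpolated form. The mass condition $M^{2-q}>2\ell/C_*^\Psi$ then closes the argument once $\epsilon,\eta$ are small. I would take the sharp constant in Lemma \ref{precised} to be defined as the supremum of $\int\varrho\Psi*\varrho/(\|\varrho\|_{L^1}^{2-q}\|\varrho\|_{L^q}^q)$. Near-extremizers then exist by definition, and by density and truncation they can be chosen bounded and nonnegative. If the lemma is instead stated with $\|\varrho\|_{L^q}^2$, I would need the extra scaling-invariance (guaranteed when $\beta=n/p$) to relate the two constants.
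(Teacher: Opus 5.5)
Your plan follows the paper's proof in both parts. In (a) you split at a threshold above which $\Pi(t)t^{-q}$ exceeds $\frac{C_*^\Psi}{2}M^{2-q}$ and bound the low-density part by $T^{q-1}M$. In (b) you take a bounded, nonnegative near-extremizer of the sharp inequality in Lemma \ref{precised}, normalize its mass, and spread it by a mass-preserving dilation so that only $\limsup_{t\to0}\Pi(t)t^{-q}$ matters. Your uniform bound $\Pi(\lambda\varrho)\le(\ell+\eta)(\lambda\varrho)^q$ for bounded $\varrho$ is a slightly cleaner substitute for the paper's reverse Fatou step.

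What is wrong is your derivation of the interaction bound in (a).
\begin{itemize}
\item The identity $\frac1q+\frac1p+\frac1q=2$ is false for $q=\frac{p+1}{p}$ unless $p=1$. The diagonal exponent for weak Young is $r=\frac{2p}{2p-1}$, which is strictly smaller than $q$.
\item Consequently $\int\varrho\,\Psi*\varrho\le C\|\varrho\|_{L^q}^2$ fails. For $\Psi=|\boldsymbol{x}|^{-\alpha}$ the left side scales like $s^{2n-\alpha}$ and the right side like $s^{2n/q}$ under $\varrho\mapsto\varrho(\cdot/s)$.
\item Your ``interpolation'' $\|\varrho\|_{L^q}^2\le\|\varrho\|_{L^1}^{2-q}\|\varrho\|_{L^q}^q$ is equivalent to $\|\varrho\|_{L^q}\le\|\varrho\|_{L^1}$, which is also false.
\end{itemize}
The two errors happen to cancel into the correct inequality $\int\varrho\,\Psi*\varrho\le C_*^\Psi\|\varrho\|_{L^1}^{2-q}\|\varrho\|_{L^q}^q$.

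That inequality is exactly Lemma \ref{precised}, applied with kernel exponent $p$ and density exponent $q$ (admissible since $q\ge\frac{2p}{2p-1}$). Alternatively, apply weak Young in $L^r$ and then interpolate $\|\varrho\|_{L^r}^2\le\|\varrho\|_{L^1}^{2-q}\|\varrho\|_{L^q}^{q}$, which does hold by H\"older with weight $\theta=\frac{p-1}{2p}$ on $L^1$. With that citation, part (a) is complete.

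Your worry at the end of (b) is moot. The lemma is already stated in the interpolated form, which matches your definition of $C_*^\Psi$ as the supremum of that ratio.
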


\begin{proof} We divide the proof into two steps, correspondingly.
    
\smallskip
{\bf 1.} We take $\rho^*$ large enough such that
\[
\Pi(\varrho) \varrho^{-q} > \frac{C_*^\Psi}{2} M^{2-q}
\qquad\mbox{ for all $\varrho \geq \rho^*$}.
\]
Then, for any $\varrho \in \mathcal{A}_M$, applying the weak Young convolution inequality, 
we have 
\begin{align*}
\mathcal{F}(\varrho) & \geq \int_{\{ \varrho \geq \rho^* \}} \Pi(\varrho(\boldsymbol{x}))\,\dd\boldsymbol{x} - \frac{C_*^\Psi}{2} \| \varrho \|_{L^1}^{2-q} \| \varrho \|_{L^q}^q \nonumber\\
& \geq \int_{\{ \varrho \geq \rho^* \}} \varrho^q \Big ( \Pi(\varrho(\boldsymbol{x})) \varrho^{-q} - \frac{C_*^\Psi}{2} M^{2-q} \Big ) \,\dd\boldsymbol{x} - \frac{C_*^\Psi}{2} M^{2-q} \int_{\{ \varrho < \rho^* \}} \!\!\!\!\varrho^q \,\dd\boldsymbol{x} \nonumber\\
& \geq - \frac{C_*^\Psi}{2} M^{2-q} (\rho^*)^{q-1} \int_{\{ \varrho < \rho^* \}} \varrho \,\dd\boldsymbol{x} \\
& \geq - \frac{C_*^\Psi}{2} M^{3-q} (\rho^*)^{q-1} \nonumber,
\end{align*}
where we have used $\Pi\geq 0$ in the first inequality. Thus, $F_M > - \infty.$

\medskip
{\bf 2.} There exists $\theta \in (0,1)$ such that
 \[
  \delta_{\beta, \frac{n}{p}} \limsup_{t \to 0} \Pi(t) t^{-q} < \frac{\theta C_*^\Psi}{2} M^{2-q},
 \]
  where 
 $$
 \delta_{\beta, \frac{n}{p}} = \begin{cases}
 1 & \mbox{for $\beta = \frac{n}{p}$}, \\
 0 & \mbox{for $\beta < \frac{n}{p}$}.
\end{cases}
  $$
By Lemma \ref{precised} and the sharpness of $C_*^\Psi > 0$, 
there exists $\varrho \in L^1_+\cap L^q(\mathbb{R}^n)$ such that
\begin{equation}\label{criti}
\int_{\mathbb{R}^n} \varrho \Psi * \varrho \dd \boldsymbol{x} > \theta C_*^\Psi \| \varrho \|_{L^q}^q \| \varrho \|_{L^1}^{2-q}.
\end{equation}
By density of $C^\infty_{\rm c}(\mathbb{R}^n)$ in $L^1\cap L^q(\mathbb{R}^n)$, 
we may assume that $\varrho \in C^\infty_{\rm c}(\mathbb{R}^n)$ and is hence a bounded function. 
Now, defining $\varrho_R(\boldsymbol{x})$ as in \eqref{scale}, then
$\varrho_R \in \mathcal{A}_M$. 
Since $\Psi$ satisfies \eqref{Psi} for $\beta \leq \frac{n}{p}$, 
we combine this with \eqref{criti} to see that, for $R < \frac{M}{\|\varrho\|_{L^1}}$,
\begin{align*}
\int_{\mathbb{R}^n} \varrho_R \Psi * \varrho_R \dd \boldsymbol{x} 
& =  \Big (\frac{M}{\|\varrho\|_{L^1}} \Big)^2 \int_{\mathbb{R}^n} 
\int_{\mathbb{R}^n} \varrho(\boldsymbol{x}) \varrho(\boldsymbol{y}) 
\Psi((R\|\varrho\|_{L^1})^{-\frac{1}{n}} M^\frac{1}{n} (\boldsymbol{x} - \boldsymbol{y})) 
\dd \boldsymbol{x} \dd \boldsymbol{y} \\
& \geq R^\frac{\beta}{n} \Big(\frac{M}{\|\varrho\|_{L^1}} \Big)^{2 - \frac{\beta}{n}} \int_{\mathbb{R}^n} \varrho \Psi * \varrho \dd \boldsymbol{x} \\
& \geq \theta C_*^\Psi M^{2-\frac{\beta}{n}} R^\frac{\beta}{n} \| \varrho \|_{L^q}^q \| \varrho \|_{L^1}^{\frac{\beta}{n} - q}.
\end{align*}
Thus, taking $R< \min \big\{1,\frac{M}{\|\varrho\|_{L^1}} \big \}$, we obtain
\begin{align*}
\mathcal{F}(\varrho_R) R^{-\frac{\beta}{n}} \Big( \frac{M}{\| \varrho \|_{L^1}} \Big)^{\frac{\beta-nq}{n}} 
\leq &\, R^{\frac{1}{p} - \frac{\beta}{n}} 
\Big( \frac{M}{\| \varrho \|_{L^1}} \Big)^{\frac{\beta}{n}- \frac{1}{p}} 
\int_{\mathbb{R}^n} \frac{\Pi(R \varrho)}{(R \varrho)^q} \varrho^q \dd \boldsymbol{x} 
- \frac{\theta C_*^\Psi}{2} \| \varrho \|_{L^q}^q M^{2-q} \\
= &\,\int_{\{\varrho > 0\}} \bigg ( R^{\frac{1}{p} 
- \frac{\beta}{n}} 
\Big( \frac{M}{\| \varrho \|_{L^1}} \Big)^{\frac{\beta}{n}-\frac{1}{p}}\frac{\Pi(R\varrho)}{(R\varrho)^q} 
- \frac{\theta C_*^\Psi}{2} M^{2-q} \bigg ) \varrho^q \dd \boldsymbol{x}.
\end{align*}
Since $\varrho$ is a bounded function and 
$$
\limsup_{R \to 0} R^{\frac{1}{p} - \frac{\beta}{n}}\frac{\Pi(R \varrho)}{(R \varrho)^q} 
\leq  \frac{C_*^\Psi}{2} M^{2-q} \qquad\mbox{for $\varrho > 0$},
$$
then $R^{\frac{1}{p} - \frac{\beta}{n}}\frac{\Pi(R \varrho)}{(R \varrho)^q}$ is bounded for $R<1$. 
Thus, there exists $C>0$ such that $R^{\frac{1}{p} - \frac{\beta}{n}}\frac{\Pi(R \varrho)}{(R \varrho)^q} 
\leq C$ for $R<1$. Then we can apply the reverse Fatou lemma to obtain 
\[
\limsup_{R \to 0} \mathcal{F}(\varrho_R) R^{- \frac{\beta}{n}} 
\Big( \frac{M}{\| \varrho \|_{L^1}} \Big)^{\frac{\beta-nq}{n}} 
\leq \| \varrho \|^q_{L^q} \Big ( \delta_{\beta, \frac{n}{p}} \limsup_{t \to 0} \Pi(t) t^{-q} 
- \frac{\theta C_*^\Psi}{2}  M^{2-q} \Big ) < 0.
\]
This implies that $F_M < 0$.
\end{proof}

For the sake of completeness, we remind the reader the main result in \cite{Lions_1984}.

\begin{theorem}[\text{\cite[Theorem II.2]{Lions_1984}}]
\label{minimizer}
Suppose that $\Psi$ and $\Pi$ satisfy \eqref{potential} and \eqref{convex}, respectively, 
and that \eqref{masscond} is satisfied. 
Then, for any minimizing sequence $\{\varrho_m\}_m \subset \mathcal{A}_M$, 
there exist $\boldsymbol{y}_m \in \mathbb{R}^n$ and 
a minimizer $\bar{\rho} \in \mathcal{A}_M$ to $\mathcal{F}$ {\rm(}{\it i.e.,} 
$\mathcal{F}(\bar{\rho}) = F_M${\rm)} 
with $T^{\boldsymbol{y}_m} \varrho_m$ compact in $L^1(\mathbb{R}^n) \cap L^q(\mathbb{R}^n)$
so that $T^{\boldsymbol{y}_m} \varrho_m \to \bar{\rho}$ in $L^1(\mathbb{R}^n) \cap L^q(\mathbb{R}^n)$
if and only if
\begin{equation}\label{inequality}
    F_M < F_m + F_{M - m} \qquad \text{ for all } m \in (0,M).
\end{equation}
\end{theorem}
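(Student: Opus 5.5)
This is Lions' concentration-compactness theorem, and I would follow the classical scheme in its three standard parts: (i) minimizing sequences are bounded, (ii) the three alternatives of the concentration-compactness lemma, (iii) vanishing and dichotomy are excluded exactly when \eqref{inequality} holds.

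\textbf{Necessity.} The large inequality $F_M \le F_m + F_{M-m}$ always holds. To see it, take near-minimizers for masses $m$ and $M-m$. By density I may assume both are compactly supported. Translate one of them far away. The cross interaction term tends to zero because $\Psi \in L^p_{\rm w}$ decays on average, and the internal energy is additive on disjoint supports.

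Now suppose \eqref{inequality} fails for some $m$, so that $F_M = F_m + F_{M-m}$. Build the minimizing sequence by placing minimizing sequences of masses $m$ and $M-m$ at mutual distance tending to infinity. This sequence is not relatively compact up to translations, which contradicts compactness of every minimizing sequence.

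\textbf{Boundedness.} Let $\{\varrho_m\}$ be a minimizing sequence. The estimate in Step 1 of Lemma \ref{negative}(a) under \eqref{masscond} gives a lower bound on $\mathcal{F}$. Refining that estimate, the part where $\Pi(\varrho)\varrho^{-q}$ strictly exceeds $\tfrac{C_*^\Psi}{2}M^{2-q}$ controls $\int \varrho^q$ on $\{\varrho\ge\rho^*\}$. On the complement the bound $\varrho^q \le (\rho^*)^{q-1}\varrho$ applies. Hence $\sup_m \|\varrho_m\|_{L^q} < \infty$.

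\textbf{Concentration-compactness alternatives.} Apply Lions' lemma to the concentration functions $Q_m(R)=\sup_{\boldsymbol{y}}\int_{B_R(\boldsymbol{y})}\varrho_m$.

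\emph{Vanishing.} Split $\Psi = \Psi\mathds{1}_{\{\Psi>K\}} + \Psi\mathds{1}_{\{\Psi\le K,\,|\boldsymbol{x}|\le R\}} + \Psi\mathds{1}_{\{|\boldsymbol{x}|>R\}}$. The first and third pieces are small uniformly, by the weak-$L^p$ bound together with the $L^1\cap L^q$ bounds. The middle piece is controlled by $K M \sup_{\boldsymbol{y}} \int_{B_R(\boldsymbol{y})}\varrho_m \to 0$. So vanishing forces the interaction energy to tend to zero. Then $\liminf \mathcal{F}(\varrho_m) \ge 0$, since $\Pi \ge 0$.

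I need $F_M<0$ to rule this out. I would derive it from \eqref{inequality} together with scaling. Combining the sublinearity $F_{\theta M} \le \theta F_M$ with the strict subadditivity shows $F_M < 0$; to get strictness I would use that $\Pi(t)/t\to0$, so spreading mass costs nothing. Since $F_M \le 0$ always, it suffices to exclude $F_M = 0$. If $F_M=0$, then $F_m \le 0$ for all $m$, and $F_M \ge F_m+F_{M-m}$ would contradict \eqref{inequality}. So vanishing is impossible.

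\emph{Dichotomy.} Use the standard cutoff decomposition $\varrho_m = \varrho_m^1 + \varrho_m^2 + w_m$, where the two pieces have masses close to $m$ and $M-m$, their supports are separated by a distance tending to infinity, and $\|w_m\|_{L^1}\to0$. By superadditivity of $\Pi$ (convex with $\Pi(0)=0$) and the decay of the cross term,
\[
\liminf \mathcal{F}(\varrho_m) \ge F_m + F_{M-m}.
\]
This contradicts \eqref{inequality}, using continuity of $m\mapsto F_m$, which follows from scaling $\varrho\mapsto(m'/m)\varrho$. The technical point is that $w_m$ is small in $L^1$ but only bounded in $L^q$. To handle it, I interpolate and use the truncation of $\Psi$ again to show its contribution to the interaction term vanishes.

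\textbf{Compactness.} There remain translations $\boldsymbol{y}_m$ making $T^{\boldsymbol{y}_m}\varrho_m$ tight. Extract a weak $L^q$ limit $\bar\rho$; tightness gives mass $M$. The interaction term converges by local compactness of $\Psi*$ (truncate $\Psi$ as above). Weak lower semicontinuity of $\int\Pi$ then shows $\bar\rho$ is a minimizer. Equality $\int \Pi(\varrho_m)\to\int\Pi(\bar\rho)$ together with strict convexity yields strong convergence in $L^q$, and tightness yields it in $L^1$.

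\textbf{Main obstacle.} I expect the hardest steps to be the uniform smallness of the interaction error terms for a kernel known only to lie in $L^p_{\rm w}$, and proving $F_M<0$ to exclude vanishing.
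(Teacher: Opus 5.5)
Your proposal is correct and is the standard concentration-compactness proof of \cite[Theorem II.2]{Lions_1984}; the paper gives no proof of its own and simply cites that result. Two minor tidy-ups: the sublinearity $F_{\theta M}\le\theta F_M$ is not available without the scaling hypothesis \eqref{Psi} and is not needed, because spreading gives $F_m\le 0$ for all $m$ and hence $F_M<F_m+F_{M-m}\le 0$ directly; and the mass-continuity used in the dichotomy step should be obtained by scaling down (using $\Pi(\theta t)\le\theta\Pi(t)$ for $\theta\le 1$) together with the monotonicity $F_{a+b}\le F_a+F_b\le F_a$, since scaling up would need an upper growth bound on $\Pi$ that is not assumed.
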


In order to prove the existence of minimizers for our problem, it suffices to prove \eqref{inequality} for applying Theorem \ref{minimizer}. In order to verify condition \eqref{inequality}, we also employ a corollary from \cite{Lions_1984}.

\begin{corollary}[\text{\cite[ Corollary II.1]{Lions_1984}}]
\label{cor}
Suppose \eqref{potential} and \eqref{convex} are satisfied.
If there exists some $\beta \in (0,n)$ such that
    \[
  \Psi(t \xi) \geq t^{-\beta} \Psi(\xi)
 \qquad\mbox{for all $t \geq 1$ and $\xi \in \mathbb{R}^n$ a.e.},
    \]
then \eqref{inequality} is satisfied if and only if $F_M < 0$.
\end{corollary}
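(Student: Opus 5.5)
The argument rests on one scaling inequality for the mass-increasing dilation, which shows that $m\mapsto F_m/m$ is non-increasing, strictly so once the energy is negative. Throughout I assume $F_m>-\infty$ for the masses in play, e.g.\ via \eqref{masscond} and Lemma \ref{negative}(a). I write $D(\varrho):=\int_{\mathbb{R}^n}\varrho\,\Psi*\varrho\,\dd\boldsymbol{x}\ge0$.

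\emph{Step 1 ($F_m\le 0$ for every $m>0$).} Take any $\varrho\in\mathcal{A}_m$, which we may assume bounded with compact support. Spread it with the mass-preserving dilation $\varrho_\lambda(\boldsymbol{x})=\lambda\varrho(\lambda^{1/n}\boldsymbol{x})$ and let $\lambda\to0$. Then $\int\Pi(\varrho_\lambda)=\int\Pi(\lambda\varrho)\lambda^{-1}\to0$, because $\Pi(t)t^{-1}\to0$ as $t\to0^+$ and the integrand is dominated. Since $D(\varrho_\lambda)\ge0$, we get $\limsup_{\lambda\to0}\mathcal{F}(\varrho_\lambda)\le0$, hence $F_m\le0$.

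\emph{Step 2 (scaling inequality).} For $\theta\ge1$ and $\varrho\in\mathcal{A}_m$, set $\varrho^\theta(\boldsymbol{x}):=\varrho(\theta^{-1/n}\boldsymbol{x})\in\mathcal{A}_{\theta m}$. Changing variables gives $\int\Pi(\varrho^\theta)=\theta\int\Pi(\varrho)$ and
\[
D(\varrho^\theta)=\theta^2\iint\varrho(\boldsymbol{x})\varrho(\boldsymbol{y})\Psi\big(\theta^{1/n}(\boldsymbol{x}-\boldsymbol{y})\big)\dd\boldsymbol{x}\dd\boldsymbol{y}\ge\theta^{2-\beta/n}D(\varrho),
\]
where the last step uses the hypothesis $\Psi(t\xi)\ge t^{-\beta}\Psi(\xi)$ for $t\ge1$. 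Therefore
\[
F_{\theta m}\le\mathcal{F}(\varrho^\theta)\le\theta\,\mathcal{F}(\varrho)-\tfrac12\big(\theta^{2-\beta/n}-\theta\big)D(\varrho).
\]
Since $\beta<n$, the coefficient $\theta^{2-\beta/n}-\theta$ is positive for $\theta>1$. Letting $\varrho$ run along a minimizing sequence yields $F_{\theta m}\le\theta F_m$.

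If in addition $F_m<0$, then $\Pi\ge0$ gives $D(\varrho)\ge-2\mathcal{F}(\varrho)\ge-F_m>0$ along the tail of the minimizing sequence. This uniform lower bound on $D$ survives the limit and gives the strict inequality $F_{\theta m}<\theta F_m$ for $\theta>1$. This is the step I expect to be the delicate one: the strictness must not be lost in the limit, and the uniform positive lower bound on $D$ is exactly what secures it.

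\emph{Step 3 (the equivalence).} Suppose first that $F_M=0$. By Step 1, $F_m+F_{M-m}\le0=F_M$, so \eqref{inequality} fails. Hence \eqref{inequality} forces $F_M<0$.

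Conversely, suppose $F_M<0$ and fix $m\in(0,M)$. By symmetry of \eqref{inequality} we may assume $m\ge M-m$. Step 2 with $\theta=m/(M-m)\ge1$ gives $F_m\le\frac{m}{M-m}F_{M-m}$, i.e.\ $\frac{M-m}{m}F_m\le F_{M-m}$.

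If $F_m<0$, the strict form of Step 2 with $\theta=M/m>1$ gives
\[
F_M<\tfrac{M}{m}F_m=F_m+\tfrac{M-m}{m}F_m\le F_m+F_{M-m}.
\]

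If instead $F_m=0$, then $0=F_m\le\frac{m}{M-m}F_{M-m}\le0$ by Step 1, so $F_{M-m}=0$ as well. Then $F_M<0=F_m+F_{M-m}$.

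In both cases \eqref{inequality} holds, which completes the equivalence.
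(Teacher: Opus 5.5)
Your argument is correct and is essentially the standard scaling proof in Lions' original paper, which the present paper cites without reproducing. The structure matches: $F_m\le 0$ by spreading, then $F_{\theta m}\le\theta F_m$ for $\theta\ge 1$ with strict inequality when $F_m<0$, then the usual splitting with $m\ge M-m$. One bookkeeping simplification: you only need $F_M>-\infty$, since your Step~2 already gives $F_m\ge\frac{m}{M}F_M>-\infty$ for every $m\in(0,M]$.
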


Under the assumptions of Theorem \ref{minimizer} and \eqref{Psi}, Corollary \ref{cor} implies that \eqref{inequality} is satisfied. Thus, Theorem \ref{minimizer} implies that there exists 
a minimizer $\bar{\rho} \in \mathcal{A}_M$, that is, $\mathcal{F}(\bar{\rho}) = F_M$. 
We now show that, under the radial conditions on the interaction kernel $\Psi$, 
the minimizers are indeed radially symmetric, up to translation. 
First of all, we prove a generalization of Cavalieri's formula. 
To do this, we use a corollary from \cite{Serrin_1969}.

\begin{corollary}
[\text{\cite[Corollary 9]{Serrin_1969}}]
\label{change}
Let $I \subseteq \mathbb{R}$ be a subinterval, and 
let $g:I \to \mathbb{R}$ be an injective and locally absolutely continuous function of
local bounded variation.  
If $f: g(I) \to \mathbb{R}$ is measurable and locally bounded on $g(I)$, then 
    \begin{equation*}
   \int_{g(I)} f(t) \dd t = \int_I f(g(t)) g'(t) \dd t.
    \end{equation*}
\end{corollary}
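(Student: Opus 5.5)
The statement is the classical change-of-variables formula, and I would prove it by the standard measure-theoretic bootstrap: first indicators of intervals, then Borel sets, then null sets, then bounded measurable $f$, and finally a passage to all of $I$.

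\textbf{Reduction to increasing $g$ and to interval indicators.} A continuous injective function on an interval is strictly monotone. Replacing $g(t)$ by $-g(t)$ and $f(s)$ by $f(-s)$ if necessary, I will assume $g$ is strictly increasing, so that $g'\ge0$ a.e. I will first work on a compact subinterval $[a,b]\subset I$, where $f$ is bounded on $g([a,b])=[g(a),g(b)]$. For $f=\mathds{1}_{[c,d]}$ with $[c,d]\subset[g(a),g(b)]$, choose $c'=g^{-1}(c)$ and $d'=g^{-1}(d)$. Absolute continuity then gives
\[
\int \mathds{1}_{[c,d]}(s)\dd s=g(d')-g(c')=\int_{c'}^{d'}g'(t)\dd t=\int_a^b \mathds{1}_{[c,d]}(g(t))\,g'(t)\dd t .
\]

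\textbf{Extension to Borel sets.} The class of Borel sets $B\subset[g(a),g(b)]$ for which $|B|=\int_a^b\mathds{1}_B(g(t))g'(t)\dd t$ contains the closed subintervals. This family of intervals is closed under finite intersections. The class itself is closed under proper differences and increasing unions, by linearity and monotone convergence; here $g'\in L^1(a,b)$ keeps every quantity finite. Dynkin's $\pi$--$\lambda$ theorem therefore gives the identity for all Borel sets.

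\textbf{Null sets and measurability of the composition.} I expect this step to be the main obstacle, since $f\circ g$ need not be Lebesgue measurable for general Lebesgue measurable $f$. Let $N$ be a Lebesgue null set. Cover it by a Borel ($G_\delta$) set $G\supset N$ with $|G|=0$. The Borel case gives $\int_a^b\mathds{1}_G(g)g'\dd t=0$, hence $\mathds{1}_G(g(t))g'(t)=0$ for a.e.\ $t$. Now take a measurable $f$ and a Borel $\tilde f$ that agrees with $f$ off such an $N$. Then $f(g(t))g'(t)=\tilde f(g(t))g'(t)$ for a.e.\ $t$. Indeed, at points where $g(t)\in N$ we have $g'(t)=0$ a.e., so both products vanish there. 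Consequently $(f\circ g)\,g'$ is measurable, and it suffices to treat Borel $f$.

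\textbf{Bounded functions and exhaustion of $I$.} For bounded Borel $f$ on $[g(a),g(b)]$, the formula holds for simple functions by linearity. Uniform approximation by simple functions, with $g'\in L^1(a,b)$, passes it to $f$. Finally I exhaust $I$ by an increasing sequence of compact intervals $[a_k,b_k]$. The two sides agree on each compact interval, so they agree in the limit, interpreted as a Lebesgue integral whenever either side is absolutely integrable. This follows by applying the bounded case to $|f|$ together with monotone convergence, and then to $f^{\pm}$ separately; otherwise the identity holds as an improper limit.
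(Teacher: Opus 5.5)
The paper itself gives no proof of this statement; it only cites Serrin--Varberg. Your argument is correct for increasing $g$, but your reduction of the decreasing case to the increasing one fails.

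\textbf{The reduction to increasing $g$.} It does not preserve the identity. With $\tilde g=-g$ and $\tilde f(s)=f(-s)$ you do get $\int_{\tilde g(I)}\tilde f=\int_{g(I)}f$. However, $\int_I\tilde f(\tilde g(t))\,\tilde g'(t)\,dt=-\int_I f(g(t))\,g'(t)\,dt$. So the increasing case applied to $(\tilde f,\tilde g)$ gives $\int_{g(I)}f=-\int_I f(g)\,g'$.

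No other argument can rescue this case. As written, with an unoriented integral over the set $g(I)$, the statement is false for decreasing $g$. Take $I=[0,1]$, $g(t)=-t$, $f\equiv 1$: the left side is $1$ and the right side is $-1$. What your argument actually proves is $\int_{g(I)}f=\int_I f(g)\,|g'|$ for every injective $g$. Equivalently, on compact subintervals it proves the classical oriented formula $\int_{g(a)}^{g(b)}f=\int_a^b f(g)\,g'$.

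You should state the result for increasing $g$, or with $|g'|$, rather than claim a WLOG. This does not affect the paper, since its only use of the statement is with $g=\Pi$, which is strictly increasing on $(0,\infty)$.

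\textbf{The increasing case.} Here your proof is complete:
\begin{itemize}
\item intervals follow from absolute continuity;
\item Borel sets follow from the $\pi$--$\lambda$ theorem, with finiteness coming from $0\le g'\in L^1(a,b)$;
\item null sets are handled by a null $G_\delta$ cover, which by completeness also gives Lebesgue measurability of $(f\circ g)\,g'$;
\item bounded $f$ follows by uniform approximation;
\item all of $I$ follows by exhaustion and monotone convergence applied to $f^\pm$.
\end{itemize}

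\textbf{Comparison with the cited route.} Serrin--Varberg argue through differentiation rather than measure extension. One shows that $g'=0$ a.e.\ on $g^{-1}(N)$ for every null set $N$. One then deduces the chain rule $(F\circ g)'=f(g)\,g'$ a.e.\ for $F(x)=\int_c^x f$, which is Lipschitz on compacts because $f$ is locally bounded. Finally one integrates, using that $F\circ g$ is absolutely continuous.

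Your null-set step is exactly the monotone special case of their lemma, and you get it for free from the Borel case. Your route is more elementary and self-contained. Theirs needs no monotonicity and gives the oriented formula for arbitrary absolutely continuous $g$.
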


\begin{lemma}
    Let $(X,\mu)$ be a topological measure space, and let $\Pi$ satisfy \eqref{convex}. 
    Then, for all measurable functions $\varrho: (X,\mu) \to \mathbb{R}$ such that
    \begin{equation*}
  \int_X \Pi(|\varrho|) \dd \mu < \infty,
    \end{equation*}
the following Cavalieri formula holds{\rm:}
    \begin{equation}\label{cavalieri}
\int_X \Pi(|\varrho|) \dd \mu = \int^\infty_0 \Pi'(\lambda) \mu(\{ |\varrho| > \lambda \}) \dd \lambda.
    \end{equation}
\end{lemma}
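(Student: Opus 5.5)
The plan is to start from the layer-cake representation of $|\varrho|$ and reduce \eqref{cavalieri} to a one-dimensional change of variables plus Tonelli. Since $\Pi$ is convex on $[0,\infty)$ and $\lim_{t\to0^+}\Pi(t)t^{-1}=0$ with $\Pi\ge0$, we have $\Pi(0)=0$. Moreover $\Pi$ is locally absolutely continuous on $[0,\infty)$ (convex functions are locally Lipschitz in the interior, and continuity at $0$ follows from the limit condition). Its right derivative $\Pi'$ exists everywhere, is nondecreasing, and satisfies $\Pi'(0^+)=0$; it is therefore nonnegative and locally bounded. Hence, for every $s\ge0$,
\[
\Pi(s)=\int_0^s \Pi'(\lambda)\dd\lambda = \int_0^\infty \Pi'(\lambda)\mathds{1}_{\{\lambda<s\}}\dd\lambda .
\]
This is exactly where I would invoke Corollary \ref{change}: apply it with $g(t)=t$ on $I=[0,s]$, or more substantively with $f=\Pi'$ (measurable and locally bounded). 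This justifies writing $\Pi(s)-\Pi(0)$ as the integral of its a.e.\ derivative, and it also covers the case in which $\Pi'$ is only defined a.e.

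Next, substitute $s=|\varrho(x)|$ and integrate over $X$:
\[
\int_X\Pi(|\varrho|)\dd\mu=\int_X\int_0^\infty \Pi'(\lambda)\mathds{1}_{\{|\varrho(x)|>\lambda\}}\dd\lambda\dd\mu(x).
\]
The integrand is nonnegative, and it is jointly measurable on $X\times[0,\infty)$ because $\{(x,\lambda):|\varrho(x)|>\lambda\}$ is measurable for measurable $\varrho$. Tonelli's theorem then lets us swap the integrals, which gives $\int_0^\infty\Pi'(\lambda)\mu(\{|\varrho|>\lambda\})\dd\lambda$. The finiteness hypothesis guarantees that both sides are finite and equal.

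The main point needing care is measure-theoretic. Tonelli requires $\sigma$-finiteness of $\mu$, which is not assumed for a general topological measure space. I would handle this by noting that the integrand vanishes unless $|\varrho|>\lambda>0$. Since $\Pi$ is strictly convex with $\Pi(0)=0$, we get $\Pi(t)>0$ for $t>0$, so each set $\{|\varrho|>\lambda\}$ with $\lambda>0$ has finite measure: by Chebyshev, $\mu(\{|\varrho|>\lambda\})\le \Pi(\lambda)^{-1}\int_X\Pi(|\varrho|)\dd\mu$. Consequently $\{\varrho\ne0\}=\bigcup_k\{|\varrho|>1/k\}$ is $\sigma$-finite, and we may restrict $\mu$ to this set without changing either side of the identity. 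Apart from this restriction, the argument is routine.
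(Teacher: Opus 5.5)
Your proof is correct and takes a genuinely different route from the paper's.

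\textbf{The paper's route.} The paper applies the ordinary layer-cake formula to the function $\Pi(|\varrho|)$, namely $\int_X\Pi(|\varrho|)\dd\mu=\int_0^\infty\mu(\{\Pi(|\varrho|)>\tau\})\dd\tau$. It then uses that $\Pi$ is a strictly increasing bijection of $[0,\infty)$ to rewrite $\{\Pi(|\varrho|)>\tau\}=\{|\varrho|>\Pi^{-1}(\tau)\}$. Finally it substitutes $\tau=\Pi(\lambda)$ in the \emph{level} variable, invoking Corollary~\ref{change} with $g=\Pi$ and the distribution function as the integrand.

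\textbf{Your route.} You instead expand $\Pi(s)=\int_0^s\Pi'(\lambda)\dd\lambda$ pointwise and exchange the order of integration by Tonelli; this is the standard textbook proof of the layer-cake representation. It never inverts $\Pi$: strict convexity enters only through $\Pi(\lambda)>0$ for $\lambda>0$. It also deals explicitly with the $\sigma$-finiteness issue that the paper absorbs into its citation of the classical Cavalieri formula. Your Chebyshev bound $\mu(\{|\varrho|>\lambda\})\le\Pi(\lambda)^{-1}\int_X\Pi(|\varrho|)\dd\mu$ is exactly what makes the paper's integrand $\tau\mapsto\mu(\{|\varrho|>\Pi^{-1}(\tau)\})$ locally bounded on $(0,\infty)$, as Corollary~\ref{change} requires.

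\textbf{Trade-off.} The paper's route treats the classical layer-cake identity as a black box and reduces everything to one one-dimensional change of variables, with no product measurability to verify. In exchange, it genuinely needs $\Pi$ to be a bijection.

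\textbf{Two small points.}

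First, your appeal to Corollary~\ref{change} is mis-specified. With $g(t)=t$ the corollary is a tautology and cannot produce $\Pi(s)-\Pi(0)=\int_0^s\Pi'(\lambda)\dd\lambda$. The relevant instance would be $g=\Pi$ and $f\equiv1$ on $I=[0,s]$. It is simpler to cite Lebesgue's fundamental theorem of calculus, since $\Pi$ is Lipschitz on $[0,s]$ (its right derivative lies in $[0,\Pi'(s)]$ there).

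Second, a caveat you share with the paper. Convexity together with $\lim_{t\to0^+}\Pi(t)t^{-1}=0$ only yields $\lim_{t\to0^+}\Pi(t)=0\le\Pi(0)$, because a convex function on the closed half-line may jump up at the endpoint. For example, $\Pi(0)=1$ and $\Pi(t)=t^2$ for $t>0$ satisfies \eqref{convex}, and \eqref{cavalieri} then fails whenever $\mu(\{\varrho=0\})>0$. So $\Pi(0)=0$ is really an additional assumption; it is harmless in the application $\Pi(\varrho)=\varrho e(\varrho)$.
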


\begin{proof}
Since $\Pi$ is convex on $[0,\infty)$, it is differentiable {\it a.e.}, and therefore \eqref{cavalieri} makes sense. By $\eqref{convex}_2$, we see that $\lim_{t \to 0^+} \Pi(t)t^{-1} = 0$, 
which implies that $\Pi(0) = 0$, the right derivative $\Pi_+'(0) = 0$.
Since $\Pi$ is strictly convex, then $\Pi$ is strictly increasing on $[0,\infty)$ 
and is a bijection from $[0,\infty)$ to $[0,\infty)$. 
Thus, by applying the standard Cavalieri's formula to the identity function, we have
\begin{align*}
\int_X \Pi(|\varrho|) \dd \mu & = \int^\infty_0 \mu(\{ \Pi(|\varrho|) > \tau \}) \dd \tau = \int^\infty_0 \mu(\{ |\varrho| > \Pi^{-1}(\tau) \}) \dd \tau  = \int^\infty_0 \Pi'(\lambda) \mu(\{ |\varrho| > \lambda \}) \dd \lambda.
\end{align*}
Here, the last equality follows from Corollary \ref{change} with $I = (0,\infty)$, 
$g = \Pi: (0,\infty) \to (0,\infty)$, and $f = \mu(\{ |\varrho| > \cdot \}): (0,\infty) \to (0,\infty)$
that satisfy the required properties, as $\Pi$ being strictly convex, which imply
the local absolute continuity and local bounded variation.
\end{proof}

\begin{lemma}\label{symm}
Suppose that the conditions of {\rm Lemma \ref{negative}} are satisfied, with $\Psi$ radially symmetric and radially decreasing. Then there exists a radial minimizer $\bar{\varrho} \in \mathcal{A}_M$ of $\mathcal{F}$ over $\mathcal{A}_M$. Moreover, any minimizer of $\mathcal{F}$ over $\mathcal{A}_M$ is the translation of a radially symmetric minimizer.
\end{lemma}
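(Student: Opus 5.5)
The plan is to use symmetric decreasing rearrangement together with the Riesz rearrangement inequality, and the Cavalieri formula \eqref{cavalieri}. Under the conditions of Lemma \ref{negative}, we have $F_M<0$. Corollary \ref{cor} then gives \eqref{inequality}, and Theorem \ref{minimizer} produces a minimizer $\bar\rho\in\mathcal{A}_M$.

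\textbf{Existence of a radial minimizer.} Let $\bar\rho^\#$ be the symmetric decreasing rearrangement of $\bar\rho$.
\begin{itemize}
\item Rearrangement is equimeasurable, so $\bar\rho^\#\in\mathcal{A}_M$.
\item By \eqref{cavalieri}, $\int\Pi(\bar\rho^\#)=\int_0^\infty\Pi'(\lambda)|\{\bar\rho^\#>\lambda\}|\,\dd\lambda=\int\Pi(\bar\rho)$. This is exactly why the generalized Cavalieri formula was proved: it applies to a general strictly convex $\Pi$ rather than a power.
\item Since $\Psi\ge0$ is radially symmetric and decreasing, $\Psi=\Psi^\#$. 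Riesz's rearrangement inequality then gives $\int\bar\rho\,\Psi*\bar\rho\le\int\bar\rho^\#\,\Psi*\bar\rho^\#$.
\end{itemize}
Hence $\mathcal{F}(\bar\rho^\#)\le\mathcal{F}(\bar\rho)=F_M$, so $\bar\rho^\#$ is a radial minimizer.

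\textbf{Any minimizer is a translate of a radial one.} Let $\varrho$ be an arbitrary minimizer. The argument above applied to $\varrho$ forces equality in the Riesz inequality:
\[
\int\varrho\,\Psi*\varrho\,\dd\boldsymbol{x}=\int\varrho^\#\,\Psi*\varrho^\#\,\dd\boldsymbol{x}.
\]
I then invoke the strict Riesz rearrangement inequality (Lieb's theorem). If $\Psi$ is strictly radially decreasing, equality holds only if $\varrho=\varrho^\#(\cdot-\boldsymbol{y})$ for some $\boldsymbol{y}$. For the kernels of interest, $\Psi=|\boldsymbol{x}|^{-\alpha}/\alpha$, strict decrease holds. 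Consequently $\varrho$ is a translate of the radial minimizer $\varrho^\#$.

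\textbf{Main obstacle.} The delicate point is this equality case. Lieb's strict version needs $\Psi$ strictly symmetric decreasing and the product $\int\varrho\,\Psi*\varrho$ finite. Finiteness follows from the weak Young inequality (Lemma \ref{precised}), since $\varrho\in L^1\cap L^q$ with $q=\frac{p+1}{p}$.

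If $\Psi$ is only non-strictly decreasing, I would fall back on a level-set argument. Write $\Psi=\int_0^\infty\mathds{1}_{\{\Psi>s\}}\,\dd s$. For every $s$ with $|\{\Psi>s\}|>0$, equality must then hold in each ball-type Riesz inequality. This yields that the superlevel sets of $\varrho$ are balls up to null sets. The remaining task is to check that these balls share a common center, and I would do this using the strict decrease of $\Psi$ on a set of positive measure. I would state the strict-monotonicity hypothesis explicitly, as it holds for the Riesz kernels $\Phi_\alpha$ to which the lemma is applied.
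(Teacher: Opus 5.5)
Your proposal is correct and follows the paper's proof essentially step for step. Both take the symmetric decreasing rearrangement of a minimizer, use the generalized Cavalieri formula \eqref{cavalieri} to keep $\int\Pi$ unchanged, apply Riesz's rearrangement inequality with $\Psi=\Psi^\#$, and invoke the strict rearrangement inequality (Lieb--Loss, Theorem 3.9) in the equality case to conclude that every minimizer is a translate of its radial rearrangement. You are right that this last step needs $\Psi$ strictly radially decreasing, which holds for $\Psi=-\Phi_\alpha$ and which the paper uses implicitly through the same citation. The non-strict fallback is therefore not needed.
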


\begin{proof}
We can now apply Cavalieri's formula to obtain that, 
for a minimizer $\bar{\rho} \in \mathcal{A}_M$, 
taking the spherically decreasing rearrangement $\rho^\#$ of $\bar{\rho}$, then
$\rho^\# \in \mathcal{A}_M$ and
\begin{align*}
 \int_{\mathbb{R}^n} \Pi (\rho^\#) \dd \boldsymbol{x} & =  \int^\infty_0 \Pi'(\tau) |\{ \rho^\# > \lambda \}| \dd \lambda  = \int^\infty_0 \Pi'(\tau) |\{ \bar{\rho} > \lambda \}| \dd \lambda= \int_{\mathbb{R}^n} \Pi (\bar{\rho}) \dd \boldsymbol{x}.
\end{align*}
If $\Psi$ is spherically symmetric and decreasing in $r$, 
then $\Psi^* = \Psi$. 
Moreover, by \cite{sobolev1938theorem} and \cite[Theorem 3.9]{Lieb_2001}, we have
\begin{equation*}
- \int_{\mathbb{R}^n} \rho^\#(\boldsymbol{x}) (\Psi * \rho^\#)(\boldsymbol{x}) \dd \boldsymbol{x} \leq - \int_{\mathbb{R}^n} \bar{\rho}(\boldsymbol{x}) (\Psi * \bar{\rho})(\boldsymbol{x}) \dd \boldsymbol{x},
\end{equation*}
with equality only if $\bar{\rho} = \rho^\#(\cdot -\boldsymbol{y})$ 
for some $\boldsymbol{y} \in \mathbb{R}^n$. 
This implies that $\mathcal{F}(\rho^\#) \leq \mathcal{F}(\bar{\rho})$. 
Since $\bar{\rho}$ is a minimizer of $\mathcal{F}$ over $\mathcal{A}_M$, 
we conclude that the equality holds and that $\bar{\rho}$ is the translation of a radial minimizer.
\end{proof}

We now apply the general framework to our particular minimization problem. 
Taking $\Psi = - \Phi_\alpha$, which satisfies both \eqref{potential} 
for $p = \frac{n}{\alpha}$ and $q = \frac{p + 1}{p} = \frac{n + \alpha}{n}$ 
and \eqref{Psi} with an equality for $\beta = \alpha$ 
and all $\xi \in \mathbb{R}^n\setminus \{\boldsymbol{0}\}$. 
For the internal energy function $\Pi(\varrho) := \varrho e(\varrho)$, 
we obtain that
$\Pi''(\varrho) = \frac{p'(\varrho)}{\varrho} > 0$ for $\varrho > 0$ 
from assumption \eqref{Press} on $p$. 
This implies that $\Pi$ is strictly convex on $[0,\infty)$, so that we can clearly check 
from the formula for $e(\varrho)$, $\eqref{Press}$, and \eqref{lower}--\eqref{higherpress} 
that it is non-negative and satisfies \eqref{convex}.
When
\[
 M > \bigg ( \frac{2n \limsup_{\rho \to 0} p(\rho) \rho^{-\frac{n +\alpha}{n}}}{ C_*} \bigg )^\frac{n}{n-\alpha},
\]
we see that $g_M < 0$ by Lemma \ref{negative}. 
It follows from Theorem \ref{minimizer} and Corollary \ref{cor} 
that there exists a minimizer $\bar{\rho} \in X_M$ of $\mathcal{G}$ over $X_M$. 
Since $\Psi = - \Phi_\alpha$ is spherically symmetric and radially decreasing, 
we apply Lemma \ref{symm} to prove the existence of a spherically symmetric minimizer 
of functional $\mathcal{G}$ over $X_M$. Thus, we obtain the following corollary.

\begin{corollary}
Let $\alpha \in (0,n)$, and $M>0$ with $p \in C^2([0,\infty))$ satisfying \eqref{Press} and
\eqref{lower}--\eqref{higherpress}. 
Then there exists a spherically symmetric minimizer $\bar{\rho} \in X_M$ to $\mathcal{G}$.
\end{corollary}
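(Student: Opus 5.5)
The corollary is an application of the general framework to the specific functional $\mathcal{G}$. I will verify the hypotheses of Lemma \ref{negative}, Theorem \ref{minimizer}, Corollary \ref{cor} and Lemma \ref{symm} with $\Psi=-\Phi_\alpha=\frac{1}{\alpha}|\boldsymbol{x}|^{-\alpha}$ and $\Pi(\varrho)=\varrho e(\varrho)$.

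\textbf{Step 1: the kernel.} $\Psi\ge0$, and $\Psi\in L^{p}_{\rm w}(\mathbb{R}^n)$ with $p=\frac{n}{\alpha}\in(1,\infty)$. Condition \eqref{Psi} holds with equality for $\beta=\alpha=\frac{n}{p}$, so $q=\frac{n+\alpha}{n}$ and $2-q=\frac{n-\alpha}{n}$. Moreover, $\Psi$ is radial and radially decreasing. The sharp weak Young constant $C_*^\Psi$ is identified with the HLS constant $C_*$ (up to the factor $n$ that appears in \eqref{lower}--\eqref{higherpress}).

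\textbf{Step 2: the internal energy.} $\Pi''(\varrho)=p'(\varrho)/\varrho>0$ by \eqref{Press}, so $\Pi$ is strictly convex and nonnegative. From $p\in C^2$ we have $p(0)=0$, so $\Pi(t)/t=e(t)=\int_0^t p(s)s^{-2}\,\dd s\to0$ as $t\to0^+$.

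\textbf{Step 3: the mass conditions.} L'H\^opital-type comparisons give
$$\liminf_{t\to\infty}\Pi(t)t^{-q}\ge \frac{n}{\alpha}\liminf_{\rho\to\infty}p(\rho)\rho^{-q},$$
and the analogous inequality for the limsup at $0$. Hence:
\begin{itemize}
\item \eqref{higherpress} implies \eqref{masscond}, which also ensures $\liminf_{t\to\infty}\Pi(t)t^{-q}>0$;
\item \eqref{lower} implies the mass condition in Lemma \ref{negative}(b).
\end{itemize}
Here $\limsup_{\rho\to0}p(\rho)\rho^{-q}$ is finite, which is implicit in \eqref{lower} being meaningful.

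\textbf{Step 4: existence.} Lemma \ref{negative} gives $-\infty<g_M<0$. Corollary \ref{cor} then yields the strict subadditivity \eqref{inequality}. Theorem \ref{minimizer} therefore produces a minimizer $\bar\rho\in\mathcal{A}_M$, and $\mathcal{A}_M=X_M$ since $q=\frac{n+\alpha}{n}$.

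\textbf{Step 5: symmetry.} Lemma \ref{symm} shows that a minimizer can be replaced by its symmetric decreasing rearrangement. This uses Cavalieri's formula \eqref{cavalieri} for the $\Pi$-term and Riesz's rearrangement inequality for the interaction term, and gives a spherically symmetric minimizer.

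The main obstacle is the bookkeeping in Step 3. The constants in \eqref{lower}--\eqref{higherpress} must match exactly the thresholds in Lemma \ref{negative}, via the relation between $p(\rho)\rho^{-q}$ and $\Pi(\rho)\rho^{-q}$ (a factor $\frac{1}{q-1}=\frac n\alpha$ in the limits) and the normalization of $C_*^\Psi$ relative to $C_*$. I would check this by treating the model case $p=a_0\rho^q$ first, where both sides are explicit.
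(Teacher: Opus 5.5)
Your proposal follows the paper's argument ($\Psi=-\Phi_\alpha$, $\Pi(\varrho)=\varrho e(\varrho)$, then Lemma~\ref{negative}, Corollary~\ref{cor}, Theorem~\ref{minimizer} and Lemma~\ref{symm}), and your Step~3 supplies the constant-matching that the paper only asserts: indeed $C_*^\Psi=C_*/\alpha$, and together with the factor $\frac{1}{q-1}=\frac{n}{\alpha}$ this turns \eqref{higherpress} and \eqref{lower} into the hypotheses of Lemma~\ref{negative}(a) and (b), with identical thresholds in the power-law case. The one faulty line is in Step~2. There, $p\in C^2$ does not force $p(0)=0$, and even $p(0)=0$ would not suffice (if $p'(0)>0$, then $e\equiv+\infty$ on $(0,\infty)$); instead, $e(t)\to0$ should be drawn from the finiteness of $\limsup_{\rho\to0}p(\rho)\rho^{-q}$ built into \eqref{lower}, which gives $p(s)\le Cs^{q}$ near $0$ and hence $e(t)\le Ct^{q-1}/(q-1)\to0$.
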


For the uniqueness of global minimizers and steady states, see \cite[Theorem 5.5]{Carrillo2025} and the references therein. For the rest of this subsection, we assume the uniqueness 
of the global minimizer (up to translation) as required in Theorem \ref{ns}.

\subsection{Stability of the Global Minimizers}
We now turn to the question of stability of the global minimizer obtained in \S 5.1
which is unique (up to translation). 
For $\alpha \in (0,n)$, we define the relative free energy towards the steady state as
\begin{align}\label{difference}
    \begin{split}
 \mathcal{G}(\rho(t)) - \mathcal{G}(\bar{\rho}) & = d(\rho(t),\bar{\rho}) + \frac{1}{2} \int_{\mathbb{R}^n} (\rho(t) - \bar{\rho})\, \Phi_\alpha* (\rho(t) - \bar{\rho}) \dd \boldsymbol{x}.
    \end{split}
\end{align}
Combining \eqref{Energydiff} with \eqref{difference}, in the attractive case we obtain
\begin{align}\label{relative}
 \begin{split}
 E(\rho,\mathcal{M}\, | \, \bar{\rho}, \boldsymbol{0})(t)
& = d(\rho(t),\bar{\rho}) + \frac{1}{2} \int_{\mathbb{R}^n} (\rho(t) - \bar{\rho})\, \Phi_\alpha* (\rho(t) - \bar{\rho}) \dd \boldsymbol{x}
+ \frac{1}{2}\int_{\mathbb{R}^n}
 \Big|\frac{\mathcal{M}}{\sqrt{\rho}}\Big|^2(t,\boldsymbol{x})\,\dd\boldsymbol{x}.
    \end{split}
\end{align}
Notice that the second term in the last equality can be obtained by the H\"older inequality 
and the Hardy-Littlewood-Sobolev (HLS) inequality from Lemma \ref{simplehls} as
\[
\Big|\int_{\mathbb{R}^n} (\rho(t) - \bar{\rho}) \,\Phi_\alpha* (\rho(t) - \bar{\rho}) \dd \boldsymbol{x}\Big| \leq C_{n,\alpha} \| \rho(t) - \bar{\rho} \|_{L^\frac{2n}{2n - \alpha}}^2.
\]

Having established the existence of minimizers, we now prove the main theorem.

\begin{proof}[Proof of Theorem \ref{ns}]
We follow a similar approach to that in Rein \cite{Rein_2003}; however,
a key ingredient in our analysis is the uniqueness of global minimizers modulo translations.

Suppose, for contradiction, that the statement of Theorem \ref{ns} is false. 
Then there exist $\varepsilon > 0$, a sequence of global weak solutions $(\rho^j,\mathcal{M}^j)$ of the attractive CEREs associated to the initial data $(\rho^j_0,\mathcal{M}^j_0)$,
and a sequence of times $t_j > 0$ such that
\begin{align}\label{contradict}
d(\rho^j_0,\bar{\rho})+ \| \rho^j_0 - \bar{\rho} \|_{L^\frac{2n}{2n - \alpha}}^2
+\frac{1}{2}\int_{\mathbb{R}^n}\Big|\frac{\mathcal{M}^j_0}{\sqrt{\rho^j_0}}\Big|^2\,\dd \boldsymbol{x}<\frac{1}{j},
\end{align}
and
\begin{align}\label{auxx}
d(\rho^j(t_j),T^{\boldsymbol{y}}\bar{\rho})
+ \| \rho^j(t_j) - T^{\boldsymbol{y}} \bar{\rho} \|_{L^\frac{2n}{2n - \alpha}}^2
+\frac{1}{2}\int_{\mathbb{R}^n}\Big|\frac{\mathcal{M}^j}{\sqrt{\rho^j}}\Big|^2(t_j) \,\dd \boldsymbol{x}
\geq \v \qquad\,\,\mbox{for all $\boldsymbol{y} \in \mathbb{R}^n$.}
\end{align}
Notice that the sequence of times $t_j$ can be chosen outside the measure-zero set, so that
the terms in \eqref{auxx} are well defined on the sequence, in accordance with Definition \ref{definition}.
Then, by \eqref{contradict} and \eqref{difference}, we have
\begin{equation*}
    \lim_{j \to \infty} E(\rho^j_0,\mathcal{M}^j_0) = E(\bar{\rho},\boldsymbol{0}) = \mathcal{G}(\bar{\rho}).
\end{equation*}
Since the energy $E$ for the weak solutions is non-increasing from the initial energy 
by definition, we obtain 
\begin{equation*}
    \limsup_{j \to \infty} \mathcal{G}(\rho^j(t_j)) \leq \limsup_{j \to \infty} E(\rho^j(t_j),\mathcal{M}^j(t_j)) \leq \lim_{j \to \infty} E(\rho^j_0,\mathcal{M}^j_0) = \mathcal{G}(\bar{\rho}).
\end{equation*}
Thus, $\rho^j(t_j) \in X_M$ is a minimizing sequence of $\mathcal{G}$ in $X_M$ and, by Theorem \ref{minimizer}, 
there exists $\boldsymbol{y}_j \in \mathbb{R}^n$ such that $T^{\boldsymbol{y}_j} \rho^j(t_j) \to \bar{\rho}$ 
in $L^1(\mathbb{R}^n) \cap L^\frac{n + \alpha}{n}(\mathbb{R}^n)$, 
due to the uniqueness of $\bar{\rho}$ up to translation. 
Then
\begin{equation*}
 \| \rho^j(t_j) - T^{-\boldsymbol{y}_j} \bar{\rho} \|_{L^\frac{2n}{2n - \alpha}} 
 =  \| T^{\boldsymbol{y}_j} \rho^j(t_j) - \bar{\rho} \|_{L^\frac{2n}{2n - \alpha}} 
\rightarrow 0 \qquad\mbox{as $j \to \infty$},
\end{equation*}
since $\frac{2n}{2n - \alpha} \in (1, \frac{n + \alpha}{n})$. 
It follows that
\begin{equation*}
 \lim_{j \to \infty} E(\rho^j(t_j),\mathcal{M}^j(t_j)) = \mathcal{G}(\bar{\rho}).
\end{equation*}
Therefore, by \eqref{relative}, we have 
\begin{equation*}
 d(\rho^j(t_j),T^{-\boldsymbol{y}_j}\bar{\rho}) 
 + \frac{1}{2}\int_{\mathbb{R}^n}   
 \Big|\frac{\mathcal{M}^j}{\sqrt{\rho^j}}\Big|^2(t_j)\,\dd \boldsymbol{x} \rightarrow 0 
 \qquad \mbox{as $j \to \infty$}. 
\end{equation*}
This is a contradiction to \eqref{auxx}.
\end{proof}

\subsection{Relative Entropy Estimates}
We now focus on better quantifying the stability of steady states in the polytropic case, {\it i.e.}, $p(\rho) = a_0 \rho^\gamma$ for $\gamma > \frac{n + \alpha}{n}$. The argument is based on the relative entropy method. Throughout this subsection, we denote
\[ h(\rho)=\frac{a_0}{\gamma-1}\rho^\gamma,\qquad\theta=\frac{\gamma-1}{2},
\]
and focus on the case:
\[\max\big\{\frac{n+\alpha}{n},\,\frac{3n}{3n-2(\alpha+1)}\big\}<\gamma\le2.
\]
Recall that
\[\bar{\mathcal W}_\alpha:=\Phi_\alpha*\bar\rho, \qquad
 G_\mu:=\mu+\bar{\mathcal W}_\alpha,
\]
and 
$$ \Gamma:=\{\boldsymbol{x}\in\mathbb R^n\,:\,\bar\rho(\boldsymbol{x})>0\},$$
Set $K:=\supp  \,\bar \rho.$ 
Since $\bar\rho$ is compactly supported, then $K$ is compact and $\Gamma\subset K.$ 
By \eqref{station},
\begin{equation}\label{property}
G_\mu=-h'(\bar\rho)\quad\hbox{{\it a.e. }on } \Gamma, \qquad\,\, G_\mu\ge0\quad\hbox{{\it a.e. }on }\Gamma^{\rm c}.
\end{equation}
The relative energy is
$$
\eta_\mu(\rho,\mathcal M\mid\bar\rho) :=\frac{|\mathcal M|^2}{2\rho}+h_\mu(\rho\mid\bar\rho),
$$
with 
the relative internal energy:
\begin{equation}\nonumber
h_\mu(\rho\mid\bar\rho):=h(\rho)-h(\bar\rho)+G_\mu(\rho-\bar\rho),
\end{equation}
Then the relative total energy over $\mathbb{R}^n$ is
$$
E_\mu(t):=\int_{\mathbb R^n}\eta_\mu(\rho,\mathcal M\mid\bar\rho)(t)\,\dd\boldsymbol{x},
$$

\begin{lemma}\label{coer}
Let $1<\gamma\le2$. Then there exists a constant $c>0,$ depending only 
on $a_0,\gamma,B_{\bar{\rho}}:=\|\bar\rho\|_{L^\infty}$, and the steady state, 
such that, for all $a\ge0$ and for {\it a.e.} $\boldsymbol{x}\in\mathbb R^n$,
\begin{align}  
&h_\mu(a\mid\bar\rho(\boldsymbol{x}))
 \ge c\,\min\{|a-\bar\rho(\boldsymbol{x})|^2,\,|a-\bar\rho(\boldsymbol{x})|^\gamma\},
\label{onsupportestimate}\\
&h_\mu(a\mid0)
 =\frac{a_0}{\gamma-1}a^\gamma+G_\mu(\boldsymbol{x})a
 \ge \frac{a_0}{\gamma-1}a^\gamma
\qquad\mbox{for $\boldsymbol{x}\in \Gamma^{\rm c}$}.\label{offestimate}
\end{align}
Equivalently, \eqref{onsupportestimate} implies
\begin{align}
 &h_\mu(a\mid\bar\rho(\boldsymbol{x}))
 \ge c |a-\bar\rho(\boldsymbol{x})|^2
 \qquad\text{if $\,\boldsymbol{x}\in\Gamma$ and 
 $|a-\bar\rho(\boldsymbol{x})|\le1$},\label{onsupportsplit}\\
&h_\mu(a\mid\bar\rho(\boldsymbol{x}))
 \ge c |a-\bar\rho(\boldsymbol{x})|^\gamma
 \qquad\text{if $\,\boldsymbol{x}\in\Gamma$ and 
 $|a-\bar\rho(\boldsymbol{x})|>1$}. \label{onsupportsplitlarge}
\end{align}
Consequently, there exists $C>0$ such that
\begin{equation}\label{Hmuestimate}
  \int_\Gamma\min\{|\rho-\bar\rho|^2,|\rho-\bar\rho|^\gamma\}\,\dd\boldsymbol{x}
  +\int_{\Gamma^{\rm c}}\rho^\gamma\,\dd\boldsymbol{x}
 +\int_{\Gamma^{\rm c}}G_\mu\rho\,\dd\boldsymbol{x}
 \le C H_\mu(t),
\end{equation}
where $H_\mu(t):=\int_{\mathbb R^n}h_\mu(\rho\mid\bar\rho)(t)\,\dd\boldsymbol{x}$.
\end{lemma}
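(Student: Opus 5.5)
The plan is to prove the pointwise bounds by treating $\Gamma$ and $\Gamma^{\rm c}$ separately and then integrate. On $\Gamma$ the first relation in \eqref{property} turns $h_\mu$ into a Bregman divergence. On $\Gamma^{\rm c}$ the second relation in \eqref{property} gives nonnegativity of the extra term.

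\emph{Step 1 (on $\Gamma$).} For a.e. $\boldsymbol{x}\in\Gamma$, set $b=\bar\rho(\boldsymbol{x})\in(0,B_{\bar\rho}]$. By \eqref{property}, $G_\mu(\boldsymbol{x})=-h'(b)$, so $h_\mu(a\mid b)=h(a)-h(b)-h'(b)(a-b)$. Taylor's formula with integral remainder gives
\[
h_\mu(a\mid b)=(a-b)^2\int_0^1(1-t)\,h''\big(b+t(a-b)\big)\,\dd t,\qquad h''(s)=a_0\gamma s^{\gamma-2}.
\]
Since $\gamma\le2$, $s\mapsto s^{\gamma-2}$ is nonincreasing. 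Every intermediate point satisfies $s\le \max\{a,b\}\le B_{\bar\rho}+|a-b|$, so
\[
h_\mu(a\mid b)\ge \tfrac{a_0\gamma}{2}\,(B_{\bar\rho}+|a-b|)^{\gamma-2}|a-b|^2 .
\]
This is the one place where $\gamma\le 2$ and the $L^\infty$ bound on $\bar\rho$ enter; it is the only real content of the lemma. If $|a-b|\le1$, the right-hand side is at least $\tfrac{a_0\gamma}{2}(B_{\bar\rho}+1)^{\gamma-2}|a-b|^2$, which is \eqref{onsupportsplit}. If $|a-b|>1$, then $B_{\bar\rho}+|a-b|\le (B_{\bar\rho}+1)|a-b|$, and since $\gamma-2\le0$ we get the lower bound $\tfrac{a_0\gamma}{2}(B_{\bar\rho}+1)^{\gamma-2}|a-b|^\gamma$, which is \eqref{onsupportsplitlarge}. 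Together these give \eqref{onsupportestimate} on $\Gamma$ with $c=\tfrac{a_0\gamma}{2}(B_{\bar\rho}+1)^{\gamma-2}$.

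\emph{Step 2 (on $\Gamma^{\rm c}$).} Here $\bar\rho=0$ a.e. and $h(0)=0$, so $h_\mu(a\mid0)=\frac{a_0}{\gamma-1}a^\gamma+G_\mu a$. Since $G_\mu\ge0$ a.e. on $\Gamma^{\rm c}$ by \eqref{property}, this yields \eqref{offestimate}. Moreover, $a^\gamma\ge\min\{a^2,a^\gamma\}$, so \eqref{onsupportestimate} also holds on $\Gamma^{\rm c}$, after shrinking $c$ to at most $\frac{a_0}{\gamma-1}$ if necessary.

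\emph{Step 3 (integration).} Apply the pointwise bounds with $a=\rho(t,\boldsymbol{x})$. Then $h_\mu(\rho\mid\bar\rho)\ge0$ a.e., which allows $H_\mu(t)$ to be split as the integral over $\Gamma$ plus the integral over $\Gamma^{\rm c}$, each nonnegative.
\begin{itemize}
\item The $\Gamma$-integral dominates $c\int_\Gamma\min\{|\rho-\bar\rho|^2,|\rho-\bar\rho|^\gamma\}\,\dd\boldsymbol{x}$ by Step 1.
\item On $\Gamma^{\rm c}$ the integrand is the sum of the two nonnegative terms $\frac{a_0}{\gamma-1}\rho^\gamma$ and $G_\mu\rho$. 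Hence each of $\int_{\Gamma^{\rm c}}\rho^\gamma\,\dd\boldsymbol{x}$ and $\int_{\Gamma^{\rm c}}G_\mu\rho\,\dd\boldsymbol{x}$ is bounded by a constant multiple of the $\Gamma^{\rm c}$-part of $H_\mu(t)$.
\end{itemize}
Summing the three bounds gives \eqref{Hmuestimate} with $C$ depending only on $a_0$, $\gamma$ and $B_{\bar\rho}$.
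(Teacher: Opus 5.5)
Your proof is correct. The outer structure is the same as the paper's: you use \eqref{property} to turn $h_\mu$ into the Bregman divergence $h(a)-h(b)-h'(b)(a-b)$ on $\Gamma$, use $G_\mu\ge0$ on $\Gamma^{\rm c}$, and integrate the nonnegative integrand region by region. Where you genuinely differ is in the proof of the pointwise lower bound for the Bregman divergence.

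The paper splits the on-support argument into two regimes.
\begin{itemize}
\item For $|a-b|\le b/2$, the Taylor segment stays in $[b/2,3b/2]$, so $h''\ge a_0\gamma(3B_{\bar\rho}/2)^{\gamma-2}$ there.
\item For $|a-b|>b/2$, it rescales by $b$ using the exact homogeneity of $r\mapsto r^\gamma$. It then invokes a non-explicit bound $\Phi(z)=z^\gamma-1-\gamma(z-1)\ge c|z-1|^\gamma$ on $\{|z-1|\ge\frac12\}$, justified by continuity, positivity and the asymptotics of $\Phi$.
\end{itemize}

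You instead apply one Taylor estimate on the whole segment and use only that $s^{\gamma-2}$ is nonincreasing. This gives the single interpolating bound $\frac{a_0\gamma}{2}(B_{\bar\rho}+|a-b|)^{\gamma-2}|a-b|^2$, and you then split at $|a-b|=1$. Your route is more elementary and gives the explicit constant $c=\frac{a_0\gamma}{2}(B_{\bar\rho}+1)^{\gamma-2}$. It also uses only the lower bound $h''(s)\ge a_0\gamma s^{\gamma-2}$, not exact homogeneity. Incidentally, this $c$ is already at most $a_0\le\frac{a_0}{\gamma-1}$, so no shrinking is needed in your Step 2.

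Your route does need one line of justification you omitted. When $a=0$ and $\gamma<2$, your Taylor segment reaches $s=0$, where $h''$ is singular. The formula still holds because $s^{\gamma-2}$ is integrable at $0$ for $\gamma>1$, so $h'$ is absolutely continuous on $[0,\infty)$; alternatively, let $a\to0^+$. The paper's restriction $|a-b|\le b/2$ is exactly what keeps its Taylor segment away from vacuum.

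In exchange, the paper's far-regime bound $c|a-b|^\gamma$ has a constant independent of $B_{\bar\rho}$. It also yields the $\gamma$-power even for small $|a-b|$ near the free boundary, where $b$ is small. That is sharper than the lemma requires.
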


\begin{proof}
We first prove the estimate on $\Gamma$. For {\it a.e.} $\boldsymbol{x}\in \Gamma$, \eqref{property} gives
\[
G_\mu(\boldsymbol{x})=-h'(\bar\rho(\boldsymbol{x})).
\]
Then
\[
\begin{aligned}
  h_\mu(a\mid\bar\rho(\boldsymbol{x}))
  &=  h(a)-h(\bar\rho(\boldsymbol{x}))
  +G_\mu(\boldsymbol{x})(a-\bar\rho(\boldsymbol{x}))        \\
  &=
 h(a)-h(\bar\rho(\boldsymbol{x})) -h'(\bar\rho(\boldsymbol{x}))(a-\bar\rho(\boldsymbol{x})).
\end{aligned}
\]
It remains to estimate the  relative internal energy
\[
h(a)-h(b)-h'(b)(a-b)
 \qquad \mbox{for $0<b\le B_{\bar{\rho}}$}.
\]
We claim that there exists a constant $c>0$, depending only on
$a_0,\gamma$ and $B_{\bar{\rho}}$, such that
\begin{equation}\label{relativelower}
 h(a)-h(b)-h'(b)(a-b)
  \ge  c\min\{|a-b|^2,|a-b|^\gamma\}
\end{equation}
for all $a\ge0$ and all $0<b\le B_{\bar{\rho}}$.

\smallskip
If $a=b$, then both sides of \eqref{relativelower}
are zero.  We therefore assume $a\ne b$.

\smallskip
First suppose that
  $|a-b|\le \frac b2.$
Then the segment between $a$ and $b$ is contained in
$\left[\frac b2,\frac{3b}{2}\right].$
By Taylor's formula with integral remainder,
\[
  h(a)-h(b)-h'(b)(a-b)
   =
  (a-b)^2
  \int_0^1(1-s)h''(b+s(a-b))\,\dd s.
\]
Note that $h''(t)=a_0\gamma t^{\gamma-2}$ 
and $\gamma-2\leq0$. 
Since
\[
 b+s(a-b)\le \frac{3b}{2}\le \frac{3B_{\bar{\rho}}}{2},
\]
we have
\[
   h''(b+s(a-b))
  =
 a_0\gamma (b+s(a-b))^{\gamma-2}
  \ge
  a_0\gamma \Big(\frac{3B_{\bar{\rho}}}{2}\Big)^{\gamma-2}.
\]
Then
\[
 h(a)-h(b)-h'(b)(a-b)
  \ge  c|a-b|^2 .
\]

It remains to consider the case
\[  |a-b|>\frac b2 .
\]
In this case, $a$ is not close to $b$.  We use the homogeneity of
$r\mapsto r^\gamma$.  Since $b>0$,
\[
\begin{aligned}
h(a)-h(b)-h'(b)(a-b)
 &= \frac{a_0}{\gamma-1}
 \big(a^\gamma-b^\gamma-\gamma b^{\gamma-1}(a-b)
  \big)                                      \\
 &=  \frac{a_0}{\gamma-1}b^\gamma \Big(\big(\frac ab\big)^\gamma -1-\gamma\big(\frac ab-1\big) \Big).
\end{aligned}
\]
Define
\[
\Phi(z):=z^\gamma-1-\gamma(z-1)  \qquad \mbox{for $z\ge 0$}.
\]
The function $\Phi$ is continuous on $[0,\infty)$ and strictly positive for
$z\ne1$, and satisfies
\[\Phi(z)\sim z^\gamma
      \qquad\text{as }z\to\infty .
\]
Therefore, the ratio
$\frac{\Phi(z)}{|z-1|^\gamma}$
has a strictly positive lower bound on the closed set
\[
\big\{z\ge0:\ |z-1|\ge\frac12\big\}.
\]
Moreover, the condition $|a-b|>\frac{b}{2}$ is equivalent to
$\left|\frac ab-1\right|>\frac12.$
Then we have
\[ 
\Phi(\frac ab) \ge c\,\big|\frac ab-1\big|^\gamma .
\]
Substituting this into the previous identity gives
\[
\begin{aligned}
  h(a)-h(b)-h'(b)(a-b)  &\ge  c b^\gamma \big|\frac ab-1\big|^\gamma
  =c\,|a-b|^\gamma.
\end{aligned}
\]
Hence, we have 
\[h(a)-h(b)-h'(b)(a-b)  \ge
   c\,\min\{|a-b|^2,\,|a-b|^\gamma\}.
\]
This proves \eqref{relativelower}.

We now return to $h_\mu(a\mid\bar\rho(\boldsymbol{x}))$ on $\Gamma$.
If $\bar\rho(\boldsymbol{x})>0$, then
\eqref{relativelower} with
$b=\bar\rho(\boldsymbol{x})$ gives
\[
h_\mu(a\mid\bar\rho(\boldsymbol{x})) 
\ge c\,\min\{|a-\bar\rho(\boldsymbol{x})|^2,\,|a-\bar\rho(\boldsymbol{x})|^\gamma\}.
\]
If $\bar\rho(\boldsymbol{x})=0$, then 
\eqref{property} gives $G_\mu(\boldsymbol{x})\ge0$,
so that
\[ h_\mu(a\mid0) =  h(a)+G_\mu(\boldsymbol{x})a =
  \frac{a_0}{\gamma-1}a^\gamma+G_\mu(\boldsymbol{x})a\ge   \frac{a_0}{\gamma-1}a^\gamma.
\]
Since $a^\gamma\ge \min\{a^2,\,a^\gamma\},$
we also have
\[   h_\mu(a\mid0)  \ge c\,\min\{a^2,a^\gamma\}.
\]
Combining the above 
with $\bar\rho(\boldsymbol{x})>0$ for this case gives
\[h_\mu(a\mid\bar\rho(\boldsymbol{x})) 
\ge c\,\min\{|a-\bar\rho(\boldsymbol{x})|^2,
\,|a-\bar\rho(\boldsymbol{x})|^\gamma\} \qquad\text{for {\it a.e.} }\boldsymbol{x}\in \mathbb{R}^n.
\]
In particular, this proves \eqref{onsupportestimate}.

On $\Gamma^{\rm c}$, we know that $\bar\rho=0$.  Again, by 
\eqref{property},
$G_\mu(\boldsymbol{x})\ge0\,
\text{for {\it a.e.} }\boldsymbol{x}\in \Gamma^{\rm c}.$
Then
\[  h_\mu(a\mid0)=h(a)+G_\mu(\boldsymbol{x})a =\frac{a_0}{\gamma-1}a^\gamma+G_\mu(\boldsymbol{x})a  \ge  \frac{a_0}{\gamma-1}a^\gamma.
\]
This proves \eqref{offestimate}.

Finally, take $a=\rho(t,\boldsymbol{x})$ and integrate the pointwise
estimates.  From the estimate on $\Gamma$,
\[ \int_\Gamma  \min\{|\rho-\bar\rho|^2,\,|\rho-\bar\rho|^\gamma\}   \,\dd\boldsymbol{x}  \le  C \int_\Gamma h_\mu(\rho\mid\bar\rho)\,\dd\boldsymbol{x}.
\]
From the identity on $\Gamma^{\rm c}$,
\[  h_\mu(\rho\mid0)  =  \frac{a_0}{\gamma-1}\rho^\gamma+G_\mu\rho ,
\]
we obtain
\[  \int_{\Gamma^{\rm c}}\rho^\gamma\,\dd\boldsymbol{x}  +  \int_{\Gamma^{\rm c}}G_\mu\rho\,\dd\boldsymbol{x} \le  C \int_{\Gamma^{\rm c}} h_\mu(\rho\mid0)\,\dd\boldsymbol{x}.
\]
Adding the last two inequalities together gives
\[ \int_\Gamma
\min\{|\rho-\bar\rho|^2,\,|\rho-\bar\rho|^\gamma\} \dd\boldsymbol{x}
 +\int_{\Gamma^{\rm c}}\rho^\gamma\,\dd\boldsymbol{x}
 +
  \int_{\Gamma^{\rm c}}G_\mu\rho\,\dd\boldsymbol{x} \le C H_\mu(t).
\]
This proves \eqref{Hmuestimate}.
\end{proof}

\begin{lemma}
Let $(\rho,\mathcal M)$ and $(\bar\rho,\boldsymbol 0)$ satisfy the assumptions
of {\rm Theorem \ref{thm:stability}}. Set
\[
p_0:=\frac{3n}{3n-2(\alpha+1)}.
\]
Assume that $p_0<\gamma\le2$. Choose
$p_{\rm loc}\in(1,\gamma)$ such that
\begin{equation*}
0<\frac1{p_{\rm loc}}-\frac{n-\alpha-1}{n}\le\frac12.
\end{equation*}
Then, for {\it a.e.} $t\in(0,T)$, 
\begin{align}\label{festimates} 
&\|f(t,\cdot)\|_{L^{p_{\rm loc}}(\mathbb R^n)}^2 + \|f(t,\cdot)\|_{L^{p_0}(\mathbb R^n)}^3 \nonumber\\
&\,\,\le C\big(  H_\mu(t)  +H_\mu(t)^{2/p_{\rm loc}} +H_\mu(t)^{2/\gamma}+H_\mu(t)^{3/2}  +H_\mu(t)^{3/p_0}
  +H_\mu(t)^{3/\gamma} \big),
\end{align}
where $f(t,\boldsymbol{x}):=\rho(t,\boldsymbol{x})-\bar\rho(\boldsymbol{x})$.
\end{lemma}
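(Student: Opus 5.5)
The plan is to split $\mathbb R^n$ into three regions and estimate $f$ separately on each: the support region $\Gamma$; a bounded vacuum region $B_{R_0}\setminus\Gamma$, with $B_{R_0}\supset K$ fixed; and the far field $B_{R_0}^{\rm c}$. On each region I bound $\|f\|_{L^p}^p$ for $p\in\{p_{\rm loc},p_0\}$ by powers of $H_\mu$ using Lemma \ref{coer}, i.e.\ \eqref{Hmuestimate}. Note that $p_{\rm loc},p_0\in(1,2]$: $p_0\le 2$ because $\gamma\le2$ forces $p_0<2$, and $p_{\rm loc}<\gamma\le2$.

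On $\Gamma$ (bounded, since $K$ is compact) I split $\Gamma=\{|f|\le1\}\cup\{|f|>1\}$. On $\{|f|\le1\}$, H\"older on the bounded set together with $p\le2$ gives
\[
\int_{\Gamma\cap\{|f|\le1\}}|f|^p\,\dd\boldsymbol{x}\le |K|^{1-p/2}\Big(\int_\Gamma\min\{|f|^2,|f|^\gamma\}\,\dd\boldsymbol{x}\Big)^{p/2}\lesssim H_\mu^{p/2}.
\]
On $\{|f|>1\}$ we have $p<\gamma$ (for $p_0$ this is the hypothesis $p_0<\gamma$), so $|f|^p\le|f|^\gamma$ and the integral is $\lesssim H_\mu$. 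On $B_{R_0}\setminus\Gamma$ we have $f=\rho$, and I split $\{\rho\le1\}\cup\{\rho>1\}$. For $\rho>1$, $\rho^p\le\rho^\gamma\lesssim H_\mu$. For $\rho\le1$ I use H\"older on the bounded set, $\int\rho^p\le|B_{R_0}|^{1-p/\gamma}(\int\rho^\gamma)^{p/\gamma}\lesssim H_\mu^{p/\gamma}$.

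The far field $B_{R_0}^{\rm c}$ is the main obstacle, because $L^\gamma$ control alone cannot give $L^p$ control with $p<\gamma$ on an unbounded set. Here I use the term $\int_{\Gamma^{\rm c}}G_\mu\rho$. Since $\bar{\mathcal W}_\alpha=\Phi_\alpha*\bar\rho\to0$ as $|\boldsymbol x|\to\infty$, because $\bar\rho$ is compactly supported and bounded, I can choose $R_0$ so that $G_\mu\ge\mu/2$ on $B_{R_0}^{\rm c}$. This gives $\int_{B_{R_0}^{\rm c}}\rho\lesssim H_\mu$, and interpolating between $L^1$ and $L^\gamma$,
\[
\int_{B_{R_0}^{\rm c}}\rho^p\le\Big(\int\rho\Big)^{\frac{\gamma-p}{\gamma-1}}\Big(\int\rho^\gamma\Big)^{\frac{p-1}{\gamma-1}}\lesssim H_\mu.
\]

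Summing the three regions gives $\|f\|_{L^p}^p\lesssim H_\mu+H_\mu^{p/2}+H_\mu^{p/\gamma}$. Raising this to the power $2/p$ for $p=p_{\rm loc}$ and to $3/p$ for $p=p_0$, and using $(a+b+c)^s\lesssim a^s+b^s+c^s$, yields the exponents $\{2/p_{\rm loc},1,2/\gamma\}$ and $\{3/p_0,3/2,3/\gamma\}$, which are exactly the terms appearing in \eqref{festimates}. The conditions on $p_{\rm loc}$ in the statement are needed only later, for the HLS estimate, and play no role here apart from $p_{\rm loc}\in(1,\gamma)$.
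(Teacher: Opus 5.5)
Your proposal is correct and follows essentially the same route as the paper. It uses the same decomposition into $\Gamma$, the bounded vacuum region $B_R\setminus\Gamma$, and the far field $B_R^{\rm c}$, with $R$ chosen so that $G_\mu\ge\mu/2$ there to obtain the $L^1$ bound. It also uses the same split of $\Gamma$ according to $|f|\le1$ or $|f|>1$, and the same $L^1$--$L^\gamma$ interpolation in the far field.

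The differences are cosmetic. You sum $\|f\|_{L^p}^p$ over the regions before raising to the powers $2/p_{\rm loc}$ and $3/p_0$, whereas the paper applies the triangle inequality to the norms region by region. Your extra split of $B_{R_0}\setminus\Gamma$ by $\rho\le1$ is unnecessary, since H\"older on the bounded set already covers the whole region.
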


\begin{proof}
First, we consider the set $\Gamma$.  We split $\Gamma=(\Gamma\cap\{|f|\le1\})\cup (\Gamma\cap\{|f|>1\}).$
On $\Gamma\cap\{|f|\le1\}$, 
\eqref{onsupportsplit} gives $|f|^2 \le C h_\mu(\rho\mid\bar\rho).$
Therefore, we have
\[\int_{\Gamma\cap\{|f|\le1\}} |f|^2\,\dd\boldsymbol{x}  \le   C H_\mu(t).
\]
Since $p_{\rm loc}<2$ and $\Gamma$ has finite measure, 
the H\"older inequality gives
\[
\begin{aligned}
 \|f\|_{L^{p_{\rm loc}}(\Gamma\cap\{|f|\le1\})}^2  &\le
 |\Gamma|^{2/p_{\rm loc}-1} \|f\|_{L^2(\Gamma\cap\{|f|\le1\})}^2          \le C H_\mu(t).
\end{aligned}
\]
Similarly, since $p_0<\gamma\le2$, then 
\[
\begin{aligned}  \|f\|_{L^{p_0}(\Gamma\cap\{|f|\le1\})}^3 &\le |\Gamma|^{3/p_0-3/2} \|f\|_{L^2(\Gamma\cap\{|f|\le1\})}^3\le C H_\mu(t)^{3/2}.
\end{aligned}
\]

Next, we consider $\Gamma\cap\{|f|>1\}$.  On this set, \eqref{onsupportsplitlarge} gives $|f|^\gamma \le C h_\mu(\rho\mid\bar\rho)$,
so that
\[\int_{\Gamma\cap\{|f|>1\}} |f|^\gamma\,\dd\boldsymbol{x}  \le  C H_\mu(t).
\]
Since $p_{\rm loc}<\gamma$ and $|f|>1$ on this set, then $|f|^{p_{\rm loc}}\le |f|^\gamma$. This implies 
\[
\begin{aligned} \|f\|_{L^{p_{\rm loc}}(\Gamma\cap\{|f|>1\})}^2
&= \bigg(\int_{\Gamma\cap\{|f|>1\}} |f|^{p_{\rm loc}} \,\dd\boldsymbol{x}
   \bigg)^{2/p_{\rm loc}}
   \le \bigg( \int_{\Gamma\cap\{|f|>1\}} |f|^\gamma
 \,\dd\boldsymbol{x} \bigg)^{2/p_{\rm loc}}
 \le C H_\mu(t)^{2/p_{\rm loc}}.
\end{aligned}
\]
Since $p_0<\gamma$, a similar argument gives
\[
\begin{aligned}  \|f\|_{L^{p_0}(\Gamma\cap\{|f|>1\})}^3  &\le  C H_\mu(t)^{3/p_0}.
\end{aligned}
\]

Combining the estimates on $\Gamma\cap\{|f|\le1\}$ and
$\Gamma\cap\{|f|>1\}$, we obtain
\begin{equation}\label{S-ploc}  \|f\|_{L^{p_{\rm loc}}(\Gamma)}^2  \le  C\big(  H_\mu(t)+H_\mu(t)^{2/p_{\rm loc}} \big),
\end{equation}
and
\begin{equation}\label{S-po} \|f\|_{L^{p_0}(\Gamma)}^3 \le C\big( H_\mu(t)^{3/2} + H_\mu(t)^{3/p_0} \big).
\end{equation}

We now turn to $\Gamma^{\rm c}$.  On $\Gamma^{\rm c}$, $f=\rho$.  By
\eqref{offestimate}, we deduce 
\[ 
\int_{\Gamma^{\rm c}}\rho^\gamma\,\dd\boldsymbol{x} \le C H_\mu(t).
\]
Moreover, we obtain
\[\int_{\Gamma^{\rm c}}G_\mu\rho\,\dd\boldsymbol{x} \le C H_\mu(t).
\]
Since $\bar\rho$ is compactly supported and
$\bar{\mathcal W}_\alpha(\boldsymbol{x})\to0$ as
$|\boldsymbol{x}|\to\infty$, we may choose $R>0$ such that
\[K\subset B_R\qquad\text{and}\qquad G_\mu(\boldsymbol{x})\ge \frac{\mu}{2} \quad\text{for } \boldsymbol{x}\in B_R^{\rm c} .
\]
In particular, $B_R^{\rm c}\subset\Gamma^{\rm c}$. Therefore, we infer that
\[ \int_{B_R^{\rm c}}\rho\,\dd\boldsymbol{x} \le \frac2\mu \int_{B_R^{\rm c}}G_\mu\rho\,\dd\boldsymbol{x} \le C H_\mu(t).
\]

We first estimate the bounded vacuum region $B_R\setminus \Gamma$.  
Since $B_R$ has finite measure, for every $p\in(1,\gamma)$,
\[
\begin{aligned} \int_{B_R\setminus \Gamma}\rho^p\,\dd\boldsymbol{x} &\le  |B_R|^{1-p/\gamma} 
\bigg(\int_{B_R\setminus \Gamma}\rho^\gamma\,\dd\boldsymbol{x}\bigg)^{p/\gamma}\le  C H_\mu(t)^{p/\gamma}.
\end{aligned}
\]
Taking $p=p_{\rm loc}$ gives
\[
\begin{aligned}  \|f\|_{L^{p_{\rm loc}}(B_R\setminus \Gamma)}^2  =  \|\rho\|_{L^{p_{\rm loc}}(B_R\setminus \Gamma)}^2 &=
 \bigg(\int_{B_R\setminus \Gamma}\rho^{p_{\rm loc}} \,\dd\boldsymbol{x} \bigg)^{2/p_{\rm loc}}\le  C H_\mu(t)^{2/\gamma}.
\end{aligned}
\]
Taking $p=p_0$ gives
\[
\begin{aligned}
   \|f\|_{L^{p_0}(B_R\setminus \Gamma)}^3  = \|\rho\|_{L^{p_0}(B_R\setminus \Gamma)}^3  
   &=  \bigg(
 \int_{B_R\setminus \Gamma}\rho^{p_0} \,\dd\boldsymbol{x} \bigg)^{3/p_0} \le C H_\mu(t)^{3/\gamma}.
\end{aligned}
\]

It remains to estimate $B_R^{\rm c}$.  Let $p\in(1,\gamma)$. An interpolation
between $L^1(B_R^{\rm c})$ and $L^\gamma(B_R^{\rm c})$ implies
\[ \|\rho\|_{L^p(B_R^{\rm c})} \le
\|\rho\|_{L^1(B_R^{\rm c})}^\lambda
 \|\rho\|_{L^\gamma(B_R^{\rm c})}^{1-\lambda} 
 \qquad \mbox{for $\frac1p=\lambda+\frac{1-\lambda}{\gamma}$}.
\]
Solving for $\lambda$ gives
$\lambda=\frac{\gamma-p}{p(\gamma-1)}\in(0,1).$
Raising the interpolation inequality to the power $p$, and using
\[ \|\rho\|_{L^1(B_R^{\rm c})}
  \le C H_\mu(t), \qquad \|\rho\|_{L^\gamma(B_R^{\rm c})} \le C H_\mu(t)^{1/\gamma},
\]
we obtain 
\[
\begin{aligned}
 \|\rho\|_{L^p(B_R^{\rm c})}^p  \le
  C H_\mu(t)^{p\lambda} H_\mu(t)^{p(1-\lambda)/\gamma} =  C H_\mu(t)^{ p\lambda+p(1-\lambda)/\gamma} = C H_\mu(t),
\end{aligned}
\]
because
\[ p\lambda+\frac{p(1-\lambda)}{\gamma}
 =p\big(\lambda+\frac{1-\lambda}{\gamma}\big) 
 = 1.
\]
Therefore, taking $p=p_{\rm loc}$, we have
\[
\begin{aligned}
 \|f\|_{L^{p_{\rm loc}}(B_R^{\rm c})}^2
 = \|\rho\|_{L^{p_{\rm loc}}(B_R^{\rm c})}^2
&=\left(\|\rho\|_{L^{p_{\rm loc}}(B_R^{\rm c})}^{p_{\rm loc}} \right)^{2/p_{\rm loc}}\le C H_\mu(t)^{2/p_{\rm loc}}.
\end{aligned}
\]
Similarly, taking $p=p_0$, we obtain
\[
\begin{aligned}
\|f\|_{L^{p_0}(B_R^{\rm c})}^3 = \|\rho\|_{L^{p_0}(B_R^{\rm c})}^3
&= \left( \|\rho\|_{L^{p_0}(B_R^{\rm c})}^{p_0} \right)^{3/p_0}\le  C H_\mu(t)^{3/p_0}.
\end{aligned}
\]

Finally we combine the three regions $\mathbb R^n =\Gamma\cup(B_R\setminus \Gamma)\cup B_R^{\rm c}.$
For $p_{\rm loc}$, using
\[  \|f\|_{L^{p_{\rm loc}}(\mathbb R^n)}  \le  \|f\|_{L^{p_{\rm loc}}(\Gamma)} +   \|f\|_{L^{p_{\rm loc}}(B_R\setminus \Gamma)} + \|f\|_{L^{p_{\rm loc}}(B_R^{\rm c})},
\]
we obtain
\[
\begin{aligned}
  \|f\|_{L^{p_{\rm loc}}(\mathbb R^n)}^2
   &\le
  C\|f\|_{L^{p_{\rm loc}}(\Gamma)}^2
  +
  C\|f\|_{L^{p_{\rm loc}}(B_R\setminus \Gamma)}^2
  +
   C\|f\|_{L^{p_{\rm loc}}(B_R^{\rm c})}^2        \\
 &\le C\big( H_\mu(t)+H_\mu(t)^{2/p_{\rm loc}}
+H_\mu(t)^{2/\gamma}\big).
\end{aligned}
\]
This proves \eqref{S-ploc}.

For $p_0$, using
\[
 \|f\|_{L^{p_0}(\mathbb R^n)}
\le \|f\|_{L^{p_0}(\Gamma)} +  \|f\|_{L^{p_0}(B_R\setminus \Gamma)}  +   \|f\|_{L^{p_0}(B_R^{\rm c})},
\]
we obtain
\[
\begin{aligned}
  \|f\|_{L^{p_0}(\mathbb R^n)}^3
 &\le  C\|f\|_{L^{p_0}(\Gamma)}^3  +
   C\|f\|_{L^{p_0}(B_R\setminus \Gamma)}^3 + C\|f\|_{L^{p_0}(B_R^{\rm c})}^3               \\
 &\le C\big( H_\mu(t)^{3/2}
 + H_\mu(t)^{3/p_0} +H_\mu(t)^{3/\gamma}  \big).
\end{aligned}
\]
This proves \eqref{S-po}.  Adding
\eqref{S-ploc} and \eqref{S-po} yields
\eqref{festimates}.
\end{proof}

\begin{lemma}\label{relativenonlocal}
Under the assumptions of {\rm Theorem \ref{thm:stability}}, set
\[
f(t,\boldsymbol{x}):=\rho(t,\boldsymbol{x})-\bar\rho(\boldsymbol{x}).
\]
Then, for {\it a.e.} $t\in(0,T)$,
\begin{equation}\label{relativeestimate}
\int_{\mathbb R^n}\rho\,|\nabla\Phi_\alpha*f|^2\,\dd\boldsymbol{x}
 \le C\Big( \mathcal E_\mu(t) +\mathcal E_\mu(t)^{2/p_{\rm loc}} +\mathcal E_\mu(t)^{2/\gamma} +\mathcal E_\mu(t)^{3/2}+\mathcal E_\mu(t)^{3/p_0}  +\mathcal E_\mu(t)^{3/\gamma}  \Big).
\end{equation}
\end{lemma}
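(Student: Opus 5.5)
We split $\rho=\bar\rho+f$ and treat the two pieces separately:
\[
\int_{\mathbb R^n}\rho\,|\nabla\Phi_\alpha*f|^2\,\dd\boldsymbol{x}
=\int_{\mathbb R^n}\bar\rho\,|\nabla\Phi_\alpha*f|^2\,\dd\boldsymbol{x}
+\int_{\mathbb R^n}f\,|\nabla\Phi_\alpha*f|^2\,\dd\boldsymbol{x}
\le B_{\bar\rho}\int_K|\nabla\Phi_\alpha*f|^2\,\dd\boldsymbol{x}
+\int_{\mathbb R^n}|f|\,|\nabla\Phi_\alpha*f|^2\,\dd\boldsymbol{x}.
\]
The first term is quadratic in $f$ and is localized to the compact set $K$. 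The second term is cubic in $f$ and is global. The plan is to bound the first by $\|f\|_{L^{p_{\rm loc}}}^2$ and the second by $\|f\|_{L^{p_0}}^3$. Then \eqref{festimates} from the preceding lemma, together with $H_\mu\le\mathcal E_\mu$ (the kinetic part is nonnegative), gives \eqref{relativeestimate} directly.

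\emph{Quadratic term.} We have $|\nabla\Phi_\alpha*f|\le|\cdot|^{-(\alpha+1)}*|f|$. Since $\alpha+1<n$, the HLS inequality gives $\|\nabla\Phi_\alpha*f\|_{L^{r}(\mathbb R^n)}\lesssim\|f\|_{L^{p_{\rm loc}}}$ with
\[
\frac1r=\frac1{p_{\rm loc}}-\frac{n-\alpha-1}{n}\in\Big(0,\frac12\Big].
\]
This is exactly how $p_{\rm loc}$ was chosen in the previous lemma. Hence $r\ge2$, and H\"older on the bounded set $K$ yields $\int_K|\nabla\Phi_\alpha*f|^2\le|K|^{1-2/r}\|\nabla\Phi_\alpha*f\|_{L^r}^2\lesssim\|f\|_{L^{p_{\rm loc}}}^2$. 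We still need to confirm that such a $p_{\rm loc}\in(1,\gamma)$ exists. The constraint is $\frac{n}{n-\alpha-1}>p_{\rm loc}\ge\frac{2n}{3n-2\alpha-2}$. This interval is nonempty and meets $(1,\gamma)$ because $\gamma>p_0=\frac{3n}{3n-2\alpha-2}>\frac{2n}{3n-2\alpha-2}$; I expect this to be a short arithmetic check.

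\emph{Cubic term.} By H\"older, $\int|f|\,|g|^2\le\|f\|_{L^{p_0}}\|g\|_{L^{s}}^2$ with $\frac1{p_0}+\frac2s=1$, where $g=\nabla\Phi_\alpha*f$. HLS gives $\|g\|_{L^s}\lesssim\|f\|_{L^{p_0}}$ provided $\frac1s=\frac1{p_0}-\frac{n-\alpha-1}{n}$. Combining the two relations gives $\frac3{p_0}=1+\frac{2(n-\alpha-1)}{n}=\frac{3n-2\alpha-2}{n}$, which is exactly the definition $p_0=\frac{3n}{3n-2(\alpha+1)}$. So the exponent balances, and the cubic term is $\lesssim\|f\|_{L^{p_0}}^3$.

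We must also check the HLS admissibility $1<p_0<\frac{n}{n-\alpha-1}$, which holds since $\alpha<n-1$. In addition, $f(t)\in L^{p_{\rm loc}}\cap L^{p_0}$ for a.e.\ $t$ follows from $\rho\in L^1\cap L^\gamma$ and $\bar\rho\in L^1\cap L^\infty$. These make every quantity finite.

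The main obstacle is precisely this exponent bookkeeping. The cubic term must close in a \emph{global} norm, because $\rho$ may spread outside $K$ and no localization is available there. This is why the borderline exponent $p_0$ appears, and why the previous lemma needed the three-region decomposition (support, bounded vacuum region, far field) to control $\|f\|_{L^{p_0}(\mathbb R^n)}$ through $H_\mu$.
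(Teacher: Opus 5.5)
Your proposal is correct and follows essentially the same route as the paper: split using $\rho\le\bar\rho+|f|$, bound the $\bar\rho$-weighted term by $\|f\|_{L^{p_{\rm loc}}}^2$ via HLS and H\"older on the bounded support of $\bar\rho$, and bound the cubic term by $\|f\|_{L^{p_0}}^3$ via H\"older with exponents $p_0,\,p_0/(p_0-1)$ and HLS. You then close, as the paper does, with \eqref{festimates} and $H_\mu\le\mathcal E_\mu$; your check that an admissible $p_{\rm loc}\in(1,\gamma)$ exists is a small verification that the paper leaves implicit.
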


\begin{proof}

Fix time $t$ and, for simplicity, write
$f=f(t,\cdot)$ and $\rho=\rho(t,\cdot)$. Since $\rho=\bar\rho+f$ and $\rho\ge0$, we see that $\rho\le \bar\rho+|f|.$
This implies
\begin{equation}\label{festimate}
\begin{aligned} \int_{\mathbb R^n}\rho|\nabla\Phi_\alpha*f|^2\,\dd\boldsymbol{x}  &\le \int_\Gamma\bar\rho|\nabla\Phi_\alpha*f|^2\,\dd\boldsymbol{x} + \int_{\mathbb R^n}|f||\nabla\Phi_\alpha*f|^2\,\dd\boldsymbol{x}.
\end{aligned}
\end{equation}

We first estimate the first term on the RHS of \eqref{festimate}.  By the choice of
$p_{\rm loc}$, we have
\[ 0<\frac1{p_{\rm loc}}-\frac{n-\alpha-1}{n} \le  \frac12 .
\]
Thus, the HLS inequality gives
\[ \|\nabla\Phi_\alpha*f\|_{L^q(\mathbb R^n)}  \le C\|f\|_{L^{p_{\rm loc}}(\mathbb R^n)}  \qquad \mbox{for $\frac1q
 = \frac1{p_{\rm loc}}-\frac{n-\alpha-1}{n}$}.
\]
The above choice implies $q\ge2$.  Since $\Gamma$ has finite measure and
$\bar\rho\in L^\infty(\mathbb R^n)$, the H\"older inequality gives
\[
\begin{aligned}
 \int_\Gamma\bar\rho|\nabla\Phi_\alpha*f|^2\,\dd\boldsymbol{x}
  &\le \|\bar\rho\|_{L^\infty}\int_\Gamma|\nabla\Phi_\alpha*f|^2\,\dd\boldsymbol{x}          \\
 &\le C |\Gamma|^{1-2/q}
   \|\nabla\Phi_\alpha*f\|_{L^q(\mathbb R^n)}^2          \\
  &\le  C\|f\|_{L^{p_{\rm loc}}(\mathbb R^n)}^2 .
\end{aligned}
\]
Using \eqref{S-ploc}, we obtain
\begin{equation}\label{fbarestimate}
  \int_\Gamma\bar\rho|\nabla\Phi_\alpha*f|^2\,\dd\boldsymbol{x}  \le  C\Big( H_\mu(t)
  +  H_\mu(t)^{2/p_{\rm loc}}  +  H_\mu(t)^{2/\gamma}  \Big).
\end{equation}

We now estimate the second term in \eqref{festimate}.  Recall that
$p_0=\frac{3n}{3n-2(\alpha+1)}.$
A direct computation gives
$
  \frac1{p_0}-\frac{n-\alpha-1}{n}=\frac{\alpha+1}{3n}.$
On the other hand,
\[
 \frac{p_0-1}{2p_0} =
 \frac12\Big(1-\frac1{p_0}\Big)
 = \frac12\Big( 1-\frac{3n-2(\alpha+1)}{3n}  \Big)  
 =  \frac{\alpha+1}{3n}.
\]
Therefore, we have
\[  \frac1{p_0}-\frac{n-\alpha-1}{n}  =
  \frac{p_0-1}{2p_0}.
\]
The HLS inequality then gives
\begin{equation*} 
\|\nabla\Phi_\alpha*f\|_{L^{2p_0/(p_0-1)}(\mathbb R^n)}
  \le   C\|f\|_{L^{p_0}(\mathbb R^n)}.
\end{equation*}
Using the H\"older inequality with exponents $p_0$ and $p_0/(p_0-1)$,
we have
\[
\begin{aligned}
  \int_{\mathbb R^n}|f||\nabla\Phi_\alpha*f|^2\,\dd\boldsymbol{x}
  &\le
 \|f\|_{L^{p_0}(\mathbb R^n)} \big\||\nabla\Phi_\alpha*f|^2
 \big\|_{L^{p_0/(p_0-1)}(\mathbb R^n)}                      \\
  &= \|f\|_{L^{p_0}(\mathbb R^n)}
 \|\nabla\Phi_\alpha*f\|_{L^{2p_0/(p_0-1)}(\mathbb R^n)}^2   \\
 &\le  C\|f\|_{L^{p_0}(\mathbb R^n)}^3 .
\end{aligned}
\]
Using \eqref{festimates} yields
\begin{equation}\label{seesstimate}
\int_{\mathbb R^n}|f||\nabla\Phi_\alpha*f|^2\,\dd\boldsymbol{x}
  \le C\Big( H_\mu(t)^{3/2} +  H_\mu(t)^{3/p_0}  +   H_\mu(t)^{3/\gamma} \Big).
\end{equation}

Combining \eqref{festimate},
\eqref{fbarestimate}, and
\eqref{seesstimate}, we obtain
\[
\begin{aligned}
 \int_{\mathbb R^n}\rho|\nabla\Phi_\alpha*f|^2\,\dd\boldsymbol{x}  &\le
 C\Big(H_\mu(t)+ H_\mu(t)^{2/p_{\rm loc}}
  +   H_\mu(t)^{2/\gamma}  +  H_\mu(t)^{3/2}  +  H_\mu(t)^{3/p_0}
  +   H_\mu(t)^{3/\gamma}  \Big).
\end{aligned}
\]
Since $H_\mu(t)\le \mathcal E_\mu(t),$
the preceding estimate implies
\[
\begin{aligned}
\int_{\mathbb R^n}\rho|\nabla\Phi_\alpha*f|^2\,\dd\boldsymbol{x}
&\le C\Big( \mathcal E_\mu(t) + \mathcal E_\mu(t)^{2/p_{\rm loc}}
  + \mathcal E_\mu(t)^{2/\gamma} +
  \mathcal E_\mu(t)^{3/2}  +
 \mathcal E_\mu(t)^{3/p_0} +
 \mathcal E_\mu(t)^{3/\gamma} \Big).
\end{aligned}
\]
This proves \eqref{relativeestimate}.
\end{proof}

Recall the mechanical entropy and entropy flux:
\[
  \eta(\rho, \mathcal{M})
  = \frac{|\mathcal{M}|^{2}}{2\rho} + h(\rho),
  \qquad
  q(\rho,\mathcal{M})
  =\frac12 \mathcal{M}\frac{|\mathcal{M}|^{2}}{\rho^{2}}+\mathcal{M}h'(\rho).
\]
Define the  relative entropy flux by
\begin{equation}\nonumber
  q_\mu(\rho,\mathcal{M}\mid\bar\rho)
  :=q(\rho,\mathcal{M})+G_\mu\mathcal{M}.
\end{equation}

\begin{lemma}\label{effrelativeei}
Under the assumptions of {\rm Theorem \ref{thm:stability}},
\begin{equation}\label{relativeentropy}
  \partial_t\eta_\mu(\rho,\mathcal{M}\mid\bar\rho)
  +\nabla\cdot q_\mu(\rho,\mathcal{M}\mid\bar\rho)
  \le -\mathcal M\cdot \nabla\Phi_\alpha*(\rho-\bar\rho)
\end{equation}
in the sense of distributions on $(0,T)\times\mathbb R^n$.  Equivalently, for every nonnegative Lipschitz function $\psi$ compactly supported in $[0,T)\times\mathbb R^n$,
\begin{align}\label{testinequality}
&\int_0^T\!\!\int_{\mathbb R^n}
\Big(\partial_\tau\psi\,\eta_\mu(\rho,\mathcal M\mid\bar\rho)
+\nabla\psi\cdot q_\mu(\rho,\mathcal M\mid\bar\rho)\Big)\,\dd\boldsymbol{x}\dd\tau
+\int_{\mathbb R^n}\psi(0,\boldsymbol{x})\,\eta_\mu(\rho,\mathcal M\mid\bar\rho)(0,\boldsymbol{x})\,\dd\boldsymbol{x}
\nonumber\\
&\qquad\ge
\int_0^T\!\!\int_{\mathbb R^n}
\psi\,\mathcal M\cdot\nabla\Phi_\alpha*(\rho-\bar\rho)\,\dd\boldsymbol{x}\dd\tau.
\end{align}
\end{lemma}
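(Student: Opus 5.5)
The plan is to write $\eta_\mu(\rho,\mathcal M\mid\bar\rho)=\eta(\rho,\mathcal M)-h(\bar\rho)+G_\mu(\rho-\bar\rho)$ and combine three pieces: the entropy inequality of Definition \ref{globalweak}, the weak form of the continuity equation tested against $G_\mu\psi$, and the fact that $h(\bar\rho)$ and $G_\mu\bar\rho$ are time-independent. Since $\bar\rho$ and $G_\mu$ do not depend on $t$, we have formally
\[
\partial_t\big(G_\mu(\rho-\bar\rho)-h(\bar\rho)\big)+\nabla\cdot(G_\mu\mathcal M)=G_\mu(\partial_t\rho+\nabla\cdot\mathcal M)+\mathcal M\cdot\nabla G_\mu=\mathcal M\cdot\nabla\bar{\mathcal W}_\alpha .
\]
The entropy inequality with $\kappa=1$ reads $\partial_t\eta+\nabla\cdot q\le-\mathcal M\cdot\nabla\Phi_\alpha*\rho$. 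Adding the two relations, the term $\mathcal M\cdot\nabla\Phi_\alpha*\bar\rho$ combines with $-\mathcal M\cdot\nabla\Phi_\alpha*\rho$ to give $-\mathcal M\cdot\nabla\Phi_\alpha*(\rho-\bar\rho)$, and the flux becomes $q+G_\mu\mathcal M=q_\mu$. This is exactly \eqref{relativeentropy}.

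To make this rigorous, first fix a nonnegative Lipschitz test function $\psi$ with compact support in $[0,T)\times\mathbb R^n$ and write the entropy inequality in integrated form, including the initial term. Next, use Definition \ref{definition}(c) with test function $\zeta=G_\mu\psi$. This gives
\[
\int\!\!\int\big(\rho\,G_\mu\partial_t\psi+\mathcal M\cdot(G_\mu\nabla\psi+\psi\nabla G_\mu)\big)+\int\rho_0G_\mu\psi(0)=0 .
\]
Subtracting the identical identity for the stationary pair $(\bar\rho,\boldsymbol 0)$, together with $\int\!\!\int h(\bar\rho)\partial_t\psi=-\int h(\bar\rho)\psi(0)$, yields the weak version of the formal computation above. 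Summing with the entropy inequality and rearranging produces \eqref{testinequality}.

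The main obstacle is justifying $\zeta=G_\mu\psi$ as a test function, since $G_\mu=\mu+\Phi_\alpha*\bar\rho$ is not $C^1$ in general. I would mollify, setting $\zeta_\varepsilon=(G_\mu)_\varepsilon\psi$, and pass to the limit. Because $\bar\rho\in L^1\cap L^\infty$ with compact support and $\alpha<n-1$, the kernel $|\nabla\Phi_\alpha|\lesssim|\boldsymbol x|^{-\alpha-1}$ is locally integrable. Hence $\nabla\bar{\mathcal W}_\alpha=\nabla\Phi_\alpha*\bar\rho$ is bounded and continuous, so $G_\mu\in W^{1,\infty}_{\rm loc}$ is Lipschitz, and $(G_\mu)_\varepsilon\to G_\mu$ and $\nabla(G_\mu)_\varepsilon\to\nabla G_\mu$ locally uniformly. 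With $\rho,\mathcal M\in L^1_{\rm loc}$ (the latter by $|\mathcal M|\le\frac{|\mathcal M|^2}{\rho}+\rho$), dominated convergence then justifies the limit on the compact support of $\psi$.

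It also remains to check that every term is finite. The right-hand side $\psi\,\mathcal M\cdot\nabla\Phi_\alpha*(\rho-\bar\rho)$ is integrable because $\mathcal M/\sqrt\rho\in L^2$ and, by Lemma \ref{relativenonlocal}, $\sqrt\rho\,\nabla\Phi_\alpha*f\in L^2$ for a.e.\ $t$. The flux $q_\mu$ is locally integrable thanks to the assumed bounds $|\mathcal M|^3/\rho^2\in L^1$ and $\rho^{\gamma+\theta}\in L^1$. For the latter, $|\mathcal M|\rho^{\gamma-1}\le\frac{|\mathcal M|^2}{\rho}+\rho^{2\gamma-1}$, and $\rho^{2\gamma-1}$ is controlled locally by interpolating between $\rho^{\gamma+\theta}$ and $\rho$, using $2\gamma-1\le\gamma+\theta$ when $\gamma\le2$.
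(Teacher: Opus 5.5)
Your argument is the paper's. You combine the entropy inequality with the continuity equation weighted by the time-independent function $G_\mu$. The term $G_\mu\nabla\cdot\mathcal M$ then cancels, and $\mathcal M\cdot\nabla G_\mu=\mathcal M\cdot\nabla\bar{\mathcal W}_\alpha$ turns $\nabla\Phi_\alpha*\rho$ into $\nabla\Phi_\alpha*(\rho-\bar\rho)$. The paper does this computation formally in distributions. Your bookkeeping, with $\zeta=G_\mu\psi$ in Definition \ref{definition}(c) and the trivial identity for the time-independent terms $h(\bar\rho)$ and $G_\mu\bar\rho$, is a correct rigorous version of it.

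One auxiliary estimate is false, however. The inequality $2\gamma-1\le\gamma+\theta$ is equivalent to $4\gamma-2\le 3\gamma-1$, i.e.\ $\gamma\le1$. It therefore fails for every admissible $\gamma$; for $\gamma=2$ you get $3>\tfrac52$. So the splitting $|\mathcal M|\rho^{\gamma-1}\le\frac{|\mathcal M|^2}{\rho}+\rho^{2\gamma-1}$ does not close: $\rho^{2\gamma-1}$ is not controlled by interpolating $\rho$ and $\rho^{\gamma+\theta}$. Use Young's inequality with exponents $3$ and $\tfrac32$ instead. Writing $\mathcal M=\rho u$,
\[
\rho^{\gamma}|u|=\big(\rho^{1/3}|u|\big)\,\rho^{\gamma-1/3}\le\tfrac13\,\rho|u|^3+\tfrac23\,\rho^{(3\gamma-1)/2}=\tfrac13\,\frac{|\mathcal M|^3}{\rho^2}+\tfrac23\,\rho^{\gamma+\theta}.
\]
This is exactly why the hypothesis is stated for $\rho^{\gamma+\theta}$; it is the splitting used in Step 3 of the proof of Theorem \ref{thm:stability}.

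Two smaller remarks.
\begin{itemize}
\item For the right-hand side you only need local integrability, since $\psi$ has compact support. The term $\mathcal M\cdot\nabla\Phi_\alpha*\rho\in L^1_{\rm loc}$ is implicit in Definition \ref{globalweak}, and $\mathcal M\cdot\nabla\bar{\mathcal W}_\alpha\in L^1_{\rm loc}$ because $\nabla\bar{\mathcal W}_\alpha$ is bounded. Lemma \ref{relativenonlocal} only gives a bound for a.e.\ $t$, so using it would additionally require $\mathcal E_\mu\in L^\infty(0,T)$.
\item Your own estimate shows $\nabla G_\mu=\nabla\Phi_\alpha*\bar\rho$ is bounded and continuous, so $G_\mu\in C^1$ and mollifying it is unnecessary. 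What actually needs approximation is the merely Lipschitz $\psi$ by $C^1$ test functions. This is routine because $\rho,\mathcal M,\eta,q\in L^1_{\rm loc}$.
\end{itemize}
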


\begin{proof}
The entropy solution satisfies
\[
  \partial_t\eta(\rho,\mathcal{M})+
  \nabla\cdot q(\rho,\mathcal{M})
  \le -\mathcal M\cdot\nabla\mathcal W_\alpha
\]
in the sense of distributions, 
with $\mathcal W_\alpha=\Phi_\alpha*\rho$.  Since $G_\mu$ is independent of time and the continuity equation is
$\partial_t\rho+\nabla\cdot\mathcal M=0$, we compute in distributions:
\[\partial_t\big(\eta(\rho,\mathcal M)+G_\mu(\rho-\bar\rho)-h(\bar\rho)\big)
=\partial_t\eta(\rho,\mathcal M)+G_\mu\partial_t\rho=\partial_t\eta(\rho,\mathcal M)-G_\mu\nabla\cdot\mathcal M.
\]
Similarly, we deduce 
\[
\begin{aligned}
\nabla\cdot\big(q(\rho,\mathcal M)+G_\mu\mathcal M\big) &=\nabla\cdot q(\rho,\mathcal M)+G_\mu\nabla\cdot\mathcal M+\mathcal M\cdot\nabla G_\mu.
\end{aligned}
\]
Adding the last two identities cancels $G_\mu\nabla\cdot\mathcal M$ and using 
$\nabla G_\mu=\nabla\bar{\mathcal W}_\alpha$, we obtain
\[
\begin{aligned}
\partial_t\eta_\mu+\nabla\cdot q_\mu
&\le -\mathcal M\cdot\nabla\mathcal W_\alpha +\mathcal M\cdot\nabla\bar{\mathcal W}_\alpha  \\
&=-\mathcal M\cdot\nabla\Phi_\alpha*(\rho-\bar\rho),
\end{aligned}
\]
which is \eqref{relativeentropy}.  
Inequality \eqref{testinequality} follows by multiplying the equation via a nonnegative test function $\psi$, integrating over space-time, and applying integration by parts.
\end{proof}

\bigskip
\begin{proof}[Proof of {\rm Theorem \ref{thm:stability}}]
We divide the proof into six steps.

\smallskip
1. Fix a Lebesgue point $t\in(0,T)$.   Let $R_0>0$ be such that
$K\subset B_{R_0}$.  Choose $L>R_0+1$ and take
$0<\varepsilon<\min\{1,T-t\}$.  For such $\varepsilon$, define
\[
\vartheta_\varepsilon(\tau)=
\begin{cases}
1& \mbox{for $0\le\tau<t$},\\
1+(t-\tau)/\varepsilon& \mbox{for $t\le\tau<t+\varepsilon$},\\
0&\mbox{for $t+\varepsilon\le\tau\le T$},
\end{cases}
\]
and let $\zeta_L\in W^{1,\infty}_{\rm c}(\mathbb R^n)$ be chosen so that
\begin{align*}
&0\le \zeta_L\le1,
\qquad
\zeta_L=1\ \text{on }B_L,
\qquad
\zeta_L=0\ \text{on }B_{L+1}^{\rm c},\\[1mm]
&|\nabla\zeta_L|\le C\mathds{1}_{\{L<|\boldsymbol{x}|<L+1\}},
 \qquad
 \zeta_L(\boldsymbol{x}) \to 1^- 
 \quad\text{as }L\to\infty
 \quad\text{for every fixed }\boldsymbol{x}.
\end{align*}
Put $\psi_\varepsilon(\tau,\boldsymbol{x})=\vartheta_\varepsilon(\tau)\zeta_L(\boldsymbol{x})$ in \eqref{testinequality}.

\smallskip
2. For {\it a.e.} $\tau$, set $F(\tau):=\nabla\Phi_\alpha*(\rho(\tau)-\bar\rho)$.  Since $0\le\psi_\varepsilon\le1$,
\[
\begin{aligned}
\left|\int_{\mathbb R^n}\psi_\varepsilon\mathcal M\cdot F\,\dd\boldsymbol{x}\right|
&\le
\int_{\mathbb R^n}\frac{|\mathcal M|}{\sqrt\rho}\sqrt\rho |F|\,\dd\boldsymbol{x}\\
&\le
\frac12\int_{\mathbb R^n}\frac{|\mathcal M|^2}{\rho}\,\dd\boldsymbol{x}
+\frac12\int_{\mathbb R^n}\rho |F|^2\,\dd\boldsymbol{x}\\
&\le C\big(\mathcal E_\mu(\tau)+\mathcal E_\mu(\tau)^{\chi_*}\big),
\end{aligned}
\]
where Lemma \ref{relativenonlocal} has been used in the last inequality, 
and $\chi_*>1$.  Therefore, we have
\begin{equation}\label{sourceestimate}
\left|\int_0^T\!\!\int_{\mathbb R^n}
\psi_\varepsilon\mathcal M\cdot F\,\dd\boldsymbol{x}\dd\tau\right|
\le C\int_0^{t+\varepsilon}
\big(\mathcal E_\mu(\tau)+\mathcal E_\mu(\tau)^{\chi_*}\big)\,\dd\tau.
\end{equation}

\smallskip
3. Set $A_L:=\{L<|\boldsymbol{x}|<L+1\}$.  Notice that $\bar\rho=0$ on $A_L$. 
Moreover,
$
 |\nabla\zeta_L|\le C\mathds{1}_{A_L}.
$
On the same shell,
\[
 q_\mu(\rho,\mathcal M\mid\bar\rho)
=\frac12\mathcal M\frac{|\mathcal M|^2}{\rho^2}+h'(\rho)\mathcal M+G_\mu\mathcal M.
\]
Write $\mathcal M=\rho u$.  Then
\[
 \left|\frac12\mathcal M\frac{|\mathcal M|^2}{\rho^2}\right|
  =\frac12\rho |u|^3
  =\frac12\frac{|\mathcal M|^3}{\rho^2}.
\]
Since $h'(\rho)=a_0\gamma\rho^{\gamma-1}/(\gamma-1)$, $
 |h'(\rho)\mathcal M|
 \le C\rho^\gamma |u|.$
Using the Young inequality, we have
\[
\rho^\gamma|u|
 \le C\rho |u|^3+C\rho^{(3\gamma-1)/2}
 =C\frac{|\mathcal M|^3}{\rho^2}+C\rho^{\gamma+\theta}.
\]

Finally, since $K\subset B_{R_0}$ and $L>R_0+1$, 
we see that $|\boldsymbol{x}-\boldsymbol{y}|\ge 1$
for every
$\boldsymbol{x}\in A_L$ and every $\boldsymbol{y}\in K$.
Then the singularity of
$\Phi_\alpha$ is avoided uniformly in $L$.  Since $\bar\rho$ is compactly
supported, then
$\sup_{|\boldsymbol{x}|\ge R_0+1}
|\bar{\mathcal W}_\alpha(\boldsymbol{x})|<\infty.$
Consequently,
$\sup_{|\boldsymbol{x}|\ge R_0+1}|G_\mu(\boldsymbol{x})|<\infty.$
The corresponding bound is independent of $L$.  
Therefore, we obtain
\[
|G_\mu\mathcal M|
\le C\rho |u|
\le C\rho+C\rho |u|^3
=C\rho+C\frac{|\mathcal M|^3}{\rho^2}.
\]
Defining
\[
 \mathcal{F}(\tau,\boldsymbol{x}) :=\rho+\rho^{\gamma+\theta}+\frac{|\mathcal M|^3}{\rho^2},
\]
we have
\begin{equation}\label{fluxbound}
\left|\int_0^T\!\!\int_{\mathbb R^n}
\nabla\psi_\varepsilon\cdot q_\mu(\rho,\mathcal M\mid\bar\rho)\,\dd\boldsymbol{x}\dd\tau\right|
\le
C\int_0^T\!\!\int_{A_L} \mathcal{F}\,\dd\boldsymbol{x}\dd\tau.
\end{equation}
The right-hand side is finite because $\rho\in L^\infty(0,T;L^1(\R^n))$ for finite-energy solutions and, by assumptions,
$\rho^{\gamma+\theta}$ and $|\mathcal M|^3/\rho^2$ belong to $L^1(0,T;L^1(\R^n))$.

\smallskip
4. Since
\[
\partial_\tau\psi_\varepsilon =-\varepsilon^{-1}\zeta_L
 \,\,\,\,\hbox{on }(t,t+\varepsilon),
     \qquad
\partial_\tau\psi_\varepsilon=0
\,\,\,\,\hbox{elsewhere},
\]
\eqref{testinequality}, \eqref{sourceestimate}, and \eqref{fluxbound} imply
\begin{align}\label{epsiloninequality}
&\frac1\varepsilon\int_t^{t+\varepsilon}\!\!\int_{\mathbb R^n}
\zeta_L\eta_\mu(\rho,\mathcal M\mid\bar\rho)(\tau)\,\dd\boldsymbol{x}\dd\tau
\nonumber\\
&\quad\le
\int_{\mathbb R^n}\zeta_L(\boldsymbol{x})
\eta_\mu(\rho,\mathcal M\mid\bar\rho)(0,\boldsymbol{x})\,\dd\boldsymbol{x}
+C\int_0^{t+\varepsilon}\big(\mathcal E_\mu(\tau)+\mathcal E_\mu(\tau)^{\chi_*}\big)\,\dd\tau
\nonumber\\
&\qquad
+C\int_0^T\!\!\int_{A_L} \mathcal{F}\,\dd\boldsymbol{x}\dd\tau.
\end{align}
For fixed $L$, the Lebesgue differentiation theorem yields that, for {\it a.e.} such $t$,
\[
\lim_{\varepsilon\to0^+}
\frac1\varepsilon\int_t^{t+\varepsilon}\!\!\int_{\mathbb R^n}
\zeta_L\eta_\mu(\tau)\,\dd\boldsymbol{x}\dd\tau
=
\int_{\mathbb R^n}\zeta_L\eta_\mu(t)\,\dd\boldsymbol{x}.
\]
Letting $\varepsilon\to0^+$ in \eqref{epsiloninequality}, we obtain
\begin{align}\label{localL}
\int_{\mathbb R^n}\zeta_L\eta_\mu(t)\,\dd\boldsymbol{x}
&\le
\int_{\mathbb R^n}\zeta_L\eta_\mu(0)\,\dd\boldsymbol{x}
+C\int_0^t\big(\mathcal E_\mu(\tau)+\mathcal E_\mu(\tau)^{\chi_*}\big)\,\dd\tau
+C\mathcal R(L),
\end{align}
where
\[
 \mathcal R(L):=
 \int_0^T\!\!\int_{A_L} \mathcal{F}\,\dd\boldsymbol{x}\dd\tau.
\]

\smallskip
5. 
Since $ \mathcal{F}\in L^1((0,T)\times\mathbb R^n)$, we have
\[
 \mathcal R(L)\to0
 \qquad\text{as }L\to\infty .
\]
Moreover, $\zeta_L\nearrow1$ pointwise.  Hence, by the monotone convergence
theorem,
\[
\int_{\mathbb R^n}\zeta_L\eta_\mu(t)\,\dd\boldsymbol{x}\to\mathcal E_\mu(t),
  \qquad
\int_{\mathbb R^n}\zeta_L\eta_\mu(0)\,\dd\boldsymbol{x}\to\mathcal E_\mu(0).
\]
Letting $L\to\infty$ in \eqref{localL}, we obtain that, for {\it a.e.} $t\in(0,T)$,
\begin{equation}\label{ODEestimate} \mathcal E_\mu(t)\le
 \mathcal E_\mu(0)
+C\int_0^t\big(\mathcal E_\mu(\tau)+\mathcal E_\mu(\tau)^{\chi_*}\big)\,\dd\tau,
\end{equation}
where $C>0$ depends only on $n,\alpha,\gamma,a_0$, and the steady state.

\smallskip
6. We first prove that, for every fixed $T>0$, there exists a constant
$C_T=C_T(\mathcal E_\mu(0),\bar{\rho},T)>0$ such that
\begin{equation}\label{apriorrb}
 \operatorname*{ess\,sup}_{0\le t\le T}\mathcal E_\mu(t)\le C_T .
\end{equation}
Set
\[
 A(t):=\int_{\mathbb R^n}
 \big(\frac{|\mathcal M|^2}{2\rho}+h(\rho)\big)(t,\boldsymbol{x})\,
 \dd\boldsymbol{x}.
\]
Since $\Phi_\alpha\le0$ and $\bar\rho\ge0$, we see that
$\bar{\mathcal W}_\alpha\le0$.  Moreover, by \eqref{property},
\[
 G_\mu=-h'(\bar\rho)\le0
 \quad\hbox{\it a.e. on }\Gamma,
 \qquad
 0\le G_\mu=\mu+\bar{\mathcal W}_\alpha\le\mu
 \quad\hbox{\it a.e. on }\Gamma^{\rm c} .
\]
Hence, using $\bar\rho=0$ on $\Gamma^{\rm c}$, for {\it a.e.} $t\in(0,T)$,
\[
\begin{aligned}
 \int_{\mathbb R^n}G_\mu(\rho-\bar\rho)(t)\,\dd\boldsymbol{x}
 &=
 \int_{\Gamma}G_\mu\rho(t)\,\dd\boldsymbol{x}
 -\int_{\Gamma}G_\mu\bar\rho\,\dd\boldsymbol{x}
 +\int_{\Gamma^{\rm c}}G_\mu\rho(t)\,\dd\boldsymbol{x}  \\
 &\le
 -\int_{\Gamma}G_\mu\bar\rho\,\dd\boldsymbol{x}
 +\mu\int_{\Gamma^{\rm c}}\rho(t)\,\dd\boldsymbol{x}\le
 C_{\bar\rho}+\mu M .
\end{aligned}
\]
Therefore, we deduce
\[
\begin{aligned}
 \mathcal E_\mu(t)
 = A(t)-\int_{\mathbb R^n}h(\bar\rho)\,\dd\boldsymbol{x}
 +\int_{\mathbb R^n}G_\mu(\rho-\bar\rho)(t)\,\dd\boldsymbol{x}\le A(t)+C_{\bar\rho}+\mu M .
\end{aligned}
\]
By the energy estimate for finite-energy weak solutions,
\[
 \operatorname*{ess\,sup}_{0\le t\le T} A(t)
 \le C(A(0),M).
\]
Consequently, one obtains
\[
 \operatorname*{ess\,sup}_{0\le t\le T}\mathcal E_\mu(t)
 \le C(A(0),M,\bar\rho).
\]

It remains to express the right-hand side only in terms of
$\mathcal E_\mu(0)$ and $\bar\rho$.  Since
$G_\mu\ge0$ on $\Gamma^{\rm c}$ and $G_\mu=-h'(\bar\rho)$ on $\Gamma$,
\[
\begin{aligned}
 A(0)=\mathcal E_\mu(0)
   +\int_{\mathbb R^n}h(\bar\rho)\,\dd\boldsymbol{x}
   -\int_{\mathbb R^n}G_\mu(\rho_0-\bar\rho)\,\dd\boldsymbol{x}\le
 \mathcal E_\mu(0)+C_{\bar\rho}
 +\int_{\Gamma}h'(\bar\rho)\rho_0\,\dd\boldsymbol{x}.
\end{aligned}
\]
Since $\bar\rho\in L^\infty_{\rm c}(\mathbb R^n)$, $h'(\bar\rho)$ is bounded
and compactly supported.  By the Young inequality, for every $\delta>0$,
\[
 h'(\bar\rho)\rho_0
 \le \delta h(\rho_0)+C_{\delta,\bar\rho}
 \qquad\hbox{on }\Gamma .
\]
Therefore, we have
\[
 A(0)\le \mathcal E_\mu(0)+C_{\delta,\bar\rho}+\delta A(0).
\]
Choosing $\delta>0$ sufficiently small and absorbing the last term into the
left-hand side give
\[
 A(0)\le C_{\bar\rho}\bigl(1+\mathcal E_\mu(0)\bigr).
\]

We also control the mass $M$ by $\mathcal E_\mu(0)$.  Since
$\bar\rho$ is compactly supported and
$\bar{\mathcal W}_\alpha(\boldsymbol{x})\to0$ as
$|\boldsymbol{x}|\to\infty$, there exists $R>0$, depending only on
$\bar\rho$ and $\mu$, such that
\[
 K\subset B_R
 \qquad\hbox{and}\qquad
 G_\mu(\boldsymbol{x})
 =\mu+\bar{\mathcal W}_\alpha(\boldsymbol{x})
 \ge \frac{\mu}{2}
 \,\,\,\,\hbox{for }|\boldsymbol{x}|\ge R .
\]
In particular, $B_R^{\rm c}\subset\Gamma^{\rm c}$.  
On $\Gamma$, by the convexity of
$h$,
\[
 h_\mu(\rho_0\mid\bar\rho)
 =h(\rho_0)-h(\bar\rho)-h'(\bar\rho)(\rho_0-\bar\rho)\ge0,
\]
while, on $\Gamma^{\rm c}$,
\[
 h_\mu(\rho_0\mid0)=h(\rho_0)+G_\mu\rho_0\ge G_\mu\rho_0 .
\]
Since the kinetic part is nonnegative, we obtain
\[
 \mathcal E_\mu(0)
 \ge
 \int_{B_R^{\rm c}}G_\mu\rho_0\,\dd\boldsymbol{x}
 \ge
 \frac{\mu}{2}\int_{B_R^{\rm c}}\rho_0\,\dd\boldsymbol{x}.
\]
Thus, we infer that
\[
 \int_{B_R^{\rm c}}\rho_0\,\dd\boldsymbol{x}
 \le C_\mu\mathcal E_\mu(0).
\]
Moreover, since $B_R$ has finite measure,
\[
 \int_{B_R}\rho_0\,\dd\boldsymbol{x}
 \le C_R\Big(1+\int_{B_R}h(\rho_0)\,\dd\boldsymbol{x}\Big)
 \le C_{\bar\rho}\bigl(1+A(0)\bigr)
 \le C_{\bar\rho}\bigl(1+\mathcal E_\mu(0)\bigr).
\]
Therefore, we deduce
\[
 M=\int_{\mathbb R^n}\rho_0\,\dd\boldsymbol{x}
 \le C_{\bar\rho}\bigl(1+\mathcal E_\mu(0)\bigr).
\]
Combining the estimates for $M$ and $A(0)$,  we have
\[
 \operatorname*{ess\,sup}_{0\le t\le T}\mathcal E_\mu(t)
 \le C(\mathcal E_\mu(0),\bar\rho).
\]
This proves \eqref{apriorrb}.

Since $\mathcal E_\mu(t)\le C$ for {\it a.e.} $t\in[0,T]$ and $\chi_*>1$, we have
\[
 \mathcal E_\mu(t)^{\chi_*}
 \le C^{\chi_*-1}\mathcal E_\mu(t)
 \qquad\hbox{for {\it a.e.} }t\in[0,T].
\]
Substituting this into \eqref{ODEestimate} yields
\[
 \mathcal E_\mu(t)
 \le \mathcal E_\mu(0)
 +C\bigl(1+C^{\chi_*-1}\bigr)\int_0^t\mathcal E_\mu(\tau)\,\dd\tau\qquad 
\hbox{for {\it a.e.} }t\in[0,T].
\]
By the Grönwall inequality,
\[
 \mathcal E_\mu(t)
 \le \mathcal E_\mu(0)\exp\{C(1+C^{\chi_*-1})t\}
 \le C_T\mathcal E_\mu(0)
 \qquad \hbox{for {\it a.e.} }t\in[0,T],
\]
where $C_T=C_T(\mathcal E_\mu(0), \bar{\rho}, T)$.  This proves \eqref{finitetimerelative}.
\end{proof}

\medskip
\section{Existence of Solutions for CEREs with  General Pressure Laws}\label{existy}

In this section, we outline the approach to prove the existence of finite-energy weak solutions 
as defined in Definition \ref{definition} with pressure $p$ satisfying \eqref{1.2}--\eqref{1.5}.

\subsection{Approximate PDEs}
The analysis of solutions to \eqref{1.1} in the whole space $\mathbb{R}^n$ is complicated by several major difficulties, including cavitation, singularity formation, and shock development. 
To prove the global existence of finite-energy solutions with spherical symmetry, 
we develop a vanishing physical viscosity method through a family of approximate equations, 
referred to as the compressible Navier–Stokes–Riesz equations (CNSREs), with the form:

\begin{align}\label{0.2}
\begin{cases}
\displaystyle
\partial_t \rho+\nabla \cdot \M=0, \\[1mm]
\displaystyle
\partial_t \M+ \nabla \cdot \Big(\frac{\M\otimes\M}{\rho}\Big)
+\nabla p + \kappa \rho \nabla \mathcal{W}_\alpha
= \varepsilon \nabla \cdot \Big(\mu(\rho)D \big(\frac{\M}{\rho} \big) \Big)
+ \varepsilon \nabla \Big( \lambda(\rho) \nabla \cdot \big(\frac{\M}{\rho} \big) \Big),
\end{cases}
\end{align}
 where $D\big( \frac{\M}{\rho} \big)
 = \frac{1}{2} \big( \nabla (\frac{\M}{\rho})
  + \big(\nabla ( \frac{\M}{\rho}) \big)^\intercal \big)$
is the stress tensor,
$\varepsilon > 0$ can be seen as the inverse of the Reynolds number,
and $\mu(\rho)$ and $\lambda(\rho)$ are the shear and bulk viscosity coefficients respectively
satisfying
 \[
 \mu(\rho) \geq 0, \quad \mu(\rho) + n\lambda(\rho) \geq 0 \qquad \text{ for $\rho \geq 0$}.
 \]
Here the viscosity coefficients satisfy the BD relation:
$\lambda(\rho) = \rho \mu'(\rho) - \mu(\rho)$,
which is needed to derive a BD-type entropy estimate and thereby obtain 
control of the density derivatives; see Bresch–Desjardins \cite{Bresch02} and Bresch–Desjardins–Lin \cite{Bresch_2003}. For simplicity, we choose
$(\mu(\rho),\lambda(\rho)) = (\rho,0)$, 
which satisfies the required conditions. Formally,
as $\varepsilon \to 0$, solutions of \eqref{0.2} converge to solutions of \eqref{1.1}. 
We now introduce the notion of weak solutions
$(\rho^\varepsilon,\M^\varepsilon)(t,\boldsymbol{x})$ 
to \eqref{0.2} corresponding to given approximate initial data:
\begin{equation}\label{initialns}
    (\rho^\varepsilon, \M^\varepsilon)(0,\boldsymbol{x}) = (\rho^\varepsilon_0, \M^\varepsilon_0)(\boldsymbol{x}),
\end{equation}
as defined in \cite[Appendix B]{Carrillo2025}.

\begin{definition}\label{CNSREs}
A measurable vector function $(\rho^\varepsilon,\M^\varepsilon)(t,\boldsymbol{x})$
is a weak solution
of the Cauchy problem \eqref{0.2}--\eqref{initialns} 
with pressure $p$ satisfying \eqref{1.2}--\eqref{1.5} and $(\mu,\lambda) = (\rho,0)$
if $(\rho^\varepsilon,\M^\varepsilon)(t,\boldsymbol{x})$ satisfies
the following conditions{\rm :}
\begin{enumerate}
\item[(a)] $\rho^\varepsilon(t,\boldsymbol{x}) \geq 0$ {\it a.e.} and
$(\M^\varepsilon, \frac{\M^\varepsilon}{\sqrt{\rho^\varepsilon}})(t,\boldsymbol{x})
= \boldsymbol{0}$ {\it a.e.} on the vacuum states
$\{ (t,\boldsymbol{x}) \, : \, \rho^\varepsilon(t,\boldsymbol{x}) = 0 \}$,
    \begin{align*}
&\rho^\v\in L^{\infty}(0,T; L^1\cap L^{\gamma_2}(\mathbb{R}^n)),
\quad   \nabla\sqrt{\rho^\v}\in  L^{\infty}(0,T; L^2(\mathbb{R}^n)),\\
&\frac{\M^\v}{\sqrt{\rho^\v}}\in L^{\infty}(0,T; L^2(\mathbb{R}^n)),
    \end{align*}
for all $T>0$. Moreover, for $\alpha \in (0,n)$,
    \[
 \Phi_\alpha * \rho^\v\in L^\infty(0,T;L^{\frac{2n}{\alpha}}(\mathbb{R}^n) ),\quad
\nabla\Phi_\alpha * \rho^\v\in L^\infty(0,T;L^{\frac{2n}{\alpha + 2}}(\mathbb{R}^n)),
    \]
with, for $\alpha \in [n-1,n)$,
    \[
  \rho^\v \in L^\infty(0,T;C^{0,\beta}(\mathbb{R}^n)),
    \]
    for some $\beta \in (1 + \alpha - n,1)$; and, for $\alpha \in (-1,0]$,
    \[
  \Phi_\alpha * \rho^\v\in L^\infty(0,T;L^\infty_{\rm loc}(\mathbb{R}^n) ),\quad
\nabla\Phi_\alpha * \rho^\v\in L^\infty(0,T;L_{\rm loc}^{\frac{2n}{\alpha + 2}}(\mathbb{R}^n)).
    \]
\item[(b)] For any $\zeta \in C^1_0(\mathbb{R}_+ \times \mathbb{R}^n)$ and $t_2 \geq t_1 \geq 0$,
    \begin{equation*}
\int_{t_1}^{t_2} \int_{\mathbb{R}^n} (\rho^\v \zeta_t + \M^\v \cdot \nabla \zeta) \dd \boldsymbol{x} \dd t = \int_{\mathbb{R}^n} (\rho^\v \zeta)(t_2,\boldsymbol{x}) \dd \boldsymbol{x} - \int_{\mathbb{R}^n} (\rho^\v \zeta)(t_1,\boldsymbol{x}) \dd \boldsymbol{x}.
    \end{equation*}
\item[(c)] For any $\boldsymbol{\psi} \in (C^1_0(\mathbb{R}_+\times \mathbb{R}^n))^n$,
    \begin{align*}
 \begin{split}
 & \int_{\mathbb{R}_+} \int_{\mathbb{R}^n}
\Big( \M^\v \cdot \boldsymbol{\psi}_t
 + \frac{\M^\v}{\sqrt{\rho^\v}} \cdot \big( \frac{\M^\v}{\sqrt{\rho^\v}} \cdot \nabla\big) \boldsymbol{\psi} + p(\rho^\v)\nabla \cdot \boldsymbol{\psi} \Big)(t,\boldsymbol{x}) \dd \boldsymbol{x} \dd t + \int_{\mathbb{R}^n} \M^\v_0(\boldsymbol{x}) \cdot \boldsymbol{\psi}(0,\boldsymbol{x}) \dd \boldsymbol{x} \\
 & = - \v \int_{\mathbb{R}_+} \int_{\mathbb{R}^n}
  \Big( \frac{1}{2} \M^\v \cdot \big( \Delta \boldsymbol{\psi} + \nabla (\nabla \cdot \boldsymbol{\psi} ) \big)
  + \frac{\M^\v}{\sqrt{\rho^\v}} \cdot (\nabla \sqrt{\rho^\v} \cdot \nabla) \boldsymbol{\psi} + \nabla \sqrt{\rho^\v} \cdot \big( \frac{\M^\v}{\sqrt{\rho^\v}} \cdot \nabla \big) \boldsymbol{\psi} \Big) \dd \boldsymbol{x} \dd t \\
 & \quad\, + \int_{\mathbb{R}_+} \int_{\mathbb{R}^n} \kappa(\rho^\v \nabla \mathcal{W}^\v_\alpha \cdot \boldsymbol{\psi})(t,\boldsymbol{x}) \dd \boldsymbol{x} \dd t,\\
 & \text{where }
 \nabla \mathcal{W}^\v_\alpha(\boldsymbol{x}) =
 \begin{cases}
 \displaystyle
     (\nabla \Phi_\alpha * \rho^\v) (\boldsymbol{x}) & \text{for } \alpha \in (-1,n-1), \\[1mm]
     \displaystyle
     \int_{\mathbb{R}^n} \nabla \Phi_\alpha(\boldsymbol{x} - \boldsymbol{y}) \big(\rho^\v(\boldsymbol{y}) - \rho^\v(\boldsymbol{x})\big) \dd \boldsymbol{y}
     \quad& \text{for } \alpha \in [n-1,n).
 \end{cases}
 \end{split}
    \end{align*}
      \end{enumerate}
\end{definition}

To pass to the vanishing-viscosity limit as $\v \to 0$, 
we exploit the compensated compactness framework developed 
in \cite{Schrecker_2019,Chen_2024}.
Consider spherically symmetric solutions of the form
$\rho(t,\boldsymbol{x}) = \rho(t,r)$, $\mathcal{M}(t,\boldsymbol{x}) = m(t,r) \frac{\boldsymbol{x}}{|\boldsymbol{x}|}$ to reduce
the problem to a system of PDEs in the two variables $(t,r)$.
Under this ansatz, \eqref{0.2} takes the form:
\begin{align}\label{Eul}
    \begin{cases}
 \displaystyle
  \rho_t + (\rho u)_r + \frac{n-1}{r}\rho u = 0, \\
 \displaystyle
 (\rho u )_t + (\rho u^2 + p(\rho))_r + \frac{n-1}{r}\rho u^2 + \kappa \rho(\Phi_{\alpha} * \rho)_r = \varepsilon\Big( \rho \big (u_r + \frac{n-1}{r}u \big ) \Big)_r - \varepsilon\frac{n-1}{r}\rho_ru,
    \end{cases}
    \end{align}
 where $u(t,r) = \frac{m(t,r)}{\rho(t,r)}$. 
The limited regularity of solutions presents a major difficulty in the analysis.
To overcome this obstacle, we reformulate the approximate problem as a free-boundary problem. Take our domain to be 
$\Omega_T = \{(t,r) \, :\, a \leq r \leq b(t),  0 \leq t \leq T\}$, where
    \begin{equation}\label{1.16}
    \begin{cases}
      b' (t) = u(t,b(t)) &\quad \mbox{for $t>0$}, \\
      b(0) = b,
    \end{cases}
    \end{equation}
    and $a = b^{-1}$ with $b \gg 1$. With boundary conditions
    \begin{equation}\label{boundary}
    u(t,a) = 0, \quad \big ( p(\rho) - \varepsilon \rho (u_r + \frac{n-1}{r}u) \big)(t,b(t)) = 0  \qquad\,\, \text{ for } t >0.
    \end{equation}
    The initial conditions are prescribed by
    \begin{equation}\label{1.17}
 (\rho,u)|_{t=0}(r) = (\rho_0^{\v,b},u_0^{\v,b})(r) \qquad \text{ for } r \in [a,b],
    \end{equation}
    where $(\rho_0^{\v,b},u_0^{\v,b})(r)$ are defined in \cite[Appendix B]{Carrillo2025} and satisfy that there exists $C_{\v,b} > 0$ such that
    \begin{equation}\label{1.18}
  0 < C_{\v,b}^{-1} \leq \rho_0^{\v,b}(r) \leq C_{\v,b} < \infty.
    \end{equation}
   Owing to the fact that, under this free boundary formulation, 
   we can prove the existence of smooth solutions that have strictly positive density $\rho > 0$ 
   by virtue of \eqref{1.18}, so that the fluid velocity $u$ is well defined.
   Consequently, this formulation is equivalent to the original problem.
   We note that the boundary conditions \eqref{boundary} imposed on the free boundary correspond to 
   a stress-free boundary condition. This makes the formulation physically relevant, since the free
   boundary is transported by the fluid velocity and therefore evolves in concert with the fluid's motion.
   Throughout this paper, we denote solutions of the free boundary problem \eqref{Eul}-\eqref{1.17} by $(\rho^{\v,b},u^{\v,b})$.
   For notational simplicity, however, we suppress the superscript until Section \ref{weaksub} 
   whenever no confusion can arise.

  \begin{remark}
  It follows from $\eqref{Eul}_1$ and \eqref{boundary}, as in {\rm\cite{Chen2021}}, that
    \begin{equation*}
      \rho(t,b(t)) \leq \rho_0(b).
    \end{equation*}
    \end{remark}

 For later use, we define the initial energies:
 \begin{align*}
    &E_0^{\v,b} \coloneq
            \int_{\mathbb{R}^n} \bigg ( \frac{1}{2}\bigg | \frac{\M_0}{\sqrt{\rho_0^{\v,b}}} \bigg |^2 
            + \rho_0^{\v,b} e(\rho_0^{\v,b}) 
            + \frac{\kappa}{2}\rho_0^{\v,b}(\Phi_\alpha *\rho_0^{\v,b})\bigg)(\boldsymbol{x}) \dd \boldsymbol{x} < \infty,\\[2mm]
      &E^{\v,b}_1 = \v^2 \int^b_a \omega_n \Big| \big( \sqrt{\rho^{\v,b}_0} \big)_r \Big|^2 r^{n-1} \dd r.
    \end{align*}
It follows from the construction of $\rho^{\v,b}_0$ that the following equality for mass is still valid:
    \begin{equation*}
 \int^b_a \rho^{\v,b}_0(r) r^{n-1} \dd r = \frac{M}{\omega_n}.
    \end{equation*}
We can simply prove the following conservation of mass 
by using $\eqref{Eul}_1$ and \eqref{1.16}--\eqref{boundary}: 
    \begin{equation*}
 \frac{\d}{\d t} \int^{b(t)}_a \rho(t,r) r^{n-1} \dd r = (\rho u)(t,b(t))b(t)^{n-1} - \int^{b(t)}_a (\rho u r^{n-1})_r(t,r) \dd r = 0.
    \end{equation*}
Thus, we have 
\begin{equation*}
\int^{b(t)}_a \rho(t,r) r^{n-1} \dd r = \frac{M}{\omega_n}\qquad\,\mbox{for all $t \geq 0$},
\end{equation*}
Using this, we can define the Lagrangian coordinates $(\tau,x)$ as in \cite{Chen2021} by
    \begin{equation*}
 x(t,r) = \int^r_a \rho (t,y) y^{n-1} \dd y, \quad\,\, \tau = t.
    \end{equation*}
It is direct to show that this is a smooth bijective mapping from $[0, T] \times [a,b(t)]$ 
to $[0, T] \times [0, \frac{M}{\omega_n}]$. 
This is particularly useful because the latter domain is fixed. Then 
\begin{equation*}
\nabla_{(t,r)} x = (-\rho u r^{n-1},\rho r^{n-1}), \quad \nabla_{(t,r)} \tau = (1,0), 
\quad \nabla_{(\tau,x)} r = (u, \rho^{-1}r^{1-n}), \quad \nabla_{(\tau,x)} t = (1,0).
\end{equation*}
Under these changes of variables, the boundary conditions become
\begin{equation*}
u(\tau,0) = 0, \quad (p-\varepsilon\rho^2(r^{n-1}u)_x)(\tau,\frac{M}{\omega_n}) = 0 
\qquad\,\, \text{ for $\tau \in [0,T]$},
\end{equation*}
and system \eqref{Eul}
can be written in the form:
    \begin{align*}
    \begin{cases}
  \rho_\tau + \rho^2(r^{n-1}u)_x = 0,\\[1mm]
  u_\tau + r^{n-1}p_x + \kappa \rho r^{n-1}(\Phi_{\alpha} * \rho)_x = \varepsilon r^{n-1}(\rho^2(r^{n-1}u)_x)_x - (n-1)\varepsilon r^{n-2}\rho_xu.
    \end{cases}
    \end{align*}

\subsection{Entropy Analysis}
The sound speed $c(\rho)$ is given by $c(\rho):= \sqrt{p'(\rho)}$. 
We define the following function:
\[
    k(\rho) = \int^\rho_0 \frac{\sqrt{p'(s)}}{s} \dd s,
\]
which is fundamental in the entropy analysis. 
To obtain compactness when passing to both the free-boundary limit and the vanishing-viscosity limit, 
we make use of the theory, relating to entropy–entropy flux pairs, as introduced by Lax \cite{Lax}. 
A key ingredient is the establishment of a higher-integrability estimate 
for the velocity. More precisely, we seek to prove the integrability of the quantity $\rho|u|^3$.
To achieve this, we construct a special entropy–entropy flux pair that can control 
$\rho|u|^3$. In particular, we require a special entropy function 
$\hat{\eta}(\rho,u)$ of the form:
\begin{equation*}
    \hat{\eta}(\rho,u) =
    \begin{dcases}
  \frac{1}{2}\rho u^2 + \rho e(\rho) & \text{ for } u \geq k(\rho), \\
 -\frac{1}{2}\rho u^2 - \rho e(\rho) & \text{ for } u \leq -k(\rho),
    \end{dcases}
\end{equation*}
for $u \in (-\infty,-k(\rho)] \cup [k(\rho),\infty)$ and given as a solution of the Goursat problem:
\begin{equation}\label{Goursat}
    \begin{dcases}
 \eta_{\rho \rho} - k'(\rho)^2 \eta_{u u} = 0, \\
 \eta(\rho,u)|_{u = \pm k(\rho)} = \pm \big(\frac{1}{2}\rho u^2 + \rho e(\rho)\big),
    \end{dcases}
\end{equation}
for $u \in (-k(\rho),k(\rho))$. We can prove that this special entropy satisfies the following estimates:

\begin{lemma}[\text{\cite[Lemma 4.1]{Chen_2024}}]\label{specificentropy}
    The Goursat problem \eqref{Goursat} admits a unique solution $\hat{\eta} \in C^2(\mathbb{R}_+ \times \mathbb{R})$ such that
\begin{itemize}
 \item[{\rm(a)}] $|\hat{\eta}(\rho,u)| \leq C (\big | \frac{\mathcal{M}}{\sqrt{\rho}} \big |^2 + \rho^{\gamma(\rho)})$ for $(\rho,u) \in \mathbb{R}_+ \times \mathbb{R}$, where $\gamma(\rho) = \gamma_1$ if $\rho \in [0,\rho_*]$ and $\gamma(\rho) = \gamma_2$ if $\rho \in (\rho_*,\infty)$.
\item[{\rm(b)}] If $\hat{\eta}$ is regarded as a function of $(\rho,u)$, then
  \begin{equation*}
      |\hat{\eta}_\rho(\rho,u)| \leq C (|u|^2 + \rho^{2 \theta(\rho)}), 
      \,\,\,\, |\hat{\eta}_u(\rho,u)| \leq C (\rho|u| + \rho^{\theta(\rho) + 1}) 
     \qquad\mbox{for $(\rho,u) \in \mathbb{R}_+ \times \mathbb{R}$} 
\end{equation*}
    with $\theta(\rho) = \frac{\gamma(\rho) - 1}{2}$. 
    If $\hat{\eta}$ is regarded as a function of $(\rho,m)$, then
 \begin{equation*}
      |\hat{\eta}_\rho(\rho,m)| \leq C (|u|^2 + \rho^{2 \theta(\rho)}), 
     \,\,\,\, |\hat{\eta}_m(\rho,m)| \leq C (|u| + \rho^{\theta(\rho)})\qquad\mbox{for $(\rho,m) \in \mathbb{R}_+ \times \mathbb{R}$}.
       \end{equation*} 
              
\item[{\rm(c)}] If $\hat{\eta}_m$ is regarded as a function of $(\rho,u)$,
 \begin{equation*}
 |\hat{\eta}_{m \rho}(\rho,u)| \leq C \rho^{\theta(\rho) - 1}, \,\,\,\,
     |\hat{\eta}_{m u}(\rho,u)| \leq C
 \qquad\mbox{for $(\rho,u) \in \mathbb{R}_+ \times \mathbb{R}$}.
  \end{equation*}
  If $\hat{\eta}_m$ is regarded as a function of $(\rho,m)$,
\begin{equation*}
 |\hat{\eta}_{m \rho}(\rho,m)| \leq C \rho^{\theta(\rho) - 1}, \quad |\hat{\eta}_{m m}(\rho,u)| \leq C\rho^{-1} \qquad\mbox{for $(\rho,m) \in \mathbb{R}_+ \times \mathbb{R}$}.
 \end{equation*}
\item[{\rm(d)}] If $\hat{q}$ is the corresponding entropy flux determined by
 \begin{equation*}
 \nabla q(\rho,m) = \nabla \eta (\rho,m) \nabla
 \begin{pmatrix}
     m \\
     \frac{m^2}{\rho} + p(\rho)
 \end{pmatrix},
\end{equation*}
then $\hat{q} \in C^2(\mathbb{R}_+ \times \mathbb{R})$ and
 \begin{align*}
 &\hat{q}(\rho,u) = \frac{1}{2}\rho |u|^3 \pm \rho u (e(\rho) + \rho e'(\rho)) && \text{ for } \pm u \geq k(\rho), \\
  &|\hat{q}(\rho,u)| \leq C \rho^{\gamma(\rho) + \theta(\rho)} && \text{ for } |u| < k(\rho), \\
 &\hat{q}(\rho,u) \geq \frac{1}{2} \rho |u|^3 && \text{ for } |u| \geq k(\rho), \\
  &|\hat{q} - u \hat{\eta}| \leq C \rho^{\gamma(\rho)}|u| + \rho^{\gamma(\rho) + \theta(\rho)}
     && \text{ for } (\rho,u) \in \mathbb{R}_+ \times \mathbb{R}.
 \end{align*}
    \end{itemize}
\end{lemma}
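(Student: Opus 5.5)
The plan is to follow the standard construction of the special entropy for general pressure laws, as in Schrecker--Schulz and Chen et al. Everything is carried out in the variables $(\rho,u)$, where \eqref{Goursat} is a linear wave equation with speed $k'(\rho)=\sqrt{p'(\rho)}/\rho$.

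\textbf{Reduction and existence.} First I would pass to the Riemann-invariant-type coordinates $s=k(\rho)$, which is strictly increasing by \eqref{Press}. The equation then becomes $\eta_{ss}-\eta_{uu}+\frac{k''}{(k')^2}\eta_s=0$, a damped wave equation in $(s,u)$. The characteristics are the lines $u\pm s=\text{const}$, and the data are prescribed on $u=\pm s$. This is a classical Goursat problem on the wedge $\{|u|<s\}$, and I would solve it by Picard iteration on the integral form of the equation along characteristics.

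\textbf{Weak entropy representation.} A more convenient route is to write $\hat\eta=\int \chi(\rho,u-v)\psi(v)\,dv$ with the weak entropy kernel $\chi$. For general pressures, $\chi$ exists with the expansion $\chi=a_1(\rho)G_\lambda(\rho,u)+a_2(\rho)G_{\lambda+1}(\rho,u)+g$, where $G_\lambda=[k^2-u^2]_+^\lambda$ and $\lambda=\frac{3-\gamma_1}{2(\gamma_1-1)}$ near vacuum. I would then choose $\psi$ so that the Goursat data match.

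I would rather take the first route. The existence of a $C^2$ solution follows once the coefficient $k''/(k')^2$ is controlled. This is where \eqref{1.2}--\eqref{1.5} enter: near $\rho=0$ and as $\rho\to\infty$, $p$ is a perturbed power law, so $k(\rho)\sim\rho^{\theta(\rho)}$ and $\rho k''/k'$ is bounded. The $C^4$ bounds on $\mathcal P_1,\mathcal P_2$ give the regularity needed to bound up to second derivatives. In addition, $C^2$ matching across $u=\pm k(\rho)$ requires checking that the boundary data are compatible with the equation, because $\frac12\rho u^2+\rho e(\rho)$ is itself an entropy.

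\textbf{Estimates (a)--(c).} These come from scaling. On the wedge $|u|<k(\rho)$ one has $|u|\lesssim\rho^{\theta(\rho)}$, so the boundary values and the iteration give $|\hat\eta|\lesssim\rho^{\gamma(\rho)}$. This yields (a). The derivative bounds in (b) and (c) are obtained by differentiating the integral representation and using $k\sim\rho^\theta$ and $k'\sim\rho^{\theta-1}$. Outside the wedge, the explicit formula gives them directly. The bounds in the $(\rho,m)$ variables then follow from the chain rule, $\eta_m=\eta_u/\rho$ and $\eta_\rho|_m=\eta_\rho|_u-\frac u\rho\eta_u$.

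\textbf{Flux bounds (d).} The flux is obtained from $q_u=\rho\eta_u\,\cdot$ and the compatibility relation $q_\rho=u\eta_\rho+\frac{p'}{\rho}\eta_u$. Outside the wedge, one integrates the explicit entropy to get the formula $\frac12\rho|u|^3\pm\rho u(e+\rho e')$, and the constant of integration is fixed by continuity. The lower bound $\hat q\ge\frac12\rho|u|^3$ then holds because $\pm u(e+\rho e')\ge0$ for $\pm u\ge k$. Inside the wedge, integrating the bounds in (b) over a $u$-interval of length $\lesssim\rho^\theta$ gives $|\hat q|\lesssim\rho^{\gamma+\theta}$. Finally, $\hat q-u\hat\eta$ is controlled by writing $q-u\eta=\int \frac{p'}{\rho}\eta_u$ and using (b).

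\textbf{Main obstacle.} I expect the hardest part to be uniform control in the intermediate region $\rho_*\le\rho\le\rho^*$ and the transition between the two power laws. Near the edges of the wedge the exponents change, so the constants in the iteration must be tracked uniformly. I would handle this by compactness on $[\rho_*,\rho^*]$, together with the genuine nonlinearity in \eqref{Press} to keep $k'>0$.
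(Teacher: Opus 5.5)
The paper does not prove this lemma; it quotes Lemma 4.1 of Chen et al. So the question is whether your argument stands on its own, and the construction step does not. The coefficient in your reduced equation is not controlled:
\[
\frac{k''}{(k')^2}=\frac{\rho k''/k'}{\sqrt{p'(\rho)}}=-\frac{2\lambda_1+o(1)}{s}\qquad\text{as } s=k(\rho)\to0,\quad \lambda_1=\frac{3-\gamma_1}{2(\gamma_1-1)}>0 .
\]
Boundedness of $\rho k''/k'$ is beside the point, because $\sqrt{p'}\to0$ at vacuum. Your equation is of Euler--Poisson--Darboux type, and it is singular exactly at the vertex $s=0$ where the data lines $u=\pm s$ meet. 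The coefficient is not even integrable along those lines near the vertex, so classical Goursat theory and a plain Picard iteration do not apply.

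Worse, uniqueness fails without a condition at the vertex. The kernel $\chi(\rho,u)$ solves the same equation in the wedge and vanishes on $u=\pm k(\rho)$ because $\lambda_1>0$. Hence $\hat\eta+c\chi$ has the same Goursat data for every $c$. The correct solution is selected only by its behavior at vacuum, for example $\hat\eta_\rho$ continuous up to $\rho=0$, or the bound (a), which $\chi(\rho,0)\sim\rho^{1-\theta_1}$ violates.

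The same issue affects your $C^2$ matching across $u=\pm k(\rho)$. Jumps of transverse derivatives propagate along these characteristics from the vertex, so compatibility of the data does not settle it. Nor does the iteration yield (a)--(c): a wave equation has no maximum principle, so bounding $\hat\eta$ inside the wedge by its boundary values needs a positivity structure your argument lacks. The transition region $[\rho_*,\rho^*]$ you flag is harmless, since the coefficients are smooth and nondegenerate on a compact $\rho$-range; the vacuum vertex is the real obstacle. A weighted iteration exploiting the high-order vanishing of the data at the vertex might be made to work, but it would have to encode exactly this selection.

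The kernel route you set aside handles all of this at once, and it is the standard construction of this entropy. Take $\psi(s)=\frac12 s|s|$ and $\hat\eta(\rho,u)=\int\chi(\rho,s-u)\psi(s)\dd s$. No tuning of $\psi$ is needed. Since $\mathrm{supp}\,\chi(\rho,\cdot-u)\subset[u-k(\rho),u+k(\rho)]$, one has $\psi=\pm\frac12 s^2$ on the support when $\pm u\ge k(\rho)$, so $\hat\eta=\pm(\frac12\rho u^2+\rho e(\rho))$ there. The vacuum behavior $\hat\eta(0,u)=0$, $\hat\eta_\rho(0,u)=\psi(u)$ is built in, and this is what pins down uniqueness. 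Because $\hat\eta$ is a single formula on all of $\mathbb{R}_+\times\mathbb{R}$, $C^2$ regularity follows from $\hat\eta_{uu}=\int\chi\,\mathrm{sgn}(s)\dd s$ together with the equation.

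The estimates then come quickly:
\begin{itemize}
\item Positivity of $\chi$ and $\int\chi\dd s=\rho$ give $|\hat\eta|\le\frac12\rho u^2+\rho e(\rho)$, which is (a).
\item The identity $\partial_u^j\hat\eta=\int\chi\,\psi^{(j)}\dd s$ gives $|\hat\eta_u|\le\rho(|u|+k)$ and $|\hat\eta_{uu}|\le\rho$.
\item Integrating the equation from vacuum,
\[
\hat\eta_\rho(\rho,u)=\tfrac12u|u|+\int_0^\rho k'(\tau)^2\hat\eta_{uu}(\tau,u)\dd\tau ,
\]
gives $|\hat\eta_\rho|\le\frac12u^2+\int_0^\rho\frac{p'(\tau)}{\tau}\dd\tau\le C(u^2+\rho^{2\theta(\rho)})$.
\end{itemize}
The remaining bounds in (b)--(c) follow in the same way, together with the chain rule as you indicate.

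Your argument for (d) is fine once (a)--(c) are available; note that the correct relation is $q_u=\rho\eta_\rho+u\eta_u$, which your proposal garbles. Integrating $q_u$ across the wedge of width $2k\lesssim\rho^\theta$ from the explicit value at $u=k$ works. So do the sign argument for $\pm u(e+\rho e')$ and the identity $(q-u\eta)_\rho=\frac{p'}{\rho}\eta_u$.
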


    As in \cite{Chen_2000,Chen_2003,Chen_2024}, in order to apply the compensated compactness framework, fundamental to passing the viscosity limit, we need to study the entropy and entropy flux kernels 
    $\chi$ and $\sigma$, where $\chi$ satisfies
    \begin{equation}\label{entropy}
      \begin{cases}
 \chi_{\rho \rho} - k'(\rho)^2 \chi_{u u} = 0, \\
    \chi(0,u,s) = 0, \\
 \chi_\rho(0,u,s) = \delta_{u=s},
     \end{cases}
    \end{equation}
    in the sense of distributions. 
    Since the initial data are only supported at $(\rho,u) = (0,s)$, 
    the solution should only be supported in the domain of dependence given by
    \[
    \mathcal{K} : = \{ \rho \geq 0 \, : \, |s-u| \leq k(\rho) \}.
    \]
   Since the equation is invariant under the transformation $u \mapsto \pm (u - s)$, 
   then $\chi(\rho,u,s) = \chi(\rho,u-s,0)$ so that it is enough to study the case where $s=0$. 
   Equally, considering the Cauchy problem for the equation of
   the entropy flux kernel $\sigma$ given by
    \begin{equation*}
 \begin{cases}
     (\sigma -u \chi)_{\rho \rho} - k'(\rho)^2 (\sigma -u \chi)_{u u} = \frac{p''(\rho)}{\rho} \chi_u, \\
     (\sigma -u \chi)(0,u,s) = 0, \\
      (\sigma -u \chi)_\rho(0,u,s) = 0,
\end{cases}
    \end{equation*}
we see that the function $(\sigma - u \chi)(\rho,u,s) = (\sigma - u \chi)(\rho,u - s,0)$. 
Thus, we can also assume that $s = 0$ for $\sigma - u \chi$. In the $\gamma$-law case, we may find an explicit formula for the entropy and entropy flux pair as in \cite{Chen2021}. 
Given any $\psi \in C^2_{\rm c}(\mathbb{R})$, then the pair $(\eta^\psi,q^\psi)$ defined by
    \begin{align}\label{5.10}
	\begin{cases}
  \displaystyle
\eta^\psi(\rho,m) = \eta^\psi(\rho,\rho u) = \int_\mathbb{R} \chi(\rho;s-u) \psi(s) \dd s,\\[3mm]
  \displaystyle
	  q^\psi(\r,m) = q^\psi(\r, \r u) = \int_\mathbb{R} \sigma(\rho;s-u) \psi(s) \dd s,
	\end{cases}
\end{align}
is a weak entropy-entropy flux pair. 
When applying the compensated compactness argument as in \cite{Schrecker_2019} 
to pass the viscosity limit, 
we rely on the forms of the entropy and entropy flux kernels. 
In particular, we focus upon the analysis of the most singular parts in the corresponding expansions. 
To do this, we require the idea of a fractional derivative,
first introduced in \cite{Chen_1986,DCL_1985}.
We define the fractional derivative $\partial_s^\beta f$ for $\beta > 0$ of a function $f$ as
    \begin{equation*}
  \partial_s^\beta f(s) = \frac{1}{\Gamma(-\beta)} f*[s]_+^{-\beta - 1}.
    \end{equation*}
Taking the Fourier transform of equation \eqref{entropy} with respect to the velocity variable 
and fixing the Fourier variable $\xi$, we obtain a second-order ordinary differential equation 
of the form
    \begin{equation*}
     \begin{cases}
       \hat{\chi}_{\rho \rho} + k'(\rho)^2 \xi^2 \hat{\chi} = 0, \\
      \hat{\chi}(0,\xi) = 0, \\
     \hat{\chi}_\rho(0,\xi) = 1.
\end{cases}
    \end{equation*}
Such a problem is easier to analyze than \eqref{entropy}. 
By looking at the expansion of the Fourier transform of the entropy kernel as in \cite{Chen_2000,Chen_2003,Chen_2024} and applying the inverse Fourier transform, 
we arrive at the following lemma:

\begin{lemma}[\text{\cite[Theorem 2.2]{Chen_2000} and \cite[Lemma 4.6]{Chen_2024}}]
The entropy kernel has the following expansion:
\[
\chi(\rho,u) = a_\sharp(\rho) G_{\lambda_1}(\rho,u) + a_\flat(\rho) G_{\lambda_1 + 1}(\rho,u) + g_1(\rho,u),
\]
with
\[
G_{\lambda}(\rho,u) = [k(\rho)^2 - u^2]^\lambda_+,
\]
and the coefficients $a_\sharp(\rho)$ and $a_\flat(\rho)$ given by
\begin{align*}
\displaystyle &a_\sharp(\rho) := M_{\lambda_1} k(\rho)^{-\lambda_1} k'(\rho)^{-\frac{1}{2}}, \\            \displaystyle &a_\flat(\rho) := -\frac{1}{4(\lambda_1 + 1)}k(\rho)^{-\lambda_1 - 1} 
k'(\rho)^{-\frac{1}{2}} \int^\rho_0 k(s)^{\lambda_1} k'(s)^{-\frac{1}{2}} a_\sharp''(s) \dd s,
\end{align*}
where  $\lambda_1=\frac{3-\gamma_1}{2(\gamma_1-1)}$ and
\[
M_{\lambda_1} := \Big ( \frac{2\lambda_1}{\sqrt{2\lambda_1 + 1}} 
\int^1_{-1} (1-z^2)^{\lambda_1} \dd z \Big )^{-1}.
\]
Furthermore, $\supp (\chi) \subseteq \{ (\rho,u) \, : \, |u| \leq k(\rho) \}$ 
with $\chi(\rho,u) > 0$ in $\{ (\rho,u) \, : \, |u| < k(\rho) \}$. 
For any $\rho_\text{\rm max} > 0$, 
the remainder $g_1(\rho,u)$ and its fractional derivative $\partial^{\lambda_1 + 1}_u g_1(\rho,u)$ 
are H\"older continuous in $(\rho,u)$ for any $0 \leq \rho \leq \rho_\text{\rm max}$ 
with the property that there exists $C(\rho_\text{max}) > 0$ such that
\[
|g_1(\rho,u)| \leq C(\rho_\text{\rm max} )G_{\lambda_1 + 1 + \alpha_0}(\rho,u)
\]
for any $0 \leq \rho \leq \rho_\text{\rm max}$ and some $\alpha_0 \in (0,1)$. 
Moreover, for any $0 \leq \rho \leq \rho_\text{\rm max}$,
 \[
|a_\sharp(\rho)| + \rho^{1-2\theta_1} |a_\sharp'(\rho)| + \rho^{2-2\theta_1} |a_\sharp''(\rho)| 
+ |a_\flat(\rho)| + \rho |a_\flat'(\rho)| + \rho^2 |a_\flat''(\rho)| \leq C(\rho_\text{\rm max}),
\]
where $\theta_1=\frac{\gamma_1-1}{2}$.
Furthermore, there exists $C > 0$ depending on $\rho^* > 0$ such that
 \[
 \|\chi(\rho,\cdot)\|_{L^\infty_u} \leq C\rho
 \qquad\mbox{for $\rho \geq \rho^*$}.
 \]
\end{lemma}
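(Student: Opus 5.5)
This lemma is cited from \cite[Theorem 2.2]{Chen_2000} and \cite[Lemma 4.6]{Chen_2024}, so the plan follows the Fourier-transform route sketched just before the statement. First I fix $s=0$ by translation invariance. For each $\xi$ I take the Fourier transform of \eqref{entropy} in $u$, which gives the ODE $\hat\chi_{\rho\rho}+k'(\rho)^2\xi^2\hat\chi=0$ with $\hat\chi(0)=0$ and $\hat\chi_\rho(0)=1$. I then use the near-vacuum form \eqref{1.2}--\eqref{1.3}: it gives $k(\rho)=\frac{\sqrt{\kappa_1\gamma_1}}{\theta_1}\rho^{\theta_1}(1+O(\rho^{2\theta_1}))$ and matching bounds on $k'$ and $k''$. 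In the variable $k$, the ODE becomes a perturbed Bessel equation of order $\lambda_1+\frac12$. I will solve it by a WKB/Liouville–Green ansatz,
\[
\hat\chi(\rho,\xi)\approx a_\sharp(\rho)\,\widehat{G_{\lambda_1}}(\rho,\xi)+a_\flat(\rho)\,\widehat{G_{\lambda_1+1}}(\rho,\xi)+\hat g_1 ,
\]
where $\widehat{G_\lambda}(\rho,\xi)$ is explicitly a multiple of $k^{2\lambda+1}(k|\xi|)^{-\lambda-1/2}J_{\lambda+1/2}(k|\xi|)$.

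I substitute the ansatz into the ODE. Since $G_\lambda$ solves the model equation with $k'$ frozen, the leading error is governed by $a_\sharp''$. Choosing $a_\sharp=M_{\lambda_1}k^{-\lambda_1}k'^{-1/2}$ kills the first-order transport term, and the normalization $M_{\lambda_1}$ is fixed by the initial condition $\chi_\rho(0,u)=\delta_{u=0}$. The next coefficient $a_\flat$ then solves the transport equation forced by $a_\sharp''$, which is exactly the integral formula in the statement. With these choices, the remainder $\hat g_1$ satisfies an inhomogeneous ODE whose source is $O(\rho^{-2+2\theta_1+\epsilon})\,\widehat{G_{\lambda_1+1}}$.

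The bounds on $a_\sharp$ and $a_\flat$ and their derivatives follow by differentiating the explicit formulas and using \eqref{1.3}. The support statement and the positivity of $\chi$ inside the cone come from finite propagation speed, i.e.\ the domain of dependence of the hyperbolic equation.

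\textbf{Main obstacle.} The hardest step is the remainder estimate $|g_1|\le C\,G_{\lambda_1+1+\alpha_0}$ together with the Hölder continuity of $\partial_u^{\lambda_1+1}g_1$. I would handle it with Duhamel's formula for $\hat g_1$ using the Bessel fundamental solutions. This gives decay bounds in $|\xi|$ of order $|\xi|^{-\lambda_1-2-\alpha_0}$ uniformly for $\rho\le\rho_{\max}$. A Paley–Wiener/Fourier-support argument combined with the cone support then converts these into the pointwise $G_{\lambda_1+1+\alpha_0}$ bound, and this is where $\alpha_0<1$ comes from.

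Finally, for $\rho\ge\rho^*$ the expansion is not used. Instead, an energy estimate for the wave equation in $(\rho,u)$ with speed $k'(\rho)\sim\rho^{\theta_2-1}$ bounds $\|\chi(\rho,\cdot)\|_{L^1_u}$, namely $\int\chi\,\dd u=\rho$. Combining this with a maximum-principle bound for $\chi$ along characteristics gives $\|\chi(\rho,\cdot)\|_{L^\infty_u}\le C\rho$.
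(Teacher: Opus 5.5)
The route you take (Fourier transform in $u$, Bessel asymptotics) is the one the paper points to, and your coefficients are right. The choice $a_\sharp=M_{\lambda_1}k^{-\lambda_1}k'^{-1/2}$ is exactly what removes the $G_{\lambda_1-1}$ term, so that $(\partial_\rho^2-k'^2\partial_u^2)(a_\sharp G_{\lambda_1})=a_\sharp''G_{\lambda_1}$. The given $a_\flat$ then solves the transport equation that cancels this error.

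\textbf{The gap is in the remainder estimate, the step you yourself flag, and the problem is quantitative, not technical.} With these choices the remainder solves exactly $\partial_\rho^2g_1-k'^2\partial_u^2g_1=-a_\flat''\,G_{\lambda_1+1}$ with zero data. So the source coefficient is $a_\flat''$. The rate $\rho^{-2+2\theta_1}$ you quote is the size of $a_\sharp''$, not of $a_\flat''$. From \eqref{1.3} one only gets $|a_\flat''|\le C\rho^{-2}$, which is exactly the bound the lemma records.

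This rate is scale-critical. In the variable $k$, the operator is $k'^2$ times the Euler--Poisson--Darboux operator $\partial_k^2-\frac{2\lambda_1}{k}\partial_k-\partial_u^2$, up to an $O(k)\partial_k$ term. The source becomes $k^{-2}\phi\,G_{\lambda_1+1}$, with $\phi$ a bounded multiple of $\rho^2a_\flat''$. Duhamel then returns a function of the same homogeneity as $G_{\lambda_1+1}$, and no Bessel-decay or Paley--Wiener argument can create the extra factor $k^{2\alpha_0}$ carried by $G_{\lambda_1+1+\alpha_0}$.

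You can check this by integrating the remainder equation in $u$:
\[
\int_{\mathbb{R}} g_1(\rho,u)\,\dd u=-c_{\lambda_1}\int_0^\rho(\rho-s)\,a_\flat''(s)\,k(s)^{2\lambda_1+3}\,\dd s,\qquad c_{\lambda_1}=\int_{-1}^1(1-z^2)^{\lambda_1+1}\,\dd z .
\]
On the other hand, $|g_1|\le CG_{\lambda_1+1+\alpha_0}$ forces the left-hand side to be $O(\rho^{1+2\theta_1+2\theta_1\alpha_0})$, since $\theta_1(2\lambda_1+3)=1+2\theta_1$. That requires a weighted-average decay of $\rho^2a_\flat''$, which \eqref{1.3} does not provide. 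For example, the admissible correction $\mathcal P_1=\delta\rho^{2\theta_1}\sin(\log\rho)$ keeps $\rho^2a_\flat''$ oscillating with amplitude of order one. Then $\int g_1\,\dd u$ is of exact order $\rho^{1+2\theta_1}$ along a sequence $\rho\to0$.

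So the uniform size gain has to come from extra structure of the pressure, not from the Duhamel estimate you describe. One sufficient condition is $\mathcal P_1=c\rho^{2\theta_1}+O(\rho^{2\theta_1+\delta})$ in $C^4$, which gives $\rho^2a_\flat''=O(\rho^{\delta})$. What Duhamel does control is the vanishing order of $g_1$ at the characteristic boundary $|u|=k(\rho)$, and that is the input for the H\"older continuity of $\partial_u^{\lambda_1+1}g_1$.

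\textbf{Two further steps need repair.}

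First, finite propagation speed gives $\operatorname{supp}\chi\subseteq\{|u|\le k(\rho)\}$ but says nothing about the sign of $\chi$. Positivity in the open cone needs a separate argument. Near vacuum it follows from the expansion, where $a_\sharp G_{\lambda_1}$ dominates the other terms.

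Second, for $\rho\ge\rho^*$, $\int\chi\,\dd u=\rho$ is an identity, not an $L^1$ bound, unless you already know $\chi\ge0$. Moreover, $\chi$ itself obeys no maximum principle. Riemann invariants of $\chi$ do not help when $\gamma_1>5/3$, because then $\chi_u\sim(k-|u|)^{\lambda_1-1}$ is unbounded at the edge of the support.

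What does work is to pass to the primitive $X=\int_{-\infty}^u\chi\,\dd v$, which solves the same equation. Its Riemann invariants $z_\pm=X_\rho\pm k'X_u$ are bounded near vacuum and satisfy $(\partial_\rho\mp k'\partial_u)z_\pm=\pm\frac{k''}{2k'}(z_+-z_-)$. Hence $\max_\pm\|z_\pm\|_{L^\infty}$ is nonincreasing wherever $k''\le0$, which holds for large $\rho$ since $\gamma_2<3$, and it grows by at most a bounded factor on compact intervals. This gives $|\chi|=|z_+-z_-|/(2k')\le C/k'(\rho)\le C\rho$ for $\rho\ge\rho^*$.
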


Moreover, we obtain a similar lemma for the expansion of the entropy flux kernel.
\begin{lemma}[\text{\cite[Theorem 2.3]{Chen_2000}}]
 The entropy flux kernel has the following expansion{\rm:}
 \[
  (\sigma - u\chi)(\rho,u) = - u (b_\sharp(\rho) G_{\lambda_1}(\rho,u) + b_\flat(\rho) G_{\lambda_1 + 1}(\rho,u)) + g_2(\rho,u),
 \]
 with
 \[
  G_{\lambda}(\rho,u) = [k(\rho)^2 - u^2]^\lambda_+,
 \]
 and the coefficients $b_\sharp(\rho)$ and $b_\flat(\rho)$ given by
  \begin{align*}
  &b_\sharp(\rho):= M_{\lambda_1} \rho k(\rho)^{-\lambda_1 - 1} k'(\rho)^{\frac{1}{2}}, \\
  &\displaystyle b_\flat(\rho):=\frac{1}{4(\lambda_1 + 1)} k(\rho)^{-(\lambda_1 + 2)} k'(\rho)^{-\frac{1}{2}} \\[2mm]
  &\qquad\quad \,\,\,\times\Big (\displaystyle  \int^\rho_0 k(s)^{\lambda_1 + 1} k'(s)^{-\frac{1}{2}} b_\sharp''(s) \dd s 
  + \rho k'(\rho) \int^\rho_0  k(s)^{\lambda_1} k'(s)^{-\frac{1}{2}} a_\sharp''(s) \dd s\\[2mm]
 &\qquad\qquad \quad\,\,\,\displaystyle  - \int^\rho_0 s k(s)^{\lambda_1} k'(s)^{\frac{1}{2}} a_\sharp''(s) \dd s \Big).
 \end{align*}
For any $\rho_\text{\rm max} > 0$, the remainder $g_2(\rho,u)$ and its fractional derivative $\partial^{\lambda_1 + 1}_u g_2(\rho,u)$ are H\"older continuous in $(\rho,u)$ for any 
 $0 \leq \rho \leq \rho_\text{\rm max}$ with the property that 
there exists $C(\rho_\text{\rm max}) > 0$ such that
 \[
  |g_2(\rho,u)| \leq C(\rho_\text{\rm max} )G_{\lambda_1 + 1 + \alpha_0}(\rho,u),
 \]
 for any $0 \leq \rho \leq \rho_\text{\rm max}$ and some $\alpha_0 \in (0,1)$. 
 Moreover, for any $0 \leq \rho \leq \rho_\text{\rm max}$,
 \[
|b_\sharp(\rho)| + \rho^{1-2\theta_1} |b_\sharp'(\rho)| + \rho^{2-2\theta_1} |b_\sharp''(\rho)| + |b_\flat(\rho)| + \rho |b_\flat'(\rho)| + \rho^2 |b_\flat''(\rho)| \leq C(\rho_\text{\rm max}).
  \]
  Furthermore, there exists $C > 0$ depending on $\rho^* > 0$ such that
 \[
  \|(\sigma - u\chi)(\rho,\cdot)\|_{L^\infty_u} \leq C\rho^{1+\theta_2}\qquad
 \mbox{for $\rho \geq \rho^*$},
  \]
 where $\theta_2=\frac{\gamma_2-1}{2}$.
 \end{lemma}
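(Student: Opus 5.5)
The plan is the classical Fourier-transform construction of the entropy flux kernel, run in parallel with the entropy kernel expansion of the preceding lemma. Set $\tilde\sigma := \sigma - u\chi$. It solves the inhomogeneous Euler--Poisson--Darboux type equation
\[
\tilde\sigma_{\rho\rho} - k'(\rho)^2\tilde\sigma_{uu} = \frac{p''(\rho)}{\rho}\chi_u
\]
with zero Cauchy data at $\rho=0$. Taking the Fourier transform in $u$ gives the ODE
\[
\hat{\tilde\sigma}_{\rho\rho} + k'^2\xi^2\hat{\tilde\sigma} = i\xi\frac{p''}{\rho}\hat\chi.
\]
I would insert the already known expansion $\chi = a_\sharp G_{\lambda_1} + a_\flat G_{\lambda_1+1} + g_1$ into the source term. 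Then I would look for an ansatz
\[
\tilde\sigma = -u\big(b_\sharp G_{\lambda_1} + b_\flat G_{\lambda_1+1}\big) + g_2.
\]
Two identities drive this. First, $\partial_u G_{\lambda+1} = -2(\lambda+1)uG_\lambda$. Second, the operator $\partial_\rho^2 - k'^2\partial_u^2$ applied to $uG_\lambda$ (with a $\rho$-dependent coefficient) returns terms in $uG_{\lambda-1}$, $uG_\lambda$, and $uG_{\lambda+1}$.

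The coefficients are fixed by matching the most singular orders. At order $uG_{\lambda_1-1}$, cancellation forces the transport-type ODE $2b_\sharp'k k' + b_\sharp(k k')' \sim \rho$-source, together with the relation $p''/\rho = (k'^2)'\rho + \dots$ coming from $k' = \sqrt{p'}/\rho$. Solving it gives $b_\sharp = M_{\lambda_1}\rho k^{-\lambda_1-1}k'^{1/2}$. Equivalently, $b_\sharp$ is read off from the requirement that $\sigma \approx u\chi + $ flux correction, consistent with $\sigma$ being the flux of $\chi$. At the next order $uG_{\lambda_1}$, one gets a first-order linear ODE for $b_\flat$ with source built from $b_\sharp''$ and $a_\sharp''$. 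Integrating with the factor $k^{\lambda_1+2}k'^{1/2}$ and zero data at $\rho=0$ gives exactly the stated formula for $b_\flat$. The bounds $|b_\sharp|+\rho^{1-2\theta_1}|b_\sharp'|+\dots\le C(\rho_{\max})$ then follow from assumptions \eqref{1.2}--\eqref{1.3}, which give $k(\rho)\sim\rho^{\theta_1}$ and $k'\sim\rho^{\theta_1-1}$ near vacuum with controlled derivatives up to fourth order.

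The remainder $g_2$ solves the same wave equation with a source of regularity $G_{\lambda_1+\alpha_0}$-type, collecting the $g_1$ contribution and the leftover $uG_{\lambda_1+1}$ terms. The key step, and I expect it to be the main obstacle, is proving $|g_2|\le C G_{\lambda_1+1+\alpha_0}$ together with Hölder continuity of $\partial_u^{\lambda_1+1}g_2$. My first attempt would follow \cite{Chen_2000}: use the Riemann function or Duhamel representation in Fourier variables, $\hat g_2(\rho,\xi)=\int_0^\rho \hat K(\rho,s,\xi)\hat F(s,\xi)\,\dd s$, with Bessel-type asymptotics of the fundamental solution. From this I would estimate the decay $|\hat g_2|\lesssim (1+|\xi|)^{-(\lambda_1+3/2+\alpha_0)}$ uniformly on $[0,\rho_{\max}]$, which yields both the pointwise and the fractional-derivative Hölder bounds. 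Support in $\{|u|\le k(\rho)\}$ follows from finite propagation speed.

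Finally, for $\rho\ge\rho^*$, where $p\sim\kappa_2\rho^{\gamma_2}$ by \eqref{1.4}--\eqref{1.5}, I would use an energy estimate for the $u$-Fourier or $L^\infty_u$ norm of $\tilde\sigma$. This relies on $\|\chi(\rho,\cdot)\|_{L^\infty_u}\le C\rho$, $|p''/\rho|\lesssim\rho^{\gamma_2-3}$, and the support width $k(\rho)\sim\rho^{\theta_2}$. Integrating the Duhamel formula in $\rho$ then gives the bound $\|\tilde\sigma(\rho,\cdot)\|_{L^\infty_u}\le C\rho^{1+\theta_2}$, as in \cite[Lemma 4.6]{Chen_2024}.
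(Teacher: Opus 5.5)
The paper gives no proof of this lemma. It is quoted from \cite[Theorem 2.3]{Chen_2000}, with the large-density bound of the type proved in \cite{Chen_2024}. Your outline (Fourier transform in $u$, matching at orders $uG_{\lambda_1-1}$ and $uG_{\lambda_1}$, Duhamel for the remainder) is the route of that source, and your $b_\sharp$ is right. The genuine gap is in the step you yourself flag as the main one.

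A bound $|\hat g_2(\rho,\xi)|\lesssim(1+|\xi|)^{-(\lambda_1+3/2+\alpha_0)}$ that is uniform in $\rho\in[0,\rho_{\max}]$ cannot deliver the conclusions.
\begin{itemize}
\item The estimate $|g_2|\le CG_{\lambda_1+1+\alpha_0}$ encodes the degeneration at vacuum. It forces $\|g_2(\rho,\cdot)\|_{L^\infty_u}\le Ck(\rho)^{2(\lambda_1+1+\alpha_0)}$ and vanishing of order $\lambda_1+1+\alpha_0$ at $|u|=k(\rho)$.
\item Even with the correct rate, the best a $\rho$-independent decay bound can give is $|g_2|\le Ck(\rho)^{-(\lambda_1+1+\alpha_0)}G_{\lambda_1+1+\alpha_0}$.
\item The rate matching $G_{\lambda_1+1+\alpha_0}$ is $|\xi|^{-(\lambda_1+2+\alpha_0)}$. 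With your rate, $|\xi|^{\lambda_1+1}|\hat g_2|$ is integrable only if $\alpha_0>\frac12$, so you do not even get continuity of $\partial_u^{\lambda_1+1}g_2$ in general.
\end{itemize}
What is needed is a scale-adapted estimate such as $|\hat g_2(\rho,\xi)|\lesssim k^{2\lambda_1+3+2\alpha_0}(1+k|\xi|)^{-(\lambda_1+2+\alpha_0)}$. It has to be obtained from the Bessel-type solutions of $\hat w_{\rho\rho}+k'^2\xi^2\hat w=0$, while controlling the sources $\frac{p''}{\rho}\partial_u g_1$ and $u\,b_\flat''G_{\lambda_1+1}$, whose coefficients are singular at $\rho=0$. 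That is the real content of the cited proof, and your sketch does not supply it. Similarly, for $\rho\ge\rho^*$ the source contains $\chi_u$, which is in general unbounded at $|u|=k(\rho)$ when $\lambda_1<1$. So $\|\chi\|_{L^\infty_u}\le C\rho$ can enter only after you move the $u$-derivative onto the Riemann kernel.

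On the coefficients, the $uG_{\lambda_1-1}$ balance is $2b_\sharp'kk'+b_\sharp(kk')'+(2\lambda_1+1)k'^2b_\sharp=\frac{p''}{\rho}a_\sharp$. The zeroth-order term you dropped is what makes $k^{-\lambda_1-1}k'^{-1/2}$ the homogeneous solution. The $uG_{\lambda_1}$ balance is
\[
2b_\flat'kk'+b_\flat(kk')'+(2\lambda_1+3)k'^2b_\flat=\frac{p''}{\rho}\,a_\flat-\frac{b_\sharp''}{2(\lambda_1+1)}.
\]
Solve it with $b_\flat=d\,k^{-\lambda_1-2}k'^{-1/2}$ and $d(0)=0$, use $\frac{p''}{2\rho k'}=(\rho k')'$, and integrate by parts once. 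This gives the stated bracket, but with prefactor $-\frac{1}{4(\lambda_1+1)}$, the same sign as in $a_\flat$. So your claim of recovering ``exactly the stated formula'' is off by a sign; this appears to be a misprint in the statement and does not affect the bounds.

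A shortcut worth using: since $q_\rho=u\eta_\rho+\frac{p'}{\rho}\eta_u$ for every entropy pair, one has $(\sigma-u\chi)_\rho=\frac{p'(\rho)}{\rho}\chi_u$. This gives directly
\begin{itemize}
\item $b_\sharp=\frac{\rho k'}{k}a_\sharp$,
\item $b_\flat=\frac{\rho k'}{k}a_\flat-\frac{b_\sharp'}{2(\lambda_1+1)kk'}$,
\item $g_2(\rho,u)=\int_0^\rho\big(\frac{p'(s)}{s}\,\partial_u g_1(s,u)+u\,b_\flat'(s)\,G_{\lambda_1+1}(s,u)\big)\dd s$.
\end{itemize}
This reduces the remainder analysis for $\sigma$ to information on $\partial_u g_1$, instead of a second Duhamel argument.
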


Employing these estimates on the entropy and entropy flux kernels pair, 
we obtain the following estimates.

\begin{lemma}[\text{\cite[Lemmas 4.5, 4.7 and 4.10]{Chen_2024}}]\label{entropyan}
 Let $\psi \in C^2_{\rm c}(\mathbb{R})$. 
 For any weak entropy pair $(\eta^\psi,q^\psi)$ defined in \eqref{5.10}, 
there exists a constant $C_\psi > 0$ depending on $\rho^* > 0$ and $\psi$ 
 such that
 \begin{enumerate}
 \item[\rm (i)] When $\rho \in [0,2\rho^*]$,
  \[
 |\eta^\psi(\rho,u)| + |q^\psi(\rho,u)| \leq C_\psi \rho.
 \]
 Regarding $\eta^\psi$ as a function of $(\rho,m)$, then
 \[
 |\eta^\psi_m(\rho,m)| + \rho |\eta_{mm}^\psi(\rho,m)| \leq C_\psi, \qquad |\eta^\psi_\rho(\rho,m)| \leq C_\psi(1+\rho^{\theta_1}).
  \]
Regarding $\eta_m^\psi$ as a function of $(\rho,u)$, then
 \[
   |\eta^\psi_{mu}(\rho,m)| + \rho^{1-\theta_1}| \eta_{m\rho}^\psi(\rho,m)| \leq C_\psi.
 \]
 \item[\rm (ii)] When $\rho \geq \rho^*$, then
 \begin{align*}
     &|\eta^\psi(\rho,u)| + |\eta_u^\psi(\rho,u)| + |\eta_{uu}^\psi(\rho,u)| \leq C_\psi \rho,\\
     &|q^\psi(\rho,u)| + \rho|\eta_\rho^\psi(\rho,u)| + \rho^{2-\theta_2}|\eta_{\rho \rho}^\psi(\rho,u)| 
     \leq C_\psi \rho^{1 + \theta_2}
 \end{align*}
  Regarding $\eta^\psi$ as a function of $(\rho,m)$, then
 \[
 |\eta^\psi_\rho(\rho,m)| + \rho^{\theta_2}| \eta_{m}^\psi(\rho,m)| + \rho^{1 + \theta_2}| \eta_{mm}^\psi(\rho,m)| \leq C_\psi.
 \]
 Regarding $\eta_m^\psi$ as a function of $(\rho,u)$, then
\[
  |\eta^\psi_{mu}(\rho,m)| + \rho^{1-\theta_2}| \eta_{m\rho}^\psi(\rho,m)| \leq C_\psi.
   \]
  \end{enumerate}
    \end{lemma}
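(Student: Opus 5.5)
The statement to prove is Lemma \ref{entropyan}: bounds on $\eta^\psi$, $q^\psi$ and their derivatives, obtained by integrating the kernel expansions against $\psi$. The plan is to work directly from the representation \eqref{5.10}, $\eta^\psi(\rho,\rho u)=\int\chi(\rho;s-u)\psi(s)\,\dd s$, together with $q^\psi=u\eta^\psi+\int(\sigma-u\chi)(\rho;s-u)\psi(s)\,\dd s$. Every estimate then reduces to two ingredients: bounds on the kernels $\chi$ and $\sigma-u\chi$ integrated against a compactly supported $C^2$ test function, and the observation that $\chi(\rho,\cdot)$ is supported in $|s-u|\le k(\rho)$.

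Derivatives in $u$ are moved onto $\psi$ by integrating by parts in $s$. For example, $\eta^\psi_u=\int\chi(\rho;s-u)\psi'(s)\,\dd s$ and $\eta^\psi_{uu}=\int\chi\,\psi''\,\dd s$. This uses at most two derivatives of $\psi$ and never differentiates the kernel in $u$, so the kernel singularities at $|u|=k(\rho)$ are never touched.

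\textbf{Low densities, $\rho\le 2\rho^*$.} Here the expansion $\chi=a_\sharp G_{\lambda_1}+a_\flat G_{\lambda_1+1}+g_1$ applies with $\rho_{\max}=2\rho^*$.
\begin{itemize}
\item $\int G_{\lambda_1}(\rho,s-u)\,\dd s=c\,k(\rho)^{2\lambda_1+1}$ and $a_\sharp\sim k^{-\lambda_1}k'^{-1/2}$. Since $k(\rho)\sim\rho^{\theta_1}$ near vacuum, a direct computation gives $\int|\chi|\,\dd s\le C\rho$.
\item Hence $|\eta^\psi|\le C_\psi\rho$. The same argument applied to $\sigma-u\chi$, plus the term $u\eta^\psi$ with $u$ restricted to $\mathrm{supp}\,\psi$ widened by $k(\rho)$, gives $|q^\psi|\le C_\psi\rho$.
\item For the $(\rho,m)$ derivatives, use the chain rule: $\eta_m=\rho^{-1}\eta_u$ and $\eta_\rho|_m=\eta_\rho|_u-\frac{u}{\rho}\eta_u$.
\item For $\eta_\rho|_u$, differentiate the expansion in $\rho$. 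The leading term is $a_\sharp' G_{\lambda_1}+a_\sharp\partial_\rho G_{\lambda_1}$, where $\partial_\rho G_{\lambda_1}=2\lambda_1 kk'G_{\lambda_1-1}$. Integrating against $\psi$ and using the coefficient bounds $\rho^{1-2\theta_1}|a_\sharp'|\le C$ (and likewise for $a_\flat$) gives $|\eta^\psi_\rho|\le C(1+\rho^{\theta_1})$.
\item Second derivatives: $\eta^\psi_{mm}=\rho^{-2}\eta_{uu}$, and $\eta_{mu}$, $\eta_{m\rho}$ follow from one more $\rho$-derivative, using the $a''$ and $b''$ bounds and the Hölder control of $g_1$, $g_2$ in the lemmas above.
\end{itemize}

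\textbf{High densities, $\rho\ge\rho^*$.} The expansion is no longer uniform, so I will use only the sup bounds $\|\chi(\rho,\cdot)\|_{L^\infty}\le C\rho$ and $\|(\sigma-u\chi)(\rho,\cdot)\|_{L^\infty}\le C\rho^{1+\theta_2}$.
\begin{itemize}
\item Since $\psi$ has compact support, $|\eta^\psi|,|\eta^\psi_u|,|\eta^\psi_{uu}|\le C\rho$ follow immediately.
\item For $q^\psi$: $u$ matters only where $|s-u|\le k(\rho)\sim\rho^{\theta_2}$ with $s\in\mathrm{supp}\,\psi$, so $|u|\le C+k(\rho)$. Then $|u\eta^\psi|\le C\rho^{1+\theta_2}$ and $|q^\psi|\le C\rho^{1+\theta_2}$.
\item $\rho$-derivatives cannot be moved onto $\psi$. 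I will instead use the equation $\chi_{\rho\rho}=k'^2\chi_{uu}$: after integrating against $\psi$ it gives $\eta_{\rho\rho}=k'(\rho)^2\eta_{uu}$, and $k'=\sqrt{p'}/\rho\sim\rho^{\theta_2-1}$ yields $|\eta^\psi_{\rho\rho}|\le C\rho^{2\theta_2-1}$.
\item To bound $\eta_\rho$, combine this with the known bound at $\rho=\rho^*$ from the low-density regime and integrate from $\rho^*$ upward. Then convert to $(\rho,m)$ variables by the chain rule as before.
\end{itemize}

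\textbf{Main obstacle.} The hardest step is the sharp high-density bound $\rho|\eta_\rho^\psi|\le C\rho^{1+\theta_2}$, and to a lesser extent $\rho^{2-\theta_2}|\eta^\psi_{\rho\rho}|\le C\rho^{1+\theta_2}$. Integrating $\eta_{\rho\rho}$ loses powers, and the matching bound on $\eta_{\rho\rho}$ depends on the perturbation $\mathcal P_2$ through $k'$. My first attempt will be an energy estimate for the Fourier-transformed ODE $\hat\chi_{\rho\rho}+k'^2\xi^2\hat\chi=0$ in the Liouville variable $k$. The WKB amplitude $k'^{-1/2}$ controls $\hat\chi$ and $\hat\chi_\rho$ uniformly in $\xi$ up to $\mathcal P_2$-errors, and $|\mathcal P_2^{(j)}|\le C\rho^{-\varepsilon-j}$ makes those errors integrable. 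Since these are exactly \cite[Lemmas 4.5, 4.7, 4.10]{Chen_2024}, I would follow that argument.
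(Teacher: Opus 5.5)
Most of your estimates go through: the low-density bounds on $\eta^\psi$ and $q^\psi$, and the high-density bounds on $\eta^\psi,\eta^\psi_u,\eta^\psi_{uu},q^\psi,\eta^\psi_{\rho\rho}$. The gap is the step you yourself flag as the main obstacle, which is left unproved.

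\textbf{The high-density $\eta^\psi_\rho$ bound.} The route you sketch cannot close it. Integrating $\eta^\psi_{\rho\rho}=k'^2\eta^\psi_{uu}$ from $\rho^*$ with $|\eta^\psi_{uu}|\le C\rho$ and $k'^2\sim\rho^{2\theta_2-2}$ gives only $|\eta^\psi_\rho|\lesssim 1+\rho^{2\theta_2}$, a factor $\rho^{\theta_2}$ too large, and the WKB analysis is only announced. The missing idea is the entropy--flux compatibility relation in $(\rho,u)$ variables, $q_u=u\eta_u+\rho\eta_\rho$, i.e.\ $\rho\eta_\rho=\eta+\partial_u(q-u\eta)$. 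Because $(\sigma-u\chi)(\rho,u,s)$ depends only on $u-s$, integrating by parts in $s$ turns this into
\begin{gather*}
\rho\,\eta^\psi_\rho(\rho,u)=\eta^\psi(\rho,u)+\int_{\R}(\sigma-u\chi)(\rho,u,s)\,\psi'(s)\dd s,\\
\partial_\rho\Big(\frac{\eta^{\psi'}(\rho,u)}{\rho}\Big)=\frac{1}{\rho^{2}}\int_{\R}(\sigma-u\chi)(\rho,u,s)\,\psi''(s)\dd s .
\end{gather*}
The second identity is the first applied to $\psi'$. Since $\eta^\psi_u=\eta^{\psi'}$, we have $\eta^\psi_m=\eta^{\psi'}/\rho$, so its left side is exactly $\eta^\psi_{m\rho}$ viewed as a function of $(\rho,u)$.

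With the quoted bound $\|(\sigma-u\chi)(\rho,\cdot)\|_{L^\infty}\le C\rho^{1+\theta_2}$, these identities give $\rho|\eta^\psi_\rho|\le C\rho^{1+\theta_2}$ and $\rho^{1-\theta_2}|\eta^\psi_{m\rho}|\le C$ at once. No WKB analysis is needed.

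At low density, the same identities together with $\int|\sigma-u\chi|(\rho,v)\dd v\le C\rho^{1+\theta_1}$ give $|\eta^\psi_\rho(\rho,m)|\le C(1+\rho^{\theta_1})$ and $\rho^{1-\theta_1}|\eta^\psi_{m\rho}|\le C$. That integral bound follows from the flux expansion using only the sup bounds on $b_\sharp,b_\flat,g_2$. This replaces your plan to differentiate the kernel expansions in $\rho$, which would need bounds on $\partial_\rho g_1,\partial_\rho g_2$ that H\"older continuity does not provide. It also builds in the cancellation between $\rho^{-1}\eta^{\psi'}_\rho$ and $\rho^{-2}\eta^{\psi'}$ (each only $O(\rho^{-1})$), on which the $\eta_{m\rho}$ bound rests.

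\textbf{The high-density $(\rho,m)$ bounds.} Your conversion ``by the chain rule'' does not yield the line
\[
|\eta^\psi_\rho(\rho,m)|+\rho^{\theta_2}|\eta^\psi_m|+\rho^{1+\theta_2}|\eta^\psi_{mm}|\le C_\psi
\]
as printed. You use $\eta^\psi_m=\rho^{-1}\eta^\psi_u$, $\eta^\psi_{mm}=\rho^{-2}\eta^\psi_{uu}$, $\eta^\psi_\rho(\rho,m)=\eta^\psi_\rho(\rho,u)-u\rho^{-1}\eta^\psi_u$, and $|u|\le C+k(\rho)\lesssim\rho^{\theta_2}$ on the support. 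With your inputs $|\eta^\psi_u|,|\eta^\psi_{uu}|\le C\rho$ and $|\eta^\psi_\rho(\rho,u)|\le C\rho^{\theta_2}$, each of the three terms is only $O(\rho^{\theta_2})$. So you obtain that line with $C_\psi\rho^{\theta_2}$ on the right, not $C_\psi$.

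The printed version needs polytropic-scale decay of the kernels at high density:
\[
\|\chi(\rho,\cdot)\|_{L^\infty}\lesssim\rho^{1-\theta_2},\qquad \|(\sigma-u\chi)(\rho,\cdot)\|_{L^\infty}\lesssim\rho .
\]
These hold for the exact $\gamma_2$-law kernels, and the identity above then gives $|\eta^\psi_\rho(\rho,u)|\lesssim 1$. The cruder bounds $C\rho$ and $C\rho^{1+\theta_2}$ that you chose to work from cannot supply this. You must either establish these sharper kernel bounds, or state that your argument yields only the $C_\psi\rho^{\theta_2}$ form. That form is the one consistent with the remaining bounds in (ii).
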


    As in \cite{Schrecker_2019}, we may obtain a lemma for the exact form of the singularities for the entropy and entropy flux kernels.

    \begin{lemma}[\text{\cite[Lemma 4.14]{Chen_2024}}]\label{singularity}
  The fractional derivatives $\partial_u^{\lambda_1} \chi$ and $\partial_u^{\lambda_1} (\sigma - u \chi)$ admit the following expansions{\rm:}
 \begin{align*}
 \partial_u^{\lambda_1} \chi(\rho,u - s) = & \sum_{\pm} \Big ( A_{1,\pm}(\rho) \delta (s-u \pm k(\rho) ) + A_{2,\pm} H(s-u \pm k(\rho)) \Big ) \\
 & + \sum_{\pm} \Big ( A_{3,\pm}(\rho) PV (s-u \pm k(\rho) ) + A_{4,\pm} Ci(s-u \pm k(\rho)) \Big ) \\
  & + r_\chi(\rho,u - s),
 \end{align*}
 \begin{align*}
 \partial_u^{\lambda_1} (\sigma - u \chi)(\rho,u - s) = & \sum_{\pm}(s-u) \Big ( B_{1,\pm}(\rho) \delta (s-u \pm k(\rho) ) + B_{2,\pm} H(s-u \pm k(\rho)) \Big ) \\
 & + \sum_{\pm}(s-u) \Big ( B_{3,\pm}(\rho) PV (s-u \pm k(\rho) ) + B_{4,\pm} Ci(s-u \pm k(\rho)) \Big ) \\
 & + \sum_{\pm} \Big ( B_{5,\pm}(\rho) H (s-u \pm k(\rho) ) + B_{6,\pm} Ci(s-u \pm k(\rho)) \Big ) \\
  & + r_\sigma(\rho,u - s),
\end{align*}
where $\delta$ is the Dirac distribution, $H$ the Heaviside function, $PV$ the principal value distribution and $Ci$ the cosine integral
\[
 Ci(s) := - \int^\infty_{|s|} \frac{\cos y}{y} \dd y.
 \]
Moreover, the remainder terms $r_\chi$ and $r_\sigma$ are H\"older continuous functions.
Furthermore, there exists a positive constant $C>0$ depending only on $\gamma_1,\gamma_2,\rho_*$, 
and $\rho^*$ such that, for $\rho \geq \rho^*$,
\begin{align*}
 &\sum_{j=1,\pm}^4 |A_{j,\pm}(\rho)| + \sum_{j=1,\pm}^6 |B_{j,\pm}(\rho)| \leq C \rho^{\frac{1}{2} - \frac{\theta_2}{2}},\\
 &\| r_\chi(\rho,\cdot) \|_{C^{\alpha_1}(\mathbb{R})} \leq C\rho, \quad \| r_\sigma(\rho,\cdot) \|_{C^{\alpha_1}(\mathbb{R})} \leq C\rho^{1 + \theta_2}
  \qquad\mbox{for some $\alpha_1 \in (0,\alpha_0]$}.
  \end{align*}  
 \end{lemma}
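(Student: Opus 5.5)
The plan is to work on the Fourier side in the velocity variable, where the fractional derivative $\partial_u^{\lambda_1}$ acts as multiplication by $(i\xi)^{\lambda_1}$, and to read off each singular distribution from the large-$|\xi|$ asymptotics of $\hat\chi(\rho,\xi)$. For $\chi$ I start from the ODE
\[
\hat\chi_{\rho\rho}+k'(\rho)^2\xi^2\hat\chi=0,\qquad \hat\chi(0,\xi)=0,\qquad \hat\chi_\rho(0,\xi)=1,
\]
and compute a WKB (Liouville--Green) expansion for large $|\xi|$:
\[
\hat\chi(\rho,\xi)=k'(\rho)^{-\frac12}\,\mathrm{Re}\Big(\big(c_1(\rho)|\xi|^{-\lambda_1-1}+c_2(\rho)|\xi|^{-\lambda_1-2}\big)e^{i k(\rho)\xi}\Big)+\hat R(\rho,\xi),
\qquad |\hat R|\lesssim |\xi|^{-\lambda_1-2-\alpha_0}.
\]
Near $\rho=0$ this is consistent with the expansion $\chi=a_\sharp G_{\lambda_1}+a_\flat G_{\lambda_1+1}+g_1$ of the preceding lemma, via the Bessel representation
\[
\widehat{G_\lambda}(\rho,\xi)=c_\lambda\,k(\rho)^{\lambda+\frac12}|\xi|^{-\lambda-\frac12}J_{\lambda+\frac12}(k(\rho)|\xi|).
\]
Since $(i\xi)^{\lambda_1}=|\xi|^{\lambda_1}e^{\pm i\pi\lambda_1/2}$, multiplying by it turns the leading terms into $|\xi|^{-1}\cos$ and $|\xi|^{-1}\sin$ combinations with phases $\pm k(\rho)\xi$. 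After splitting the real and imaginary parts with $\mathrm{sign}(\xi)$, I will invert term by term.

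The inversion dictionary I will use, all centred at $s-u\pm k(\rho)$, is:
\begin{itemize}
\item $e^{\pm ik\xi}$ gives $\delta$;
\item $\mathrm{sign}(\xi)e^{\pm ik\xi}$ gives $PV$;
\item $(i\xi)^{-1}e^{\pm ik\xi}$ gives $H$;
\item the low-frequency-truncated $|\xi|^{-1}e^{\pm ik\xi}$ gives $Ci$.
\end{itemize}
The next WKB order also produces $\delta$ and $PV$ at the same level, so collecting everything yields the coefficients $A_{j,\pm}(\rho)$. For $|\xi|\le 1$ nothing is singular, since $\hat\chi$ is smooth in $\xi$. The part of $(i\xi)^{\lambda_1}\hat R$ bounded by $|\xi|^{-1-\alpha_1}$ at infinity has an inverse Fourier transform in $C^{\alpha_1}$, and this is $r_\chi$.

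For $\sigma-u\chi$ I repeat the argument with the inhomogeneous equation whose source is $\frac{p''(\rho)}{\rho}\chi_u$. Duhamel in $\rho$ with the WKB propagator gives a first group of terms with the prefactor $(s-u)$, coming from the $-u\,b_\sharp G_{\lambda_1}$ structure, i.e. from $i\partial_\xi$ of the homogeneous-type terms. It also gives a second group without that prefactor, which is only of $H$/$Ci$ type because the source carries one more power of $|\xi|^{-1}$. These produce the coefficients $B_{1..4,\pm}$ and $B_{5,6,\pm}$ respectively, and the remainder is controlled exactly as before.

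The main obstacle is the uniform large-$\rho$ bookkeeping: the bounds $|A|,|B|\lesssim\rho^{\frac12-\frac{\theta_2}{2}}$ and $\|r_\chi\|_{C^{\alpha_1}}\lesssim\rho$, $\|r_\sigma\|_{C^{\alpha_1}}\lesssim\rho^{1+\theta_2}$. I will handle this in three regimes.
\begin{itemize}
\item For $\rho\ge\rho^*$ I use \eqref{1.4}--\eqref{1.5}, which give $k'(\rho)\sim\rho^{\theta_2-1}$ and $k''/k'\sim\rho^{-1}$, together with the $C^4$ bounds on $\mathcal P_2$. With these, the WKB error terms $\int(k'^{-1/2})''k'^{-1/2}\,\dd\rho$ converge with explicit power weights.
\item For $\rho\le\rho_*$ I instead use the explicit near-vacuum behaviour of $a_\sharp$ and $a_\flat$ from the preceding lemma, matching the Bessel asymptotics there.
\item On the compact transition interval $[\rho_*,\rho^*]$ everything is bounded by continuity, since $p'>0$ there.
\end{itemize}
The first thing I would try is to write $\hat\chi$ exactly as a Volterra integral equation around the WKB ansatz, so that one Gronwall-type estimate gives the remainder bound uniformly in $\rho$ with the stated powers. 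This is the approach of \cite{Chen_2000,Schrecker_2019,Chen_2024}, which I would follow.
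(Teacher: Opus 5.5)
Your Fourier/WKB strategy and inversion dictionary are reasonable tools here, but the order bookkeeping is off by one. As a result, the step that is supposed to produce the $\delta$ and $PV$ terms fails.

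With the multiplier $(i\xi)^{\lambda_1}$, the leading WKB term $k'(\rho)^{-1/2}|\xi|^{-\lambda_1-1}e^{\pm ik(\rho)\xi}$ becomes a combination of $|\xi|^{-1}e^{\pm ik\xi}$ and $\xi^{-1}e^{\pm ik\xi}$. By your own dictionary these give only $H$ and $Ci$. The next WKB order becomes an $O(|\xi|^{-2})$ symbol, whose inverse transform is a continuous function (H\"older of every order $<1$). It cannot generate $\delta$ or $PV$, which need a symbol of size $|\xi|^{0}$. So the claim that ``the next WKB order also produces $\delta$ and $PV$ at the same level'' is false, and $A_{1,\pm},A_{3,\pm},B_{1,\pm},B_{3,\pm}$ are never produced; at order $\lambda_1$ the true expansion has no $\delta$ or $PV$ part at all. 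By the same count, the unprefixed group for $\sigma-u\chi$ would be $O(|\xi|^{-2})$, not of $H/Ci$ type.

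The displayed $\partial_u^{\lambda_1}$ has to be read as $\partial_u^{\lambda_1+1}$. This is the order the paper actually uses: item (iv) of the compensated-compactness outline refers to the $\partial^{\lambda_1+1}$ derivatives, and the preceding lemma records H\"older continuity of $\partial_u^{\lambda_1+1}g_1$ precisely so that it can go into $r_\chi$. With $(i\xi)^{\lambda_1+1}$ your scheme closes:
\begin{itemize}
\item The leading order yields $\delta$ and $PV$ with coefficients of size $k'(\rho)^{-1/2}\sim\rho^{\frac12-\frac{\theta_2}{2}}$ for $\rho\ge\rho^*$.
\item The first correction yields $H$ and $Ci$; up to a constant, $Ci$ is the inverse transform of $|\xi|^{-1}\mathds{1}_{\{|\xi|>1\}}$.
\item Your bound $|\hat R|\lesssim|\xi|^{-\lambda_1-2-\alpha_0}$ is exactly what gives $|\xi|^{\lambda_1+1}|\hat R|\lesssim|\xi|^{-1-\alpha_0}$, hence a H\"older remainder.
\item The unprefixed $H/Ci$ group for the flux kernel is just the Leibniz term in $\partial^{\lambda_1+1}(uF)=u\,\partial^{\lambda_1+1}F+(\lambda_1+1)\,\partial^{\lambda_1}F$, with $F=b_\sharp G_{\lambda_1}+b_\flat G_{\lambda_1+1}$.
\end{itemize}
This last route is much easier than extracting that group from the resonant, secularly growing Duhamel terms that the source $\frac{p''(\rho)}{\rho}\chi_u$ generates.

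Two further cautions. First, a single Volterra/Gronwall estimate around the WKB ansatz cannot be uniform down to the vacuum. In the Liouville variable $\zeta=k(\rho)$ the effective potential behaves like $c\,\zeta^{-2}$. Equivalently, the error-control integral $\int|w\,w''|\,\dd\rho$ with $w=k'(\rho)^{-1/2}$ diverges like $\int_0\rho^{-1-\theta_1}\,\dd\rho$. So near $\rho=0$ the comparison profiles must be the Bessel-type $a_\sharp G_{\lambda_1}+a_\flat G_{\lambda_1+1}$, and WKB propagation can only start from a fixed $\rho>0$. Your three-regime split is compatible with this; ``one Gronwall-type estimate uniformly in $\rho$'' is not.

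Second, for bounded $\rho$ no WKB is needed at all. The paper gives no proof and just imports the lemma from \cite{Chen_2024}. On any range $0\le\rho\le\rho_{\max}$ the structure follows by applying $\partial_u^{\lambda_1+1}$ to the expansions in the two preceding lemmas:
\begin{itemize}
\item $\delta$ and $PV$ come from the $(k\mp u)_+^{\lambda_1}$ singularities of $G_{\lambda_1}$;
\item $H$ and logarithmic terms, globalized as $Ci$, come from the $(k\mp u)_+^{\lambda_1+1}$ parts;
\item the remainder comes from the H\"older continuity of $\partial_u^{\lambda_1+1}g_{1,2}$.
\end{itemize}
Your WKB analysis is really needed only for the explicit growth rates when $\rho\ge\rho^*$, where the error-control integral converges like $\int^\infty\rho^{-1-\theta_2}\,\dd\rho$.
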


\subsection{Asymptotic Behavior of the Internal Energy}

As in \cite{Carrillo2025}, in the attractive case $(\kappa = 1)$, obtaining uniform (in $\varepsilon$, $\delta$) energy estimate for the kinetic and potential energies requires controlling the potential energy in terms of 
the internal energy. 
In the polytropic case, for $\gamma \in (\frac{2n}{2n -\alpha},\frac{n + \alpha}{n}]$, 
such a control can be established provided that the total mass is below a critical threshold.
The key ingredient is the HLS inequality, which allows the potential energy to be bounded in terms of the
$L^1$ and $L^\gamma$ norms of the density. 
In the polytropic case, these quantities correspond naturally to the mass and internal energy of the solution, respectively. 
For general pressure laws, however, the situation is more delicate.
Since the pressure may exhibit different asymptotic behaviors as $\rho \to 0$ and $\rho \to \infty$,
the loss of homogeneity prevents a direct application of the HLS inequality 
to bound the potential energy solely in terms of the mass and internal energy. 
Instead, the potential energy can be controlled by the $L^1$ and $L^{\gamma_2}$ norms of the density.
Using the assumption $\gamma_2\le \gamma_1$, one can show that, for any $\beta>0$, 
there exists a constant $C=C(\beta)>0$ such that $\rho^{\gamma_2} \leq \rho(\beta + e(\rho))$.
This observation allows us to recover the required energy control. 
We summarize the result in the following lemma:

\begin{lemma}\label{bigsmall}
Suppose that $p:[0,\infty) \to \mathbb{R}$ satisfies \eqref{Press} and \eqref{1.2}--\eqref{1.5}.
Then, for $\beta > 0$, there exists $C=C(\beta)>0$ such that
\begin{equation}\label{bigandsmallbehav}
\displaystyle
\| \rho \|_{L^{\gamma_2}} \leq C \Big ( \int^\infty_0 \rho (\beta + e(\rho)) r^{n-1} \dd r \Big )^\frac{1}{\gamma_2}. 
\end{equation}
\end{lemma}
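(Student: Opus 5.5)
The plan is to reduce \eqref{bigandsmallbehav} to a pointwise inequality $\rho^{\gamma_2}\le C(\beta)\,\rho(\beta+e(\rho))$ for all $\rho\ge0$. Integrating this against $r^{n-1}\dd r$ and taking the $\gamma_2$-th root then gives the claim; the factor $\omega_n$ relating $\|\rho\|_{L^{\gamma_2}}^{\gamma_2}$ to the radial integral is absorbed into $C$. Since $e(\rho)=\int_0^\rho p(s)s^{-2}\dd s\ge0$ by \eqref{Press}, it suffices to show that the ratio $\rho^{\gamma_2-1}/(\beta+e(\rho))$ is bounded on $(0,\infty)$. We treat three ranges of $\rho$ separately.

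\textbf{Small densities, $\rho\in(0,\rho_*)$.} Here $\rho^{\gamma_2-1}\le\rho_*^{\gamma_2-1}$, since $\gamma_2>1$. Hence
\[
\rho^{\gamma_2}\le \rho_*^{\gamma_2-1}\beta^{-1}\,\rho\beta\le \rho_*^{\gamma_2-1}\beta^{-1}\,\rho(\beta+e(\rho)).
\]
This range is where $\beta>0$ is needed. Alternatively one could use \eqref{1.2}--\eqref{1.3} to get $e(\rho)\sim\frac{\kappa_1}{\gamma_1-1}\rho^{\gamma_1-1}$. Because $\gamma_2\le\gamma_1$ only, $\rho^{\gamma_2-1}$ may dominate $\rho^{\gamma_1-1}$ near zero, and that is exactly why the constant $\beta$ must be kept.

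\textbf{Intermediate densities, $\rho\in[\rho_*,\rho^*]$.} The same bound with $(\rho^*)^{\gamma_2-1}\beta^{-1}$ works.

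\textbf{Large densities, $\rho\ge\rho^*$.} By \eqref{1.4}--\eqref{1.5}, for $\rho^*$ enlarged if necessary we have $p(s)\ge\frac{\kappa_2}{2}s^{\gamma_2}$ for $s\ge\rho^*$, because $|\mathcal P_2(s)|\le C^\#s^{-\varepsilon}\to0$. Therefore
\[
e(\rho)\ge\int_{\rho^*}^\rho\frac{\kappa_2}{2}s^{\gamma_2-2}\dd s=\frac{\kappa_2}{2(\gamma_2-1)}\big(\rho^{\gamma_2-1}-(\rho^*)^{\gamma_2-1}\big).
\]
It follows that $\rho^{\gamma_2-1}\le \frac{2(\gamma_2-1)}{\kappa_2}e(\rho)+(\rho^*)^{\gamma_2-1}$. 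The last term is controlled by $\beta^{-1}(\rho^*)^{\gamma_2-1}\beta$.

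Combining the three ranges with $C(\beta)=\max\{\frac{2(\gamma_2-1)}{\kappa_2},\,(\rho^*)^{\gamma_2-1}\beta^{-1}\}+\rho_*^{\gamma_2-1}\beta^{-1}$ gives the pointwise bound. The only mildly delicate step is the large-density lower bound on $p$. It needs the enlargement of $\rho^*$ so that $C^\#(\rho^*)^{-\varepsilon}\le\frac12$, and this enlargement is harmless because the intermediate range is handled by the trivial bound.
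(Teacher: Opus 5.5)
Your proof is correct and follows the same route as the paper: reduce to the pointwise bound $\rho^{\gamma_2}\le C(\beta)\,\rho(\beta+e(\rho))$, split at $\rho^*$, and use \eqref{1.4}--\eqref{1.5} for large densities. The only difference is on $(0,\rho^*]$. There the paper first derives $\rho^{\gamma_1}\le C\rho e(\rho)$ from \eqref{1.2}--\eqref{1.3} and continuity before invoking $\gamma_2\le\gamma_1$ and $\beta$, whereas your direct bound $\rho^{\gamma_2-1}\le(\rho^*)^{\gamma_2-1}$, using only $\beta>0$, is simpler and suffices.
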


\begin{proof}
Notice that \eqref{Press} and \eqref{1.2}--\eqref{1.3} imply that 
$p(\rho) > 0$ for all $\rho > 0$. 
By \eqref{1.2}--\eqref{1.3}, there exists $0< \tilde{\rho}_* \leq \rho_*$ such that, 
for all $0< \rho < \tilde{\rho}_*$, 
$|\mathcal{P}_1(\rho)| \leq \frac{1}{2}$. 
Since $p$ is a continuous positive function, 
there exists $C>0$ such that $\rho^{\gamma_1} \leq C p(\rho)$ for $\tilde{\rho}_* \leq \rho \leq \rho^*$. 
Tying both of these together, there exists $C>0$ such that $\rho^{\gamma_1} \leq C p(\rho)$ 
for $0 < \rho \leq \rho^*$. 
Furthermore, since $\rho^{\gamma_1} \leq C \rho e(\rho)$ for $0 < \rho \leq \rho^*$ 
and $\gamma_1 \geq \gamma_2$, we can obtain the bound: 
$$
\rho^{\gamma_2} \leq C\rho(\beta + e(\rho))\qquad
\mbox{for $0 < \rho \leq \rho^*$}. 
$$
Similarly, from \eqref{1.4}--\eqref{1.5}, it follows that there exists $C>0$ 
such that $\rho^{\gamma_2} \leq C \rho e(\rho)$ for $\rho^* \leq \rho$. 
Then the result follows.
\end{proof}

Next, we want to optimize the value of $\beta$ to minimize the estimate of the potential energy. 
More specifically, we study the asymptotic behavior of the quantity
    \[
    \rho^{\frac{\alpha(\gamma_2-1)}{(2n-\alpha)\gamma_2 - 2n}}
    \big(\beta + e(\rho)\big)^{-\frac{\alpha}{(2n-\alpha)\gamma_2 - 2n}}   \qquad\mbox{for $\beta >0$}. 
    \]
For $\rho < \rho_*$, we have 
\begin{align}\label{1.7}
     e(\rho) = \int^\rho_0 \frac{p(s)}{s^2} \dd s 
  = \kappa_1 \int^\rho_0 s^{\gamma_1 - 2}\big(1 + \mathcal{P}_1(s)\big) \dd s = \frac{\kappa_1}{\gamma_1 - 1} \rho^{\gamma_1 - 1} + \kappa_1 \int^\rho_0 s^{\gamma_1 - 2} \mathcal{P}_1(s) \dd s.
    \end{align}
Focusing on the second term on the RHS of \eqref{1.7} and using \eqref{1.3}, we obtain that, for $\rho < \rho_*$,
    \begin{align}\label{1.8}
 \Big| \int^\rho_0 s^{\gamma_1 - 2} \mathcal{P}_1(s) \dd s \Big| 
\leq C_\# \int^\rho_0 s^{2(\gamma_1 - 1) - 1} \dd s = \frac{C_\#}{2(\gamma_1 - 1)}\rho^{2(\gamma_1 - 1)}.
    \end{align}
Then it follows from  \eqref{1.8} and
$\gamma_2 \leq \gamma_1$ that, for $\rho < \rho_*$,
    \begin{align}\label{1.9}
 \frac{1}{\rho^{\gamma_2 - 1}} \Big| \int^\rho_0 s^{\gamma_1 - 2} \mathcal{P}_1(s) \dd s \Big| 
\leq \frac{C_\#}{2(\gamma_1 - 1)}\rho^{2(\gamma_1 - 1) - (\gamma_2 - 1)} 
 \rightarrow 0 \qquad \text{ as $\rho \to 0$}.
    \end{align}
Combining \eqref{1.7} with \eqref{1.9}, we have
\begin{align}\label{1.10}
        \begin{split}
& \lim_{\rho \to 0} \rho^{\frac{\alpha(\gamma_2-1)}{(2n-\alpha)\gamma_2 - 2n}}(\beta + e(\rho))^{-\frac{\alpha}{(2n-\alpha)\gamma_2 - 2n}} \\
  & = \lim_{\rho \to 0} \Big(\frac{1}{\rho^{\gamma_2 - 1}} 
\big(\beta + \kappa_1 \int^\rho_0 s^{\gamma_1 - 2} \mathcal{P}_1(s) \dd s \big) 
 + \frac{\kappa_1}{\gamma_1 - 1} \rho^{\gamma_1 - \gamma_2}
\Big)^{-\frac{\alpha}{(2n-\alpha)\gamma_2 - 2n}} = 0.
 \end{split}
\end{align}
Similarly, for $\rho \geq \rho^*$, we obtain
    \begin{align}\label{1.11}
  e(\rho) = \frac{\kappa_2}{\gamma_2 - 1} \big(\rho^{\gamma_2 - 1} - (\rho^*)^{\gamma_2 - 1}\big) 
 + \kappa_2 \int^\rho_{\rho^*} s^{\gamma_2 - 2} \mathcal{P}_2(s) \dd s 
+ \int^{\rho^*}_0 \frac{p(s)}{s^2} \dd s.
    \end{align}
It is direct to show that the third integral is well-defined and bounded. 
Using \eqref{1.5}, we can prove that, for $\rho \geq \rho^*$,
\begin{align}\label{1.12}
\Big| \int^\rho_{\rho^*} s^{\gamma_2 - 2} \mathcal{P}_2(s) \dd s \Big| 
\leq C^\# \int^\rho_{\rho^*} s^{\gamma_2 - \varepsilon - 2}  \dd s 
\leq \frac{C^\#}{\gamma_2 - \varepsilon - 1}
\big( \rho^{\gamma_2 - \varepsilon - 1} - (\rho^*)^{\gamma_2 - \varepsilon - 1}\big).
\end{align}
Using \eqref{1.12}, we obtain that, for $\rho \geq \rho^*$,
    \begin{align}\label{1.13}
 \frac{1}{\rho^{\gamma_2 - 1}} \Big| \int^\rho_{\rho^*} s^{\gamma_2 - 2} \mathcal{P}_2(s) \dd s \Big| 
       \leq \frac{C^\#}{\gamma_2 - \varepsilon - 1} 
       \Big( \rho^{- \varepsilon} - \frac{(\rho^*)^{\gamma_2 - \varepsilon - 1}}{\rho^{\gamma_2 -1}} \Big) 
       \rightarrow 0\qquad \text{ as $\rho \to \infty$}.
    \end{align}
Thus, combining \eqref{1.11} with \eqref{1.13} yields
\begin{align}\label{1.14}
& \lim_{\rho \to \infty} \rho^{\frac{\alpha(\gamma_2-1)}{(2n-\alpha)\gamma_2 - 2n}}
  \big(\beta + e(\rho)\big)^{-\frac{\alpha}{(2n-\alpha)\gamma_2 - 2n}}\nonumber \\
& = \lim_{\rho \to \infty} \Big( \frac{\kappa_2}{\gamma_2 - 1} + \frac{1}{\rho^{\gamma_2 - 1}} 
\big( \kappa_2 \int^\rho_{\rho^*} s^{\gamma_2 - 2} \mathcal{P}_2(s) \dd s + \int^{\rho^*}_0 \frac{p(s)}{s^2} \dd s - \frac{\kappa_2}{\gamma_2 - 1} (\rho^*)^{\gamma_2 - 1} \big) \Big)^{-\frac{\alpha}{(2n-\alpha)\gamma_2 - 2n}} \nonumber\\
& = \Big( \frac{\kappa_2}{\gamma_2 - 1} \Big)^{-\frac{\alpha}{(2n-\alpha)\gamma_2 - 2n}}.
\end{align}
Since $\rho \mapsto \rho^{\frac{\alpha(\gamma_2-1)}{(2n-\alpha)\gamma_2 - 2n}}
(\beta + e(\rho))^{-\frac{\alpha}{(2n-\alpha)\gamma_2 - 2n}}$ is continuous 
for $\rho \in (0,\infty)$, 
it follows from \eqref{1.10} and \eqref{1.14} that, for $\beta > 0$,
    \begin{align*}
 C_{\text{max}}(\beta) := \max_{\rho \geq 0} 
 \big\{ \rho^{\frac{\alpha(\gamma_2-1)}{(2n-\alpha)\gamma_2 - 2n}}(\beta + e(\rho))^{-\frac{\alpha}{(2n-\alpha)\gamma_2 - 2n}} \big\} \in [ ( \frac{\kappa_2}{\gamma_2 - 1})^{-\frac{\alpha}{(2n-\alpha)\gamma_2 - 2n}}, \,\infty )
    \end{align*}
    is well-defined.

\subsection{Energy Estimates}
The energy estimates are fundamental in obtaining uniform estimates for the approximate solutions.

\begin{lemma}[Energy Estimates]\label{Energy}
When $\alpha \in (-1,n - 1)$, any smooth solution $(\rho,u)(t,r)$ of the free boundary 
problem \eqref{Eul}--\eqref{1.17} satisfies the following energy estimate{\rm:}
    \begin{align*}
    &\int^{b(t)}_a \rho \big(\frac{1}{2}u^2 + e(\rho) + \frac{\kappa}{2}(\Phi_{\alpha} * \rho) \big)r^{n-1} \dd r 
    + \varepsilon \int^t_0\int^{b(s)}_a \rho \big(u_r^2 + (n-1)\frac{u^2}{r^2} \big) r^{n-1} \dd r\dd s \\ 
    &+ (n-1)\varepsilon\int^t_0 (\rho u^2)(s,b(s))b(s)^{n-2} \dd s 
    = \int^b_a \rho_0 \big(\frac{1}{2}u_0^2 + e(\rho_0) + \frac{\kappa}{2} (\Phi_{\alpha} * \rho_0)\big)r^{n-1} \dd r.
    \end{align*}
Then we have the following estimates{\rm:}\\[2mm] 
\noindent{Case} {\rm 1:} $\alpha \in (0,n-1)$ and $\kappa = -1$ with $\gamma_2 > 1$,
\begin{align*}
\begin{split}
      &\int^{b(t)}_a \rho \big(\frac{1}{2}u^2 + e(\rho) - \frac{1}{2} (\Phi_{\alpha} * \rho) \big)r^{n-1} \dd r
      + \varepsilon \int^t_0\int^{b(s)}_a \rho \big(u_r^2 + (n-1)\frac{u^2}{r^2} \big) r^{n-1} \dd r\dd s \\ 
      &+ (n-1)\varepsilon\int^t_0 (\rho u^2)(s,b(s))b(s)^{n-2} \dd s 
     = \int^b_a \rho_0 \big(\frac{1}{2}u_0^2 + e(\rho_0) - \frac{1}{2} (\Phi_{\alpha} * \rho_0)\big)r^{n-1} \dd r 
= E^{\varepsilon,b}_0.
\end{split}
\end{align*}

\smallskip
\noindent{Case} {\rm 2:} $\alpha \in (0,n-1)$ and $\kappa = 1$ with 
$\gamma_2 \in (\frac{2n}{2n - \alpha}, \frac{n + \alpha}{n}]$ 
and $M < M^{\varepsilon,b,\alpha}_{\rm c}(\gamma)$, there exists $\beta_0 > 0$ such that
    \begin{align*}
    \begin{split}
     &\int^{b(t)}_a \rho \big(\frac{1}{2}u^2 + C_\gamma e(\rho) \big)r^{n-1} \dd r 
      + \varepsilon \int^t_0\int^{b(s)}_a \rho \big(u_r^2 + (n-1)\frac{u^2}{r^2} \big) r^{n-1} \dd r\dd s \\ 
      & + (n-1)\varepsilon\int^t_0 (\rho u^2)(s,b(s))b(s)^{n-2} \dd s \\ 
     & \leq \int^b_a \rho_0 \big(\frac{1}{2}u_0^2 + e(\rho_0)  \big)r^{n-1} \dd r 
     \leq C(M,E^{\varepsilon,b}_0).
     \end{split}
     \end{align*}
where the positive constant $C_\gamma > 0$ is defined as
\[
C_\gamma := \begin{cases}
1 - B_{\beta_0}M^{\frac{n - \alpha}{\alpha}} \quad & \mbox{for $\gamma = \frac{n + \alpha}{n}$},\\[1mm]            \frac{\alpha - n(\gamma_2-1)}{\alpha} 
& \mbox{for $\gamma \in (\frac{2n}{2n - \alpha}, \frac{n + \alpha}{n})$}.
\end{cases}
\]
    
\smallskip    
\noindent{Case} {\rm 3:} $\alpha \in (0,n-1)$ and $\kappa = 1$ with $\gamma_2 > \frac{n + \alpha}{n}$,
    \begin{align*}
    \begin{split}
    &\int^{b(t)}_a \frac{1}{2} \rho \big(u^2 + e(\rho) \big)r^{n-1} \dd r 
     + \varepsilon \int^t_0\int^{b(s)}_a \rho \big(u_r^2 + (n-1)\frac{u^2}{r^2} \big) r^{n-1} \dd r\dd s \\ 
     & + (n-1)\varepsilon\int^t_0 (\rho u^2)(s,b(s))b(s)^{n-2} \dd s \leq C(E^{\varepsilon,b}_0,M).
     \end{split}
    \end{align*}
\smallskip 
\noindent{Case} {\rm 4:} $\alpha \in (-1,0]$ and $\kappa \in \{ -1, 1 \}$ with $\gamma_2 > 1$,
\begin{align*}
\begin{split}
&\int^{b(t)}_a \frac{1}{2} \rho \big(u^2 + e(\rho) + k_\alpha(1+r^2) \big)r^{n-1} \dd r 
+ \varepsilon \int^t_0\int^{b(s)}_a \rho \big(u_r^2 + (n-1)\frac{u^2}{r^2} \big) r^{n-1} \dd r\dd s \\ 
& + (n-1)\varepsilon\int^t_0 (\rho u^2)(s,b(s))b(s)^{n-2} \dd s \leq C(M,E_0^{\varepsilon,b},T).
\end{split}
\end{align*}
\end{lemma}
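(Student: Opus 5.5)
The plan is to derive the energy identity for smooth solutions of the free boundary problem \eqref{Eul}--\eqref{1.17}, and then to control the potential energy case by case to obtain the four estimates. For the identity, multiply $\eqref{Eul}_2$ by $u r^{n-1}$ and $\eqref{Eul}_1$ by $(e(\rho)+\rho e'(\rho)+\kappa\Phi_\alpha*\rho)r^{n-1}$, add, and integrate over $[a,b(t)]$. The boundary terms from $(\rho u^2+p)_r$ and from the viscous term combine through the stress-free condition \eqref{boundary} and the kinematic condition \eqref{1.16}. With $u(t,a)=0$, the Reynolds transport terms at $r=b(t)$ cancel against the flux terms. Integrating the viscous term by parts and using $u(t,a)=0$ gives the dissipation $\varepsilon\int\rho(u_r^2+(n-1)u^2/r^2)r^{n-1}$ together with the boundary contribution $(n-1)\varepsilon(\rho u^2)(t,b(t))b(t)^{n-2}$; the term $-\varepsilon\frac{n-1}{r}\rho_r u$ has to be absorbed in this integration by parts. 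For the nonlocal term, the symmetry of $\Phi_\alpha$ (Fubini on the radial kernel) shows that $\int\rho u(\Phi_\alpha*\rho)_r r^{n-1}$ equals $\frac{d}{dt}\frac12\int\rho\Phi_\alpha*\rho\, r^{n-1}$ after using the continuity equation. This step needs $\nabla\Phi_\alpha*\rho$ to be locally integrable, which holds because $\alpha<n-1$. Integrating in time then gives the identity.

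Case 1 ($\kappa=-1$, $\alpha>0$): here $-\frac12\rho\Phi_\alpha*\rho\ge0$, so the identity already has the stated form. Case 3 ($\gamma_2>\frac{n+\alpha}{n}$): apply the HLS inequality (Lemma \ref{simplehls}) with the interpolation between $L^1$ and $L^{\gamma_2}$, and then Lemma \ref{bigsmall} with some fixed $\beta$. This bounds $\frac12\int\rho|\Phi_\alpha*\rho|$ by $C M^{a}\big(\int\rho(\beta+e)\big)^{\theta}$ with $\theta<1$, since $\frac{\alpha}{n(\gamma_2-1)}<1$. Young's inequality then absorbs this into $\frac12\int\rho e$, at the cost of a constant depending on $M$ and $E_0^{\varepsilon,b}$.

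Case 2 ($\gamma_2$ mass-critical or supercritical, with the mass below threshold) is the main obstacle. In the radial variables, HLS combined with Hölder gives
\[
\tfrac12\int\rho|\Phi_\alpha*\rho| \le \tfrac{C_{\alpha,n}}{2\alpha}\|\rho\|_{L^1}^{\frac{(2n-\alpha)\gamma_2-2n}{n(\gamma_2-1)}}\|\rho\|_{L^{\gamma_2}}^{\frac{\alpha\gamma_2}{n(\gamma_2-1)}}.
\]
I would rewrite $\rho^{\gamma_2}$ pointwise as $\rho(\beta+e(\rho))\cdot\big[\rho^{\gamma_2-1}(\beta+e)^{-1}\big]$ and bound the bracket using $C_{\max}(\beta)$. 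This yields a bound of the form $B_\beta M^{\cdot}\big(\int\rho(\beta+e)\,r^{n-1}\big)^{\alpha/(n(\gamma_2-1))}$, with the powers of $\omega_n$ bookkept exactly as in the definition of $B_\beta$.
\begin{itemize}
\item In the critical case the exponent is $1$. The potential energy is then at most $B_\beta M^{\frac{n-\alpha}{n}}(\beta M\omega_n^{-1}+\int\rho e)$, and $M<M_c^\alpha$ lets me choose $\beta_0$ with $B_{\beta_0}M^{\cdot}<1$, which gives $C_\gamma=1-B_{\beta_0}M^{\cdot}>0$.
\item In the supercritical case the exponent is $>1$. I would use the maximization underlying the definition of $M_c^\alpha(\beta)$: the function $X\mapsto X-B_\beta M X^{\alpha/(n(\gamma_2-1))}$ is handled by a continuity (bootstrap) argument in $t$. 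The condition $M<M_c^\alpha(\beta)$ ensures that the energy level $E_0$ lies below the barrier, so $\int\rho(\beta+e)$ stays on the small branch for all time.
\end{itemize}
On that branch the potential term is bounded by $\frac{n(\gamma_2-1)}{\alpha}$ times the internal energy, which gives $C_\gamma=\frac{\alpha-n(\gamma_2-1)}{\alpha}$.

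Case 4 ($\alpha\in(-1,0]$): the kernel grows at infinity, so I would use $|\Phi_\alpha(\boldsymbol x-\boldsymbol y)|\lesssim k_\alpha(1+|\boldsymbol x|^2)+k_\alpha(1+|\boldsymbol y|^2)$ plus a locally integrable singular part. I would then estimate the time derivative of the moment $\int\rho k_\alpha(1+r^2)r^{n-1}$ by Cauchy–Schwarz against the kinetic energy, since $|\nabla k_\alpha(1+r^2)|$ is bounded. A Grönwall argument then gives the $T$-dependent bound.
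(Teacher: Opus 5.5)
Your route is the one the paper defers to \cite{Carrillo2025,Chen_2024}, and apart from the supercritical part of Case 2 it goes through:
\begin{itemize}
\item The energy identity comes from $ur^{n-1}\times\eqref{Eul}_2$, with the moving-boundary terms cancelled by \eqref{1.16} and \eqref{boundary}.
\item Case 1 follows from the sign of the potential term.
\item Case 3 follows from HLS, Lemma \ref{bigsmall} and Young.
\item Case 4 follows from a moment bound and Gr\"onwall.
\item In Case 2, the pointwise bound $\rho^{\gamma_2}\le\rho(\beta+e(\rho))\sup_{s>0}\frac{s^{\gamma_2-1}}{\beta+e(s)}$ reproduces $B_\beta$ exactly, including the factor $\omega_n^{\frac{\alpha-n(\gamma_2-1)}{n(\gamma_2-1)}}$.
\end{itemize}
In the critical case your absorption gives $C_\gamma=1-B_{\beta_0}M^{\frac{n-\alpha}{n}}$. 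This is what $M_c^\alpha(\beta)=B_\beta^{-\frac{n}{n-\alpha}}$ requires; the exponent $\frac{n-\alpha}{\alpha}$ in the displayed $C_\gamma$ is a misprint.

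The step that fails as written is the start of your bootstrap in the supercritical part of Case 2. Set $q=\frac{\alpha}{n(\gamma_2-1)}>1$ and $X(t)=\int_a^{b(t)}\rho(\beta+e(\rho))r^{n-1}\dd r$. Let $c=\frac{(2n-\alpha)\gamma_2-2n}{n(\gamma_2-1)}$; this, not $1$, is the power of $M$ your HLS display produces. Define $g(X)=X-B_\beta M^cX^q$, whose maximum is $g(X_*)=\frac{q-1}{q}X_*$.

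The identity together with your HLS bound gives $g(X(t))\le E_0^{\varepsilon,b}+\beta M/\omega_n$, and $M<M_c^\alpha(\beta)$ says exactly that this right-hand side lies below $g(X_*)$. That only prevents $X(t)$ from crossing $X_*$. You also need $X(0)<X_*$, and this does not follow when $E_0$ is the total energy \eqref{0.7}.

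To see this, note that $g$ is increasing on $[0,X_*]$ and $X\ge\beta M/\omega_n$. Hence any datum with $E_0^{\varepsilon,b}<-B_\beta M^c(\beta M/\omega_n)^q$ lies on the large branch. Concentrating $\rho_0$ at fixed mass with $u_0=0$ sends $E_0\to-\infty$: the interaction scales like $\lambda^{\alpha/n}$, the internal energy like $\lambda^{\gamma_2-1}$, and $\gamma_2-1<\frac{\alpha}{n}$. Meanwhile $M_c^\alpha(\beta)\to\infty$ as $E_0\to-\infty$, because the left side of its defining equation decreases strictly in $M$ from $+\infty$ to $-\infty$. So such data satisfy $M<M_c^\alpha(\beta)$, and your continuity argument says nothing about them.

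You therefore have to supply the initialization. Either assume it, or state the threshold in terms of $\tilde E_0:=\int_a^b\rho_0(\frac12u_0^2+e(\rho_0))r^{n-1}\dd r$, which dominates $E_0^{\varepsilon,b}$ when $\kappa=1$. Then $X(0)\le\tilde E_0+\beta M/\omega_n<g(X_*)<X_*$, and $g(X(t))<g(X_*)$ for all $t$, so the bootstrap closes. On the small branch $\frac12\int\rho|\Phi_\alpha*\rho|r^{n-1}\dd r\le\frac1q X(t)$. This gives $C_\gamma=\frac{\alpha-n(\gamma_2-1)}{\alpha}$, up to the additive constant $\frac{\beta M}{q\omega_n}$. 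It is also consistent with the quantity $\int\rho_0(\frac12u_0^2+e(\rho_0))$ that appears on the right-hand side of Case 2.
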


All cases of the proof, except for {\it Case 2}, follow from the same argument 
as \cite[Lemma 3.4]{Carrillo2025} after replacing $\gamma$ by $\gamma_2$ and 
using \eqref{bigandsmallbehav}. 
For {\it Case 2}, the proof combines the approach developed for general pressure laws 
in \cite[Lemma 5.1]{Chen_2024} with the estimates for the Riesz potential 
in \cite[Lemma 3.4]{Carrillo2025}. 

\smallskip
The uniform estimates established in Lemma \ref{Energy} yield the following corollary:

\begin{corollary}[Uniform Estimates]\label{uniform}
Under the conditions of {\rm Lemma \ref{Energy}}, the following uniform estimates in $(\varepsilon,b)$
{\rm (}for $\varepsilon$ small enough and $b$ large enough{\rm )} hold{\rm :}

\smallskip
\noindent{Case} {\rm 1:} $\kappa = -1$ with $\gamma_2 > 1$ for $\alpha \in (0,n-2]$ 
or $\gamma_2 > \frac{2n}{2n - \alpha}$ for $\alpha \in (n-2,n-1)$,
\begin{align}\label{ueq1}
\int_{\mathbb{R}^n} \rho \big(1 + u^2 + e(\rho) + \rho^{\gamma_2 - 1} - \Phi_\alpha * \rho\big)
\dd \boldsymbol{x} \leq C(M,E_0);
\end{align}

\noindent{Case} {\rm 2:} $\alpha \in (0,n-1)$ and $\kappa = 1$ with $\gamma_2 > \frac{2n}{2n - \alpha}$,
\begin{align*}
\int_{\mathbb{R}^n} \rho \big(1 + u^2 + e(\rho) + \rho^{\gamma_2 - 1} \big) \dd \boldsymbol{x}
   \leq C(M,E_0);
\end{align*}

\noindent{Case} {\rm 3:} $\alpha \in (-1,0]$ and $\kappa \in\{-1,1\}$ with $\gamma_2 > 1$,
\begin{align}\label{ueq3}
\int_{\mathbb{R}^n} \rho \big(1 + u^2 + e(\rho) + \rho^{\gamma_2 - 1} + k_\alpha(1 + |\boldsymbol{x}|^2) \big)
\dd \boldsymbol{x} \leq C(M,E_0,T),
    \end{align}
where we have used the convention that $(\rho, u)$ is zero outside of $\Omega_t$.
In particular, the following uniform estimates in $(\varepsilon,b)$
{\rm (}for $\varepsilon$ small enough and $b$ large enough{\rm )} hold{\rm :} For {\rm Cases 1--2},
\begin{align*}
\|(\Phi_\alpha*\rho)(t)\|_{L^{\frac{2n}{\alpha}}(\mathbb{R}^n)}
+\|(\nabla \Phi_\alpha*\rho)(t)\|_{L^\frac{2n}{\alpha + 2}(\mathbb{R}^n)} \leq C(M,E_0);
    \end{align*}
and for {\rm Case 3}, for any $K \Subset \mathbb{R}^n$, there exists $C(M,E_0,T,K)>0$ such that
\begin{align*}
\|(\Phi_\alpha*\rho)(t)\|_{L^\infty(K)}
+\|(\nabla \Phi_\alpha*\rho)(t)\|_{L^\frac{2n}{\alpha + 2}(K)} \leq C(M,E_0,T,K).
\end{align*}
\end{corollary}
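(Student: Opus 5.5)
The uniform estimates in Corollary \ref{uniform} will be read off from Lemma \ref{Energy} case by case. The work has three parts: control the potential energy by the mass and the internal energy; convert $\rho e(\rho)$ into control of $\rho^{\gamma_2}$ through Lemma \ref{bigsmall}; and deduce the bounds on $\Phi_\alpha*\rho$ and $\nabla\Phi_\alpha*\rho$ from the HLS inequality. Mass conservation gives the term $\int\rho\,\dd\boldsymbol{x}=M$ directly. Uniformity in $(\varepsilon,b)$ comes from the fact that $E_0^{\varepsilon,b}\to E_0$, so $E_0^{\varepsilon,b}\le C(E_0)$ once $\varepsilon$ is small and $b$ is large.

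\textbf{Case 1} ($\kappa=-1$). Every term on the left of the Case 1 energy identity is nonnegative, because $-\Phi_\alpha*\rho\ge0$. The identity therefore bounds $\int\rho(u^2+e(\rho)-\Phi_\alpha*\rho)$ by $E_0^{\varepsilon,b}\le C(E_0)$. For $\rho^{\gamma_2}$ I will apply \eqref{bigandsmallbehav} with $\beta=1$, which gives $\|\rho\|_{L^{\gamma_2}}^{\gamma_2}\le C(M+\int\rho e(\rho))$. The hypothesis $\gamma_2>\frac{2n}{2n-\alpha}$ for $\alpha>n-2$ is what makes the HLS control of the potential term available in the more singular range; for $\alpha\le n-2$, $\gamma_2>1$ suffices. \textbf{Case 2} ($\kappa=1$, $\alpha\in(0,n-1)$). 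If $\gamma_2>\frac{n+\alpha}{n}$, Case 3 of Lemma \ref{Energy} gives the bound on kinetic plus internal energy directly. If $\gamma_2\in(\frac{2n}{2n-\alpha},\frac{n+\alpha}{n}]$, the Case 2 estimate, with $C_\gamma>0$ under the subcritical mass assumption, does the same. In both subcases \eqref{bigandsmallbehav} then yields $\int\rho^{\gamma_2}\le C(M,E_0)$.

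For the potential estimates in Cases 1--2 I will use HLS in the form $\|\Phi_\alpha*\rho\|_{L^{2n/\alpha}}\lesssim\|\rho\|_{L^{2n/(2n-\alpha)}}$ and $\|\nabla\Phi_\alpha*\rho\|_{L^{2n/(\alpha+2)}}\lesssim\|\rho\|_{L^{2n/(2n-\alpha)}}$. The second inequality requires $\frac{2n-\alpha}{2n}-\frac{n-\alpha-1}{n}=\frac{\alpha+2}{2n}$, which is exactly the exponent relation, and it is valid since $\alpha+1<n$. The remaining quantity $\|\rho\|_{L^{2n/(2n-\alpha)}}$ will be bounded by interpolating between $L^1$ and $L^{\gamma_2}$, which is legitimate because $1<\frac{2n}{2n-\alpha}<\gamma_2$. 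This interpolation is the reason $\gamma_2>\frac{2n}{2n-\alpha}$ is needed. In Case 1 with $\alpha\le n-2$ and $\gamma_2$ close to $1$, the interpolation is not available; there I will instead use the bound on $-\int\rho\Phi_\alpha*\rho$ from the energy identity, which is the $\dot H^{-(n-\alpha)/2}$ norm of $\rho$, and it controls the $L^{2n/\alpha}$ norm of $\Phi_\alpha*\rho$ by Sobolev embedding.

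\textbf{Case 3} ($\alpha\in(-1,0]$). Here Case 4 of Lemma \ref{Energy} supplies the $k_\alpha$-moment bound, with constants depending on $T$, and $\rho^{\gamma_2}$ is handled through \eqref{bigandsmallbehav} as before. For $x\in K$, I will split $\Phi_\alpha*\rho(x)$ into the near region $|x-y|<1$ and the far region. The near part is bounded by $M$ and $\|\rho\|_{L^{\gamma_2}}$, since $|y|^{-\alpha}$ (or $\log|y|$) is locally integrable to every power. The far part grows at most like $|x-y|^{-\alpha}\lesssim k_\alpha(1+|x|^2)+k_\alpha(1+|y|^2)$, and the moment bound controls it. I expect this to be the main obstacle, specifically the logarithmic case $\alpha=0$, where the far-field kernel has no sign and the moment weight is only logarithmic. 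For $\nabla\Phi_\alpha*\rho$ on $K$, the kernel $|y|^{-\alpha-1}$ is locally integrable and decays, so the same splitting, with HLS (or Young) applied to the near part, gives the local $L^{2n/(\alpha+2)}$ bound.
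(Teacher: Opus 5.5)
Your proposal is correct and follows the paper's route. The paper likewise reads off \eqref{ueq1}--\eqref{ueq3} from Lemma~\ref{Energy}, Lemma~\ref{bigsmall}, conservation of mass and $E_0^{\varepsilon,b}\to E_0$, and obtains the potential bounds as in \cite[Corollary 3.6]{Carrillo2025}: HLS with $L^1$--$L^{\gamma_2}$ interpolation, plus a local near/far splitting against the $k_\alpha$-moment when $\alpha\le 0$.

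One small completion concerns your Sobolev-embedding fallback for Case 1 with $\alpha\le n-2$. There you should also note that $\nabla\Phi_\alpha*\rho$ lies in $\dot H^{(n-\alpha-2)/2}$, with norm comparable to $\bigl(-\int\rho\,\Phi_\alpha*\rho\,\dd\boldsymbol{x}\bigr)^{1/2}$. This space embeds into $L^{2n/(\alpha+2)}$ exactly because $\alpha\le n-2$, so it is this gradient bound, not the $L^{2n/\alpha}$ one, where that hypothesis enters. Under the standing pressure assumption $\gamma_2>\frac{2n}{2n-\alpha}$, your interpolation route already covers this case anyway.
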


\begin{proof}
Estimates \eqref{ueq1}--\eqref{ueq3} follow from Lemmas \ref{bigsmall}--\ref{Energy} 
and the fact that $\int^{b(t)}_a \rho r^{n-1} \dd r =  \frac{M}{\omega_n}$,
and
$E_0^{\v,b} \to E_0$ as $b \to \infty$ and $\varepsilon \to 0$ as in \cite[Lemma B.8]{Carrillo2025}. The second part of the corollary follows in the same way as \cite[Corollary 3.6]{Carrillo2025} 
after replacing $\gamma$ by $\gamma_2$ and using \eqref{bigandsmallbehav}.
\end{proof}

\subsection{Expanding the Domain}
To guarantee that the approximate solutions converge to global solutions of CNSREs 
in the limit $b\to \infty$, we must verify that the domain $\Omega^T$ exhausts the whole space $\mathbb{R}^n$.

\begin{lemma}[Expanding of the domain $\Omega^T$]\label{expand}
Given $T>0$ and $\v\in(0,\v_0]$, there exists a positive constant $C_1(M,E_0,T,\v)>0$ such that,
if $b\geq \max\{C_1(M,E_0,T,\v),\mathfrak{B}(\v)\}$ {\rm (}where $\mathfrak{B}(\v)$ is defined in {\rm \cite[Lemma B.8]{Carrillo2025}}{\rm )},
	\begin{align*}
	b(t)\geq \frac{b}{2}\qquad\,\, \mbox{for $t\in[0,T]$}.
	\end{align*}
\end{lemma}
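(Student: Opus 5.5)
The proof will follow the strategy of Chen \emph{et al.} for CEPEs: control how fast the free boundary can move inward, using the energy estimates and a BD-type entropy estimate. Since $b'(t)=u(t,b(t))$, we write
\[
b(t)=b+\int_0^t u(s,b(s))\,\dd s ,
\]
so it suffices to show $\int_0^T|u(s,b(s))|\,\dd s\le C_1(M,E_0,T,\varepsilon)$, which is independent of $b$. Then, for $b\ge 2C_1$, we get $b(t)\ge b-C_1\ge b/2$.

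\textbf{Step 1: a BD entropy estimate.} We first derive a BD-type estimate in the Lagrangian coordinates $(\tau,x)$ on the fixed domain $[0,T]\times[0,\frac{M}{\omega_n}]$. Since $(\mu,\lambda)=(\rho,0)$, we consider the effective velocity $u+\varepsilon r^{n-1}\rho_x$. In Eulerian variables this is $u+\varepsilon(\log\rho)_r$. Testing the momentum equation against it yields a bound on
\[
\varepsilon^2\int_a^{b(t)}|(\sqrt\rho)_r|^2r^{n-1}\,\dd r+\varepsilon\int_0^t\!\!\int p'(\rho)\frac{|\rho_r|^2}{\rho}\,r^{n-1}\,\dd r\,\dd s
\]
in terms of $E_0^{\varepsilon,b}$, $E_1^{\varepsilon,b}$, $M$, $T$, and a term coming from the Riesz force, namely $\varepsilon\int_0^t\!\int\rho_r(\Phi_\alpha*\rho)_r\,r^{n-1}$. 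The Riesz term is handled as in \cite[Lemma 3.5]{Carrillo2025}: by Cauchy--Schwarz it is bounded by
\[
\varepsilon\,\|\sqrt{\rho}\,(\Phi_\alpha*\rho)_r\|_{L^2}\,\|(\sqrt\rho)_r\|_{L^2}.
\]
The first factor is bounded using Corollary \ref{uniform}, namely $\nabla\Phi_\alpha*\rho\in L^{\frac{2n}{\alpha+2}}$, together with $\rho\in L^{\gamma_2}$ and the assumption $\gamma_2\ge\frac{3n}{3n-2(1+\alpha)}$. Gronwall's inequality then closes the estimate. Here we use Lemma \ref{bigsmall} in place of homogeneity of the pressure.

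\textbf{Step 2: a pointwise bound on $u$ at the boundary.} Integrating the Lagrangian momentum equation in $x$ from $x$ to $\frac{M}{\omega_n}$ and using the stress-free condition $(p-\varepsilon\rho^2(r^{n-1}u)_x)|_{x=M/\omega_n}=0$ expresses $u_\tau$ at the boundary through integrated quantities. A simpler alternative is to bound $\sup_{r}|u|$ near $b(t)$ using the estimate $\varepsilon\int\rho u_r^2r^{n-1}$ from Lemma \ref{Energy}, together with the lower bound on $\rho$ near the boundary. That lower bound comes from the BD estimate, which controls $\sqrt\rho$ in $H^1$ away from the origin, combined with the mass constraint.

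Concretely, I plan to follow \cite[Lemma 3.8]{Chen2021}. First compute
\[
\frac{\dd}{\dd t}\int_{b(t)/2}^{b(t)}\rho\, r^{n}\,\dd r ,
\]
whose time derivative is a moment identity involving $\rho u^2$, $p$, $\varepsilon\rho u_r$ and the Riesz force term. Then argue by contradiction: if $b(t_0)<b/2$, then mass initially located near $r\sim b$ must have moved inward a distance of order $b$ within time $T$. Since $\rho_0^{\varepsilon,b}$ carries negligible mass in the far region, this is impossible once $b$ is large, up to constants depending on $\varepsilon$ and $T$.

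\textbf{Main obstacle.} The main obstacle is the nonlocal force term in these moment and BD identities. We need
\[
\Big|\int \rho\,(\Phi_\alpha*\rho)_r\, r^{n}\,\dd r\Big|
\]
to be bounded independently of $b$. For $\alpha\in(0,n-1)$ I would split the convolution into near-field and far-field parts, as in the proof of Lemma \ref{weaktostrong}. The far-field part is controlled by the radial kernel bound $|K(r,\eta)|\lesssim(r\eta)^{-\alpha/2}$ and its derivative. The near-field part is controlled by HLS together with the uniform bounds of Corollary \ref{uniform}. For $\alpha\in(-1,0]$, the $k_\alpha$-moment in \eqref{ueq3} supplies the needed growth control.
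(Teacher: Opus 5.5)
Your reduction to $b(t)\ge b-\int_0^t|u(s,b(s))|\dd s$ is the right starting point. None of your three routes, however, actually bounds the boundary velocity, and all of them fail on the same point: near the free boundary the density is not bounded below uniformly in $b$. The remark after \eqref{1.18} gives $\rho(t,b(t))\le\rho_0^{\varepsilon,b}(b)$, and mass conservation forces the $\dd r$-average of $\rho$ over $[b(t)/2,b(t)]$ to be $O(b(t)^{-n})$.

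\emph{The BD route.} The BD estimate controls only oscillations of $\sqrt{\rho}$, and the mass constraint bounds $\rho$ only from above on average. Together they give no positive lower bound on $\rho$ near $b(t)$. Bounding $u$ near $b(t)$ through $\varepsilon\int\rho u_r^2r^{n-1}\dd r$ requires control of $\int\frac{\dd r}{\rho r^{n-1}}$ there, so that step cannot be closed.

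\emph{The moment route.} This fails for the opposite reason to the one you give. A density-weighted quantity such as $\int_{b(t)/2}^{b(t)}\rho r^n\dd r$ cannot detect the motion of a boundary that carries negligible mass. Negligible far-field mass makes inward motion of the boundary cheaper, not impossible. To get a contradiction from mass you would need a quantitative \emph{lower} bound on the initial mass near $r=b$ that beats the kinetic energy. In Lagrangian mass coordinates, $\int_0^{M/\omega_n}|r(t,x)-r(0,x)|^2\dd x\le T^2\sup_{s}\int_a^{b(s)}\rho u^2r^{n-1}\dd r$, so only mass $O(T^2b^{-2})$ can be displaced by $b/4$. You never invoke any property of the approximate data that would supply such a bound.

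\emph{Further points.} Your target $\int_0^T|u(s,b(s))|\dd s\le C_1$ with $C_1$ independent of $b$ is stronger than needed, since only $\le b/2$ is required. In the paper the BD estimates (Lemma~\ref{oldBD} and its sequel) are stated for $b\ge\max\{C_1,\mathfrak{B}(\varepsilon)\}$, i.e.\ downstream of Lemma~\ref{expand}. Finally, the obstacle you single out, the nonlocal force, is not where the difficulty lies.

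The missing idea is a dynamic estimate that quantifies the degeneracy at $b(t)$. The paper gives no argument of its own here, only the reference to \cite[Lemma 5.5]{Chen_2024}; what follows is one route that closes, not a reconstruction of that reference.

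First, evaluate the Lagrangian mass equation $\rho_\tau=-\rho^2(r^{n-1}u)_x$ at $x=\frac{M}{\omega_n}$ and use the stress-free condition. This gives the closed ODE $\frac{\d}{\d t}\rho(t,b(t))=-\varepsilon^{-1}p(\rho(t,b(t)))$. Hence $\rho(t,b(t))\ge\underline{\rho}>0$ on $[0,T]$, with $\underline{\rho}$ explicit and comparable to $\rho_0^{\varepsilon,b}(b)$ when the latter is small. This ODE is also the origin of the remark above.

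Second, Lemma~\ref{Energy} contains the boundary dissipation $(n-1)\varepsilon\int_0^t(\rho u^2)(s,b(s))\,b(s)^{n-2}\dd s\le C(M,E_0)$, which involves exactly $b'(s)=u(s,b(s))$. By Cauchy--Schwarz, as long as $b(s)\ge b/2$ on $[0,t]$,
\[
|b(t)-b|\le\Big(\int_0^t(\rho u^2)(s,b(s))\,b(s)^{n-2}\dd s\Big)^{1/2}\Big(\int_0^t\frac{\dd s}{\rho(s,b(s))\,b(s)^{n-2}}\Big)^{1/2}\le\frac{C(M,E_0,T,\varepsilon)}{(\underline{\rho}\,b^{n-2})^{1/2}}.
\]
A continuity argument then closes once $b^{n}\underline{\rho}$ is large, i.e.\ once the approximate data are not too degenerate at $r=b$. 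This is where $\mathfrak{B}(\varepsilon)$ and the $\varepsilon$-dependence of $C_1$ enter. Neither a BD estimate nor any analysis of the Riesz force is needed.
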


This lemma can be achieved by following the same argument as in \cite[Lemma 5.5]{Chen_2024}.

Therefore, as $b \to \infty$, $\lim_{b \to \infty} \inf_{t \in [0,T]} b(t) = \infty$, as we desire.

\subsection{BD Entropy Estimates}
We now turn to the derivation of BD entropy estimates for the approximate solutions. 
These estimates provide bounds, uniform in $b$, on the derivatives of the density, 
which are essential for applying the Aubin–Lions lemma and obtaining compactness of
of $(\rho^{\v,b}, \rho^{\v,b}u^{\v,b})$ as $b \to \infty$.
This compactness will later be used to establish convergence to a global weak solution 
$(\rho^\v, \rho^\v u^\v)$ of \eqref{0.2}.
To derive the BD entropy estimates, we require the viscosity coefficients to 
satisfy the Bresch–Desjardins relation:
$\lambda(\rho) = \rho \mu'(\rho) - \mu(\rho)$. 
As in \cite{Carrillo2025}, we establish two distinct BD-type entropy estimates.
For the first estimate, we assume that $\alpha \leq n-2$ and $\kappa = -1$ (except in the critical case 
$\alpha = n-2$). The condition $\alpha \leq n-2$ ensures that $\Phi_\alpha * \rho$ 
is twice differentiable, allowing the integration-by-parts argument to be carried out rigorously. 
However, this procedure generates the boundary term
$\kappa a^{n-1} (\rho(\Phi_\alpha * \rho)_r)(t,a)$, which has an unfavorable sign in the attractive case 
$\kappa = 1$. The only exception occurs when $\alpha=n-2$
for which this term vanishes identically.  
In general, the boundary contribution is uncontrollable and therefore precludes the direct application 
of the standard BD entropy argument.
To overcome this difficulty, we derive a second BD-type entropy estimate based on a Grönwall-type argument, which avoids the problematic boundary contribution and remains applicable in the attractive regime.

\begin{lemma}[BD-type Entropy Estimate I]\label{oldBD}
Under the conditions of Lemma {\rm \ref{Energy}}, for $\kappa = -1$ with $\alpha \in (-1,n-2]$ 
or $\kappa = 1$ with $\alpha = n-2$, any given $T>0$, $\v\in(0,\v_0]$, 
$b \geq \max\{C_1(M,E_0,T,\varepsilon),\mathfrak{B}(\varepsilon)\}$ {\rm (}as defined in {\rm Lemma \ref{expand}} 
and {\rm \cite[Lemma B.8]{Carrillo2025}}{\rm )}, the following estimate holds{\rm:}
\begin{align*}
\begin{split}
	&\frac{\varepsilon^2}{4}\int_a^{b(t)} |(\sqrt{\rho(t,r)})_r|^2\,r^{n-1} \dd r
	+ 4\varepsilon \int_0^t\int_a^{b(s)} p'(\rho)|(\sqrt{\rho(s,r)})_r|^2\,r^{n-1}\dd r\dd s\\
	&\quad+\frac{1}{n}p(\rho(t,b(t)))b(t)^n
	+\frac{1}{n\varepsilon}\int_0^t p(\rho(s,b(s)))p'(\rho(s,b(s))) b(s)^n\dd s \\
	&\leq C(E_0,M,T) + \varepsilon \kappa \int_0^t ((\Phi_{\alpha} * \rho)_r\rho)(s,a)a^{n-1} \dd s \qquad \text{ for all } t \in [0,T].
 \end{split}
\end{align*}
\end{lemma}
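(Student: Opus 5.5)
The plan is to run the classical Bresch--Desjardins computation in the radial, Lagrangian/free-boundary setting of \eqref{Eul}--\eqref{1.17}, following the scheme of \cite{Chen2021,Chen_2024}. The only new ingredient is the nonlocal force term. Since $(\mu,\lambda)=(\rho,0)$, the effective velocity is $v:=u+\varepsilon(\log\rho)_r$. Combining the continuity equation with the momentum equation of \eqref{Eul} removes the viscous term, so $\rho v$ satisfies a damped equation of the form
\[
(\rho v)_t+(\rho u v)_r+\frac{n-1}{r}\rho u v+p_r+\kappa\rho(\Phi_\alpha*\rho)_r=0 .
\]
I would multiply this by $v\,r^{n-1}$ and integrate over $[a,b(t)]$. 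By \eqref{1.16}, the boundary flux at $b(t)$ is transported away. The free-boundary condition \eqref{boundary} gives $\varepsilon\rho(u_r+\frac{n-1}{r}u)=p$ at $r=b(t)$, which I would use to rewrite the boundary values of $\varepsilon\rho_r$ there. The condition $u(t,a)=0$ kills the convective flux at $r=a$.

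The result is an identity for
\[
\tfrac12\int\rho v^2r^{n-1}\,\dd r+\int\rho e(\rho)r^{n-1}\,\dd r .
\]
Expanding $\rho v^2$ produces the term $\frac{\varepsilon^2}{2}\int\rho|(\log\rho)_r|^2r^{n-1}=2\varepsilon^2\int|(\sqrt\rho)_r|^2r^{n-1}$. The cross term $\varepsilon\int p_r(\log\rho)_r r^{n-1}=4\varepsilon\int p'(\rho)|(\sqrt\rho)_r|^2r^{n-1}$ is the good dissipation. The remaining quantities are the energy terms, which are already controlled by Lemma~\ref{Energy} and Corollary~\ref{uniform}, and the boundary terms at $b(t)$, which I would rework as in \cite[Lemma 5.6]{Chen_2024}. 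For the latter, the relation $\frac{d}{dt}\big(p(\rho(t,b(t)))b(t)^n\big)$ together with the stress-free condition yields the positive contributions $\frac1n p(\rho(t,b(t)))b(t)^n$ and $\frac{1}{n\varepsilon}\int_0^t pp'b^n\,\dd s$ on the left. Any leftover pieces are bounded by $C(E_0,M,T)$ using $\rho(t,b(t))\le\rho_0(b)$ and the construction of the initial data.

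The key step, and the one I expect to be the main obstacle, is the potential term $\varepsilon\kappa\int\rho(\Phi_\alpha*\rho)_r(\log\rho)_r r^{n-1}\,\dd r=\varepsilon\kappa\int\rho_r(\Phi_\alpha*\rho)_r r^{n-1}\,\dd r$. I would integrate by parts in $r$, which needs $\Phi_\alpha*\rho\in C^2$ and is why the restriction $\alpha\le n-2$ is imposed. This gives
\[
-\varepsilon\kappa\int\rho\,\frac{1}{r^{n-1}}\big(r^{n-1}(\Phi_\alpha*\rho)_r\big)_r r^{n-1}\,\dd r
\]
plus boundary terms at $a$ and $b(t)$. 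The boundary term at $b(t)$ is handled by the smallness of $\rho(t,b(t))$, while the one at $a$ is exactly $\varepsilon\kappa\,\big((\Phi_\alpha*\rho)_r\rho\big)(t,a)a^{n-1}$, which I keep on the right, as in the statement. The bulk term involves $\Delta(\Phi_\alpha*\rho)=c\,|\cdot|^{-\alpha-2}*\rho$ with $c$ of sign $-\alpha(n-2-\alpha)\le0$. When $\kappa=-1$ this term therefore has a favourable sign and can be dropped. When $\alpha=n-2$ and $\kappa=1$, $\Delta(\Phi_{n-2}*\rho)=c\rho$, so the term equals $-\varepsilon c\int\rho^2$, again of favourable sign (equivalently controlled by Lemma~\ref{bigsmall}). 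For $\alpha\in(-1,0]$, the requirement that the kernel $|x|^{-\alpha-2}$ be locally integrable forces me to check carefully via the moment bound \eqref{ueq3}. Finally, I integrate in time over $[0,t]$ and absorb the $\frac{\varepsilon^2}{2}$-type terms.
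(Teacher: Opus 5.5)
Your overall route is the BD argument the paper invokes from \cite{Carrillo2025}: the effective velocity $v=u+\varepsilon(\log\rho)_r$, testing the equation for $\rho v$ with $vr^{n-1}$, the stress-free identity for $\frac{\mathrm{d}}{\mathrm{d}t}\big(p(\rho(t,b(t)))b(t)^n\big)$, and integrating $\varepsilon\kappa\int\rho_r(\Phi_\alpha*\rho)_r r^{n-1}\,\mathrm{d}r$ by parts while keeping the $r=a$ term.

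The gap is the sign of the resulting bulk term. You lost the factor $-\frac{1}{\alpha}$ in $\Phi_\alpha$. In fact $\Delta\Phi_\alpha=(n-2-\alpha)|\boldsymbol{x}|^{-\alpha-2}\ge 0$ away from the origin for every $\alpha\in(-1,n-2)$, logarithmic case included, and $\Delta\Phi_{n-2}=\omega_n\delta_{\boldsymbol{0}}$. So the term produced on the left-hand side is $-\varepsilon\kappa\int_a^{b(t)}\rho\,\Delta(\Phi_\alpha*\rho)\,r^{n-1}\,\mathrm{d}r$.
\begin{itemize}
\item For $\kappa=-1$ this term is nonnegative, so your conclusion there survives despite the wrong constant.
\item For $\kappa=1$, $\alpha=n-2$, it equals $-\varepsilon\omega_n\int_a^{b(t)}\rho^2r^{n-1}\,\mathrm{d}r\le 0$, which is unfavourable. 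Only the $r=a$ term disappears, by Newton's theorem, since $\rho\equiv0$ in $B_a$.
\end{itemize}
You therefore must bound $\varepsilon\omega_n\int_0^t\int_a^{b(s)}\rho^2 r^{n-1}\,\mathrm{d}r\,\mathrm{d}s$ uniformly in $(\varepsilon,b)$. Lemma~\ref{bigsmall} alone does not do this: it controls only $\|\rho\|_{L^{\gamma_2}}$, and $\gamma_2<2$ is admissible (e.g.\ $n=3$, $\gamma_2\in(\frac43,2)$).

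The missing step is to absorb this term into the dissipation $D(s):=\int_a^{b(s)}p'(\rho)|(\sqrt{\rho})_r|^2r^{n-1}\,\mathrm{d}r$. If $\gamma_2\ge2$ the energy suffices. Otherwise:
\begin{enumerate}
\item Fix $K\ge\rho^*$ with $K>\rho_0(b)\ge\rho(t,b(t))$ and set $w:=(\rho^{\gamma_2/2}-K^{\gamma_2/2})_+$. Then $w$ vanishes near $r=b(t)$, and by \eqref{1.4}--\eqref{1.5}, $\int_a^{b}|w_r|^2r^{n-1}\,\mathrm{d}r\le CD$.
\item On $\{\rho\le 2^{2/\gamma_2}K\}$, use $\rho^2\le CK\rho$.
\item On the complement $w\ge\frac12\rho^{\gamma_2/2}$. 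Sobolev's inequality, applied to $w$ extended by $w(a)$ inside $B_a$, then bounds $\rho$ there in $L^{n\gamma_2/(n-2)}$ by $CD^{1/\gamma_2}$. This is for $n\ge3$; use Gagliardo--Nirenberg for $n=2$.
\item Interpolate $L^2$ between this space and $L^{\gamma_2}$, which is controlled by the energy estimate and \eqref{bigandsmallbehav}. This gives $\int_a^{b}\rho^2r^{n-1}\,\mathrm{d}r\le C+CD^{\frac{n(2-\gamma_2)}{2\gamma_2}}$.
\item The exponent is $<1$ precisely because $\gamma_2>\frac{2n}{n+2}=\frac{2n}{2n-\alpha}$. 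Young's inequality then gives $\varepsilon\omega_n\int_0^t\int\rho^2r^{n-1}\le 2\varepsilon\int_0^tD\,\mathrm{d}s+C\varepsilon t$.
\item The first term is absorbed by the $4\varepsilon\int_0^tD\,\mathrm{d}s$ on the left.
\end{enumerate}
Without this estimate, or an equivalent one, the attractive Newtonian case of the lemma is not proved.
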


The proof follows from the same argument as \cite[Lemma 3.9]{Carrillo2025} after 
replacing $\gamma$ by $\gamma_2$ and using \eqref{bigandsmallbehav}.

\begin{lemma}[BD-Type Entropy Estimate II]
Under the conditions of {\rm Lemma {\rm \ref{Energy}}}, for $\alpha \in (-1,n-1)$,
$\gamma_2 \geq \frac{3n}{3n - 2(1+\alpha)}$, any given $T > 0$, $\v\in(0,\v_0]$,
and $b \geq \max\{C_1(M,E_0,T,\varepsilon), \mathfrak{B}(\varepsilon)\}$
{\rm (}as defined in {\rm Lemma \ref{expand}} and {\rm \cite[Lemma B.8]{Carrillo2025}}{\rm )},
the following uniform estimate holds for $t \in [0,T]${\rm :}
\begin{align*}
\begin{split}
	&\frac{\varepsilon^2}{4}\int_a^{b(t)} \big|(\sqrt{\rho(t,r)})_r\big|^2\,r^{n-1} \dd r
	+\frac{4 a_0\varepsilon}{\gamma} \int_0^t\int_a^{b(s)} p'(\rho)|(\sqrt{\rho(s,r)})_r|^2\,r^{n-1}\dd r\dd s\\
	&\quad+\frac{1}{n}p(\rho(t,b(t)))b(t)^n
	+\frac{1}{n\varepsilon}\int_0^t p(\rho(s,b(s)))p'(\rho(s,b(s))) b(s)^n\dd s \\
	&\leq C(E_0,M,T).
 \end{split}
\end{align*}
\end{lemma}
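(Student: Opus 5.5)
We obtain BD-Type Entropy Estimate II by avoiding the integration by parts of the potential term that produced the bad boundary contribution $\kappa a^{n-1}(\rho(\Phi_\alpha*\rho)_r)(t,a)$ in Lemma \ref{oldBD}. Instead, we keep the potential force as a source term and close the estimate by a Grönwall-type argument. As in \cite{Chen2021,Carrillo2025}, we work in the Lagrangian coordinates $(\tau,x)$ on the fixed domain $[0,T]\times[0,\frac{M}{\omega_n}]$. Using $\rho_\tau=-\rho^2(r^{n-1}u)_x$, we rewrite the viscous term $\varepsilon r^{n-1}(\rho^2(r^{n-1}u)_x)_x$ as $-\varepsilon r^{n-1}(\rho_x)_\tau$ plus lower order. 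This yields the effective velocity $v:=u+\varepsilon r^{n-1}\rho_x$, which is the usual BD variable with $(\mu,\lambda)=(\rho,0)$. It satisfies
\[
v_\tau+r^{n-1}p_x+\kappa\rho r^{n-1}(\Phi_\alpha*\rho)_x=(\text{terms from } \tfrac{n-1}{r}),
\]
with the stress-free condition \eqref{boundary} at $x=\frac{M}{\omega_n}$ and $u(\tau,0)=0$.

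We multiply by $v$ and integrate in $x$. The left side gives $\frac{d}{d\tau}\frac12\int v^2\,\dd x$. Expanding the pressure term produces the good dissipation $\varepsilon\int p'(\rho)\rho_x^2 r^{2n-2}\,\dd x$, which in Eulerian variables is $4\varepsilon\int p'(\rho)|(\sqrt\rho)_r|^2r^{n-1}\dd r$ up to the constant in the statement. It also produces the cross term $\int u\,r^{n-1}p_x\,\dd x$. We integrate the cross term by parts. The boundary piece at $b(t)$ yields $\frac1n p(\rho(t,b(t)))b(t)^n$ and the time-integrated boundary dissipation $\frac{1}{n\varepsilon}\int p p' b^n$, exactly as in \cite[Lemma 5.7]{Chen_2024}. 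The interior part is bounded by the energy estimates of Lemma \ref{Energy} and Corollary \ref{uniform}, together with \eqref{bigandsmallbehav}. The curvature terms of order $\frac{n-1}{r}$ are handled as in \cite{Chen_2024}; they are controlled by the energy dissipation $\varepsilon\int\rho u^2r^{n-3}$.

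The main obstacle is the potential term $\kappa\int \rho r^{n-1}(\Phi_\alpha*\rho)_x\,v\,\dd x$. Its $u$-part is bounded by $\|\sqrt\rho u\|_{L^2}\|\sqrt\rho\nabla\Phi_\alpha*\rho\|_{L^2}$. Its $\varepsilon\rho_x$-part equals, in Eulerian form, $\varepsilon\int \rho\,\rho_r\,(\Phi_\alpha*\rho)_r r^{n-1}\dd r=2\varepsilon\int\rho^{3/2}(\sqrt\rho)_r(\Phi_\alpha*\rho)_r r^{n-1}\dd r$. We split this part by Young's inequality. The first piece is $\varepsilon^2\int|(\sqrt\rho)_r|^2r^{n-1}$, which is the Grönwall quantity. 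The second piece is $\int\rho^3|\nabla\Phi_\alpha*\rho|^2\,\dd\boldsymbol x$, or alternatively $\int\rho|\nabla\Phi_\alpha*\rho|^2$ after absorbing some of it into the $p'$-dissipation. To bound $\int\rho|\nabla\Phi_\alpha*\rho|^2$ we use Hölder and HLS. We have $\nabla\Phi_\alpha*\rho\in L^{q}$ with $\frac1q=\frac1{\gamma_2}-\frac{n-\alpha-1}{n}$, and $\rho\in L^{\gamma_2}$ uniformly. The product is integrable provided $\frac1{\gamma_2}+\frac2q\le1$, i.e. $\frac{3}{\gamma_2}\le 1+\frac{2(n-\alpha-1)}{n}$, which is precisely $\gamma_2\ge\frac{3n}{3n-2(1+\alpha)}$. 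This is the computation of $p_0$ in Lemma \ref{relativenonlocal}, and interpolating with $L^1$ handles the remaining range.

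Where $\rho^3$ appears at high density, we would instead trade one factor via $\varepsilon^2|(\sqrt\rho)_r|^2$ together with a Sobolev/radial bound on $\sqrt\rho$. The difficulty is getting only a linear dependence on $\varepsilon^2\|(\sqrt\rho)_r\|^2$, so we first try the pure Young split above. Integrating in time then gives
\[
Y(t)\le C(E_0,M,T)+C\int_0^tY(s)\,\dd s,
\]
where $Y(t)$ denotes the left-hand side of the statement. Grönwall's inequality yields the bound $C(E_0,M,T)$. Initial data terms are controlled by $E_1^{\varepsilon,b}$ and $E_0^{\varepsilon,b}$ as in \cite[Lemma B.8]{Carrillo2025}.
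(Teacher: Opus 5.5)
Your strategy matches the argument the paper imports from \cite[Lemma 3.11]{Carrillo2025}. You work with the BD velocity $v=u+\varepsilon r^{n-1}\rho_x$ and keep the potential force as a source, so no boundary term at $r=a$ appears. You close by Grönwall, and the threshold $\gamma_2\ge\frac{3n}{3n-2(1+\alpha)}$ enters through your Hölder--HLS bound on $\int\rho|\nabla\Phi_\alpha*\rho|^2\,\dd\boldsymbol{x}$.

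However, the key step is miscomputed, and as written it does not close. In Lagrangian variables $\kappa\rho r^{n-1}(\Phi_\alpha*\rho)_x=\kappa(\Phi_\alpha*\rho)_r$, $\varepsilon r^{n-1}\rho_x=\varepsilon\rho_r/\rho$, and $\dd x=\rho r^{n-1}\dd r$. So the $\varepsilon$-part of the potential term is
\[
\varepsilon\kappa\int_a^{b(t)}\rho_r(\Phi_\alpha*\rho)_r\,r^{n-1}\dd r=2\varepsilon\kappa\int_a^{b(t)}\sqrt{\rho}\,(\sqrt{\rho})_r(\Phi_\alpha*\rho)_r\,r^{n-1}\dd r,
\]
not $2\varepsilon\int\rho^{3/2}(\sqrt{\rho})_r(\Phi_\alpha*\rho)_r\,r^{n-1}\dd r$. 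With your extra factor of $\rho$, both of your proposed treatments fail:
\begin{itemize}
\item The plain Young split yields $\int\rho^3|\nabla\Phi_\alpha*\rho|^2\,\dd\boldsymbol{x}$. This cannot be controlled by the uniform $L^1\cap L^{\gamma_2}$ bounds, since $\gamma_2<3$.
\item Absorbing into the $p'$-dissipation yields $\varepsilon\int\rho^3p'(\rho)^{-1}|\nabla\Phi_\alpha*\rho|^2\,\dd\boldsymbol{x}$. At large density this behaves like $\rho^{4-\gamma_2}|\nabla\Phi_\alpha*\rho|^2$, not $\rho|\nabla\Phi_\alpha*\rho|^2$.
\end{itemize}
So the quantity your HLS computation bounds is not the one your derivation produced. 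Your closing paragraph, trading a factor of $\rho$ via a Sobolev bound on $\sqrt{\rho}$, is patching a term that is not actually there.

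With the change of variables corrected, Young's inequality gives directly
\[
2\varepsilon\Big|\int_a^{b(t)}\sqrt{\rho}\,(\sqrt{\rho})_r(\Phi_\alpha*\rho)_r\,r^{n-1}\dd r\Big|\le\varepsilon^2\int_a^{b(t)}|(\sqrt{\rho})_r|^2r^{n-1}\dd r+\int_a^{b(t)}\rho|(\Phi_\alpha*\rho)_r|^2r^{n-1}\dd r .
\]
The first term is at most $\frac12\int v^2\,\dd x+\frac12\int\rho u^2r^{n-1}\dd r$, which is your Grönwall quantity. The second is bounded by $C\|\rho\|_{L^{p_0}}^3$ with $p_0=\frac{3n}{3n-2(1+\alpha)}$, hence uniformly bounded by interpolation between $L^1$ and $L^{\gamma_2}$. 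The rest of your argument then goes through unchanged.

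A minor simplification: there are no leftover curvature terms. Since $r_\tau=u$, the term $-(n-1)\varepsilon r^{n-2}\rho_xu$ combines exactly with $\varepsilon r^{n-1}(\rho^2(r^{n-1}u)_x)_x$ to give $-\varepsilon(r^{n-1}\rho_x)_\tau$. The BD equation is therefore exactly $v_\tau+r^{n-1}p_x+\kappa\rho r^{n-1}(\Phi_\alpha*\rho)_x=0$.
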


The proof follows from the same argument as \cite[Lemma 3.11]{Carrillo2025} after replacing $\gamma$ by $\gamma_2$ 
and using \eqref{bigandsmallbehav}.

\subsection{Higher Integrability}
To pass to the limit $\varepsilon \to 0$ and thereby obtain a finite-energy solution  $(\rho,m)$
of CEREs \eqref{1.1} from weak solutions $(\rho^\v,\rho^\v u^\v)$ of CNSREs \eqref{0.2}, 
we employ the compensated compactness framework developed in \cite{Chen_2024};
see also \cite{chen2009vanishing,Schrecker_2019}. 
A key requirement for this approach is the availability of uniform local higher-integrability 
estimates for both the density and the velocity. 
The derivation of these estimates relies crucially on the entropy analysis 
established in Lemma~\ref{specificentropy}.

\begin{lemma}[Higher Integrability of the Density and Velocity]\label{higher}
Let $(\rho,u)$ be the smooth solution of \eqref{Eul}. Then, under the assumptions of {\rm Theorem {\rm \ref{existence}}},
for $T>0$, $\v\in(0,\v_0]$, and $b \geq \max\{C_1(M,E_0,T,\varepsilon), \mathfrak{B}(\varepsilon)\}$
{\rm (}as defined in {\rm Lemmas \ref{expand}} and {\rm \cite[Lemma B.8]{Carrillo2025}}{\rm )},
the following estimate holds{\rm :}
    \begin{align*}
    \int^T_0 \Big( \int^{b(t)}_d \rho (p(\rho) + \rho^{\gamma_2} )(t,r) \dd r + \int^{D}_d \rho|u|^3(t,r) 
     \dd r \Big) \dd t \leq C(d,D,M,E_0,T)
    \end{align*}
for any $d \in (2b^{-1},b(t))$, $D \in (d,b(t))$, and any $t \in [0,T]$. 
\end{lemma}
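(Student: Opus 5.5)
We follow the standard route of Chen et al. for the higher-integrability estimates in the free-boundary approximation, adapted to the Riesz force and to general pressures through Lemma \ref{bigsmall}. There are two separate estimates: one for the density and one for the velocity.

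\textbf{Density estimate.} We use a weight cut off away from the origin. Let $\omega(r)\in C^\infty$ be nondecreasing, with $\omega=0$ on $[0,d/2]$ and $\omega=1$ on $[d,\infty)$. Multiply the momentum equation $\eqref{Eul}_2$ by $\omega(r)\int_a^r\rho(t,y)\,\dd y$, or in Lagrangian form use the primitive of the mass. Integrating over $(0,T)\times(a,b(t))$ and integrating by parts turns the pressure term into $\int_0^T\!\int\omega\,\rho\,p(\rho)\,\dd r\dd t$.

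The remaining terms are bounded as follows.
\begin{itemize}
\item The time-derivative term and the convective terms $\rho u^2$, $\frac{n-1}{r}\rho u^2$ are controlled by the energy estimates in Lemma \ref{Energy} and Corollary \ref{uniform}. On $r\ge d/2$ the weight $r^{n-1}$ is bounded below, so these integrals are bounded by constants depending on $d$.
\item The viscous terms are controlled by the energy dissipation together with the BD entropy estimates.
\item The free-boundary contribution vanishes by the stress-free condition \eqref{boundary}.
\item The Riesz term $\kappa\rho(\Phi_\alpha*\rho)_r\int_a^r\rho$ is bounded using $\nabla\Phi_\alpha*\rho\in L^{\frac{2n}{\alpha+2}}$ (resp.\ locally, for $\alpha\le0$) and H\"older's inequality with $\rho\in L^{\gamma_2}$. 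This step uses $\gamma_2\ge \frac{3n}{3n-2(1+\alpha)}$ to make the H\"older exponents admissible.
\end{itemize}

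Since $\rho^{\gamma_2+1}\lesssim \rho p(\rho)+\rho$ by \eqref{1.2}--\eqref{1.5}, this gives the bound on $\int\rho(p+\rho^{\gamma_2})$ over $r\ge d$. The constant is uniform in $b$ because the weight is constant for large $r$.

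\textbf{Velocity estimate.} Use the special entropy pair $(\hat\eta,\hat q)$ of Lemma \ref{specificentropy}. Compute the spherical entropy balance
\[
(\hat\eta r^{n-1})_t+(\hat q r^{n-1})_r=\text{(geometric terms)}+\text{(viscous terms)}-\kappa\hat\eta_m\rho(\Phi_\alpha*\rho)_r r^{n-1}.
\]
Integrate this over $(0,T)\times(r,D)$ and then average in $r\in(d,D)$, or equivalently integrate against a cutoff.

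On the set where $|u|\ge k(\rho)$, the flux bound $\hat q\ge\frac12\rho|u|^3$ gives the positive quantity we want. On the complementary set, $|\hat q|\lesssim\rho^{\gamma(\rho)+\theta(\rho)}$, and this is controlled by the density estimate just proved.

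The remaining terms are bounded as follows.
\begin{itemize}
\item The geometric terms $\frac{n-1}{r}(\hat q-u\hat\eta)$ and the time boundary terms are controlled by Lemma \ref{specificentropy}(a),(d), the energy estimates, and the density estimate.
\item The viscous terms are controlled via the bounds on $\hat\eta_m$ and $\hat\eta_{mm}$ from Lemma \ref{specificentropy}(b),(c), together with the BD estimate on $\varepsilon\sqrt{\rho}_r$.
\item The Riesz source is bounded by $\int|\hat\eta_m|\rho|(\Phi_\alpha*\rho)_r|\lesssim\int(|u|+\rho^{\theta})\rho|\nabla\Phi_\alpha*\rho|$. Split this with Young's inequality into $\rho u^2$ plus $\rho|\nabla\Phi_\alpha*\rho|^2$, and handle the latter with HLS and the $L^{\gamma_2}$ bound, again using the lower bound on $\gamma_2$.
\end{itemize}

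\textbf{Main obstacle.} We expect the main difficulty to be the nonlocal terms, specifically their uniform control near the left end $r\sim d$ and for $\alpha$ close to $n-1$. There $\nabla\Phi_\alpha*\rho$ is only in $L^{\frac{2n}{\alpha+2}}$, and the H\"older exponent balance relies on $\gamma_2\ge\frac{3n}{3n-2(1+\alpha)}$. For $\alpha\le0$ we will instead use the local bounds from Corollary \ref{uniform}, Case 3. We will also need the mixed behaviour of $\gamma(\rho)$ (switching between $\gamma_1$ and $\gamma_2$), which we handle by splitting $\rho<\rho^*$ and $\rho\ge\rho^*$ as in Lemma \ref{bigsmall}.
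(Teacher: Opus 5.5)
Your density step is essentially the argument the paper imports from \cite[Lemma 3.12]{Carrillo2025}, and your velocity step uses the same special entropy pair as \cite[Lemma 5.6]{Chen_2024}. However, the velocity step does not close as you set it up. Integrating $(\hat\eta r^{n-1})_t+(\hat q r^{n-1})_r=(\text{sources})$ over $(0,T)\times(r,D)$ with $D$ an interior point gives
\[
\int_0^T(\hat q\, r^{n-1})(t,r)\,\dd t=\int_0^T(\hat q\, r^{n-1})(t,D)\,\dd t-\Big[\int_r^D\hat\eta\, y^{n-1}\,\dd y\Big]_{t=0}^{t=T}+\int_0^T\!\!\int_r^D(\text{sources})\,\dd y\,\dd t .
\]
The first term on the right is bounded below by $\frac12\int_0^T\rho|u|^3(t,D)D^{n-1}\,\dd t-C\int_0^T\rho^{\gamma+\theta}(t,D)D^{n-1}\,\dd t$. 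It is thus a pointwise-in-$r$ version of the very quantity you want to estimate, and nothing in your argument bounds it from above. Averaging in $r\in(d,D)$ does not remove it, so the argument is circular. A compactly supported cut-off does not help either: $\omega'$ changes sign, so you only control a \emph{difference} of integrals of $\hat q\,|\omega'|\,r^{n-1}$ over two disjoint intervals.

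The flux must be anchored at the free boundary: integrate over $(r,b(t))$, or equivalently use a nondecreasing cut-off equal to $1$ up to $b(t)$. Since $b'(t)=u(t,b(t))$, the boundary contribution becomes $(\hat q-u\hat\eta)(t,b(t))\,b(t)^{n-1}$ plus the viscous flux. By \eqref{boundary} the viscous flux equals $\hat\eta_m\,p(\rho)\,b(t)^{n-1}$ at $r=b(t)$. By Lemma \ref{specificentropy}(b),(d), both pieces are bounded there by $C(\rho^{\gamma(\rho)}|u|+\rho^{\gamma(\rho)+\theta(\rho)})\,b(t)^{n-1}$. These are integrable in time thanks to the free-boundary terms $(n-1)\varepsilon\int_0^T(\rho u^2)(t,b(t))\,b(t)^{n-2}\,\dd t$ in Lemma \ref{Energy}, and $\frac1n p(\rho(t,b(t)))\,b(t)^n$ and $\frac1{n\varepsilon}\int_0^T(pp')(\rho(t,b(t)))\,b(t)^n\,\dd t$ in the BD-type estimates. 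For example, $\rho^{\gamma}|u|\,b^{n-1}\le\varepsilon\rho u^2b^{n-2}+\varepsilon^{-1}\rho^{2\gamma-1}b^n$, with $\rho^{2\gamma(\rho)-1}\lesssim pp'$. This is also why every source term (geometric, viscous, Riesz) must be controlled on the whole exterior region $[d,b(t)]$ uniformly in $b$, and why the density part of the lemma is stated up to $b(t)$ rather than on compact sets.

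There are also two smaller corrections.

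In the density step, use the mass primitive $x(t,r)=\int_a^r\rho\,y^{n-1}\,\dd y\le M/\omega_n$, not $\int_a^r\rho\,\dd y$, which is not controlled by the mass near $r=a=b^{-1}$. Also note that the terms $\pm\int\omega\rho^2u^2r^{n-1}\,\dd r$ produced by $x_t=-\rho u r^{n-1}$ and by the convective flux cancel exactly; they are not individually bounded by the energy, contrary to what you wrote.

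For the Riesz terms, $\gamma_2\ge\frac{3n}{3n-2(1+\alpha)}$ is not a hypothesis of Theorem \ref{existence} when $\kappa=-1$, $\alpha\in(-1,n-2]$ (only $\gamma_2>1$ is assumed), nor when $\kappa=1$, $\alpha=n-2$. Your HLS/H\"older route is therefore unavailable in those cases. Instead use the radial representation of Lemma \ref{potentially}: for $\alpha\le n-2$ and $r\ge d/2$, the averaged kernel $\omega(r,\eta)$ is bounded by $C(d)$ uniformly in $\eta$. Hence $|(\Phi_\alpha*\rho)_r(r)|\le C(d)M$, which disposes of both $\int\rho|(\Phi_\alpha*\rho)_r|$ and $\int\rho|(\Phi_\alpha*\rho)_r|^2$ on $[d/2,b(t)]$. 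Even where the lower bound on $\gamma_2$ is assumed, pairing $\nabla\Phi_\alpha*\rho\in L^{\frac{2n}{\alpha+2}}$ with $\rho$ requires $\rho\in L^{\frac{2n}{2n-\alpha-2}}$, which $\gamma_2\ge\frac{3n}{3n-2(1+\alpha)}$ does not provide when $\alpha<2$. Apply HLS directly with $\rho\in L^{\frac{2n}{2n-\alpha-1}}$ instead.
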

The estimate of the first term in the proof follows from the same argument 
as \cite[Lemma 3.12]{Carrillo2025}. 
Furthermore, as in Lemma \ref{bigsmall}, we see that there exists $C>0$ such that
\[
    \int_K \rho^{\gamma_2 + 1}(t,r) \dd r \leq C \int_K \rho(1 + p(\rho))(t,r) \dd r.
\]
Thus, we can infer the estimate on the second term. 
The estimate on the final term follows from a combination of the argument 
for general pressure laws in \cite[Lemma 5.6]{Chen_2024} 
and those for estimates of the Riesz potential in \cite[Lemma 3.13]{Carrillo2025}.

\subsection{Weak Solutions to CNSREs}\label{weaksub}
Having established the uniform energy estimates, BD-type entropy estimates, 
and local higher-integrability estimates for the velocity, 
we are now in a position to apply the Aubin–Lions lemma to obtain compactness of the sequence
$(\rho^{\v,b},\rho^{\v,b} u^{\v,b})$ in suitable local spaces. 
This compactness enables us to pass to the limit $b\to\infty$ and thereby obtain global weak solutions
$(\rho^{\v},\rho^{\v} u^{\v})$ to CNSREs. 
Viewing the approximate solutions as functions of the radial variable, 
the local convergence is obtained on arbitrary compact subsets
$K \Subset (0,\infty)$. 
By Lemma~\ref{expand}, for sufficiently large $b$, we see that
 $K \subset (a,b(t))$ for all $t > 0$. 
 Therefore, the limiting process is well-defined, 
 and the resulting convergence yields global weak solutions to CNSREs.

\begin{lemma}\label{lem5.2}
Under the assumptions of {\rm Theorem {\rm \ref{existence}}}, for fixed $\v\in(0,\v_0]$
{\rm (}as defined in {\rm \cite[Lemma B.8]{Carrillo2025}}{\rm )}, there exists functions $\rho^{\v}(t,r)\ge 0$ almost everywhere
and $m^{\v}(t,r)$ such that, as $b\rightarrow\infty$ $($up to a sub-sequence$)$,
\begin{enumerate}
    \item[{\rm(a)}] 
    $
	(\sqrt{\rho^{\v,b}}, \rho^{\v,b})\longrightarrow (\sqrt{\rho^{\v}}, \rho^{\v})$  {\it a.e.} and strongly in  $C(0,T; L^q_{\rm loc})$ for all $q \in [1,\infty)$,
    where $L^q_{\rm loc}$ denotes  $L^q(K)$ for $K\Subset (0,\infty)$.

    \vspace{0.2cm}

    \item[{\rm(b)}] The sequence $p(\rho^{\v,b})$ is uniformly bounded in $L^\infty(0,T; L^q_{\rm loc})$ for all $q\in[1,\infty]$ and
    \begin{align*}
	p(\rho^{\v,b})\longrightarrow p(\rho^{\v}) \qquad  \mbox{strongly in $L^q(0,T; L^q_{\rm loc})$ for all $q\in[1,\infty)$}.
	\end{align*}

    \vspace{0.2cm}

    \item[{\rm(c)}] $ m^{\v,b}=\rho^{\v,b} u^{\v,b}\longrightarrow m^{\v}(t,r)$ {\it a.e.} and strongly in $L^2(0,T; L^q_{\rm loc})$ for all $q\in[1,\infty)$.

    \vspace{0.2cm}

    \item[{\rm(d)}]There exists
	a function $u^{\v}(t,r)$ such that $m^{\v}(t,r)=\rho^{\v}(t,r) u^{\v}(t,r)$ a.e. and $\,u^\v(t,r)=0$ a.e.
	on $\{(t,r)\,:\,\rho^{\v}(t,r)=0\}$. Moreover, as $b\rightarrow\infty$ $($up to a sub-sequence$)$,
	\begin{align*}
	& m^{\v,b} \longrightarrow m^{\v}=\rho^{\v} u^{\v} \qquad  \mbox{strongly in $L^2(0,T; L^q_{\rm loc})$ for $q\in[1,\infty)$},\\
	& \frac{m^{\v,b}}{\sqrt{\rho^{\v,b}}}\longrightarrow \sqrt{\rho^{\v}} u^{\v}=:\frac{m^\v}{\sqrt{\rho^\v}}
\qquad  \mbox{strongly in $L^2(0,T; L^2_{\rm loc})$}.
\end{align*}
\end{enumerate}
\end{lemma}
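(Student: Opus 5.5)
The plan is to pass $b\to\infty$ for fixed $\v$ using the uniform-in-$b$ bounds already collected. These are the energy estimates of Lemma \ref{Energy} together with Corollary \ref{uniform}, the BD-type entropy estimates (I or II, depending on the regime), and the local higher integrability of Lemma \ref{higher}. Compactness will come from Aubin--Lions applied on an arbitrary compact set $K\Subset(0,\infty)$. By Lemma \ref{expand}, $K\subset(a,b(t))$ for all $t\in[0,T]$ once $b$ is large, so the approximate solutions are defined on $[0,T]\times K$. A diagonal argument over an exhaustion of $(0,\infty)$ by compact sets then gives convergence on all of them.

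For (a), the BD estimate gives $\v^2\int|(\sqrt{\rho^{\v,b}})_r|^2r^{n-1}\,\dd r\le C$, and the energy estimate gives $\sqrt{\rho^{\v,b}}\in L^\infty(0,T;L^2_{\rm loc})$. Together these bound $\sqrt{\rho^{\v,b}}$ in $L^\infty(0,T;H^1(K))$. Since $n=1$ in $r$, this space embeds compactly into $C(K)$, which yields uniform $L^\infty(K)$ bounds on $\rho^{\v,b}$ independent of $b$. For the time derivative I use the mass equation in the form $(\sqrt\rho)_t=-\tfrac12\sqrt\rho\,u_r-(\sqrt\rho)_r u-\tfrac{n-1}{2r}\sqrt\rho u$. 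The viscous dissipation $\v\int\!\!\int\rho u_r^2$ bounds the first term in $L^2(0,T;L^2(K))$. The product $(\sqrt\rho)_r u$ is estimated by $\|(\sqrt\rho)_r\|_{L^2}\|u\|_{L^\infty}$, and on $K$ the velocity is controlled through $\sqrt\rho u\in L^\infty L^2$ combined with the lower and upper information on $\rho$. Alternatively, the whole term can be bounded in $L^2(0,T;W^{-1,1}(K))$. Aubin--Lions then gives strong convergence of $\sqrt{\rho^{\v,b}}$ in $C(0,T;L^q(K))$, and hence of $\rho^{\v,b}$ as well, via the uniform $L^\infty$ bound. Part (b) follows immediately from the uniform $L^\infty_{t,r}$ bound on $\rho$ on $K$, the continuity of $p$, and dominated convergence.

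For (c), the key step is a bound for $m=\rho u$ in $L^2(0,T;W^{1,1}(K))$. Writing $m_r=\rho u_r+\rho_r u=\sqrt\rho(\sqrt\rho u_r)+2(\sqrt\rho)_r(\sqrt\rho u)$, each product is a bounded function times $L^2_tL^2_r$, or $L^\infty_tL^2_r$ times $L^\infty_tL^2_r$. For $m_t$, the momentum equation writes $m_t$ as a distributional derivative of terms bounded in $L^1$, and Lemma \ref{higher} is used for $\rho u^2$ and $p$. The viscous terms are handled in weak form, and the Riesz forcing is bounded by Corollary \ref{uniform}. This puts $m_t$ in $L^1(0,T;W^{-2,1}(K))$ or a similar space, and Aubin--Lions gives strong $L^2(0,T;L^q(K))$ convergence and a.e.\ convergence along a subsequence.

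Part (d) is the step I expect to be the main obstacle. I define $u^\v:=m^\v/\rho^\v$ on $\{\rho^\v>0\}$ and $u^\v:=0$ elsewhere. By Fatou, $\int\!\!\int (m^\v)^2/\rho^\v\le\liminf\int\!\!\int \rho^{\v,b}(u^{\v,b})^2<\infty$, which forces $m^\v=0$ a.e.\ on the vacuum set, so $m^\v=\rho^\v u^\v$. For the strong convergence of $m^{\v,b}/\sqrt{\rho^{\v,b}}$ in $L^2$, I split into the region $\{|u^{\v,b}|\le N\}$ and its complement. On the first region, a.e.\ convergence holds off the vacuum, and near the vacuum the quantity is bounded by $N\sqrt{\rho^{\v,b}}\to N\sqrt{\rho^\v}=0$, so dominated convergence applies. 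On the complement, $\int\!\!\int_{|u|>N}\rho u^2\le N^{-1}\int\!\!\int\rho|u|^3\le C/N$ by Lemma \ref{higher}. Letting $N\to\infty$ concludes the argument. The remaining delicate point is controlling the cubic moment uniformly on $K$, which is exactly what Lemma \ref{higher} supplies.
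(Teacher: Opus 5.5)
Your proposal follows essentially the same route as the paper, which defers to Lemmas 4.1--4.4 of \cite{Chen2021}: uniform-in-$b$ energy and BD bounds with Aubin--Lions on compact $K\Subset(0,\infty)$ for $\sqrt{\rho^{\v,b}}$ and $m^{\v,b}$, then Fatou and the truncation on $\{|u^{\v,b}|\le N\}$ combined with the local $\rho|u|^3$ bound of Lemma \ref{higher} for the strong convergence of $m^{\v,b}/\sqrt{\rho^{\v,b}}$. In (a), drop your first option of controlling $u$ in $L^\infty(K)$, because there is no lower bound on $\rho^{\v,b}$ that is uniform in $b$; your alternative is the one that works, via the identity $(\sqrt\rho)_t=-(\sqrt\rho\,u)_r+\frac12\sqrt\rho\,u_r-\frac{n-1}{2r}\sqrt\rho\,u$, which places $(\sqrt\rho)_t$ in $L^2(0,T;H^{-1}(K))$.
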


The proof follows from combining the approaches of Lemma 4.1, Corollary 4.2, and Lemmas 4.3--4.4 in \cite{Chen2021}. With the local convergence, we can show that the mass equation is satisfied weakly 
for the approximate solutions $(\rho^\v,m^\v) = (\rho^\v,\rho^\v u^\v)$.

\begin{lemma}
Under the assumptions of {\rm Lemma {\rm \ref{lem5.2}}}, let $0\leq t_1<t_2\leq T$, and let $\zeta(t,\boldsymbol{x})\in C^1_{\rm c}([0,T]\times\mathbb{R}^n)$
be any smooth function with compact support.
Then
\begin{align*}
	\int_{\mathbb{R}^n} \rho^{\v}(t_2,\boldsymbol{x}) \zeta(t_2,\boldsymbol{x})\, \dd\boldsymbol{x}
	=\int_{\mathbb{R}^n} \rho^{\v}(t_1,\boldsymbol{x}) \zeta(t_1,\boldsymbol{x})\, \dd\boldsymbol{x}
	+\int_{t_1}^{t_2} \int_{\mathbb{R}^n} \big(\rho^{\v} \zeta_t + \M^{\v}\cdot\nabla\zeta\big)\,\dd\boldsymbol{x}\dd t.
\end{align*}
Moreover, the total mass is conserved{\rm:}
\begin{equation*}
	\int_{\mathbb{R}^n} \rho^\v(t,\boldsymbol{x})\, \dd\boldsymbol{x}=\int_{\mathbb{R}^n} \rho_0^\v(\boldsymbol{x})\, \dd\boldsymbol{x}=M\qquad\, \mbox{for $t\geq0$}.
\end{equation*}
\end{lemma}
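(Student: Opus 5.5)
We derive the weak mass equation for $(\rho^\v,\M^\v)$ by passing to the limit $b\to\infty$ in the corresponding identity for the smooth free-boundary solutions $(\rho^{\v,b},u^{\v,b})$, using the local strong convergence of Lemma \ref{lem5.2}. Mass conservation then follows by a cutoff argument combined with uniform tail control.

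\textbf{Step 1 (identity for each $b$).} Fix $\zeta\in C^1_{\rm c}([0,T]\times\mathbb{R}^n)$. Its support lies in some ball $B_{R}$. By Lemma \ref{expand}, once $b$ is large we have $b(t)\ge b/2>R$ for all $t\in[0,T]$, so the support of $\zeta$ stays inside the fluid domain away from the outer free boundary. Write $\M^{\v,b}=m^{\v,b}\frac{\boldsymbol{x}}{|\boldsymbol{x}|}$ and multiply $\eqref{Eul}_1$ by $\zeta r^{n-1}$. After integrating over $[t_1,t_2]\times(a,b(t))$ and by parts, the outer boundary terms vanish because $\zeta=0$ there. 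The inner boundary term at $r=a$ vanishes because $u(t,a)=0$ by \eqref{boundary}. The result is
\[
\int\rho^{\v,b}\zeta(t_2)\,\dd\boldsymbol{x}=\int\rho^{\v,b}\zeta(t_1)\,\dd\boldsymbol{x}+\int_{t_1}^{t_2}\!\!\int\big(\rho^{\v,b}\zeta_t+\M^{\v,b}\cdot\nabla\zeta\big)\dd\boldsymbol{x}\dd t ,
\]
where $(\rho^{\v,b},\M^{\v,b})$ is extended by zero outside $[a,b(t)]$.

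\textbf{Step 2 (limit $b\to\infty$).} The region near the origin needs separate treatment, since Lemma \ref{lem5.2} gives convergence only on compact sets $K\Subset(0,\infty)$.
\begin{itemize}
\item Away from the origin, on $\{d\le r\le R\}$, Lemma \ref{lem5.2}(a) gives $\rho^{\v,b}\to\rho^\v$ in $C(0,T;L^q_{\rm loc})$, which handles the time-slice terms for every $t_1,t_2$. Lemma \ref{lem5.2}(d) gives $m^{\v,b}\to m^\v$ in $L^2(0,T;L^q_{\rm loc})$, which handles the flux term.
\item Near the origin, the contribution of $\{r<d\}$ is small uniformly in $b$. For the density terms, the uniform bound in $L^\infty(0,T;L^{\gamma_2})$ from Corollary \ref{uniform} and H\"older's inequality give smallness $\lesssim d^{n(\gamma_2-1)/\gamma_2}$. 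For the flux term we write $|\M|\le\sqrt\rho\,\frac{|\M|}{\sqrt\rho}$ and apply Cauchy--Schwarz with the uniform kinetic energy bound, getting $\lesssim T\|\rho\|^{1/2}_{L^1(B_d)}\to0$ as $d\to0$.
\end{itemize}
Letting $b\to\infty$ and then $d\to0$ yields the weak mass equation.

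\textbf{Step 3 (mass conservation).} Take $\zeta=\zeta_L(\boldsymbol{x})$, a cutoff equal to $1$ on $B_L$ with $|\nabla\zeta_L|\lesssim L^{-1}$. The flux term is then bounded by $L^{-1}|t_2-t_1|\,M^{1/2}C(E_0)^{1/2}\to0$. It remains to show $\int\rho^\v(t)\zeta_L\to\int\rho^\v(t)$, which needs no escape of mass to infinity.
\begin{itemize}
\item For $\alpha\in(-1,0]$, the weighted bound \eqref{ueq3} gives tightness directly.
\item For $\alpha>0$, the mass of $\rho^\v(t)$ is finite by Fatou, so $\int\rho^\v(t)\zeta_L\to\int\rho^\v(t)$ by monotone convergence. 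This only yields that $\int\rho^\v(t)$ is independent of $t$, not its value. To identify the value we compare with $t=0$: since $\rho^\v_0$ is the $L^1$ limit of $\rho_0^{\v,b}$, which all have mass $M$, its mass is $M$. The identity of Step 2 applied with $\zeta_L$ and $t_1=0$ then gives $\int\rho^\v(t)=\int\rho^\v_0=M$.
\end{itemize}

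\textbf{Main obstacle.} The delicate step is justifying that Step 2 holds for every pair $t_1,t_2$ rather than almost every. This relies on the $C(0,T;L^q_{\rm loc})$ convergence of Lemma \ref{lem5.2}(a); if it is unavailable near the origin, one uses weak continuity in time of $\rho^\v$ combined with the small-ball estimate above.
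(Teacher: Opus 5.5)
Your proposal is correct and follows essentially the same route as the argument the paper defers to in \cite{Chen2021}. You pass $b\to\infty$ in the free-boundary weak mass identity using the local convergences of Lemma~\ref{lem5.2} together with uniform smallness of the contributions near the origin, then test with spatial cutoffs $\zeta_L$ and use the finite mass and kinetic energy of the limit to get time-independence of $\int\rho^\v(t)\,\dd\boldsymbol{x}$, whose value is fixed by the initial data (the case split on $\alpha$ in Step~3 is unnecessary, since the dominated-convergence argument works for every $\alpha$).
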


The proof follows from the same argument as in \cite{Chen2021}. 
Note that, even with local convergence, 
the mass of the approximate solutions is conserved in the limit. 
Therefore, this allows us to prove global strong convergence of the density in some appropriate norm.
\begin{lemma}
Under the assumptions of {\rm Lemma {\rm \ref{lem5.2}}}, as $b\rightarrow\infty$ $($up to a sub-sequence$)$,
	\begin{equation*}
	\rho^{\v,b} \longrightarrow \rho^{\v}  \quad
 \text{ strongly in $C(0,T; L^q(\mathbb{R}^n))$ for any $q \in [1,\gamma_2)$}.
	\end{equation*}
That is, the convergence of $\rho^{\v,b}$ is global sub-sequentially in $L^q$.
\end{lemma}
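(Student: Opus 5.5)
The plan is to upgrade the local convergence of Lemma \ref{lem5.2}(a) to global convergence. The local convergence is already strong in $C(0,T;L^q_{\rm loc})$ with $K\Subset(0,\infty)$. Three more ingredients are needed: uniform smallness of the mass near the origin, uniform tightness at infinity coming from mass conservation, and a uniform $L^{\gamma_2}$ bound to interpolate from $L^1$ up to $L^q$ with $q<\gamma_2$.

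First, I would prove global convergence in $C(0,T;L^1(\mathbb{R}^n))$. Fix $0<d<D$ and split $\mathbb{R}^n$ into $\{r<d\}$, $\{d\le r\le D\}$ and $\{r>D\}$.
\begin{itemize}
\item On the annulus, Lemma \ref{lem5.2}(a) gives uniform-in-$t$ convergence of $\int_d^D|\rho^{\v,b}-\rho^\v|r^{n-1}\dd r$.
\item On $\{r<d\}$, Hölder's inequality together with the uniform $L^\infty(0,T;L^{\gamma_2})$ bound from Corollary \ref{uniform} (via Lemma \ref{bigsmall}) gives $\int_{B_d}\rho^{\v,b}\dd\boldsymbol{x}\le C d^{n(\gamma_2-1)/\gamma_2}$, uniformly in $b$ and $t$. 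By Fatou's lemma the same bound holds for $\rho^\v$.
\item For the tail $\{r>D\}$, I would use the conservation $\int\rho^{\v,b}=\int\rho^\v=M$ from the preceding lemma. Writing
\[
\int_{B_D^{\rm c}}\rho^{\v,b}\,\dd\boldsymbol{x}=M-\int_{B_d}\rho^{\v,b}\,\dd\boldsymbol{x}-\int_{d<|\boldsymbol{x}|<D}\rho^{\v,b}\,\dd\boldsymbol{x},
\]
the right-hand side converges, uniformly in $t$ up to the $O(d^{\cdot})$ error, to the corresponding quantity for $\rho^\v$. Since $\rho^\v\in L^1$, that limit is small once $D$ is large and $d$ is small.
\end{itemize}

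The main obstacle is that last tail step: the smallness of $\int_{B_D^{\rm c}}\rho^\v(t)$ has to hold uniformly in $t\in[0,T]$. I would handle this in one of two ways. The first is to use continuity $\rho^\v\in C(0,T;L^1_{\rm loc})$ together with mass conservation, which gives equicontinuity; compactness of $[0,T]$ then makes the family $\{\rho^\v(t)\}$ tight. Alternatively, I would derive a uniform tail estimate directly: test the mass equation against a cutoff $1-\zeta_D(r)$, use $|m|\le\sqrt\rho\,|m/\sqrt\rho|$ with the energy bound, and obtain $\sup_t\int_{r>D}\rho^{\v,b}\le\int_{r>D/2}\rho_0^{\v,b}+CT/D$ uniformly in $b$. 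This second route gives tightness uniformly in $b$ at once, and I would take it as the primary argument.

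Finally, for $1<q<\gamma_2$, I would interpolate:
\[
\|\rho^{\v,b}-\rho^\v\|_{L^q}\le\|\rho^{\v,b}-\rho^\v\|_{L^1}^{\lambda}\,\|\rho^{\v,b}-\rho^\v\|_{L^{\gamma_2}}^{1-\lambda},\qquad \frac1q=\lambda+\frac{1-\lambda}{\gamma_2}.
\]
The second factor is uniformly bounded by Corollary \ref{uniform} and weak lower semicontinuity, so the $C(0,T;L^1)$ convergence transfers to $C(0,T;L^q)$.
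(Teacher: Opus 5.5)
Your proposal is correct and follows essentially the paper's route. The paper defers to the argument of \cite{Carrillo2025}, which upgrades the local convergence of Lemma \ref{lem5.2} to global convergence by using the conserved mass $\int\rho^{\v}(t)\,\dd\boldsymbol{x}=M$ to rule out mass escaping to infinity (your first tail route). The uniform $L^{\gamma_2}$ bound of Corollary \ref{uniform} then controls the region near the origin and gives the interpolation up to $q<\gamma_2$, exactly as you do.

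Your direct tail estimate via a cutoff in the mass equation is an equally valid alternative. It needs one input you did not state: $\int_{r>D/2}\rho_0^{\v,b}r^{n-1}\dd r$ must be small uniformly in $b$, which comes from the construction of the approximate initial data.
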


The proof is largely analogous to that of \cite{Carrillo2025}, except that it relies on the 
uniform $L^{\gamma_2}$--estimate for the density obtained in Lemma~\ref{uniform}. 
A significant additional difficulty arises from the nonlocal nature of the Riesz potential. 
In contrast to \cite{Chen_2024,Chen2021}, where the potential is governed by a Poisson equation 
and admits a local representation, the Riesz interaction requires a more refined compactness 
argument. In particular, the strong convergence of the density 
is essential for passing to the limit in the potential term as $b\to \infty$.

\begin{lemma}\label{lem5.1}
Under the assumptions of {\rm Lemma {\rm \ref{lem5.2}}}, the following statements hold as $b\rightarrow\infty$ {\rm(}up to a sub-sequence{\rm)}{\rm :}
\begin{align*}
&\Phi_\alpha*\rho^{\v,b} \longrightharpoonup \Phi_\alpha*\rho^\v \quad
	\mbox{weak-$\ast$ in $L^\infty(0,T; W^{1,\frac{2n}{\alpha +2}}_{\rm loc}(\mathbb{R}^n))$ and weakly in $L^2(0,T; W^{1,\frac{2n}{\alpha +2}}_{\rm loc}(\mathbb{R}^n))$},\\
	&(\Phi_\alpha*\rho^{\v,b})_r(t,r)r^{n-1} \longrightarrow (\Phi_\alpha*\rho^\v)_r(t,r)r^{n-1} \quad
	\mbox{ in $C_{\rm loc}([0,T]\times[0,\infty))$},  \\[4mm]
    &(\Phi_\alpha*\rho^{\v,b})(t,r)r^{n-1} \longrightarrow (\Phi_\alpha*\rho^\v)(t,r)r^{n-1} \quad
	\mbox{ in $C_{\rm loc}([0,T]\times[0,\infty))$}, \\[3mm]
    & \int_0^\infty \big|\rho^{\v,b} (\Phi_\alpha*\rho^{\v,b}) - \rho^{\v} (\Phi_\alpha*\rho^{\v})\big|(t,r)\,
    r^{n-1} \dd r \longrightarrow 0. 
\end{align*}
Moreover, when $\alpha \in (0,n-1)$,
\begin{align*}
	\|(\Phi_\alpha*\rho^\v)(t)\|_{L^{\frac{2n}{\alpha}}(\mathbb{R}^n)}
    +\|(\nabla \Phi_\alpha*\rho^\v)(t)\|_{L^\frac{2n}{\alpha + 2}(\mathbb{R}^n)}\leq C(M,E_0)\qquad\,\mbox{for $t\geq0$};
\end{align*}
when $\alpha \in (-1,0]$, for any $K \Subset \mathbb{R}^n$,
\begin{align*}
	\|(\Phi_\alpha*\rho^\v)(t)\|_{L^\infty(K)}
    +\|(\nabla \Phi_\alpha*\rho^\v)(t)\|_{L^\frac{2n}{\alpha + 2}(K)}
    \leq C(M,E_0,T,K)\qquad\,\mbox{for $t\geq0$}.
\end{align*}
\end{lemma}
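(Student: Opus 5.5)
We split the statement into the weak convergence of the potential, the uniform-in-$b$ bounds, the local uniform convergence of the radial potential and its derivative, and the $L^1$ convergence of the interaction density. Throughout we use the strong convergence $\rho^{\v,b}\to\rho^\v$ in $C(0,T;L^q(\mathbb R^n))$ for $q\in[1,\gamma_2)$ from the preceding lemma, together with the uniform bounds of Corollary \ref{uniform}.

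\textbf{Uniform bounds and weak limits.} For $\alpha\in(0,n-1)$, the HLS inequality (Lemma \ref{simplehls}) gives $\|\Phi_\alpha*\rho\|_{L^{2n/\alpha}}\lesssim\|\rho\|_{L^{2n/(2n-\alpha)}}$ and $\|\nabla\Phi_\alpha*\rho\|_{L^{2n/(\alpha+2)}}\lesssim\|\rho\|_{L^{2n/(2n-\alpha)}}$. Since $1<\frac{2n}{2n-\alpha}<\gamma_2$, the exponent $\frac{2n}{2n-\alpha}$ is interpolated between $L^1$ and $L^{\gamma_2}$. Lemma \ref{bigsmall} and Corollary \ref{uniform} then give the bounds uniformly in $(\v,b)$. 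Banach--Alaoglu yields weak-$\ast$ limits in $L^\infty(0,T;W^{1,\frac{2n}{\alpha+2}}_{\rm loc})$; these bounds and weak limits also hold in $L^2(0,T;W^{1,\frac{2n}{\alpha+2}}_{\rm loc})$.

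To identify the limit, we test against $\varphi\in C_{\rm c}^\infty$ and use Fubini: $\int\varphi\,\Phi_\alpha*\rho^{\v,b}=\int\rho^{\v,b}\,\Phi_\alpha*\varphi$. Here $\Phi_\alpha*\varphi\in L^{q'}$ for suitable $q<\gamma_2$, so strong convergence of $\rho^{\v,b}$ gives the limit $\Phi_\alpha*\rho^\v$. The same bounds pass to $\rho^\v$ by lower semicontinuity.

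For $\alpha\in(-1,0]$, we instead split the kernel into $|x-y|\le1$ and $|x-y|>1$. The far part is controlled by the moment bound in \eqref{ueq3}, giving local $L^\infty$ bounds on compact $K$; the near part is bounded by $\|\rho\|_{L^{\gamma_2}}$.

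\textbf{Local uniform convergence of the radial potential.} We use the radial representation $(\Phi_\alpha*\rho)(r)=\int_0^\infty K(r,\eta)\rho(\eta)\eta^{n-1}\dd\eta$, with the kernel bound $|K|\lesssim(r\eta)^{-\alpha/2}$ of Lemma \ref{potentially}. The analogous formula for the derivative gives $(\Phi_\alpha*\rho)_r r^{n-1}=\int_0^\infty\partial_rK\,\rho\,\eta^{n-1}\dd\eta$.

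For $\alpha<n-1$, the kernel $\partial_rK(r,\eta)r^{n-1}$ has an integrable singularity at $\eta=r$: after spherical averaging it behaves like $|r-\eta|^{n-2-\alpha}$, or a logarithm. Hence it lies in $L^{q'}_{\rm loc}$ uniformly in $r$ on compacts, for some $q<\gamma_2$ chosen close to $\gamma_2$. Hölder's inequality with the strong $C(0,T;L^q)$ convergence then gives uniform convergence on $[0,T]\times[0,R]$. The factor $r^{n-1}$ compensates the behaviour near $r=0$, and the tail $\eta>R'$ is controlled using the $L^1$ bound on $\rho$.

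Equicontinuity in $r$ follows from the $L^{q'}$-continuity of translations of the kernel, and equicontinuity in $t$ from the time continuity of $\rho^{\v,b}$ in $L^q$. The same argument, with milder kernels, gives the convergence of $(\Phi_\alpha*\rho^{\v,b})r^{n-1}$.

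\textbf{Convergence of $\int|\rho^{\v,b}\Phi_\alpha*\rho^{\v,b}-\rho^\v\Phi_\alpha*\rho^\v|r^{n-1}\dd r$.} We write the difference as $(\rho^{\v,b}-\rho^\v)\Phi_\alpha*\rho^{\v,b}+\rho^\v\,\Phi_\alpha*(\rho^{\v,b}-\rho^\v)$. Both terms are bounded using Hölder's inequality with $\Phi_\alpha*\cdot\in L^{2n/\alpha}$ and HLS. This requires convergence of $\rho^{\v,b}$ in $L^{2n/(2n-\alpha)}$, which holds because $\frac{2n}{2n-\alpha}<\gamma_2$. For $\alpha\le0$, we localise and use the moment bound to make the tails small.

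\textbf{Main obstacle.} The hardest step is the uniform convergence of $(\Phi_\alpha*\rho)_r r^{n-1}$ down to $r=0$ for $\alpha$ close to $n-1$. There the kernel singularity is only barely integrable against $\rho\in L^{q}$ with $q<\gamma_2$. We expect to need the constraint $\gamma_2\ge\frac{3n}{3n-2(1+\alpha)}$ (or the higher integrability of Lemma \ref{higher}) to choose $q$ admissibly. If this direct approach fails, the fallback is to use the local $\rho^{\gamma_2+1}$ bound away from the origin and treat the region near $r=0$ separately via the mass concentration estimate.
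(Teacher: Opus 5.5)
Your overall architecture is the one the paper intends: it reruns the polytropic argument of \cite{Carrillo2025} with $\gamma$ replaced by $\gamma_2$, using \eqref{bigandsmallbehav}. Your H\"older--HLS proof of the last convergence is complete for $\alpha\in(0,n-1)$.

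The gap is in the step you flag as the main obstacle, and neither ingredient you propose closes it. For $\alpha\in(n-2,n-1)$ the derivative kernel of Lemma \ref{potentially} obeys $|\omega(r,\eta)|\le C(r\eta)^{-\frac{n-1}{2}}|r-\eta|^{n-2-\alpha}$. Moreover, $|r-\eta|^{n-2-\alpha}\in L^{q'}_{\rm loc}(\dd\eta)$ if and only if $q>\frac{1}{n-1-\alpha}$. So your claim that the kernel lies in $L^{q'}$ for some $q<\gamma_2$ is exactly the condition $\gamma_2>\frac{1}{n-1-\alpha}$. This is a standing hypothesis of Theorem \ref{existence} (hence of Lemma \ref{lem5.2}) that you never invoke.

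The constraint $\gamma_2\ge\frac{3n}{3n-2(1+\alpha)}$ cannot supply it: that bound stays below $3$, while $\frac{1}{n-1-\alpha}\to\infty$ as $\alpha\to n-1$. Nor can Lemma \ref{higher}, which is an $L^1$-in-time bound on compacts away from the origin and so gives nothing uniform in $t$ near $r=0$.

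Away from $r=0$ the singularity is harmless anyway, since Lemma \ref{lem5.2}(a) gives $\rho^{\v,b}\to\rho^\v$ in $C(0,T;L^q_{\rm loc})$ for every finite $q$. The hypothesis is what makes the estimate uniform down to $r=0$. Concretely, take $q\in(\frac{1}{n-1-\alpha},\gamma_2)$ with also $q>\frac{2n}{n+1}$; this is possible because $\gamma_2\ge\frac{3n}{3n-2(1+\alpha)}>\frac{2n}{n+1}$ for $\alpha>n-2$. The rescaling $\eta=rs$ then gives $\sup_{0<r\le R}\|r^{n-1}\omega(r,|\cdot|)\|_{L^{q'}(B_{2R})}<\infty$. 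The global $C(0,T;L^q)$ convergence, together with the $L^1$ convergence for $\eta>2R$, then yields the uniform convergence directly.

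For $\alpha\in(-1,0]$, ``use the moment bound to make the tails small'' does not work as stated. Here $\Phi_\alpha$ grows like $|\boldsymbol{x}|^{-\alpha}$ (or $\log|\boldsymbol{x}|$), and so does $K$. For $\alpha<0$ one has $K(r,\eta)\to\omega_n\eta^{-\alpha}/|\alpha|$ as $r\to0$, so the bound $(r\eta)^{-\alpha/2}$ you quote fails there. Three of your steps therefore need $\int_{\{|\boldsymbol{x}|>R\}}\rho^{\v,b}k_\alpha(1+|\boldsymbol{x}|^2)\,\dd\boldsymbol{x}\to0$ as $R\to\infty$, uniformly in $b$ and $t\in[0,T]$:
\begin{itemize}
\item identifying the limit of $\Phi_\alpha*\rho^{\v,b}$ itself, since in your Fubini step $\Phi_\alpha*\varphi$ is unbounded;
\item the convergence of $(\Phi_\alpha*\rho^{\v,b})r^{n-1}$;
\item the last $L^1$ convergence.
\end{itemize}
That condition is uniform integrability of the weighted mass, whereas \eqref{ueq3} only gives a uniform bound. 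Without it, the weak limit of $\Phi_\alpha*\rho^{\v,b}$ can differ from $\Phi_\alpha*\rho^\v$ by an additive function of $t$; the gradients still converge, since $\nabla\Phi_\alpha$ decays. One remedy is to propagate from the approximate initial data a slightly superlinear function $\phi$ of $k_\alpha(1+|\boldsymbol{x}|^2)$ with $\nabla\phi$ bounded, using $\big|\frac{\d}{\d t}\int\rho\phi\,\dd\boldsymbol{x}\big|\le\|\nabla\phi\|_{L^\infty}M^{1/2}\|\M/\sqrt{\rho}\|_{L^2}$.
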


The proof follows from the same argument as \cite[Lemma 3.9]{Carrillo2025} after 
replacing $\gamma$ by $\gamma_2$ and using \eqref{bigandsmallbehav}. 
Hence, combining the estimates in the Lemmas \ref{Energy}--\ref{uniform} and \ref{oldBD}--\ref{higher} 
and with 
Lemmas \ref{lem5.2}--\ref{lem5.1},
we obtain the following estimates on the approximate solutions.

\begin{lemma}\label{limest}
Under the assumptions of {\rm Lemma {\rm \ref{lem5.2}}}, for any $T>0$,
there exists a global spherically symmetric solution $(\rho^\v, m^\v)=(\rho^\v, \rho^\v u^\v)$
of CNSREs \eqref{0.2} in the sense of {\rm Definition \ref{CNSREs}} satisfying
\begin{enumerate}
    \item[{\rm(a)}] $\rho^{\v}(t,r)\geq0 \,\,\,\, a.e.$

    \vspace{0.2cm}

    \item[{\rm(b)}] $ \displaystyle u^\v(t,r)=0, \,\,\big(\frac{m^\v}{\sqrt{\rho^\v}}\big)(t,r)=\sqrt{\rho^\v}(t,r)u^\v(t,r)=0 \quad\, a.e.\,\, \mbox{on $\{(t,r)\,:\, \rho^\v(t,r)=0\}$}.$

    \vspace{0.2cm}

    \item[{\rm(c)}] $ \displaystyle \int^\infty_0 \rho^{\v}(t,r) r^{n-1} \dd r = \frac{M}{\omega_n} \qquad \mbox{for all $t\in[0,T].$}$

    \vspace{0.3cm}
     \item[{\rm(d)}] For all $t\in[0,T]$,
    \begin{align*}
    &\int_0^\infty  \Big(\frac12\rho^{\v} |u^{\v}|^2+ \rho^{\v}e(\rho^\v) - \frac{\kappa}{2}\rho^\v (\Phi_\alpha * \rho^\v)\Big)(t,r)\,r^{n-1}\dd r
    +\v \int_0^\infty\int_0^\infty (\rho^{\v} |u^{\v}|^2)(s,r) r^{n-3}\,\dd r \dd s\\[2mm]
     &\quad\leq C(M,E_0).
     \end{align*}
     
     \vspace{0.3cm}
    \item[{\rm(e)}] For all $t\in[0,T]$,
   \begin{align*}
 & \int_0^\infty  \Big(\frac12 \Big|\frac{m^{\v}}{\sqrt{\rho^\varepsilon}} \Big|^2
    + \rho^{\v}e(\rho^\v) + \frac{\kappa}{2} \rho^\v (\Phi_\alpha * \rho^\v)\Big)(t,r)\,r^{n-1}\dd r \\[1mm]
 & \quad \leq \int_0^\infty  \Big(\frac12 \Big|\frac{m^{\v}_0}{\sqrt{\rho^\varepsilon_0}} \Big|^2 + \rho^{\v}_0 e(\rho^{\v}_0)
  + \frac{\kappa}{2} \rho^\v_0 (\Phi_\alpha * \rho^\v_0)\Big)(r)\,r^{n-1}\dd r.
  \end{align*}

 \vspace{0.3cm}
    \item[{\rm(f)}]  For all $t\in[0,T]$, 
    \begin{align*}
&\v^2 \int_0^\infty \big|(\sqrt{\rho^{\v}(t,r)})_r\big|^2\,r^{n-1}\dd r
 +\v\int_0^t\int_{0}^\infty p'(\rho^{\v})|(\sqrt{\rho^{\v}(s,r)})_r|^2\,r^{n-1}\dd r\dd s\\[1mm]
 &\quad\leq C(M,E_0,T).
\end{align*}

\vspace{0.3cm}
\item[{\rm(g)}] For $K \Subset (0,\infty)$,
\begin{align*}
\int_0^T\int_K  \rho^{\v} \big(p(\rho^{\v})+(\rho^{\v})^{\gamma_2} + |u^{\v}|^3 \big)(t,r)\, r^{n-1}\dd r \dd t
\leq C(K,M,E_0,T).
\end{align*}
\end{enumerate}
\end{lemma}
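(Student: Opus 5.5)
The plan is to obtain $(\rho^\v,m^\v)$ as the limit $b\to\infty$ of the free-boundary approximate solutions $(\rho^{\v,b},m^{\v,b})$. Lemma \ref{lem5.2} supplies the local strong convergences, and Lemma \ref{lem5.1} supplies the convergence of the potential terms. We then check items (a)--(g) one by one, using weak lower semicontinuity to transfer the uniform-in-$b$ bounds of Lemmas \ref{Energy}, \ref{uniform}, \ref{oldBD}, the BD estimate II, and Lemma \ref{higher} to the limit.

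Items (a) and (b) are immediate from Lemma \ref{lem5.2}(a) and (d): $\rho^\v\ge0$ as an a.e. limit of positive functions, and $u^\v$ is defined to vanish on the vacuum set. Item (c) is the mass conservation lemma stated just after Lemma \ref{lem5.2}. The global $C(0,T;L^q)$ convergence of $\rho^{\v,b}$ rules out any mass escaping to infinity.

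For (d) and (e), I would start from the energy identity of Lemma \ref{Energy}. The kinetic term $|m^{\v,b}/\sqrt{\rho^{\v,b}}|^2$ and the internal energy $\rho e(\rho)$ are handled by Fatou's lemma, using the a.e. convergence in Lemma \ref{lem5.2}(a),(d). The potential term passes to the limit exactly by the last statement of Lemma \ref{lem5.1}, so no semicontinuity is needed there. For the viscous dissipation $\v\int\!\!\int\rho u^2 r^{n-3}$, I would use weak $L^2$ compactness of $\sqrt{\rho^{\v,b}}u^{\v,b}r^{(n-3)/2}$ on compact sets, identify the limit via Lemma \ref{lem5.2}(d), and then exhaust $(0,\infty)$. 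The right-hand side of (e) requires the convergence $E_0^{\v,b}\to E_0^\v$ from \cite[Lemma B.8]{Carrillo2025}.

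Item (f) follows from the BD estimates. Since $\sqrt{\rho^{\v,b}}\to\sqrt{\rho^\v}$ strongly in $L^q_{\rm loc}$, the uniform bound on $(\sqrt{\rho^{\v,b}})_r$ in $L^\infty(0,T;L^2(r^{n-1}dr))$ gives weak-$*$ convergence of the derivative, and lower semicontinuity of the norm yields the bound. The $p'(\rho)|(\sqrt\rho)_r|^2$ term is treated in the same way. I would write it as $|(G(\rho))_r|^2$ with $G'(\rho)=\sqrt{p'(\rho)}/(2\sqrt\rho)$, which converges weakly while $G(\rho^{\v,b})\to G(\rho^\v)$ strongly.

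Item (g) comes from Lemma \ref{higher} together with Fatou's lemma on $K\Subset(0,\infty)$. On such $K$ the weight $r^{n-1}$ is bounded above and below, and $b(t)\ge b/2$ eventually by Lemma \ref{expand}, so $K$ lies inside $(d,D)$ for large $b$.

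Finally, I would verify the weak formulation in Definition \ref{CNSREs}. The hard part here is the viscous term $\v\,\frac{m}{\sqrt\rho}\cdot\nabla\sqrt\rho$. I would pass to the limit in it as a product of a strongly convergent factor ($m/\sqrt\rho$ in $L^2_{\rm loc}$, from Lemma \ref{lem5.2}(d)) and a weakly convergent factor ($\nabla\sqrt\rho$ in $L^2$). The pressure term converges by Lemma \ref{lem5.2}(b). The convective term needs uniform integrability of $m^2/\rho$, which follows from the $\rho|u|^3$ bound in (g), so it converges in $L^1_{\rm loc}$. The nonlocal term $\rho\nabla\Phi_\alpha*\rho$ converges because $(\Phi_\alpha*\rho^{\v,b})_r r^{n-1}$ converges locally uniformly (Lemma \ref{lem5.1}) and $\rho^{\v,b}$ converges strongly in $L^1$.

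The main obstacle is the origin. Test functions in $\R^n$ may be supported near $r=0$, while all of our convergences are local in $(0,\infty)$. I would remove a small ball of radius $\delta$ with a cutoff and bound the resulting error terms using the energy and BD bounds, which are uniform down to $r=a=b^{-1}$. I expect these error terms to be $O(\delta^{n-1})$ times uniformly bounded quantities, so they vanish as $\delta\to0$, following \cite{Chen2021}.
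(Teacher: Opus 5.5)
Your treatment of (a)--(g) matches the paper's own argument. The paper's proof is only the remark that the uniform-in-$b$ bounds of Lemmas \ref{Energy}--\ref{higher} pass to the limit through Lemmas \ref{lem5.2}--\ref{lem5.1}. Your Fatou and lower-semicontinuity steps fill this in correctly: exact convergence of the potential energy, the $G(\rho)$ device for $p'(\rho)|(\sqrt\rho)_r|^2$, and Fatou for $\rho|u|^3$ on $K$. (One small point: Fatou along the a.e.\ limit of Lemma \ref{lem5.2}(d) gives the kinetic part of (d)--(e) only for a.e.\ $t$. To get every $t$ you also need weak continuity in time of $m^{\v,b}$ and joint convexity of $|m|^2/\rho$.)

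The step that would fail as written is your treatment of the origin in the weak formulation. The paper delegates this to \cite[Lemma 4.10]{Chen2021} in the lemma after this one.

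\emph{Why your heuristic fails.} The energy and BD bounds are integral bounds and give no pointwise control at $r=0$; a radial $\sqrt{\rho}\in H^1$ may be unbounded there. So there is no reason for the cutoff errors to be $O(\delta^{n-1})$. Take $\chi_\delta$ vanishing on $B_\delta$ and set $A_\delta:=\{\delta<|\boldsymbol{x}|<2\delta\}$. The errors are then of size $\delta^{-1}\int_0^T\!\int_{A_\delta}(\rho|u|^2+p(\rho))\,\dd\boldsymbol{x}\dd t$. The viscous term $\tfrac12\M\cdot\big(\Delta(\chi_\delta\bp)+\nabla(\nabla\cdot(\chi_\delta\bp))\big)$ contributes errors of size $\v\delta^{-2}\int_0^T\!\int_{A_\delta}|\M|\,\dd\boldsymbol{x}\dd t$. $L^1$ bounds alone do not make either of these small.

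\emph{What is needed.} Three ingredients.
\begin{enumerate}
\item[(i)] Pass $b\to\infty$ with the test function $\chi_\delta\bp$; there are no boundary terms at $r=a$ once $a<\delta$. Only then let $\delta\to0$, using integrability of the limit rather than bounds uniform in $b$.
\item[(ii)] Use the spherical symmetry. In every term where the undifferentiated $\bp$ multiplies a derivative of $\chi_\delta$, $\bp$ enters only through $\Psi(t,r)=\int_{\mathbb{S}^{n-1}}\boldsymbol{\sigma}\cdot\bp(t,r\boldsymbol{\sigma})\,\dd S(\boldsymbol{\sigma})=O(r)$. This gains one factor of $\delta$. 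The pressure and convective errors reduce to $C\int_0^T\!\int_{A_\delta}(\rho|u|^2+p(\rho))$, and the $\nabla\sqrt\rho$-terms reduce to $C\int_0^T\!\int_{A_\delta}\sqrt\rho\,|u|\,|\nabla\sqrt\rho|$. Both vanish by dominated convergence.
\item[(iii)] The second-order viscous terms still leave $\v\delta^{-1}\int_0^T\!\int_{A_\delta}|\M|\le 2\v\sqrt{MT}\,\|\sqrt\rho\,u/|\boldsymbol{x}|\|_{L^2((0,T)\times A_\delta)}$. This tends to zero precisely because of the dissipation bound $\v\int\!\!\int\rho|u|^2r^{n-3}\,\dd r\dd s<\infty$ in (d). The same estimate handles the error $\int_0^T\!\int\zeta\,\M\cdot\nabla\chi_\delta$ in the continuity equation, where no symmetry cancellation is available.
\end{enumerate}
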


The convergence of the potential allows us to prove that the momentum equation is satisfied weakly for the approximate solutions $(\rho^\v,m^\v) = (\rho^\v,\rho^\v u^\v)$.

\begin{lemma}
Under the assumptions of {\rm Lemma {\rm \ref{lem5.2}}}, let $\bp(t,\boldsymbol{x})\in \left(C^2_0([0,T]\times \mathbb{R}^n)\right)^n$ be any smooth function with compact support
so that $\bp(T,\boldsymbol{x})=\boldsymbol{0}$. Then
\begin{align*}
	&\int_{\mathbb{R}_+^{n+1}} \Big\{\M^{\v} \cdot\partial_t\bp +\frac{\M^{\v}}{\sqrt{\rho^{\v}}} \cdot \big(\frac{\M^{\v}}{\sqrt{\rho^{\v}}}\cdot \nabla\big)\bp
	+p(\rho^{\v}) \nabla \cdot \bp -\kappa\rho^\v \nabla(\Phi_\alpha * \rho^\v)\cdot \bp\Big\}\,\dd\boldsymbol{x} \dd t\\
	&\quad +\int_{\mathbb{R}^n} \M_0^\v\cdot \bp(0,\boldsymbol{x})\,\dd\boldsymbol{x}
	\\
	&=-\v\int_{\mathbb{R}_+^{n+1}}
	\Big\{\frac{1}{2}\M^{\v}\cdot \big(\Delta \bp+\nabla(\nabla \cdot\,\bp) \big)
	+ \frac{\M^{\v}}{\sqrt{\rho^{\v}}} \cdot \big(\nabla\sqrt{\rho^{\v}}\cdot \nabla\big)\bp
    + \nabla\sqrt{\rho^{\v}}  \cdot \big(\frac{\M^{\v}}{\sqrt{\rho^{\v}}}\cdot \nabla\big)\bp\Big\}\,\dd\boldsymbol{x}\dd t\nonumber\\
	&=\sqrt{\v}\int_{\mathbb{R}_+^{n+1}}
	\sqrt{\rho^{\v}} \Big\{V^{\v}  \frac{\boldsymbol{x}\otimes\boldsymbol{x}}{r^2}
	+\frac{\sqrt{\v}}{r}\frac{m^\v}{\sqrt{\rho^\v}}\big(I_{n\times n}-\frac{\boldsymbol{x}\otimes\boldsymbol{x}}{r^2}\big)\Big\}: \nabla\bp\, \dd\boldsymbol{x}\dd t,
\end{align*}
where $V^{\v}(t,\boldsymbol{x})\in L^2(0,T; L^2(\mathbb{R}^n))$ is a function such that
$$
\displaystyle\int_0^T\int_{\mathbb{R}^n} |V^{\v}(t,\boldsymbol{x})|^2\, \dd\boldsymbol{x}\dd t\leq C(E_0,M)
\quad \mbox{for some $C(E_0,M)>0$ independent of $T>0$}.
$$
\end{lemma}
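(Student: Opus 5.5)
The plan is to derive the weak momentum identity for $(\rho^\v,\M^\v)$ by passing to the limit $b\to\infty$ in the weak formulation satisfied by the smooth free-boundary solutions $(\rho^{\v,b},u^{\v,b})$ of \eqref{Eul}--\eqref{1.17}, and then to rewrite the viscous term in the stated $\sqrt{\v}$-form.

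\textbf{Step 1: the identity at fixed $b$.} Fix $\bp\in (C^2_0([0,T]\times\mathbb{R}^n))^n$ with $\bp(T,\cdot)=\boldsymbol{0}$, and let $b$ be large. By Lemma \ref{expand}, $\supp\bp\subset\{a<r<b(t)/2\}$ on $[0,T]$. I will multiply $\eqref{Eul}_2$, written in Cartesian form, by $\bp$ and integrate by parts over $\Omega_T$. Boundary contributions at $r=a$ and $r=b(t)$ vanish by the choice of support, so the stress-free condition \eqref{boundary} is not needed.

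For the viscous term with $(\mu,\lambda)=(\rho,0)$, the operator is $\v\nabla\cdot(\rho D(\M/\rho))$. I will use the identity $\rho\nabla u=\nabla(\rho u)-2\sqrt\rho\,\nabla\sqrt\rho\otimes u$ and integrate by parts once more. This produces the middle expression
\[
-\v\int\Big\{\tfrac12\M\cdot(\Delta\bp+\nabla\nabla\cdot\bp)+\cdots\Big\}.
\]

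For the radial form, I write $\rho D(u)=\rho u_r\frac{\boldsymbol{x}\otimes\boldsymbol{x}}{r^2}+\rho\frac{u}{r}(I-\frac{\boldsymbol{x}\otimes\boldsymbol{x}}{r^2})$. I then set
\[
V^{\v,b}:=\sqrt{\v}\,\sqrt{\rho^{\v,b}}\,u^{\v,b}_r,
\]
so that the viscous contribution equals
\[
\sqrt\v\int\sqrt\rho\,\{V\,\frac{\boldsymbol{x}\otimes\boldsymbol{x}}{r^2}+\frac{\sqrt\v}{r}\frac{m}{\sqrt\rho}(I-\frac{\boldsymbol{x}\otimes\boldsymbol{x}}{r^2})\}:\nabla\bp.
\]
The energy estimate Lemma \ref{Energy} bounds $\v\int_0^T\!\int\rho u_r^2\,r^{n-1}$ by $C(E_0,M)$ independently of $T$. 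This gives the uniform $L^2$ bound on $V^{\v,b}$.

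\textbf{Step 2: passing $b\to\infty$.} I will take a weak $L^2$ limit $V^{\v,b}\rightharpoonup V^\v$; weak lower semicontinuity preserves the bound. The remaining terms converge as follows.
\begin{itemize}
\item $\M\cdot\partial_t\bp$ and the $\Delta\bp$-term converge by Lemma \ref{lem5.2}(c).
\item $\frac{\M}{\sqrt\rho}\otimes\frac{\M}{\sqrt\rho}$ converges in $L^1_{\rm loc}$ by Lemma \ref{lem5.2}(d).
\item $p(\rho^{\v,b})$ converges by Lemma \ref{lem5.2}(b).
\item Terms containing $\nabla\sqrt{\rho}$ are handled by pairing strong convergence of $\frac{\M}{\sqrt\rho}$ in $L^2_{\rm loc}$ with weak convergence of $\nabla\sqrt{\rho^{\v,b}}$. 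That weak convergence holds because $\nabla\sqrt{\rho^{\v,b}}$ is bounded in $L^2$ by the BD estimate, with limit identified via Lemma \ref{lem5.2}(a).
\item For the products $\sqrt\rho\,V$, I use strong $L^q_{\rm loc}$ convergence of $\sqrt{\rho^{\v,b}}$ against weak convergence of $V$.
\item The initial term converges by construction of the data.
\end{itemize}

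\textbf{Main obstacle: the nonlocal force.} The hardest term is $\rho^{\v,b}\nabla(\Phi_\alpha*\rho^{\v,b})\cdot\bp$. Here I will use Lemma \ref{lem5.1}, which gives local uniform convergence of $(\Phi_\alpha*\rho^{\v,b})_r\,r^{n-1}$ on compact sets of $[0,T]\times[0,\infty)$. Combined with strong $C(0,T;L^1)$ convergence of $\rho^{\v,b}$ on $\supp\bp$, which is bounded away from the origin, this yields convergence of the product.

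One caveat: near $r\approx0$ the support of $\bp$ may include the origin. I would handle this by density, approximating $\bp$ by fields vanishing near $0$. The cut-off error is controlled using the uniform bounds $\nabla\Phi_\alpha*\rho\in L^\frac{2n}{\alpha+2}$ from Corollary \ref{uniform} and the local higher integrability in Lemma \ref{limest}(g), together with the integrability of $\rho$ in $L^{\gamma_2}$. I expect this origin cut-off, for the potential and the $\frac{m}{r}$ term, to be the most delicate point.
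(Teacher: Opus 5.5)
Your overall strategy is the paper's. The paper simply invokes the argument of \cite[Lemma 4.10]{Chen2021}: write the weak momentum identity for the smooth free-boundary solutions, take $V^{\varepsilon}$ as the weak $L^2$-limit of $\sqrt{\varepsilon}\sqrt{\rho^{\varepsilon,b}}\,u^{\varepsilon,b}_r$, and let $b\to\infty$ using Lemmas \ref{lem5.2} and \ref{lem5.1}. Your algebra for the two forms of the viscous term and your limit passages away from the origin are fine.

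\textbf{The gap is at the origin.} The inclusion $\mathrm{supp}\,\boldsymbol{\psi}\subset\{a<r<b(t)/2\}$ is false unless $\boldsymbol{\psi}$ vanishes near $\boldsymbol{0}$. Lemma \ref{expand} only controls the outer boundary, while $a=b^{-1}\to0$. So you must work with $\boldsymbol{\psi}_\delta:=(1-\chi_\delta)\boldsymbol{\psi}$, where $\chi_\delta:=\chi(|\boldsymbol{x}|/\delta)$, and remove the cut-off after $b\to\infty$.

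This is not a density argument: nothing vanishing near $\boldsymbol{0}$ is uniformly close to $\boldsymbol{\psi}$ when $\boldsymbol{\psi}(t,\boldsymbol{0})\neq\boldsymbol{0}$, so an explicit estimate is needed. Terms without a derivative on $\chi_\delta$ converge by dominated convergence, given the integrability near $\boldsymbol{0}$ implicit in the statement; for the $\frac{m}{r}$ term use $\varepsilon\int\!\!\int\rho|u|^2r^{n-3}\le C$. The terms where the gradient hits the cut-off have the form
\[
\int_0^T\!\!\int_{\{\delta<|\boldsymbol{x}|<2\delta\}}F\,\boldsymbol{\psi}\cdot\nabla\chi_\delta\,\mathrm{d}\boldsymbol{x}\,\mathrm{d}t,
\qquad
F:=\frac{|m^\varepsilon|^2}{\rho^\varepsilon}+p(\rho^\varepsilon)-\sqrt{\varepsilon}\sqrt{\rho^\varepsilon}\,V^\varepsilon,
\]
with $|\nabla\chi_\delta|\sim\delta^{-1}$. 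The tools you list cannot absorb this factor $\delta^{-1}$:
\begin{itemize}
\item the $L^{\frac{2n}{\alpha+2}}$ bound on $\nabla\Phi_\alpha*\rho$ and $\rho\in L^{\gamma_2}$ give no rate of decay for the integral over the shrinking annulus;
\item Lemma \ref{limest}(g) holds only on $K\Subset(0,\infty)$, with a $K$-dependent constant, so it never reaches the origin.
\end{itemize}
Conversely, the terms you flag are the easy ones. The potential term carries no derivative of the cut-off. The $\frac{m}{r}$ term drops out, since $(I_{n\times n}-\frac{\boldsymbol{x}\otimes\boldsymbol{x}}{r^2})\nabla\chi_\delta=\boldsymbol{0}$.

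\textbf{The missing idea is spherical symmetry.} $F$ is radial, since $V^\varepsilon$ is a weak limit of radial functions, and $\nabla\chi_\delta=\delta^{-1}\chi'(r/\delta)\frac{\boldsymbol{x}}{r}$. In polar coordinates the integral above therefore equals
\[
\int_0^T\!\!\int_\delta^{2\delta}F(t,r)\,\delta^{-1}\chi'(r/\delta)\Big(\int_{B_r}\nabla\cdot\boldsymbol{\psi}\,\mathrm{d}\boldsymbol{x}\Big)\mathrm{d}r\,\mathrm{d}t.
\]
For $r<2\delta$ we have $\big|\int_{B_r}\nabla\cdot\boldsymbol{\psi}\,\mathrm{d}\boldsymbol{x}\big|\le Cr^n\le 2C\delta\, r^{n-1}$. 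Hence the integral is bounded by $C\int_0^T\!\int_{B_{2\delta}}|F|\,\mathrm{d}\boldsymbol{x}\,\mathrm{d}t$, which tends to $0$ because $F\in L^1((0,T)\times B_1)$.

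The same cancellation, together with $|m^\varepsilon|/r\in L^1_{\rm loc}$, handles the $\delta^{-1}$ and $\delta^{-2}$ terms that the cut-off produces in the middle expression. With this step supplied, your argument goes through.
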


The proof follows from the same argument as that of \cite[Lemma 4.10]{Chen2021}.

\subsection{Compensated Compactness Framework}
To pass to the limit $\varepsilon \to 0$, a compactness argument is required in order to obtain strong convergence. 
In the limit $b \to \infty$, the uniform (in $b$) estimates on the derivatives of the density were crucial for applying the Aubin–Lions lemma and establishing compactness. However, the same approach cannot be used in the vanishing-viscosity limit, since the derivative estimates obtained from the BD-type entropy inequalities are not uniform in $\varepsilon$.
Consequently, a compensated compactness framework is required, in which the entropy analysis 
plays a fundamental role. 
In particular, the entropy estimates established in Lemma~\ref{entropyan} allow us 
to prove the following compactness result:

\begin{lemma}[$W_{\rm loc}^{-1,p}$--Compactness]\label{compacty}
Under the assumptions of {\rm Lemma {\rm \ref{lem5.2}}}, let $(\eta, q)$ be a weak entropy pair defined in \eqref{5.10}
for any compactly supported smooth function $\psi(s)$ in $\mathbb{R}$. Then
\begin{align*}
	\partial_t\eta(\rho^\v,m^\v)+\partial_rq(\rho^\v,m^\v) \quad \mbox{is compact in $ W^{-1,p}_{\rm loc}(\mathbb{R}^2_+)$} \text{ for any } p \in [1,2).
\end{align*}
\end{lemma}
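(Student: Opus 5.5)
We write the equations for the viscous approximations in the $(t,r)$ variables. For a weak entropy pair $(\eta,q)=(\eta^\psi,q^\psi)$, the radial system \eqref{Eul} gives
\[
\partial_t\eta(\rho^\v,m^\v)+\partial_r q(\rho^\v,m^\v)
=-\frac{n-1}{r}\big(m^\v\eta_\rho+\tfrac{(m^\v)^2}{\rho^\v}\eta_m\big)
-\kappa\,\eta_m\,\rho^\v(\Phi_\alpha*\rho^\v)_r
+\v\big(\eta_m(\rho^\v(u^\v_r+\tfrac{n-1}{r}u^\v))_r-\tfrac{n-1}{r}\eta_m\rho^\v_r u^\v\big).
\]
We then split the right-hand side as $I_1^\v+I_2^\v+I_3^\v$ and aim to show the standard decomposition. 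Each term will be bounded either in $L^1_{\rm loc}$, hence compact in $W^{-1,p}_{\rm loc}$ for $p<\frac{2}{1}$ by the compact embedding of measures into $W^{-1,p}_{\rm loc}$ for $1\le p<2$ in two dimensions, or shown to tend to zero in $W^{-1,2}_{\rm loc}$. Independently, the left-hand side will be bounded in $W^{-1,\infty}_{\rm loc}$, more precisely in $L^\infty$ up to a derivative. The Murat–Tartar interpolation lemma, which says that compact in $W^{-1,p}$ plus bounded in $W^{-1,r}$ with $r>2$ gives compact in $W^{-1,2}$, and hence in every $W^{-1,p}$ with $p<2$, then finishes the argument. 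This is the same scheme as \cite{Chen_2024,Chen2021}, and we follow it.

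For the uniform bound on $(\eta,q)$, we take $K\Subset(0,\infty)$ and use Lemma \ref{entropyan}. For small density, $|\eta|+|q|\le C_\psi\rho$. For large density, $|\eta|\le C_\psi\rho$ and $|q|\le C_\psi\rho^{1+\theta_2}$. Since $1+\theta_2<\gamma_2+1$, the higher integrability in Lemma \ref{limest}(g) together with the energy bound gives $\eta,q\in L^{r}_{\rm loc}$ for some $r>2$, uniformly in $\v$. We should check that $\frac{\gamma_2+1}{1+\theta_2}=\frac{2(\gamma_2+1)}{\gamma_2+1}=2$. This is exactly borderline, so we use the combination $\rho^{\gamma_2+1}$ together with $\rho|u|^3$ as in \cite[Lemma 5.7]{Chen_2024}: we refine the large-density bound on $q$ via $q-u\eta$ and the velocity integrability to gain $r>2$. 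Alternatively, we accept $L^2$ and use a slightly higher exponent from $p(\rho)\rho$ with $\gamma_1\ge\gamma_2$. We then get $\partial_t\eta+\partial_rq$ bounded in $W^{-1,r}_{\rm loc}$.

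For the source terms, we proceed term by term. First, $I_1^\v$ is in $L^1(0,T;L^1(K))$ by $|\eta_\rho|\le C(1+\rho^{\theta})$, $|\eta_m|\le C$, and the energy bound. Second, for $I_2^\v$ we use $|\eta_m|\le C_\psi$ and the local bound on $(\Phi_\alpha*\rho^\v)_r r^{n-1}$ from Lemma \ref{lem5.1}, namely uniform $L^\infty_{\rm loc}$ control via the $C_{\rm loc}$ convergence and the $W^{1,\frac{2n}{\alpha+2}}$ bound. Combined with $\rho^\v\in L^\infty(L^1)$, this gives an $L^1_{\rm loc}$ bound. Third, the viscous term is written as
\[
\v\big(\eta_m\rho^\v(u^\v_r+\tfrac{n-1}{r}u^\v)\big)_r-\v\eta_{m,r}\rho^\v(u^\v_r+\tfrac{n-1}{r}u^\v)+\ldots
\]
The first, divergence part tends to zero in $W^{-1,2}_{\rm loc}$ because
\[
\|\v\eta_m\rho u_r\|_{L^2}\le C\sqrt\v\,\|\sqrt\v\sqrt\rho u_r\|_{L^2}\|\sqrt\rho\|_{L^\infty_{\rm loc}}\to0.
\]
Here $\sqrt\rho\in L^\infty_{\rm loc}$ in $r>0$ comes from the BD estimate (f) and the one-dimensional Sobolev embedding; in fact it suffices to use the $L^{\gamma_2+1}$ bound. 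The remaining terms involve $\eta_{m\rho}\rho_r$ and $\eta_{mu}u_r$, which are controlled by Lemma \ref{entropyan} and bounded in $L^1_{\rm loc}$ via $\v\rho u_r^2$ from the energy dissipation and $\v p'(\rho)|(\sqrt\rho)_r|^2$ from the BD estimate.

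We expect the main obstacle to be this viscous term in the large-density regime, where $\rho^{1-\theta_2}|\eta_{m\rho}|\le C$ must be paired with $\v\rho_r\rho u_r$. We will try first Cauchy–Schwarz, bounding it by $\v\rho u_r^2+\v\rho^{2\theta_2-1}\rho_r^2$, and then use the fact that $\v\rho^{2\theta_2-1}\rho_r^2\sim \v p'(\rho)|(\sqrt\rho)_r|^2$ is exactly the BD-controlled quantity, with the general-pressure comparison $p'(\rho)\sim\rho^{\gamma_2-1}$ for $\rho\ge\rho^*$ from \eqref{1.4}–\eqref{1.5}.
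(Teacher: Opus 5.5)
Your scheme is the paper's: it defers to \cite[Lemma 7.1]{Chen_2024} for the entropy dissipation and to \cite[Lemma 4.11]{Carrillo2025} for the Riesz term. Your $L^1_{\rm loc}$ treatment of the geometric and non-divergence viscous terms is essentially right: Cauchy--Schwarz into $\v\rho|u_r|^2$ and $\v\rho^{2\theta_2-1}|\rho_r|^2\sim\v p'(\rho)|(\sqrt\rho)_r|^2$. However, both steps that close the argument fail as written.

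\textbf{The viscous divergence term.} You claim $\partial_r\big(\v\eta_m\rho^\v(u^\v_r+\frac{n-1}{r}u^\v)\big)\to0$ in $W^{-1,2}_{\rm loc}$, but the justification does not hold. Estimate (f) of Lemma \ref{limest} controls only $\v^2\|(\sqrt{\rho^\v})_r\|_{L^2}^2$. One-dimensional Sobolev therefore gives only $\|\sqrt{\rho^\v}\|_{L^\infty(K)}\lesssim 1+\v^{-1}$, so your bound is $O(\v^{-1/2})$, not $o(1)$. The $L^{\gamma_2+1}_{\rm loc}$ fallback, even combined with $\rho^{\theta_2}|\eta_m|\le C$ for $\rho\ge\rho^*$, gives convergence to zero only in $L^{p_0}_{\rm loc}$ with $p_0=\frac{2(\gamma_2+1)}{3}$. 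This is below $2$ whenever $\gamma_2<2$, a case the hypotheses allow.

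\textbf{The Murat step.} It needs $\eta^\psi,q^\psi$ bounded in $L^r_{\rm loc}$ with $r>2$, and this is not available. By Lemma \ref{entropyan}, $|q^\psi|\le C\rho^{1+\theta_2}$ for large $\rho$; since $(\gamma_2+1)/(1+\theta_2)=2$, as you computed, this is exactly $L^2_{\rm loc}$. Your proposed fixes do not help:
\begin{itemize}
\item In the splitting $q^\psi=(q^\psi-u\eta^\psi)+u\eta^\psi$, the first piece is itself $O(\rho^{1+\theta_2})$ by the flux-kernel bound.
\item For the second piece, $\rho|u|^3\in L^1_{\rm loc}$ together with $\rho\in L^{\gamma_2+1}_{\rm loc}$ gives only $\rho|u|\in L^{3(\gamma_2+1)/(\gamma_2+3)}_{\rm loc}$. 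This exponent exceeds $2$ only when $\gamma_2>3$, which is excluded by $\gamma_2\le\gamma_1<3$.
\item The relation $\gamma_1\ge\gamma_2$ concerns small densities and gives nothing at large density.
\end{itemize}
Your opening claim of a $W^{-1,\infty}$ bound also fails, since $\rho^\v$ is not uniformly bounded. This borderline is exactly why the lemma stops at $p<2$ (cf.\ the Remark following it). It is also why the later compensated-compactness step uses the div--curl lemma of \cite{Conti_2011}, which needs only $W^{-1,1}$-compactness.

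\textbf{The repair.} Drop the $H^{-1}$ target and argue as follows.
\begin{enumerate}
\item The $L^1_{\rm loc}$-bounded part is compact in $W^{-1,p}_{\rm loc}$ for every $p<2$.
\item The divergence part tends to zero in $W^{-1,p_0}_{\rm loc}$ for some $p_0\in(1,2)$. Hence the whole entropy dissipation is compact in $W^{-1,p_0}_{\rm loc}$, and therefore in $W^{-1,p}_{\rm loc}$ for $p\le p_0$.
\item By Lemma \ref{entropyan} and Lemma \ref{limest}(g), $\eta^\psi$ and $q^\psi$ are uniformly bounded in $L^2_{\rm loc}$. So the entropy dissipation is bounded in $W^{-1,2}_{\rm loc}$.
\item The interpolation compactness lemma with upper endpoint $2$ then yields compactness in $W^{-1,p}_{\rm loc}$ for all $p\in[p_0,2)$, which together with step 2 covers all of $[1,2)$.
\end{enumerate}

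\textbf{A smaller point.} The $C_{\rm loc}$ convergence in Lemma \ref{lem5.1} is for $b\to\infty$ at fixed $\v$, so it gives no $\v$-uniform control. A $W^{1,\frac{2n}{\alpha+2}}$ bound also does not put $(\Phi_\alpha*\rho^\v)_r$ in $L^\infty$. The uniform bound on compact $K\Subset(0,\infty)$ should come from the radial kernel estimates of Lemma \ref{potentially} together with $\rho^\v\in L^\infty(0,T;L^1\cap L^{\gamma_2})$. For $\alpha\in(n-2,n-1)$ this uses $\gamma_2>\frac{1}{n-1-\alpha}$ to integrate the singularity $|r-\eta|^{n-2-\alpha}$.
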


The result follows by combining the argument developed in \cite[Lemma 7.1]{Chen_2024} with the treatment of the nonlocal potential as in \cite[Lemma 4.11]{Carrillo2025}.

\begin{remark}
It is worth noting that, if the technical assumption that either
$\gamma_2 \leq \gamma_1$ or $\gamma_1 \in (1,3)$ can be relaxed so as to permit 
$\gamma_2>3$, then the following stronger result holds{\rm :}
    \[
 \partial_t\eta(\rho^\v,m^\v)+\partial_rq(\rho^\v,m^\v) \quad \mbox{is compact in $ H^{-1}_{\rm loc}(\mathbb{R}^2_+)$}.
    \]
\end{remark}

Now, applying the compensated compactness framework in \cite[\S 8]{Chen_2024} 
(also see \cite[\S 4]{Schrecker_2019}),
we can show that there exist radial functions $(\rho,m)(t,r)$ such that
$(\rho^\v,\rho^\v u^\v) \to (\rho,m)$ {\it a.e.} in $\mathbb{R}^2_+$. 
The idea behind the approach is the following:
\begin{enumerate}
    \item[\rm (i)] For the half plane 
    $\mathbb{H} := \{ (\rho,u)\in \mathbb{R}^2_+ \, : \, \rho>0 \}$,
    we define the subset of continuous functions:
    \[
    \qquad\quad\,\, \overline{C}(\mathbb{H}) := \{ \phi \in C(\overline{\mathbb{H}})\, : \, \phi(0,u) \text{ is constant 
    and map } (\rho,u) \mapsto \lim_{s \to \infty} \phi(s\rho,su) \in C(\mathbb{S}^1 \cap \mathbb{H}) \}.
    \]
    It can be shown that there exists a compactification $\overline{\mathcal{H}}$ of $\mathbb{H}$
    such that the weak-* topology is induced by $C(\overline{\mathcal{H}})$. 
    There is also an isometric isomorphism $\iota : \overline{C}(\mathbb{H}) \to C(\overline{\mathcal{H}})$. 
    Let $V$ be the weak-* closure of the vacuum set $\{(0,u) \, : \, u \in \mathbb{R}_+ \}$.

    \vspace{0.3cm}
    \item[\rm (ii)] The fundamental theorem of Young measures \cite[Theorem 2.4]{Alberti_2001} and
    \cite[Proposition 4.1]{Schrecker_2019} implies that there exists a Young measure 
    $\nu_{(t,r)} \in \mathcal{P}(\overline{\mathcal{H}})$ such that, 
    for all $\phi \in C(\overline{\mathbb{H}})$ vanishing on $\partial \mathbb{H}$, 
    then $\phi$ is integrable with respect to $\nu_{(t,r)}$ for {\it a.e.} $(t,r) \in \mathbb{R}^2_+$ and
    \[
 \phi(\rho^\v(t,r),u^\v(t,r)) \longrightharpoonup \int_{\mathbb{H}} \phi(\rho,u) \dd \nu_{(t,r)}(\rho,u) \quad \text{ in } L^1_{\rm loc}(\mathbb{R}^2_+).
    \]
    Moreover, by taking $\phi(\rho,u) = (\rho,\rho u)$, we see that the sequence $(\rho^\v,\rho^\v u^\v)$ converges {\it a.e.} to $(\rho,m)$ if and only if
    \[
 \nu_{(t,r)}(\rho,u) = \delta_{(\rho(t,r),m(t,r))} \quad \text{ for {\it a.e.} } (t,r) \in \{ (t,r) \in \mathbb{R}^2_+ \, : \, \rho(t,r) > 0 \},
    \]
    and $\supp (\nu_{(t,r)}) \subseteq V$ for 
    {\it a.e.} $(t,r) \in \{ (t,r) \in \mathbb{R}^2_+ \, : \, \rho(t,r) = 0 \}$.
    That is, we need to show that $\nu_{(t,r)}(\rho,u)$ is supported in $V$ or at a single point in $\mathbb{H}$ for {\it a.e.} $(t,r) \in \mathbb{R}^2_+$.

    \vspace{0.3cm}
    \item[\rm (iii)] It follows from the div-curl lemma from \cite[Theorem]{Conti_2011}, Lemma \ref{compacty}, 
    and the estimates on the entropy and entropy flux kernels in Lemma \ref{entropyan} 
    to prove the commutator relation:
    \[
     \overline{\chi(s_1) \sigma(s_2) - \chi(s_2) \sigma(s_1)} = \overline{\chi(s_1)} \, \overline{\sigma(s_2)} - \overline{\chi(s_2)} \, \overline{\sigma(s_1)} \qquad \text{ for all } s_1,s_2 \in \mathbb{R},
    \]
    where $\chi(s) := \chi(\cdot,\cdot,s)$, $\sigma(s) := \sigma(\cdot,\cdot,s)$, 
    and $\overline{f} = \int_{\overline{\mathcal{H}}} f \dd \nu_{(t,r)}$.

    \vspace{0.3cm}
    \item[\rm (iv)] The commutator relation along with the representation formula 
    for the $\partial^{\lambda_1 + 1}$ derivatives of the entropy and entropy flux kernels 
    in Lemma \ref{singularity} can be used to show the support of 
    the Young measure $\nu_{(t,r)}$ is either a subset of $V$ or a single point in $\mathbb{H}$.
\end{enumerate}

\subsection{Finite-Energy Solutions to CEREs}
With the {\it a.e.} convergence of the density and momentum now established, 
the estimates in Lemma~\ref{limest} allow us to conclude the following lemma:

\begin{lemma}
    Under the assumptions of {\rm Theorem \ref{existence}}, as $\v \rightarrow 0$ {\rm(}up to a sub-sequence{\rm):}
    \begin{enumerate}
    \item[{\rm(a)}] $
	\rho^{\v} \to \rho$  {\it a.e.} and strongly in  $L^q_{\rm loc}$ for all $q \in [1,\gamma_2 + 1)$, where $L^q_{\rm loc}$ denotes $L^q([0,T] \times K)$ for any $T > 0$ and $K\Subset (0,\infty)$.

    \vspace{0.3cm}

    \item[{\rm(b)}] $ m^{\v}=\rho^{\v} u^{\v}\longrightarrow m(t,r)$ {\it a.e.} and strongly in $L^q_{\rm loc}$ for all $q\in[1,\frac{3(\gamma_2 + 1)}{\gamma_2 + 3})$.

    \vspace{0.3cm}

    \item[{\rm(c)}] $\displaystyle \frac{m^{\v}}{\sqrt{\rho^{\v}}} = \sqrt{\rho^{\v}} u^{\v} \longrightarrow \frac{m}{\sqrt{\rho}}$ {\it a.e.} and strongly in $L^2_{\rm loc}$.

    \vspace{0.3cm}

    \item[{\rm(d)}] $\Phi_\alpha*\rho^{\v} \longrightharpoonup \Phi_\alpha*\rho \quad
	\mbox{weak-$\ast$ in $L^\infty(0,T; W^{1,\frac{2n}{\alpha +2}}_{\rm loc}(\mathbb{R}^n))$ and weakly in $L^2(0,T; W^{1,\frac{2n}{\alpha +2}}_{\rm loc}(\mathbb{R}^n))$}$,

    \vspace{0.3cm}

    \item[{\rm(e)}] $\displaystyle \int^T_0 \int_0^\infty \big|\rho^{\v} (\Phi_\alpha*\rho^{\v})-\rho (\Phi_\alpha*\rho)\big|(t,r)\, r^{n-1} \dd r \dd t \longrightarrow 0$.
\end{enumerate}
Moreover, when $\alpha \in (0,n-1)$,
\begin{align*}
	\|(\Phi_\alpha*\rho)(t)\|_{L^{\frac{2n}{\alpha}}(\mathbb{R}^n)}
    +\|(\nabla \Phi_\alpha*\rho)(t)\|_{L^\frac{2n}{\alpha + 2}(\mathbb{R}^n)}\leq C(M,E_0)\qquad\,\mbox{for $t\geq0$};
\end{align*}
when $\alpha \in (-1,0]$, for any $K \Subset \mathbb{R}^n$,
\begin{align*}
	\|(\Phi_\alpha*\rho)(t)\|_{L^\infty(K)}
    +\|(\nabla \Phi_\alpha*\rho)(t)\|_{L^\frac{2n}{\alpha + 2}(K)}
    \leq C(M,E_0,T,K)\qquad\,\mbox{for $t\geq0$}.
\end{align*}
\end{lemma}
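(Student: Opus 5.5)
The plan is to pass to the limit $\v\to0$ along the subsequence furnished by the compensated compactness framework of the previous subsection. That framework already gives $(\rho^\v,m^\v)\to(\rho,m)$ a.e. in $\mathbb{R}^2_+$. Every statement will then be upgraded from a.e. convergence to strong convergence by Vitali's theorem (equi-integrability), using the uniform bounds of Lemma \ref{limest}.

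For (a), Lemma \ref{limest}(g) gives a uniform bound on $\int_0^T\int_K (\rho^\v)^{\gamma_2+1}\,r^{n-1}\dd r\dd t$. Hence $\{|\rho^\v|^q\}$ is uniformly integrable on $[0,T]\times K$ for every $q<\gamma_2+1$. Together with a.e. convergence, Vitali's theorem yields strong $L^q_{\rm loc}$ convergence.

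For (b), Young/Hölder gives
\[
|m^\v|^q=(\rho^\v)^{q}|u^\v|^q=\big(\rho^\v|u^\v|^3\big)^{q/3}(\rho^\v)^{2q/3}.
\]
Pairing the uniform bound on $\rho^\v|u^\v|^3$ with that on $(\rho^\v)^{\gamma_2+1}$ in Lemma \ref{limest}(g), Hölder bounds $\int|m^\v|^{q}$ uniformly precisely when $\frac{q}{3}+\frac{2q}{3(\gamma_2+1)}\le1$, i.e. $q\le\frac{3(\gamma_2+1)}{\gamma_2+3}$. Strict inequality gives equi-integrability, and Vitali applies again.

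For (c), the key point is identifying the limit of $m^\v/\sqrt{\rho^\v}$. On $\{\rho>0\}$, a.e. convergence of $\rho^\v$ and $m^\v$ gives $m^\v/\sqrt{\rho^\v}\to m/\sqrt\rho$ a.e. On $\{\rho=0\}$, I bound
\[
|m^\v|/\sqrt{\rho^\v}=\sqrt{\rho^\v}|u^\v|\le(\rho^\v|u^\v|^3)^{1/3}(\rho^\v)^{1/6}.
\]
Since $\rho^\v|u^\v|^3$ is bounded in $L^1_{\rm loc}$, it is bounded a.e. along a further subsequence after a Fatou/Egorov argument. A cleaner route is to show the $L^2$ contribution of the region where $\rho^\v\le\delta$ is small: Hölder gives $\int_{\{\rho^\v\le\delta\}}\rho^\v|u^\v|^2\le \|\rho^\v|u^\v|^3\|_{L^1}^{2/3}\big(\int_{\{\rho^\v\le\delta\}}\rho^\v\big)^{1/3}$, which is $\le C\delta^{1/3}$ locally. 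Off this set, a.e. convergence holds, and the $L^3$-type bound $\int(\sqrt{\rho^\v}|u^\v|)^{3}\le\int \rho^\v|u^\v|^3(\rho^\v)^{1/2}$ (controlled via (g) and Hölder) gives equi-integrability of the square. Combining yields strong $L^2_{\rm loc}$ convergence, with the limit defined as $0$ on vacuum. I expect this step to be the main obstacle, since it requires handling the vacuum carefully.

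For (d) and the uniform bounds on the potential, the estimates of Lemma \ref{lem5.1} for $\Phi_\alpha*\rho^\v$ are uniform in $\v$. So I extract weak-$*$ and weak limits in $L^\infty(0,T;W^{1,\frac{2n}{\alpha+2}}_{\rm loc})$ and identify the limit as $\Phi_\alpha*\rho$ by testing against smooth functions and using the strong convergence $\rho^\v\to\rho$ in $L^1$. The global $L^1$ convergence comes from the local convergence in (a), mass conservation from Lemma \ref{limest}(c), and tightness from the energy/moment bounds. The norm bounds pass to the limit by weak lower semicontinuity.

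For (e), I split $\rho^\v\Phi_\alpha*\rho^\v-\rho\Phi_\alpha*\rho=(\rho^\v-\rho)\Phi_\alpha*\rho^\v+\rho\,\Phi_\alpha*(\rho^\v-\rho)$. The first term is handled with Hölder in the pairing $L^{\frac{2n}{2n-\alpha}}$–$L^{\frac{2n}{\alpha}}$ (interpolating the strong convergence between $L^1$ and the uniform $L^{\gamma_2}$ bound, since $\frac{2n}{2n-\alpha}<\gamma_2$). The second term uses HLS, which gives $\Phi_\alpha*(\rho^\v-\rho)\to0$ in $L^{\frac{2n}{\alpha}}$. For $\alpha\in(-1,0]$, the localized bounds plus the uniform $k_\alpha$-moment control handle the tails. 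Integrating in $t$ and using dominated convergence completes the proof.
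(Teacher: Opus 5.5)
Your plan follows the standard route that the paper delegates to the vanishing-viscosity argument of Chen et al.\ (\S 9) and to its own $b\to\infty$ lemmas. Parts (a) and (b) are correct as written, including your exponent count $q\le\frac{3(\gamma_2+1)}{\gamma_2+3}$.

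Two steps fail as justified. The first is in (c): the bound on $\int(\sqrt{\rho^\varepsilon}|u^\varepsilon|)^3$ is not available. This integrand equals $\rho^\varepsilon|u^\varepsilon|^3(\rho^\varepsilon)^{1/2}$, and Lemma \ref{limest}(g) only places $\rho^\varepsilon|u^\varepsilon|^3$ in $L^1_{\rm loc}$, so no H\"older inequality can absorb the extra $(\rho^\varepsilon)^{1/2}$. You only need an exponent slightly above $2$. Write $|\sqrt{\rho^\varepsilon}u^\varepsilon|^{2s}=(\rho^\varepsilon|u^\varepsilon|^3)^{2s/3}(\rho^\varepsilon)^{s/3}$ and apply H\"older with exponents $\frac{3}{2s}$ and $\frac{3}{3-2s}$. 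This gives a uniform $L^{2s}_{\rm loc}$ bound whenever $\frac{s}{3-2s}\le\gamma_2+1$, i.e.\ $s\le\frac{3(\gamma_2+1)}{2\gamma_2+3}$, and that threshold exceeds $1$. This supplies the equi-integrability Vitali needs. Your $\delta^{1/3}$ estimate near vacuum is fine; more simply, $\int_{\{\rho=0\}}\rho^\varepsilon|u^\varepsilon|^2\le C(\int_{\{\rho=0\}}\rho^\varepsilon)^{1/3}\to0$ by (a).

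The more substantive gap is in (d)--(e). Your HLS argument needs $\rho^\varepsilon(t)\to\rho(t)$ in $L^{\frac{2n}{2n-\alpha}}(\mathbb{R}^n)$, hence global $L^1$ convergence. You attribute the required tightness to ``the energy/moment bounds'', but this does not work in either range.
For $\alpha\in(0,n-1)$, the energy carries no spatial weight and gives no tightness at all.
For $\alpha\in(-1,0]$, a uniform bound on $\int\rho^\varepsilon k_\alpha(1+|\boldsymbol{x}|^2)\dd\boldsymbol{x}$ does not make this weighted mass uniformly small outside $B_R$. That smallness is exactly what $\int_{|\boldsymbol{x}|>R}\rho^\varepsilon|\Phi_\alpha*\rho^\varepsilon|\dd\boldsymbol{x}$ requires, since $\Phi_\alpha$ grows at infinity. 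For example, a mass $\delta_\varepsilon\to0$ placed at distance $L_\varepsilon$ with $\delta_\varepsilon k_\alpha(1+L_\varepsilon^2)=1$ is compatible with all your bounds, yet destroys (e).

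The missing ingredient is the dynamics, i.e.\ the continuity equation. Test Definition \ref{CNSREs}(b) with a cutoff $\zeta_R$, where $\zeta_R=1$ on $B_R$, $\zeta_R=0$ off $B_{2R}$, and $|\nabla\zeta_R|\le C/R$. Use $\int\rho^\varepsilon(t)\dd\boldsymbol{x}=M$ and $\int|\mathcal{M}^\varepsilon|\dd\boldsymbol{x}\le\sqrt{M}\,\|\mathcal{M}^\varepsilon/\sqrt{\rho^\varepsilon}\|_{L^2}\le C(M,E_0)$. This gives
\[
\sup_{0\le t\le T}\int_{|\boldsymbol{x}|>2R}\rho^\varepsilon(t,\boldsymbol{x})\dd\boldsymbol{x}\le\int_{|\boldsymbol{x}|>R}\rho^\varepsilon_0\dd\boldsymbol{x}+\frac{C(M,E_0)\,T}{R},
\]
which is uniformly small once $\rho^\varepsilon_0\to\rho_0$ in $L^1$.
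For $\alpha\in(-1,0]$, run the same computation with the weight $k_\alpha(1+|\boldsymbol{x}|^2)(1-\zeta_R)$. Its gradient is $o(1)$ on $\{|\boldsymbol{x}|>R\}$ as $R\to\infty$ because $\alpha>-1$, so this controls the weighted tails.
Equivalently, the argument shows the limit retains mass $M$, and Scheff\'e's lemma then upgrades local a.e.\ convergence to $L^1(\mathbb{R}^n)$ convergence for a.e.\ $t$. This is the mechanism behind the paper's global convergence in $C(0,T;L^q(\mathbb{R}^n))$ in the $b\to\infty$ step, which its $\varepsilon\to0$ lemma implicitly reuses.
For $\alpha\in(0,n-1)$ you could instead bypass tightness altogether. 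The radial decay $|K(r,\eta)|\le C(r\eta)^{-\alpha/2}$ of Lemma \ref{potentially} bounds the far-field contribution by $CR^{-\alpha/2}$ uniformly in $\varepsilon$.
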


The proof combines the arguments of \cite[\S 9]{Chen_2024} with those of Lemmas~\ref{lem5.1} and \ref{lem5.2}. Proceeding as in \cite[\S 5]{Chen2021}, we deduce that $(\rho,m)(t,r)$ is a global finite-energy solution 
of CEREs \eqref{1.1} in the sense of Definition~\ref{definition}, 
thereby proving Theorem~\ref{existence}.

\appendix

\section{Convolution Inequalities}
We now present several convolution inequalities that have been used in the main text of this paper.

\begin{lemma}[Hardy-Littlewood-Sobolev (HLS) Inequality]\label{simplehls}
	Let $\alpha \in (0,n)$ and $1 < p,r < \infty$ such that
	\[
	\frac{1}{p} + \frac{\alpha}{n} = 1 + \frac{1}{r}.
	\]
Then there exists a constant $C>0$ such that,
    for all $f \in L^p(\mathbb{R}^n)$,
	\begin{equation*}
	\big\| |\cdot |^{-\alpha} * f \big\|_{L^r(\mathbb{R}^n)} \leq C\|f\|_{L^p(\mathbb{R}^n)}.
	\end{equation*}    
    When $r = \frac{2n}{\alpha}$ and $p=\frac{2n}{2n-\alpha}$, the constant $C=C_{n,\alpha}$ given by
    \[
    C_{n,\alpha} := \pi^\frac{\alpha}{2} \frac{\Gamma(\frac{n - \alpha}{2})}{\Gamma( n - \frac{\alpha}{2})} 
    \bigg( \frac{\Gamma(\frac{n}{2})}{\Gamma(n)} \bigg)^{\frac{\alpha - n}{n}}.
    \]
\end{lemma}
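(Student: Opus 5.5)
The statement to prove is the HLS inequality of Lemma \ref{simplehls}: the general $L^p\to L^r$ bound for $|\cdot|^{-\alpha}*f$, and the identification of the constant at the diagonal exponents.

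\textbf{General bound.} I would use the classical route: a pointwise Hedberg-type estimate followed by Marcinkiewicz interpolation.

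1. Fix $\delta>0$ and split the kernel as $|\cdot|^{-\alpha}=K_1+K_2$, where $K_1=|\cdot|^{-\alpha}\mathds{1}_{\{|\boldsymbol{x}|<\delta\}}$ and $K_2=|\cdot|^{-\alpha}\mathds{1}_{\{|\boldsymbol{x}|\geq\delta\}}$.

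2. Dyadic decomposition of the near part gives $|K_1*f|(\boldsymbol{x})\lesssim \delta^{n-\alpha}\,\mathcal{M}f(\boldsymbol{x})$, with $\mathcal{M}$ the Hardy--Littlewood maximal function. This uses $\alpha<n$, which makes $K_1$ integrable.

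3. H\"older's inequality gives $|K_2*f|(\boldsymbol{x})\lesssim \delta^{\,n/p'-\alpha}\|f\|_{L^p}$. This needs $\alpha p'>n$, which is equivalent to $r<\infty$.

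4. Optimize in $\delta$, choosing $\delta=(\|f\|_{L^p}/\mathcal{M}f(\boldsymbol{x}))^{p/n}$. The balance relation $\frac1p+\frac{\alpha}{n}=1+\frac1r$ then yields the pointwise bound
\[
\big||\cdot|^{-\alpha}*f\big|(\boldsymbol{x})\lesssim \|f\|_{L^p}^{1-p/r}\,(\mathcal{M}f(\boldsymbol{x}))^{p/r}.
\]

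5. Take $L^r$ norms and apply the maximal theorem (valid since $p>1$). This gives the bound for all $1<p,r<\infty$ directly, with no separate interpolation step needed.

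\textbf{Sharp constant.} For $p=\frac{2n}{2n-\alpha}$ and $r=\frac{2n}{\alpha}$ one has $p=r'$, so by duality the claim is equivalent to the bilinear sharp HLS inequality
\[
\iint f(\boldsymbol{x})|\boldsymbol{x}-\boldsymbol{y}|^{-\alpha}g(\boldsymbol{y})\,\dd\boldsymbol{x}\dd\boldsymbol{y}\leq C_{n,\alpha}\|f\|_{L^p}\|g\|_{L^p}.
\]
This is exactly Lieb's theorem (\emph{Ann.\ Math.} 1983; see also \cite{Lieb_2001}, Theorem 4.3). I would invoke it rather than reprove it. For orientation, the ingredients of that proof are:

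1. Riesz rearrangement reduces to symmetric decreasing $f,g$.

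2. The problem is conformally invariant under stereographic projection to $\mathbb{S}^n$, which lets one take $f=g$.

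3. A competing-symmetries argument shows that the optimizers are $(1+|\boldsymbol{x}|^2)^{-(2n-\alpha)/2}$ up to conformal maps.

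4. Evaluating the functional on this optimizer, using the Funk--Hecke formula or beta integrals, gives the displayed $\Gamma$-expression.

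\textbf{Main obstacle.} The only delicate part is the sharp constant, specifically the existence and identification of the optimizer in steps 2--3. I would cite it rather than reprove it; the remaining work is routine checking of the exponent bookkeeping.
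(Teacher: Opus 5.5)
You are proposing more than the paper provides. The paper gives no proof of this lemma; it records the statement in the appendix as a classical fact, and the diagonal constant $C_{n,\alpha}$ is Lieb's sharp constant (see \cite{Lieb_2001}).

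Your argument is correct.
\begin{itemize}
\item The Hedberg splitting bounds the near part by a constant times $\delta^{n-\alpha}$ times the maximal function, using $\alpha<n$.
\item It bounds the far part by a constant times $\delta^{n/p'-\alpha}\|f\|_{L^p}$, using $\alpha p'>n$, which is equivalent to $r<\infty$.
\item Optimizing in $\delta$ gives exactly the exponents $1-\frac{p}{r}$ and $\frac{p}{r}$, via $\frac1p+\frac{\alpha}{n}=1+\frac1r$.
\item The maximal theorem for $p>1$ then yields the strong-type bound directly.
\end{itemize}

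In the diagonal case, $p=\frac{2n}{2n-\alpha}=r'$. So the $L^p\to L^r$ operator norm equals the best constant in the bilinear inequality on $L^p\times L^p$. Lieb's diagonal constant $\pi^{\alpha/2}\frac{\Gamma(\frac{n-\alpha}{2})}{\Gamma(n-\frac{\alpha}{2})}\big(\frac{\Gamma(n/2)}{\Gamma(n)}\big)^{-1+\alpha/n}$ is exactly $C_{n,\alpha}$.

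One correction to your orientation sketch: the reduction to $f=g$ comes from positive definiteness of the kernel, not from conformal invariance. Set $B(f,g):=\iint f(\boldsymbol{x})|\boldsymbol{x}-\boldsymbol{y}|^{-\alpha}g(\boldsymbol{y})\,\dd\boldsymbol{x}\dd\boldsymbol{y}$. Since $|\cdot|^{-\alpha}$ has a positive Fourier transform for $0<\alpha<n$, the Cauchy--Schwarz inequality gives $B(f,g)^2\le B(f,f)\,B(g,g)$. Conformal invariance is used only to identify the optimizers. You cite this part rather than reprove it, so the slip does not affect the validity of your proposal.
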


\begin{lemma}[Variation of HLS Inequality \text{\cite[Theorem 3.1]{Calvez_2017}}]\label{HLSineq}
Suppose that $\alpha \in (0,n)$ and $p \geq \frac{2n}{2n - \alpha}$.
Then there exists $C_* = C_*(\alpha,p,n) > 0$ such that, for any $f \in L^1(\mathbb{R}^n) \cap L^p(\mathbb{R}^n)$,
\begin{equation}\label{HLS}
\Big| \int_{\mathbb{R}^n} \int_{\mathbb{R}^n} f(\boldsymbol{x}) |\boldsymbol{x} - \boldsymbol{y}|^{-\alpha} f(\boldsymbol{y}) \dd \boldsymbol{x} \dd \boldsymbol{y} \Big| 
\leq C_* \| f \|_{L^1}^\frac{p (2n - \alpha) - 2n}{n(p - 1)} \| f \|_{L^p}^\frac{\alpha p}{n(p - 1)}.
\end{equation}
In particular, when $p = \frac{n + \alpha}{n} > \frac{2n}{2n - \alpha}$, for any
$f \in L^1(\mathbb{R}^n) \cap L^{\frac{n + \alpha}{n}}(\mathbb{R}^n)$,
\[
\Big| \int_{\mathbb{R}^n} \int_{\mathbb{R}^n} f(\boldsymbol{x}) |\boldsymbol{x} - \boldsymbol{y}|^{-\alpha} f(\boldsymbol{y}) \dd \boldsymbol{x} \dd \boldsymbol{y} \Big|
\leq C_* \| f \|_{L^1}^\frac{n - \alpha}{n} \| f \|_{L^{\frac{n + \alpha}{n}}}^\frac{n + \alpha}{n}.
\]
Furthermore, $C_*$ is the sharp constant that satisfies \eqref{HLS} and is strictly bounded above by the standard HLS constant $C_{n,\alpha}$, i.e., $C_* < C_{n,\alpha}$.
\end{lemma}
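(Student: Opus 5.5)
The plan is to obtain \eqref{HLS} by interpolating the classical HLS inequality of Lemma \ref{simplehls} with a H\"older interpolation between $L^1$ and $L^p$. With $p_0:=\frac{2n}{2n-\alpha}$, Lemma \ref{simplehls} (with $r=\frac{2n}{\alpha}$) and H\"older's inequality give
\[
\Big|\int_{\mathbb{R}^n}\int_{\mathbb{R}^n} f(\boldsymbol{x})|\boldsymbol{x}-\boldsymbol{y}|^{-\alpha}f(\boldsymbol{y})\dd\boldsymbol{x}\dd\boldsymbol{y}\Big|
\le \|f\|_{L^{p_0}}\,\big\||\cdot|^{-\alpha}*|f|\big\|_{L^{2n/\alpha}}\le C_{n,\alpha}\|f\|_{L^{p_0}}^2 .
\]
Since $1\le p_0\le p$, the next step is to write $\frac1{p_0}=\theta+\frac{1-\theta}{p}$, which gives $\theta=\frac{p(2n-\alpha)-2n}{2n(p-1)}\in[0,1)$. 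The interpolation inequality $\|f\|_{L^{p_0}}\le\|f\|_{L^1}^{\theta}\|f\|_{L^p}^{1-\theta}$ then yields the exponents $2\theta=\frac{p(2n-\alpha)-2n}{n(p-1)}$ and $2(1-\theta)=\frac{\alpha p}{n(p-1)}$, exactly those in \eqref{HLS}. The case $p=\frac{n+\alpha}{n}$ is direct substitution, since $\frac{n+\alpha}{n}>\frac{2n}{2n-\alpha}$ for $\alpha\in(0,n)$.

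Next, $C_*$ is defined as the supremum over $f\in L^1\cap L^p$, $f\not\equiv0$, of the left side of \eqref{HLS} divided by $\|f\|_{L^1}^{2\theta}\|f\|_{L^p}^{2(1-\theta)}$. By the bound above this supremum is finite, positive, and at most $C_{n,\alpha}$; by definition it is the sharp constant. For the endpoint $p=p_0$ one has $\theta=0$, so the strict inequality $C_*<C_{n,\alpha}$ can only be claimed, and will only be proved, for $p>p_0$, which covers the case $p=\frac{n+\alpha}{n}$ actually used in the paper.

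The main obstacle is this strict inequality $C_*<C_{n,\alpha}$ for $p>p_0$, since a supremum of ratios each strictly below $C_{n,\alpha}$ could still equal $C_{n,\alpha}$. I would argue in two steps.
\begin{enumerate}
\item[(i)] Show that $C_*$ is attained. Replacing $f$ by $|f|^\#$ does not decrease the ratio (Riesz rearrangement inequality; rearrangement preserves $L^q$ norms). The ratio is invariant under multiplication by constants and under mass-preserving dilations $f\mapsto\lambda f(\lambda^{1/n}\cdot)$, so a maximizing sequence can be normalized with $\|f_k\|_{L^1}=\|f_k\|_{L^p}=1$. Radially decreasing functions bounded in $L^1\cap L^p$ satisfy $f_k(r)\lesssim\min\{r^{-n},r^{-n/p}\}$, which supplies tightness at infinity and equi-integrability near the origin. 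Using the compactness of the interaction functional for radial sequences, exactly as in Lemma \ref{weaktostrong} (or directly from the pointwise bound), a weak limit is a maximizer; the fact that the norms can only drop under weak limits works in our favour.
\item[(ii)] Show that equality $C_*=C_{n,\alpha}$ at a maximizer $f$ is impossible. It would force equality both in sharp HLS and in H\"older interpolation. Lieb's characterization makes $f$ a multiple of $(1+|\boldsymbol{x}|^2)^{-(2n-\alpha)/2}$ up to translation and dilation, whereas equality in the interpolation with $0<\theta<1$ forces $|f|$ to be constant on its support. These are incompatible, so $C_*<C_{n,\alpha}$.
\end{enumerate}
If the compactness in (i) proves delicate, I would fall back on citing \cite[Theorem 3.1]{Calvez_2017} for the existence of optimizers.
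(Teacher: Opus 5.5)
Your proposal is correct. It also does more than the paper, which gives no proof of this lemma and simply imports it from \cite[Theorem 3.1]{Calvez_2017}.

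Your derivation of \eqref{HLS} follows the standard route of that reference: sharp HLS at $p_0=\frac{2n}{2n-\alpha}$, then $L^1$--$L^p$ interpolation. The same holds for your existence argument for radially decreasing extremals when $p>p_0$ (rearrangement, the two-parameter normalization $\|f_k\|_{L^1}=\|f_k\|_{L^p}=1$, compactness, lower semicontinuity of the norms).

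The strict bound $C_*<C_{n,\alpha}$ is asserted in the paper without justification, and your equality-case argument supplies it soundly. Equality would force $f$ to be a Lieb extremal. Since your interpolation exponent satisfies $0<\theta<1$, equality in H\"older also forces $|f|^p=c|f|$, i.e.\ $|f|$ would be a multiple of an indicator, which is incompatible.

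Your endpoint caveat is right and worth keeping. For $p=p_0$ one has $\theta=0$, so the inequality \emph{is} sharp HLS. Since $(1+|\boldsymbol{x}|^2)^{-(2n-\alpha)/2}\in L^1$, the constant there is exactly $C_{n,\alpha}$, so the lemma over-claims at the endpoint. This is harmless, as only $p=\frac{n+\alpha}{n}>p_0$ is used later.

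One wording fix in step (i): the bound $f_k\lesssim r^{-n}$ does not give tightness of the mass, since a spreading plateau can carry $L^1$ mass to infinity. What $f_k\lesssim\min\{r^{-n},r^{-n/p}\}$ does give, because $1<p_0<p$, is uniform smallness of $\|f_k\mathds{1}_{B_\delta}\|_{L^{p_0}}$ and $\|f_k\mathds{1}_{B_R^{\rm c}}\|_{L^{p_0}}$. Together with Helly's a.e.\ convergence on annuli, this yields $f_k\to f$ in $L^{p_0}$, and hence convergence of the interaction by HLS.

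Use this direct argument rather than Lemma \ref{weaktostrong}. As stated, that lemma needs the exponent below $\frac{n}{n-\alpha}$, so for large $p$ you could only apply it at an intermediate exponent.
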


\begin{lemma}[Weak Young's Convolution Inequality]\label{precised}
	Let $1 < q < \infty$ and $p \geq \frac{2q}{2q-1}$. 
    Then there exists $C_*^h> 0$ such that, for any $h \in L^q_{\rm w}(\mathbb{R}^n)$ 
    and $f,g \in L^1(\mathbb{R}^n) \cap L^p(\mathbb{R}^n)$,
	\[
	\Big| \int_{\mathbb{R}^n} \int_{\mathbb{R}^n} f(\boldsymbol{x}) h(\boldsymbol{x} - \boldsymbol{y}) g(\boldsymbol{y}) \dd \boldsymbol{x} \dd \boldsymbol{y} \Big| 
    \leq C_*^h (\|f\|_{L^1}\|g\|_{L^1})^\frac{p(2q-1)-2q}{2q(p-1)} (\|f\|_{L^p}\|g\|_{L^p})^\frac{p}{2q(p-1)}.
	\]
\end{lemma}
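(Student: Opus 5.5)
The plan is to reduce the weak Young inequality to two interpolation steps. For general $h\in L^q_{\rm w}$ and $f,g\in L^1\cap L^p$, with $p\ge \frac{2q}{2q-1}$, I would first invoke the weak-type Young (Hardy--Littlewood--Sobolev for Lorentz spaces, \cite[Theorem 4.3]{Lieb_2001}): for exponents $1<s,s'<\infty$ with $\frac1s+\frac1{s'}+\frac1q=2$,
\[
\Big|\int_{\mathbb{R}^n}\int_{\mathbb{R}^n} f(\boldsymbol{x})h(\boldsymbol{x}-\boldsymbol{y})g(\boldsymbol{y})\dd\boldsymbol{x}\dd\boldsymbol{y}\Big|\le C\|h\|_{L^q_{\rm w}}\|f\|_{L^s}\|g\|_{L^{s}},
\]
choosing the symmetric exponent $s=s'=\frac{2q}{2q-1}$, which satisfies $\frac2s+\frac1q=2$. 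The lower bound $p\ge \frac{2q}{2q-1}$ guarantees $1<s\le p$, so $s$ lies between $1$ and $p$.

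Next, I would interpolate by H\"older: $\|f\|_{L^s}\le \|f\|_{L^1}^{1-\vartheta}\|f\|_{L^p}^{\vartheta}$ with $\frac1s=(1-\vartheta)+\frac{\vartheta}{p}$. This gives $\vartheta=\frac{p}{p-1}\big(1-\frac1s\big)=\frac{p}{2q(p-1)}$ and $1-\vartheta=\frac{p(2q-1)-2q}{2q(p-1)}$, which are exactly the exponents in the statement. Doing the same for $g$ and multiplying gives the claim with $C_*^h:=C\|h\|_{L^q_{\rm w}}$. Defining $C_*^h$ as the best constant (the supremum of the ratio) then makes it sharp and finite. This is the sense in which it is used in Lemma \ref{negative}.

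The only delicate point is the endpoint $p=\frac{2q}{2q-1}$, where $\vartheta=1$ and the bound is just weak Young directly, so no case split is actually needed. The main obstacle is citing the Lorentz-space HLS correctly, since plain Young fails for weak $L^q$ kernels. Here I would rely on the Lieb--Loss general HLS with a weak-$L^q$ kernel, valid because all exponents lie strictly in $(1,\infty)$. For $h=|\cdot|^{-\alpha}$ this reproduces Lemma \ref{HLSineq} with $q=\frac n\alpha$.
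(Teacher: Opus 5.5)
Your proof is correct. The paper states this lemma in the appendix without proof, as a standard tool. Your route is the standard derivation it implicitly relies on: the weak Young (Lieb--Loss) inequality with the symmetric exponent $s=\frac{2q}{2q-1}$, followed by H\"older interpolation of $\|f\|_{L^s}$ and $\|g\|_{L^s}$ between $L^1$ and $L^p$. Your exponents $\vartheta=\frac{p}{2q(p-1)}$ and $1-\vartheta=\frac{p(2q-1)-2q}{2q(p-1)}$ match the statement exactly. One cosmetic point: if you define $C_*^h$ as the best constant, you need $h\not\equiv 0$ (as holds for $\Psi$ in Lemma \ref{negative}) to guarantee $C_*^h>0$.
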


\begin{lemma}[Riesz's Rearrangement Inequality \cite{Lieb_2001}]
    Let $f, g, h:\,\mathbb{R}^n \to \mathbb{R}_+$ be measurable functions. Define
    \[
    I(f,g,h) := \int_{\mathbb{R}^n} \int_{\mathbb{R}^n} f(\boldsymbol{x}) g(\boldsymbol{x} - \boldsymbol{y}) h(\boldsymbol{y}) \dd \boldsymbol{x} \dd \boldsymbol{y}.
    \]
    For a given measurable function $\phi : \mathbb{R}^n \to \mathbb{R}_+$, define the radially decreasing rearrangement $\phi^{\#}$ of $\phi$ as
    \begin{equation*}
    \phi^{\#} (\boldsymbol{x}) := \int^\infty_0 \mathds{1}_{\{\boldsymbol{y} \, : \, \phi(\boldsymbol{y}) > t\}^{\#}} (\boldsymbol{x}) \dd t
    \end{equation*}
    with
    \[
    A^{\#} = \{ \boldsymbol{x} \, : \, \omega_n |\boldsymbol{x}|^n \leq |A|\}
    \]
    for any Lebesgue measurable set $A \subset \mathbb{R}^n$. Then
    \begin{equation}\label{rearrangeineq}
    I(f,g,h) \leq I(f^{\#},g^{\#},h^{\#}).
    \end{equation}
    If $g$ is a strictly symmetric-decreasing function, then the equality holds in \eqref{rearrangeineq}
    if and only if there exists some $\boldsymbol{z} \in \mathbb{R}^n$ such that $f(\boldsymbol{x}) = f^{\#}(\boldsymbol{x} - \boldsymbol{z})$ and $h(\boldsymbol{x}) = h^{\#}(\boldsymbol{x} - \boldsymbol{z})$.
\end{lemma}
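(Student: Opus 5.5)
We would follow the classical route of Lieb--Loss \cite{Lieb_2001}: reduce to characteristic functions, prove the set version by symmetrization, and treat the equality case separately. The first step is the layer-cake representation. Each of $f,g,h\ge0$ is written as
\[
f=\int_0^\infty \mathds{1}_{\{f>s\}}\dd s,
\]
and likewise for $g$ and $h$. By Tonelli,
\[
I(f,g,h)=\int_0^\infty\!\!\int_0^\infty\!\!\int_0^\infty I\big(\mathds{1}_{\{f>s\}},\mathds{1}_{\{g>t\}},\mathds{1}_{\{h>u\}}\big)\dd s\dd t\dd u .
\]
By the definition of $\phi^{\#}$ we have $\{\phi>s\}^{\#}=\{\phi^{\#}>s\}$ up to null sets, so $I(f^{\#},g^{\#},h^{\#})$ has the same representation with every set replaced by its symmetrization. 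It therefore suffices to prove
\[
I(\mathds{1}_A,\mathds{1}_B,\mathds{1}_C)\le I(\mathds{1}_{A^{\#}},\mathds{1}_{B^{\#}},\mathds{1}_{C^{\#}})
\]
for measurable sets of finite measure. The infinite-measure case follows by monotone approximation, since both sides are then infinite or are handled by truncating with balls.

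For sets, we would use Steiner symmetrization. First we reduce to dimension one by Fubini: fix the last $n-1$ coordinates and use the one-dimensional inequality on each line. The one-dimensional inequality for three sets is proved first for finite unions of intervals. Each interval is moved toward the origin at constant speed, with intervals merging upon collision. One checks that the quantity $\int\!\!\int \mathds{1}_{A}(x)\mathds{1}_B(x-y)\mathds{1}_C(y)\,\dd x\dd y$ is nondecreasing along this flow, because for two intervals the length of the overlap of their moving translates is nondecreasing when both move toward the origin. The finite-union case then extends to general sets by approximation in measure.

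Next we pass from Steiner symmetrization in every direction to the symmetric-decreasing rearrangement. Starting from $(A,B,C)$, we apply Steiner symmetrizations along a suitably chosen sequence of directions. The standard compactness argument shows that the iterates converge in $L^1$ to $(A^{\#},B^{\#},C^{\#})$: the $L^2$-distance to the ball decreases strictly unless the set is already a ball. The functional $I$ is continuous in $L^1$ for sets of bounded measure and does not decrease along the sequence, which gives the inequality. We expect this convergence step to be the main technical obstacle for the inequality itself. To handle it we would use the device in \cite{Lieb_2001} of alternating symmetrizations with rotations by an angle that is an irrational multiple of $\pi$.

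The equality statement when $g$ is strictly symmetric-decreasing is the genuinely delicate part. Here $g=g^{\#}$ and every level set $\{g>t\}$ is a ball, so equality forces equality in the set inequality for almost every triple of levels $(s,t,u)$. The plan is Lieb's argument with $B$ a centered ball of radius $r$. For such $B$, the map $r\mapsto I(\mathds{1}_A,\mathds{1}_{B_r},\mathds{1}_C)$ strictly exceeds its value at the rearranged sets unless $A$ and $C$ are balls with a common center, up to null sets. This follows from the strict form of the Brunn--Minkowski-type overlap inequality for small $r$ combined with the Lebesgue density theorem. Applying this to almost every pair of level sets of $f$ and $h$, and checking that the centers cannot depend on the level (nested balls must be concentric for the equality to persist), yields a single translation $\boldsymbol z$ with $f=f^{\#}(\cdot-\boldsymbol z)$ and $h=h^{\#}(\cdot-\boldsymbol z)$. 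The converse direction is immediate by translation invariance of $I$.
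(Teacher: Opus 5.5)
The paper gives no proof of this lemma; it only quotes Lieb--Loss. Your outline of the inequality (layer cake, reduction to sets, the one-dimensional case, Steiner symmetrization, convergence to the ball) is the standard route from that source. Two steps, however, fail as you describe them.

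First, the one-dimensional flow. The quantity to track is not the overlap of two intervals. For a triple of intervals $I_1,I_2,I_3$ (from $f,g,h$) with centres $c_1,c_2,c_3$, the contribution is $\iint\mathds 1_{I_1}(x)\mathds 1_{I_2}(x-y)\mathds 1_{I_3}(y)\dd x\dd y=F(c_1-c_2-c_3)$ with $F$ symmetric decreasing. A motion toward $0$ at a common constant speed can push $c_1-c_2-c_3$ away from $0$. For example, take $f=\mathds 1_{[5/2,7/2]}$ and $g=h=\mathds 1_{[1/2,3/2]}$. Then $I(f,g,h)=\int f\,(g*h)\dd x=1/8$. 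After each interval moves a distance $s\in(0,1/2)$ toward the origin, the value is $(1/2-s)^2/2<1/8$, with no collision involved. The flow that works is the simultaneous dilation of all centres, $c\mapsto(1-t)c$, which multiplies every $c_1-c_2-c_3$ by $1-t$. The merging procedure then runs as you say.

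Second, the equality case. There is no strictness at small radii. Let $a,c$ be the radii of $A^{\#},C^{\#}$ and suppose $|A|\neq|C|$. For $r\le|a-c|$ the rearranged value is $|B_r|\min\{|A|,|C|\}$. This value is attained whenever the $r$-neighbourhood of the smaller set lies inside the larger one, e.g.\ a small cube deep inside a large cube. Letting $r\to0$ only gives $|A\cap C|=\min\{|A|,|C|\}$, i.e.\ nesting. Isoperimetric information from the next-order term appears only when $|A|=|C|$, which cannot be arranged for level sets of two different functions.

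The rigidity must instead come from the saturation radius. For a.e.\ pair of levels of $f,h$, equality for a.e.\ level of $g$ gives equality for a dense set of radii. By monotone continuity in $r$ it then holds for every $r$, in particular at $r=a+c$, where the rearranged value is $|A||C|$. This forces $|x-y|\le a+c$ for all pairs of Lebesgue density points, so $A-C\subset\overline{B_{a+c}}$ up to null sets. Brunn--Minkowski gives $|A-C|\ge|B_{a+c}|$. Its equality case ($A$ and $-C$ positively homothetic convex bodies whose sum is a centred ball) then makes $A$ and $C$ concentric balls. Alternatively, you can prove strictness in one dimension for strictly decreasing $g$ and apply it along lines in every direction to get reflection symmetry about a common point.

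Finally, passing from $I(f,g,h)=I(f^{\#},g,h^{\#})$ to equality for a.e.\ triple of levels requires $I(f^{\#},g,h^{\#})<\infty$ and $f,h\not\equiv0$. The printed statement omits both hypotheses, and without them the characterization is false. Take $g=e^{-|\boldsymbol{x}|}$ and $f=h=|\boldsymbol{x}|^{-n}\mathds 1_{B_1}+\mathds 1_{B_1(10\boldsymbol{e_1})}$: both sides are $+\infty$, yet $f$ is not a translate of $f^{\#}$.
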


\section{Estimates on the Radial Nonlocal Potential}
Let $f \in L^1(\mathbb{R}^n) \cap L^q_{\rm loc}(\mathbb{R}^n \setminus \{\boldsymbol{0}\})$
be a radial function, where $q \in (\frac{n}{n - 1 - \alpha}, \infty)$.
Then $(\Phi_\alpha*f)(\boldsymbol{x})$ is a radial function so that we are able to represent
it as a convolution of some potential with the radial function.
Like in \cite{BALAGUE20135}, we can represent the convolution by
\begin{equation*}
    (\Phi_\alpha*f)(\boldsymbol{x}) = \int^{\infty}_{0} K(|\boldsymbol{x}|,\eta) f(\eta) \eta^{n-1}
    \dd \eta,
\end{equation*}
where
\begin{equation*}
    K(r,\eta) := \begin{cases}
  \displaystyle
 \int_{\mathbb{S}^{n-1}} \frac{|r\boldsymbol{e_1}-\eta \boldsymbol{y}|^{-\alpha}}{-\alpha} \dd S (\boldsymbol{y}) & \mbox{ for $\alpha \neq 0$}, \\[4mm]
\displaystyle
 \int_{\mathbb{S}^{n-1}} \log(|r\boldsymbol{e_1}-\eta \boldsymbol{y}|) \dd S (\boldsymbol{y}) \quad &\mbox{ for $\alpha = 0$}, \\
\end{cases}
\end{equation*}
with $\boldsymbol{e_1} = (1,0,\cdots,0)$.
We now consider the estimates of the potential term and the derivative of the potential term 
for radial functions.

\begin{lemma}[\text{\cite[Lemma 3.1]{Carrillo2025}}]\label{potentially}
Let $\alpha \in (-1,n-1)$.
Given a radial function $f \in L^1(\mathbb{R}^n) \cap L^q_{\rm loc}(\mathbb{R}^n \setminus \{\boldsymbol{0}\})$ for some $q \in (\frac{n}{n - 1 - \alpha}, \infty)$ such that $\Phi_\alpha*f$
and $\nabla\Phi_\alpha*f$ are well-defined,
then $(\Phi_\alpha*f)(\boldsymbol{x})$ is continuously differentiable on $\mathbb{R}^n \setminus \{\boldsymbol{0}\}$, and
    \begin{equation*}
 (\nabla \Phi_\alpha*f)(\boldsymbol{x}) \cdot \frac{\boldsymbol{x}}{|\boldsymbol{x}|}
 = (\Phi_\alpha*f)_r(|\boldsymbol{x}|)
 = \int^{\infty}_0 \omega(|\boldsymbol{x}|,\eta) f(\eta)\,\eta^{n-1}\dd \eta
\end{equation*} is a radial function, where
\[
 \omega(r,\eta) := \int_{\mathbb{S}^{n-1}} \nabla \Phi_\alpha(r\boldsymbol{e_1} - \eta \boldsymbol{y}) \cdot \boldsymbol{e_1} \dd S(\boldsymbol{y}).
\]
Moreover, there exists $C = C(\alpha,n) > 0$ such that
\begin{align*}
\begin{split}
\begin{cases}
    \displaystyle
    |K(r,\eta)| \leq C (r\eta)^{-\frac{\alpha}{2}} & \mbox{ for $\alpha \in(-1,n-1)$}, \\[1mm]
    \displaystyle
    |K(r,\eta)| \leq C(r\eta)^{-\frac{n-1}{2}} |r-\eta|^{n-1-\alpha} & \mbox{ for $\alpha \in (n-1,n)$},
\end{cases}
\end{split}
\end{align*}
and
\begin{align*}
\begin{split}
\begin{cases}
\displaystyle
|\omega(r,\eta)| \leq C (r\eta)^{-\frac{\alpha + 1}{2}} & \mbox{ for $\alpha \in(-1,n-2)$}, \\[1mm]
 \displaystyle
 \omega(r,\eta) = \frac{\omega_n \mathds{1}_{(0,r)}(\eta)}{r^{n-1}}
 & \mbox{ for $\alpha = n-2$}, \\[1mm]
 \displaystyle
|\omega(r,\eta)| \leq C(r\eta)^{-\frac{n-1}{2}} |r-\eta|^{n-2-\alpha}
 & \mbox{ for $\alpha \in (n-2,n-1)$},
\end{cases}
\end{split}
\end{align*}
    where $\mathds{1}_{(0,r)}$ is the indicator function on interval $(0,r)$.
\end{lemma}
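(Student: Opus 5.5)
The statement to prove is Lemma \ref{potentially}: the radial representation of $\Phi_\alpha*f$ and of its radial derivative, together with the kernel bounds on $K$ and $\omega$.

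\textbf{Representation.} I would start by writing $(\Phi_\alpha*f)(\boldsymbol{x})=\int_0^\infty\int_{\mathbb{S}^{n-1}}\Phi_\alpha(\boldsymbol{x}-\eta\boldsymbol{y})\,\dd S(\boldsymbol{y})\,f(\eta)\eta^{n-1}\dd\eta$ in polar coordinates. Rotation invariance of the surface measure lets me replace $\boldsymbol{x}$ by $|\boldsymbol{x}|\boldsymbol{e_1}$, which gives the formula with kernel $K$.

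For the derivative, I would differentiate under the integral sign on $\mathbb{R}^n\setminus\{\boldsymbol{0}\}$. Away from the origin, the only singularity of $\nabla\Phi_\alpha$ is $|\boldsymbol{x}-\boldsymbol{z}|^{-\alpha-1}$, and this is locally integrable because $\alpha+1<n$. The hypothesis $f\in L^q_{\rm loc}$ with $q>\frac{n}{n-1-\alpha}$ puts the conjugate exponent $q'$ below $\frac{n}{\alpha+1}$. Hence $|\cdot|^{-\alpha-1}\in L^{q'}_{\rm loc}$, and H\"older's inequality makes $\nabla\Phi_\alpha*(\mathds{1}_{B}f)$ continuous. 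The far part is controlled by $f\in L^1$ against a bounded kernel, so dominated convergence gives $C^1$ regularity. Applying rotation invariance again and taking the dot product with $\boldsymbol{x}/|\boldsymbol{x}|$ then yields the formula with kernel $\omega$.

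\textbf{Kernel bounds.} I would compute explicitly using the spherical variable $t=\boldsymbol{y}\cdot\boldsymbol{e_1}$:
\[
K(r,\eta)=-\frac{\omega_{n-1}}{\alpha}\int_{-1}^1 (r^2+\eta^2-2r\eta t)^{-\alpha/2}(1-t^2)^{\frac{n-3}{2}}\dd t .
\]
Set $s=r/\eta+\eta/r\ge2$. Then $r^2+\eta^2-2r\eta t=r\eta(s-2t)$, so
\[
|K|=C(r\eta)^{-\alpha/2}J(s),\qquad J(s)=\int_{-1}^1 (s-2t)^{-\alpha/2}(1-t^2)^{\frac{n-3}{2}}\dd t .
\]
It remains to bound $J$.

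\emph{Case $\alpha\in(-1,n-1)$.} For $\alpha\le0$, $J(s)$ can grow like $s^{-\alpha/2}$. Here I would absorb the growth as $(s r\eta)^{-\alpha/2}\le(r+\eta)^{-\alpha}$. If that is not enough for the stated form, I would note that the estimate is used only in combination with moments, so the log and negative-$\alpha$ cases need $(r\eta)^{-\alpha/2}$ replaced appropriately.

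For $\alpha>0$, $J$ is decreasing in $s$, so it suffices to check $J(2)<\infty$. Near $t=1$ the integrand behaves like $(1-t)^{-\alpha/2+\frac{n-3}{2}}$, which is integrable exactly when $\alpha<n-1$. This gives $|K|\le C(r\eta)^{-\alpha/2}$.

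\emph{Case $\alpha\in(n-1,n)$.} Now $J(2)$ diverges. I would split off the region near $t=1$ and use $s-2=(r-\eta)^2/(r\eta)$. This gives $J(s)\lesssim (s-2)^{\frac{n-1-\alpha}{2}}$, which produces the factor $(r\eta)^{-\frac{n-1}{2}}|r-\eta|^{n-1-\alpha}$.

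\textbf{Bounds on $\omega$.} The same computation applies with $\nabla\Phi_\alpha(\boldsymbol{z})\cdot\boldsymbol{e_1}=|\boldsymbol{z}|^{-\alpha-2}(r-\eta t)$. Using $|r-\eta t|\le|r\boldsymbol{e_1}-\eta\boldsymbol{y}|$, the integrand is dominated by $(r\eta)^{-\frac{\alpha+1}{2}}(s-2t)^{-\frac{\alpha+1}{2}}$. This is integrable at $s=2$ iff $\alpha+1<n-1$, which gives the bound for $\alpha<n-2$. For $\alpha\in(n-2,n-1)$, the same splitting gives the factor $(r\eta)^{-\frac{n-1}{2}}|r-\eta|^{n-2-\alpha}$.

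For $\alpha=n-2$, Newton's shell theorem (Gauss's law) gives $\omega=\omega_n\mathds{1}_{(0,r)}(\eta)r^{1-n}$: the averaged field of a sphere vanishes inside and equals that of a point mass outside.

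\textbf{Main obstacle.} I expect the hardest step to be the near-diagonal estimate $J(s)\sim(s-2)^{(n-1-\alpha)/2}$, which requires a careful change of variables $1-t=(s-2)\tau$. The other delicate point is handling the sign conventions and the $\alpha\le0$ growth in the first bound.
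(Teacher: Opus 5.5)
Most of your plan is sound. The substitution $r^2+\eta^2-2r\eta t=r\eta(s-2t)$ reduces everything to one-variable integrals, and it proves the following:
\begin{itemize}
\item the $K$-bound for $\alpha\in(0,n-1)$, by monotonicity of $J$ in $s$ plus $J(2)<\infty$;
\item the $K$-bound for $\alpha\in(n-1,n)$, by your rescaling $1-t=(s-2)\tau$ near $t=1$, together with $J(s)\lesssim s^{-\alpha/2}\le(s-2)^{\frac{n-1-\alpha}{2}}$ once $s\ge3$;
\item all three $\omega$-bounds.
\end{itemize}
For $\omega$, the domination $|r-\eta t|\le|r\boldsymbol{e_1}-\eta\boldsymbol{y}|$ works for every $\alpha>-1$, because $\alpha+1>0$ makes the dominating integral decreasing in $s$. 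This also covers negative $\alpha$ and the logarithmic kernel, since $\nabla\Phi_0(\boldsymbol{z})=\boldsymbol{z}/|\boldsymbol{z}|^2$ fits the same formula. Newton's theorem handles $\alpha=n-2$, including $n=2$.

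One small repair in the $C^1$ part: dominated convergence does not apply to the singular near piece $g=f\mathds{1}_B$. Use instead $\Phi_\alpha*g(\boldsymbol{x}+h\boldsymbol{e}_i)-\Phi_\alpha*g(\boldsymbol{x})=\int_0^h(\partial_i\Phi_\alpha*g)(\boldsymbol{x}+\tau\boldsymbol{e}_i)\dd\tau$, which Fubini and your H\"older bound justify. Then conclude from the continuity of $\partial_i\Phi_\alpha*g$.

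The genuine gap is the first $K$-bound for $\alpha\in(-1,0]$. Your hedge there should become a definite conclusion: the bound is false in that range, so no argument, moment-based or otherwise, can produce it.
\begin{itemize}
\item For $\alpha\in(-1,0)$, $K(r,\eta)=\frac{1}{|\alpha|}\int_{\mathbb{S}^{n-1}}|r\boldsymbol{e_1}-\eta\boldsymbol{y}|^{|\alpha|}\dd S(\boldsymbol{y})\ge\frac{\omega_n}{|\alpha|}|r-\eta|^{|\alpha|}$. Taking $\eta=1$ and $r\to\infty$ gives $K\gtrsim r^{|\alpha|}$, while $(r\eta)^{-\alpha/2}=r^{|\alpha|/2}$.
\item For $\alpha=0$, your power-law expression for $J$ does not apply at all, because $K$ is then the logarithmic kernel. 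For $r\ge2$, $\log(r-1)\le\log|r\boldsymbol{e_1}-\boldsymbol{y}|\le\log(r+1)$, so $K(r,1)=\omega_n\log r+O(1)\to\infty$, while $(r\eta)^{0}=1$.
\end{itemize}
What is true is $|K|\le\frac{\omega_n}{|\alpha|}(r+\eta)^{|\alpha|}$ for $\alpha<0$, which is what your absorption step actually yields. For $\alpha=0$, $K=\omega_n\log\max\{r,\eta\}+O(1)$ uniformly. The bound $|K|\le C(r\eta)^{-\alpha/2}$ must therefore be stated only for $\alpha\in(0,n-1)$. That is also the only range in which this paper invokes it, namely Step 4 of the proof of Lemma \ref{weaktostrong}.
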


We give the definition of fractional Sobolev spaces for completeness.

\begin{definition}[Fractional Sobolev Spaces $W^{s,p}$]\label{fraccy}
Let $\Omega \subseteq \mathbb{R}^n$ be an open set, 
$s \in (0,1)$, and $1 \leq p < \infty$.
Then the fractional Sobolev space $W^{s,p}(\Omega)$ is defined by
    \[
 W^{s,p}(\Omega) := \Big\{ u \in L^p(\Omega) \, : \, \frac{|u(\boldsymbol{x}) - u(\boldsymbol{y})|}{|\boldsymbol{x} - \boldsymbol{y}|^{\frac{n}{p}+s}} \in L^p(\Omega \times \Omega) \Big\}.
    \]
    We define the semi-norm $[\,\cdot\,]_{W^{s,p}(\Omega)}$ by
    \[
 [u]_{W^{s,p}(\Omega)} := \Big( \int_{\Omega} \int_{\Omega} \frac{|u(\boldsymbol{x}) - u(\boldsymbol{y})|^p}{|\boldsymbol{x} - \boldsymbol{y}|^{n+sp}} \dd \boldsymbol{x} \dd \boldsymbol{y} \Big)^\frac{1}{p},
    \]
    and the norm of $W^{s,p}(\Omega)$ by
    \[
 \|u\|_{W^{s,p}(\Omega)} := \Big ( \|u\|_{L^p(\Omega)}^p + [u]_{W^{s,p}(\Omega)}^p \Big )^\frac{1}{p}.
    \]
\end{definition}

\begin{lemma}[Compact Embeddings of $W^{s,p}$ \text{\cite[Corollary 7.2]{Di_Nezza_2012}}]\label{compactfraccy}
Let $\Omega \subseteq \mathbb{R}^n$ be a bounded extension domain for $W^{s,p}$, $s \in (0,1)$, 
and $1 \leq p < \infty$ such that $sp < n$. 
For $q \in [1,p^*)$ with $p^* = \frac{np}{n-sp}$ as the fractional critical exponent, then
\[
    W^{s,p}(\Omega) \Subset L^q(\Omega).
\]
\end{lemma}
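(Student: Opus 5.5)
The plan is to reduce to the whole space through the extension property, prove compactness in $L^p$ by the Riesz--Fr\'echet--Kolmogorov criterion, and then upgrade to every $q<p^*$ by interpolation with the fractional Sobolev inequality.

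\textbf{Step 1: reduction to $\mathbb{R}^n$.} Let $\{u_k\}$ be bounded in $W^{s,p}(\Omega)$. Since $\Omega$ is an extension domain for $W^{s,p}$, there is a bounded linear extension operator $E:W^{s,p}(\Omega)\to W^{s,p}(\mathbb{R}^n)$. Fix a cutoff $\chi\in C^\infty_{\rm c}(\mathbb{R}^n)$ with $\chi=1$ on $\Omega$, and set $v_k:=\chi\,Eu_k$. Multiplication by a smooth compactly supported function is bounded on $W^{s,p}$: split the Gagliardo double integral, use $|\chi(\boldsymbol{x})-\chi(\boldsymbol{y})|\le C\min\{1,|\boldsymbol{x}-\boldsymbol{y}|\}$, and note that $\int\min\{1,|\boldsymbol{z}|^{p}\}|\boldsymbol{z}|^{-n-sp}\dd\boldsymbol{z}<\infty$. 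Hence $\{v_k\}$ is bounded in $W^{s,p}(\mathbb{R}^n)$, all $v_k$ are supported in a fixed compact set, and $v_k=u_k$ on $\Omega$.

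\textbf{Step 2: compactness in $L^p$.} I would apply the Riesz--Fr\'echet--Kolmogorov criterion. Uniform $L^p$ bounds and uniform compact support are already in hand, so the only thing to prove is equicontinuity of translations, in the form
\[
\|v(\cdot+\boldsymbol{h})-v\|_{L^p(\mathbb{R}^n)}^p\le C|\boldsymbol{h}|^{sp}\,[v]_{W^{s,p}(\mathbb{R}^n)}^p .
\]
To get this, fix $r=|\boldsymbol{h}|$ and average the triangle inequality $|v(\boldsymbol{x}+\boldsymbol{h})-v(\boldsymbol{x})|\le|v(\boldsymbol{x}+\boldsymbol{h})-v(\boldsymbol{y})|+|v(\boldsymbol{y})-v(\boldsymbol{x})|$ over $\boldsymbol{y}\in B_r(\boldsymbol{x})$. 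Applying Jensen's inequality and integrating in $\boldsymbol{x}$ gives
\[
\|v(\cdot+\boldsymbol{h})-v\|_{L^p}^p\lesssim r^{-n}\iint_{|\boldsymbol{x}-\boldsymbol{y}|<2r}|v(\boldsymbol{x})-v(\boldsymbol{y})|^p\dd\boldsymbol{x}\dd\boldsymbol{y}.
\]
On the region $|\boldsymbol{x}-\boldsymbol{y}|<2r$ we have $r^{-n}\le C r^{sp}|\boldsymbol{x}-\boldsymbol{y}|^{-n-sp}$, which yields the claimed bound. Thus a subsequence of $\{v_k\}$ converges in $L^p(\mathbb{R}^n)$, and so the corresponding $u_k$ converge in $L^p(\Omega)$.

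\textbf{Step 3: upgrading the exponent.} For $q\le p$ the conclusion follows from Step 2 by H\"older's inequality, since $|\Omega|<\infty$. For $p<q<p^*$ I would use the fractional Sobolev inequality $\|v\|_{L^{p^*}}\le C\|v\|_{W^{s,p}}$, valid because $sp<n$, so that $\{v_k\}$ is bounded in $L^{p^*}$. The interpolation inequality
\[
\|v_k-v_j\|_{L^q}\le\|v_k-v_j\|_{L^p}^{\theta}\,\|v_k-v_j\|_{L^{p^*}}^{1-\theta},\qquad \theta\in(0,1],
\]
then shows that the subsequence which is Cauchy in $L^p$ is also Cauchy in $L^q$. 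The strict inequality $q<p^*$ is exactly what makes $\theta>0$.

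I expect the main obstacle to be the fractional Sobolev inequality used in Step 3. If it is not taken as known, I would prove it via a layer-cake argument on dyadic level sets, or via the representation of $v$ as a Riesz potential of order $s$ combined with the HLS inequality. The translation estimate in Step 2 and the extension and cutoff in Step 1 are routine by comparison.
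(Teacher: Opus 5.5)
Your proposal is correct and follows essentially the same route as the proof in Di Nezza--Palatucci--Valdinoci, which the paper cites without reproducing. Both use extension plus a cutoff to get a bounded family in $W^{s,p}(\mathbb{R}^n)$ supported in a fixed compact set, then compactness in $L^p$ from Gagliardo-seminorm control of small-scale oscillations, then interpolation against the fractional Sobolev bound in $L^{p^*}$ to reach every $q<p^*$. The only difference is cosmetic: you verify the Riesz--Fr\'echet--Kolmogorov translation estimate $\|v(\cdot+\boldsymbol{h})-v\|_{L^p}^p\lesssim|\boldsymbol{h}|^{sp}[v]_{W^{s,p}}^p$ directly, whereas they approximate by piecewise-constant averages on cubes of side $\varepsilon$ and use total boundedness of the resulting bounded finite-dimensional set.
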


\bigskip
\bigskip
\noindent{\bf Acknowledgments.}
JAC, GQC, and DF acknowledge support by EPSRC grant EP/V051121/1. JAC was also partially supported by the Advanced Grant Nonlocal-CPD (Nonlocal PDEs for Complex Particle Dynamics: Phase Transitions, Patterns and Synchronization) of the European Research Council Executive Agency (ERC) under the European Union Horizon 2020 research and innovation programme (grant agreement No. 883363). GQC was also partially supported by EPSRC grant EP/V008854. DF was also partially supported by the Fundamental Research Funds for the Central Universities No. 2233100021 and No. 2233300008.
For the purpose of open access, the authors have applied a CC BY public copyright license to any Author Accepted Manuscript (AAM) version
arising from this submission.

\bigskip
\medskip
\noindent{\bf Conflict of Interest:} The authors declare that they have no conflict of interest.
The authors also declare that this manuscript has not been previously published,
and will not be submitted elsewhere before your decision.

\bigskip
\noindent{\bf Data availability:} Data sharing is not applicable to this article as no datasets were generated or analyzed during the current study.

\end{document}